\newenvironment{enumerate*}{
\begin{enumerate}[{\rm (i)}]
  \setlength{\itemsep}{3.5pt}
  \setlength{\parskip}{0pt}
}{\end{enumerate}}
\newenvironment{enumerate!}{
\begin{enumerate}[{\rm I.}]
  \setlength{\itemsep}{3.5pt}
  \setlength{\parskip}{0pt}
}{\end{enumerate}}
\newenvironment{enumerateabc}{
\begin{enumerate}[{\rm (a)}]
  \setlength{\itemsep}{3.5pt}
  \setlength{\parskip}{0pt}
}{\end{enumerate}}
\newenvironment{enumeratenum}{
\begin{enumerate}[{\rm (1)}]
  \setlength{\itemsep}{3.5pt}
  \setlength{\parskip}{0pt}
}{\end{enumerate}}
\renewcommand*{\thefootnote}{\arabic{footnote}}
\newtheorem{theorem}{Theorem}[section]
\newtheorem{lemma}[theorem]{Lemma}
\newtheorem{proposition}[theorem]{Proposition}
\newtheorem{corollary}[theorem]{Corollary} 
\newtheorem{definition}[theorem]{Definition}
\newtheorem{example}[theorem]{Example}
\newtheorem{remark}[theorem]{Remark}
\newtheorem*{acknowledgement}{Acknowledgement}
\newtheorem*{acknowledgements}{Acknowledgements}
\newtheorem{openquestion1}[theorem]{Open Question}
\newtheorem{openquestion2}[theorem]{Open Question}
\newtheorem{question1}[theorem]{Question}
\newtheorem{question2}[theorem]{Question}
\newtheorem{construct}[theorem]{Construction}
\numberwithin{equation}{section}
\titleformat*{\section}{\large \bfseries}
\titleformat*{\subsection}{\it}
\titleformat*{\subsubsection}{\large\bfseries}
\titleformat*{\paragraph}{\large\bfseries}
\titleformat*{\subparagraph}{\large\bfseries}
\begin{document}

\title{Virtually torsion-free covers of minimax groups} 

\author{
{\sc Peter Kropholler}\\
{\sc Karl Lorensen}
}

\maketitle
\begin{abstract} We prove that every finitely generated, virtually solvable minimax group can be expressed as a homomorphic image of a virtually torsion-free, virtually solvable minimax group.  
This result enables us to generalize a theorem of Ch. Pittet and L. Saloff-Coste about random walks on finitely generated, virtually solvable minimax groups. Moreover, the paper identifies properties, such as the derived length and the nilpotency class of the Fitting subgroup, that are preserved in the covering process. Finally, we determine exactly which infinitely generated, virtually solvable minimax groups also possess this type of cover.
\vspace{20pt}

\centerline{\it Couvertures virtuellement sans torsion de groupes minimax}
\vspace{5pt}
\centerline{\bf R\'esum\'e}
\vspace{5pt}

{\it Nous prouvons que tout groupe de type fini minimax et virtuellement r\'esoluble peut \^etre exprim\'e comme l'image homomorphe d'un groupe minimax virtuellement r\'esoluble et virtuellement sans torsion. Ce r\'esultat permet de g\'en\'eraliser un th\'eor\`eme de Ch. Pittet et L. Saloff-Coste concernant les marches al\'eatoires sur les groupes de type fini minimax et virtuellement r\'esolubles. En outre, l'article identifie des propri\'et\'es conserv\'ees dans le processus de couverture, telles que la classe de r\'esolubilit\'e et la classe de nilpotence du sous-groupe de Fitting. Enfin, nous d\'eterminons exactement quels groupes de type infini minimax et virtuellement r\'esolubles admettent \'egalement ce type de couverture.}

\vspace{20pt}

\noindent {\bf Mathematics Subject Classification (2010)}:  20F16, 20J05 (Primary);  \\ 20P05, 22D05, 60G50, 60B15 (Secondary).

\vspace{10pt}

\noindent {\bf Keywords}:  virtually solvable group of finite rank, virtually solvable minimax group, random walks on groups, {\it groupe virtuellement r\'esoluble de rang fini}, {\it groupe minimax virtuellement r\'esoluble}, {\it marches al\'eatoires sur les groupes}.

\end{abstract}

\let\thefootnote\relax\footnote{The authors began work on this paper as participants in the Research in Pairs Program of the {\it Mathematisches Forschungsinstitut Oberwolfach} from March 22 to April 11, 2015. In addition, the project was partially supported by EPSRC Grant EP/N007328/1. Finally, the second author would like to express his gratitude to the {\it Universit\"at Wien} for hosting him for part of the time during which the article was written.}

\tableofcontents

\section{Introduction}

In this paper, we study {\it virtually solvable minimax groups}; these are groups $G$  that possess
a series 

\begin{equation*} 1=G_0\unlhd  G_1\unlhd\cdots \unlhd G_r=G\end{equation*}

\noindent such that each factor $G_i/G_{i-1}$ is either finite, infinite cyclic, or quasicyclic. (Recall that a group is {\it quasicyclic} if, for some prime $p$, it is isomorphic to $\mathbb Z(p^\infty):=\mathbb Z[1/p]/\mathbb Z$.) Denoting the class of virtually solvable minimax groups by $\mathfrak{M}$, we determine which $\mathfrak{M}$-groups can be realized as quotients of virtually torsion-free $\mathfrak{M}$-groups. Moreover, our results on quotients allow us to settle a longstanding question about a lower bound for the return probability of 
a random walk on the Cayley graph of a finitely generated $\mathfrak{M}$-group (see \S 1.3). We are indebted to Lison Jacoboni for pointing out the relevance of our work to this question.  

The importance of $\mathfrak{M}$-groups arises primarily from the special status among virtually solvable groups that is enjoyed by finitely generated $\mathfrak{M}$-groups. As shown by the first author in \cite{kropholler2}, the latter comprise all the finitely generated, virtually solvable groups without any sections isomorphic to a wreath product of a finite cyclic group with an infinite cyclic one. In particular, any finitely generated, virtually solvable group of finite abelian section rank is minimax (a property first established by D. J. S. Robinson \cite{robinson}). For background on these and other properties of $\mathfrak{M}$-groups, we refer the reader to J. C. Lennox and Robinson's treatise \cite{robinson-lennox} on infinite solvable groups.

Within the class $\mathfrak{M}$, we distinguish two subclasses: first, the subclass $\mathfrak{M_1}$ consisting of all the $\mathfrak{M}$-groups that are virtually torsion-free; second, the complement of $\mathfrak{M_1}$ in $\mathfrak{M}$, which we denote $\mathfrak{M_{\infty}}$.  
It has long been apparent that the groups in $\mathfrak{M_1}$ possess a far more transparent  
structure than those in $\mathfrak{M}_{\infty}$. For example, an $\mathfrak{M}$-group belongs to $\mathfrak{M_1}$ if and only if it is residually finite. As a result, every finitely generated $\mathfrak{M_\infty}$-group fails to be linear over any field. In contrast, $\mathfrak{M_1}$-groups are all linear over $\mathbb Q$ and hence can be studied with the aid of the entire arsenal of the theory of linear groups over $\mathbb R$, including via embeddings into Lie groups. The latter approach is particularly fruitful when tackling problems of an analytic nature, such as those that arise in the investigation of random walks (see \cite{pittet}).  

A further consequence of the $\mathbb Q$-linearity of $\mathfrak{M_1}$-groups is that the finitely generated ones fall into merely countably many isomorphism classes. On the other hand, there are uncountably many nonisomorphic, finitely generated $\mathfrak{M_{\infty}}$-groups (see either  [{\bf 16}, p. 104] or Proposition 7.12 below). Other differences between  $\mathfrak{M_1}$-groups and $\mathfrak{M_{\infty}}$-groups are evident in their respective algorithmic properties. The word problem, for instance, is solvable for every finitely generated group in $\mathfrak{M_1}$ (see \cite{cannonito}). However, since the number of possible algorithms is countable, there exist uncountably many nonisomorphic, finitely generated $\mathfrak{M_{\infty}}$-groups with unsolvable word problem.  

Our goal here is to explore the following two questions concerning the relationship between $\mathfrak{M_1}$-groups and $\mathfrak{M_{\infty}}$-groups. In phrasing the questions, we employ a parlance that will be used throughout the paper, and that also occurs in its title: if a group $G$ can be expressed as a homomorphic image of a group $G^\ast$, we say that $G$ is {\it covered by $G^\ast$}. In this case, $G^\ast$ is referred to as a {\it cover} of $G$ and any epimorphism $G^\ast\to G$ as a {\it covering}. 
A cover of $G$ that belongs to the class $\mathfrak{M}_1$ is called an {\it $\mathfrak{M_1}$-cover} of $G$, and the corresponding covering is designated an {\it $\mathfrak{M_1}$-covering}. 

\begin{question1} Under what conditions is it possible to cover an $\mathfrak{M_{\infty}}$-group by an  $\mathfrak{M_1}$-group?
\end{question1}

\begin{question2} If an $\mathfrak{M_{\infty}}$-group $G$ can be covered by an $\mathfrak{M_1}$-group $G^\ast$, 
how can we choose $G^\ast$ so that it retains many of the properties enjoyed by $G$?
\end{question2}

Answering the above questions should enable us to reduce certain problems about $\mathfrak{M}$-groups to the more tractable case where the group belongs to $\mathfrak{M_1}$.  A current problem inviting such an approach arises in Ch. Pittet and L. Saloff-Coste's study \cite{pittet} of random walks on the Cayley graphs of finitely generated $\mathfrak{M}$-groups. Their paper establishes a lower bound on the probability of return for this sort of random walk when the group is virtually torsion-free, but their methods fail to apply to $\mathfrak{M_{\infty}}$-groups.* \footnote{*The hypothesis that the group is virtually torsion-free should be included in the statement of [{\bf 21}, Theorem 1.1], for the proof requires that assumption. We thank Lison Jacoboni for bringing this mistake to our attention.} One way to extend their bound to the latter case is to    
prove that any finitely generated member of the class $\mathfrak{M_{\infty}}$ can be expressed as a homomorphic image of an $\mathfrak{M_1}$-group. In Theorem 1.5 below, we establish that the condition that the group be finitely generated is indeed one possible answer to Question 1.1, thus yielding the desired generalization of Pittet and Saloff-Coste's result (Corollary 1.8).

\subsection{Structure of $\mathfrak{M}$-groups}

Before stating our main results, we summarize the structural properties of $\mathfrak{M}$-groups in Proposition 1.3 below; proofs of these may be found in \cite{robinson-lennox}.  In the statement of the proposition, as well as throughout the rest of the paper, we write ${\rm Fitt}(G)$ for the {\it Fitting subgroup} of a group $G$, namely, the subgroup generated by all the nilpotent normal subgroups.  In addition, we define $R(G)$ to be the {\it finite residual} of $G$, by which we mean the intersection of all the subgroups of finite index.

\begin{proposition} If $G$ is an $\mathfrak{M}$-group, then the following four statements hold.

\begin{enumerate*}

\item ${\rm Fitt}(G)$ is nilpotent and $G/{\rm Fitt}(G)$ virtually abelian. 

\item If $G$ belongs to $\mathfrak{M_1}$, then $G/{\rm Fitt}(G)$ is finitely generated. 

\item $R(G)$ is a direct product of finitely many quasicyclic groups. 
 
\item $G$ is a member of $\mathfrak{M_1}$ if and only if $R(G)=1$. \hfill\(\square\)
\end{enumerate*}

\end{proposition}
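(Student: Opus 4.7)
The plan is to derive each of the four parts from standard structural results on virtually solvable minimax groups collected in \cite{robinson-lennox}. Parts (i) and (iii) supply the structural skeleton, (iv) is the key residual-finiteness equivalence, and (ii) then follows by combining (i) with a linearity argument. I would handle them roughly in the order (i), (iii), (iv), (ii).

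For (i), $\mathrm{Fitt}(G)$ is locally nilpotent by Fitting's theorem applied to finite products of nilpotent normal subgroups, and a locally nilpotent minimax group is nilpotent because its upper central series must terminate in finitely many steps (finite Hirsch length combined with the minimum condition on divisible abelian sections). To see that $G/\mathrm{Fitt}(G)$ is virtually abelian, I would refine a finite $\mathfrak{M}$-series of $\mathrm{Fitt}(G)$ and study the conjugation action of $G$ on each torsion-free abelian factor. Each such factor embeds into a finite-dimensional $\mathbb{Q}$-vector space, so the action factors through $\mathrm{GL}_n(\mathbb{Q})$; Mal'cev's theorem on solvable linear groups then produces a common finite-index subgroup acting diagonally on every factor, which gives that $G/\mathrm{Fitt}(G)$ is virtually abelian.

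For (iii), $R(G)$ is divisible (it admits no nontrivial finite quotient) and inherits minimaxness from $G$. Since $\mathbb{Q}$ itself is not minimax ($\mathbb{Q}/\mathbb{Z}$ fails to be artinian), $R(G)$ contains no torsion-free divisible part. A short induction on the derived length of a solvable finite-index subgroup shows $R(G)$ is abelian, and a divisible torsion abelian minimax group is necessarily a finite direct product of quasicyclics. Part (iv) follows quickly: if $R(G)=1$, then $G$ is residually finite, and a residually finite $\mathfrak{M}$-group is virtually torsion-free by a theorem of Robinson; conversely, for a torsion-free finite-index subgroup $K\leq G$, the intersection $R(G)\cap K$ has finite index in the divisible abelian $R(G)$, so $R(G)\leq K$, making $R(G)$ both torsion and torsion-free, hence trivial. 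I expect part (ii) to be the subtlest step: one first shows, using (i) and (iv), that $G/\mathrm{Fitt}(G)$ is again in $\mathfrak{M}_1$ and hence virtually torsion-free abelian minimax, and then observes that the diagonal representation from the proof of (i) takes values in a finitely generated subgroup of $\mathbb{Q}^*$ (controlled by the finitely many primes occurring in the structure of $\mathrm{Fitt}(G)$), forcing finite generation of $G/\mathrm{Fitt}(G)$. The main obstacle is making this last control precise while tracking the passage of the $\mathfrak{M}_1$ property through the quotient.
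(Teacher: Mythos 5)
The paper gives no proof of Proposition 1.3; it simply cites Lennox and Robinson's monograph, where all four facts are established, so there is no ``paper argument'' to compare against line by line. Your sketches of (i), (iii) and (iv) are reconstructions of the standard arguments and are broadly sound. A couple of points you gloss over in (iii): you assert $R(G)$ is abelian ``by a short induction on the derived length,'' but the inductive step is not as short as it looks --- what one really uses is that $R(G)$ is nilpotent (e.g.\ via $R(G)\cap\mathrm{Fitt}(G)$ and Hall's stability theorem) together with the vanishing $\mathbb{Z}(p^\infty)\otimes\mathbb{Z}(q^\infty)=0$, which kills the lower central factors below $R(G)_{\rm ab}$ and forces $R(G)$ to be abelian. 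That step deserves to be made explicit rather than waved at.

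The genuine gap is in your treatment of (ii), and it has two parts. First, you say one can ``show, using (i) and (iv), that $G/\mathrm{Fitt}(G)$ is again in $\mathfrak{M}_1$,'' but this is not a consequence of (i) and (iv): quotients of residually finite groups need not be residually finite, and in fact the assertion that $R(G/\mathrm{Fitt}(G))=1$ when $R(G)=1$ is essentially the content of (ii) itself (a virtually abelian minimax group with trivial finite residual is polycyclic-by-finite, hence finitely generated). Stating it as an intermediate lemma without argument makes the proof circular. Second, your finite-generation mechanism --- ``the diagonal representation \ldots takes values in a finitely generated subgroup of $\mathbb{Q}^*$'' --- is incorrect as stated. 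After Mal'cev-triangularizing over $\overline{\mathbb{Q}}$, the diagonal entries are algebraic numbers, generally irrational, and to obtain finite generation you must (a) observe that, because $\mathrm{Fitt}(G)$ is virtually torsion-free with $\mathrm{spec}(G)\subseteq\pi$ finite, the matrices representing $G$ on the torsion-free factors have entries in a ring of $S$-integers, (b) invoke Dirichlet's $S$-unit theorem for the relevant number field(s) to bound the diagonal part, and (c) separately dispose of the unipotent part via the stability argument you used in (i). It is (a)--(b), not a statement about $\mathbb{Q}^*$, that makes the argument work, and without them the claimed finite generation does not follow.
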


\subsection{Covers of finitely generated $\mathfrak{M}$-groups}

Our main result, Theorem 1.13, characterizes all the $\mathfrak{M}$-groups that can be covered by an $\mathfrak{M_1}$-group. We will postpone describing this theorem until \S 1.4, focusing first on 
its implications for finitely generated $\mathfrak{M}$-groups. We begin
with this special case because of its immediate relevance to random walks, as well as its potential to find further applications. 

While discussing our results, we will refer to the following example; it is the simplest instance of a finitely generated $\mathfrak{M_\infty}$-group, originally due to P. Hall \cite{hall}. More sophisticated examples of such groups are described in the final section of the paper. 

\begin{example} {\rm Take $p$ to be a prime, and let $G^\ast$ be the group of upper triangular $3\times 3$ matrices $( a_{ij})$ with entries in the ring $\mathbb Z[1/p]$ such that $a_{11}=a_{33}=1$ and $a_{22}$ is a power of $p$.  Let  $A$ be the central subgroup consisting of all the matrices $( a_{ij})\in G^\ast$ with $a_{22}=1$, $a_{12}=a_{23}=0$, and $a_{13}\in \mathbb Z$. 
Set $G=G^\ast/A$. Then $G$ is a finitely generated solvable minimax group with a quasicyclic center.}
\end{example}

In Example 1.4, there is an epimorphism from the finitely generated, torsion-free solvable minimax group $G^\ast$ to $G$.
The kernel of this epimorphism is a central cyclic subgroup of $G^\ast$. Moreover, the groups $G$ and $G^\ast$ are structurally very similar. For instance, they have the same derived length, and the nilpotency classes of ${\rm Fitt}(G)$ and ${\rm Fitt}(G^\ast)$ coincide. 

It turns out that the group $G$ in Example 1.4 is, in certain respects, quite typical for a finitely generated $\mathfrak{M_{\infty}}$-group. In Theorem 1.5 below, we prove that such groups always admit an $\mathfrak{M_1}$-covering exhibiting most of the properties manifested by the covering in Example 1.4. In the statement of the theorem, the {\it spectrum}
of a group $G$, written ${\rm spec}(G)$, is the set of primes $p$ for which $G$ has a section isomorphic to $\mathbb Z(p^\infty)$. Also, ${\rm solv}(G)$ denotes the {\it solvable radical} of $G$, that is, the subgroup generated by all the solvable normal subgroups. Note that, if $G$ is an $\mathfrak{M}$-group, ${\rm solv}(G)$ is a solvable normal subgroup of finite index. Another piece of notation refers to the derived length of $G$ if $G$ is solvable, written ${\rm der}(G)$. Finally, if $N$ is a nilpotent group, then its nilpotency class is denoted ${\rm nil}\ N$. 

\begin{theorem} Let $G$ be a finitely generated $\mathfrak{M}$-group, and write $N={\rm Fitt}(G)$ and $S={\rm solv}(G)$.   
 Then there is a finitely generated $\mathfrak{M_1}$-group $G^\ast$ and an epimorphism $\phi: G^\ast\to G$ satisfying the following four properties, where $N^\ast={\rm Fitt}(G^\ast)$ and $S^\ast={\rm solv}(G^\ast)$. 
\begin{enumerate*}

\item ${\rm spec}(G^\ast)={\rm spec}(G)$.

\item $N^\ast=\phi^{-1}(N)$; hence $S^\ast=\phi^{-1}(S)$.

\item ${\rm der}(S^\ast)={\rm der}(S)$. 

\item ${\rm nil}\ N^\ast={\rm nil}\ N$ and ${\rm der}(N^\ast)={\rm der}(N)$.

\end{enumerate*}
\end{theorem}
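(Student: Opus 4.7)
The approach I would take is cohomological: construct $G^\ast$ as an extension of $\bar{G} := G/R$ by a torsion-free abelian group $\tilde{R}$ that surjects $\bar{G}$-equivariantly onto the finite residual $R := R(G)$. By Proposition 1.3, $R$ decomposes as $R = \bigoplus_{p \in \mathrm{spec}(G)} R_p$ with $R_p \cong \mathbb{Z}(p^\infty)^{m_p}$, and $\bar{G}$ is a finitely generated $\mathfrak{M}_1$-group. The conjugation action of $G$ on $R$ makes $R$ into a $\bar{G}$-module, and the short exact sequence $1 \to R \to G \to \bar{G} \to 1$ is classified by some $\xi \in H^2(\bar{G}, R)$.

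The first step is to lift the abelian part. For each $p \in \mathrm{spec}(G)$, I would construct a $\bar{G}$-equivariant short exact sequence
\[
0 \longrightarrow K_p \longrightarrow \tilde{R}_p \longrightarrow R_p \longrightarrow 0
\]
with $\tilde{R}_p$ a torsion-free $\bar{G}$-module of finite $\mathbb{Q}$-rank --- ideally $\mathbb{Z}[1/p]^{m_p}$ with the canonical surjection $\mathbb{Z}[1/p]^{m_p} \twoheadrightarrow \mathbb{Z}(p^\infty)^{m_p}$ of kernel $\mathbb{Z}^{m_p}$, but possibly a larger $\mathbb{Z}[1/p]$-lattice if the $\bar{G}$-action on $R_p$, which factors through a finitely generated subgroup of $\mathrm{GL}_{m_p}(\mathbb{Z}_p)$, does not already restrict to $\mathrm{GL}_{m_p}(\mathbb{Z}[1/p])$. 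Summing yields $0 \to K \to \tilde{R} \to R \to 0$ with $\tilde{R}$ torsion-free minimax and $K$ finitely generated free abelian.

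The second step is to lift the extension class. The long exact cohomology sequence gives
\[
H^2(\bar{G}, \tilde{R}) \longrightarrow H^2(\bar{G}, R) \xrightarrow{\ \delta\ } H^3(\bar{G}, K),
\]
and a demonstration that $\delta(\xi) = 0$ allows me to pick a lift $\tilde{\xi} \in H^2(\bar{G}, \tilde{R})$ and define $G^\ast$ as the corresponding extension $1 \to \tilde{R} \to G^\ast \to \bar{G} \to 1$. The map $\tilde{R} \to R$ then extends canonically to an epimorphism $\phi: G^\ast \to G$ with kernel $K$. Since $\tilde{R}$ is torsion-free abelian and $\bar{G}$ is virtually torsion-free, $G^\ast$ is virtually torsion-free; it is also minimax, and should be finitely generated by an argument mirroring Example 1.4 --- namely, the $p$-divisible part of $\tilde{R}$ is reached from lifts of generators of $\bar{G}$ via iterated commutators and conjugation by elements whose action on $\tilde{R}_p$ involves denominators.

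The four properties of $\phi$ should then be read off. Property (i) is immediate since $\mathrm{spec}(\tilde{R}) = \mathrm{spec}(R) = \mathrm{spec}(G)$. Property (iii) follows because $\tilde{R}$ is abelian, so prepending it to $S/R$ does not alter derived length. Properties (ii) and (iv) follow by arranging the $N$-action on $\tilde{R}$ to be unipotent (which it is on $R$, since $R \subseteq N$ and $N$ is nilpotent), forcing $\phi^{-1}(N)$ to be nilpotent of class exactly $\mathrm{nil}\,N$ and to coincide with $\mathrm{Fitt}(G^\ast)$. The main obstacle is two-fold: (a) producing the $\bar{G}$-equivariant torsion-free lift $\tilde{R}$ with a unipotent $N$-action on it, and (b) verifying the vanishing of $\delta(\xi) \in H^3(\bar{G}, K)$. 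Both steps are likely handled by a careful induction that treats one quasicyclic summand at a time, exploiting the finite Hirsch length and cohomological structure of $\bar{G}$.
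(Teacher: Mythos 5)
Your strategy---lifting the finite residual $R = R(G)$ to a torsion-free $G/R$-module $\tilde{R}$ and then lifting the extension class $\xi \in H^2(G/R, R)$ past the connecting map $\delta : H^2(G/R, R) \to H^3(G/R, K)$---is the natural first attempt, but there are two genuine gaps, one of which defeats the construction as you have specified it. You restrict $\tilde{R}_p$ to a $\mathbb{Z}[1/p]$-lattice, which forces the kernel $K = \ker(\tilde{R} \to R)$ of the eventual covering to satisfy ${\rm spec}(K) \subseteq {\rm spec}(R)$. This is incompatible with Example 7.1 and Lemma 7.2 of the paper: for the finitely generated group $G_1(p,q)$ one has $R(G_1(p,q)) \cong \mathbb{Z}(p^\infty)$, so your $\tilde{R}$ would be a $\mathbb{Z}[1/p]$-lattice and $K$ would satisfy ${\rm spec}(K) \subseteq \{p\}$, whereas Lemma 7.2 shows that \emph{every} $\mathfrak{M_1}$-covering of $G_1(p,q)$ has $q \in {\rm spec}(K)$. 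So for this $G$ there is no admissible lift $\tilde\xi$ with $\tilde{R}$ of the shape you prescribe, and the freedom to enlarge $\tilde{R}$ to a torsion-free minimax module with spectrum strictly larger than ${\rm spec}(R)$ is indispensable; Example 7.6 shows the same phenomenon with a single prime, ruling out a polycyclic kernel there as well.

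The deeper gap is that, even with a more generous $\tilde{R}$, you never argue that $\delta(\xi)$ vanishes for some admissible choice; this is precisely the nontrivial content of the theorem, and it cannot be deferred to a ``careful induction'' without saying what the induction proves at each step. The paper's route is genuinely different and avoids a direct obstruction-killing argument: it embeds $N = {\rm Fitt}(G)$ in its tensor $p$-completion $N_p$, works inside the locally compact group $G_{(N,p)}$, invokes a topological factorization $G_{(N,p)} = R_0 X$ with $X \cap N_p$ compact (Proposition 4.15), and builds an inverse system of radicable nilpotent covers of $R_0$ whose limit $\Omega$ is torsion-free (Proposition 3.12). The cohomological input there is the vanishing of higher cohomology of a torsion-free radicable nilpotent group with finite coefficients (Lemma 3.13), which is simply unavailable at the level of your lattice $\tilde{R}$. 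The cover $G^\ast$ is then assembled inside $\Omega \rtimes X$ via Propositions 4.20 and 3.27, and the control of nilpotency class, derived length, and the Fitting subgroup needed for your properties (ii)--(iv) comes from Proposition 3.8 together with the compatibility of the inverse-system maps with the $X$-action, not from a unipotence arrangement on $\tilde{R}$.
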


The first part of statement (ii) in Theorem 1.5 implies that ${\rm Ker}\ \phi$ is nilpotent, but the theorem says little more about the kernel's properties. In particular, Theorem 1.5 makes no claims about the kernel of the covering being central or polycyclic, although both properties hold for the kernel in Example 1.4. Indeed, it is not possible in general to form a covering whose kernel has either of these two attributes. We illustrate this phenomenon in \S 7 with two examples, one where the spectrum contains two primes and another where it consists of a single prime.  

The reader will notice that, in all of the examples appearing in the paper, the kernel of the covering is abelian. Nevertheless, as will become clear in the proof of Theorem 1.5, our construction may conceivably produce a kernel with nilpotency class larger than one (see Remark 6.2).  Whether this is merely an artifact of the techniques employed here, or whether certain groups will only admit coverings with nonabelian kernels, remains a mystery.  

\begin{openquestion1} Can the group $G^\ast$ and epimorphism $\phi:G^\ast\to G$ in Theorem 1.5 always be chosen so that ${\rm Ker}\ \phi$ is abelian?
\end{openquestion1}

\noindent We conjecture that this question has a negative answer.

\begin{acknowledgement}{\rm We are grateful to an anonymous referee for posing Open Question 1.6.}
\end{acknowledgement}

\subsection{Application to random walks}

We now describe in detail the application of Theorem 1.5 to random walks alluded to above.  For the statement of the result, we require some notation specific to this matter. First, if $f$ and $g$ are functions from the positive integers to the nonnegative real numbers, we write $f(m)\succsim g(m)$ whenever there are positive constants $a$, $b$, and $c$ such that $f(m)\geq ag(bm)$ for $m>c$. Also, if $f(m)\succsim g(m)$ and $g(m)\succsim f(m)$, then we write $f(m)\sim g(m)$.

Suppose that $G$ is a group with a finite symmetric generating set $S$; by {\it symmetric}, we mean that it is closed under inversion. 
Consider the simple random walk on the Cayley graph of $G$ with respect to $S$. For any positive integer $m$, let $P_{(G,S)}(2m)$ be the probability of returning to one's starting position after $2m$ steps. It is shown in \cite{pittet3} that,
for any other finite symmetric generating set $T$, $P_{(G,T)}(2m)\sim P_{(G,S)}(2m)$. 

In \cite{pittet} the following lower bound is established for the probability of return for a random walk on the Cayley graph of a finitely generated $\mathfrak{M_1}$-group.  

\begin{theorem}{\rm (Pittet and Saloff-Coste [{\bf 21}, Theorem 1.1])} Let $G$ be an $\mathfrak{M_1}$-group with a finite symmetric generating set $S$. Then

\[
\pushQED{\qed}
P_{(G,S)}(2m)\succsim\exp(-m^{\frac{1}{3}}).\qedhere
\popQED
\]

\end{theorem}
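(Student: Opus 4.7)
The plan is to combine the $\mathbb{Q}$-linearity of finitely generated $\mathfrak{M}_1$-groups (noted in \S 1) with the general machinery that relates return probabilities to isoperimetric profiles. First I would pass to a finite-index torsion-free solvable subgroup $G_0 \leq G$; since the return probability is insensitive to finite-index inclusions and to the choice of finite symmetric generating set (up to $\sim$-equivalence), this reduces the problem to the case where $G$ is torsion-free solvable minimax.

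Next, I would use the $\mathbb{Q}$-linearity of $G$ together with constructions of Mal'cev and Mostow to realize $G$ as a closed subgroup of a real Lie group $L$ whose identity component is connected and solvable and whose dimension equals the Hirsch length of $G$. The ambient Lie structure provides a geometric framework in which one can construct Følner sets of controlled size: explicitly, I would exhibit a sequence $\{F_n\}_{n \geq 1}$ in $G$ with $|F_n| \leq e^{Cn}$ and $|\partial F_n|/|F_n| \leq c/n$, so that the Følner function $\Phi_G$ satisfies $\Phi_G(n) \preccurlyeq e^{Cn}$.

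The final step is to invoke the theorem of Coulhon and Saloff-Coste: if a finitely generated amenable group $H$ has Følner function bounded by $\Phi_H(n) \preccurlyeq \exp(n^\alpha)$ for some $\alpha \geq 1$, then for any finite symmetric generating set $T$ one has
\[
P_{(H,T)}(2t) \succsim \exp\!\bigl(-t^{\alpha/(\alpha+2)}\bigr).
\]
Specializing to $H = G$ and $\alpha = 1$ gives precisely $P_{(G,S)}(2t) \succsim \exp(-t^{1/3})$.

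I expect the main obstacle to lie in the construction of the controlled Følner sequence when $G$ contains divisible sections such as $\mathbb{Z}[1/p]$. For polycyclic $G$ the argument is classical and one can take Følner sets arising from coordinate boxes in a polycyclic presentation, but the general $\mathfrak{M}_1$ setting forces one to combine a compact Følner set in the Lie group $L$ with a $p$-arithmetic contribution for each $p \in \mathrm{spec}(G)$, and then verify both the cardinality bound and the boundary estimate for the resulting subset of $G$. This is precisely the step where the virtual torsion-freeness is indispensable and where a direct extension to $\mathfrak{M}_\infty$-groups breaks down, motivating the detour through $\mathfrak{M}_1$-covers developed in Theorem 1.5.
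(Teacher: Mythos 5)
This statement is Theorem 1.7 in the paper, which is quoted from Pittet and Saloff-Coste [{\bf 21}, Theorem 1.1] with no proof supplied---the $\square$ marks it as an external citation. Your outline is, in fact, the strategy of [{\bf 21}]: reduce to a torsion-free solvable minimax group, exploit linearity to place the group inside an ambient locally compact group, construct a F\o lner sequence witnessing $\Phi_G(n)\preccurlyeq \exp(n)$, and feed this into the Coulhon--Grigor'yan--Pittet lower bound for return probabilities with $\alpha=1$, giving the exponent $1/3$.

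One imprecision worth flagging. You first assert that $G$ can be realized as a \emph{closed} subgroup of a real solvable Lie group $L$ via Mal'cev and Mostow; for a group with nontrivial spectrum (e.g.\ $BS(1,p)\cong \mathbb Z[1/p]\rtimes\mathbb Z$) this is false---the image in any real Lie group is dense rather than discrete, so the ambient geometry gives no control over F\o lner sets for $G$ itself. The correct picture, and the one actually used in [{\bf 21}], is an $S$-arithmetic embedding: $G$ sits as a discrete cocompact subgroup of $L_\infty\times\prod_{p\in\mathrm{spec}(G)}L_p$ with $L_\infty$ a real solvable Lie group and each $L_p$ a solvable $p$-adic analytic group, and the F\o lner sets are products of archimedean boxes with compact open subgroups at the finite places. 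You do acknowledge this later (``combine a compact F\o lner set in the Lie group $L$ with a $p$-arithmetic contribution for each $p$''), but the opening setup contradicts it and should be corrected so the two halves of your argument are consistent. With that fix, your sketch captures the proof of [{\bf 21}]; it is also exactly the point at which virtual torsion-freeness (equivalently, residual finiteness, cf.\ Proposition 1.3) enters, since an $\mathfrak{M}_\infty$-group is not $\mathbb Q$-linear and admits no such embedding.
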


With the aid of Theorem 1.5, we generalize Theorem 1.7 to all finitely generated $\mathfrak{M}$-groups. As related at the end of the paper in the acknowledgements, this corollary and its proof were communicated to the authors by Lison Jacoboni. 

\begin{corollary}  Let $G$ be an $\mathfrak{M}$-group with a finite symmetric generating set $S$. Then
$$P_{(G,S)}(2m)\succsim\exp(-m^{\frac{1}{3}}).$$ 
\end{corollary}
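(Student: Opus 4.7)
The plan is to reduce the claim to the $\mathfrak{M_1}$-setting via Theorem 1.5 and then apply the known bound of Theorem 1.7. First, I would apply Theorem 1.5 to $G$ to produce a finitely generated $\mathfrak{M_1}$-group $G^*$ together with an epimorphism $\phi\colon G^*\to G$. Since \cite{pittet3} shows that the $\sim$-class of $P_{(G,S)}(2t)$ is independent of the finite symmetric generating set chosen, it suffices to establish the bound for some convenient generating set of $G$.

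Next, I would choose a finite symmetric generating set $T_0\subseteq G$ and lift it to a finite symmetric subset $T_0^*\subseteq G^*$ so that $\phi$ restricts to an inversion-respecting bijection $T_0^*\to T_0$. Because $G^*$ is finitely generated and $\phi(\langle T_0^*\rangle)=G$, one can adjoin finitely many symmetric elements of $\ker\phi$ to obtain a finite symmetric generating set $T^*:=T_0^*\cup K^*$ of $G^*$. The pushforward $\mu:=\phi_{*}\mu_{T^*}$ of the uniform step measure on $T^*$ is then supported on $T_0\cup\{e\}$, placing mass $1/|T^*|$ on each element of $T_0$ and mass $|K^*|/|T^*|$ on the identity.

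The key inequality will be
$$\mu^{(2t)}(e)=\sum_{g\in\ker\phi}\mu_{T^*}^{(2t)}(g)\geq\mu_{T^*}^{(2t)}(e)=P_{(G^*,T^*)}(2t),$$
which holds because any walk in $G^*$ returning to the identity projects to a walk in $G$ that also returns to the identity, while other walks in $G^*$ landing in $\ker\phi$ contribute additional probability. The measure $\mu$ defines a lazy random walk on $(G,T_0)$ whose holding probability $|K^*|/|T^*|$ is a fixed constant, so a standard comparison of lazy versus simple walks yields $\mu^{(2t)}(e)\sim P_{(G,T_0)}(2t)\sim P_{(G,S)}(2t)$. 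Together these give $P_{(G,S)}(2t)\succsim P_{(G^*,T^*)}(2t)$.

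Finally, I would invoke Theorem 1.7 for the finitely generated $\mathfrak{M_1}$-group $G^*$ with the generating set $T^*$ to obtain $P_{(G^*,T^*)}(2t)\succsim\exp(-t^{1/3})$, and chain the two bounds. The main obstacle is the lazy-versus-simple comparison in the third paragraph: although this is a standard tool in the theory of random walks on groups, it depends on the holding probability being bounded away from both $0$ and $1$, a condition that is automatic here because $T_0^*$ and $K^*$ are the fixed finite sets produced by the construction, so no uniformity in $t$ is required.
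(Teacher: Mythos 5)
Your proof is correct, and it reaches the conclusion by a genuinely different route from the paper's. The paper lifts $S$ to a finite symmetric set $S^{\ast}\subseteq G^{\ast}$ with $\phi(S^{\ast})=S$ and implicitly replaces $G^{\ast}$ by the subgroup $\langle S^{\ast}\rangle$ (still a finitely generated $\mathfrak{M}_1$-group, since these properties pass to subgroups); then $S^{\ast}$ generates, the step measure on $G^{\ast}$ pushes forward exactly to the uniform measure on $S$, and the inequality $P_{(G,S)}(2t)\geq P_{(G^{\ast},S^{\ast})}(2t)$ drops out at once with no laziness. You instead keep the original $G^{\ast}$ and enlarge the lifted set by adjoining finitely many kernel elements to make it generate, which forces the projected walk to be lazy and requires the extra lazy-versus-simple comparison. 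Both routes work: the paper's trades shrinking $G^{\ast}$ for a shorter argument, whereas yours keeps $G^{\ast}$ fixed at the cost of one more (standard) comparison step, and your identification of the holding probability as a fixed constant is exactly what makes that step harmless. One small point worth flagging, common to both arguments and absorbed by the stability results of \cite{pittet3}: your requirement of an ``inversion-respecting bijection'' $T_0^{\ast}\to T_0$ forces, when $s=s^{-1}\in T_0$, the lift $s^{\ast}$ to be an involution, and $\phi^{-1}(s)$ need not contain one; this is handled either by choosing $T_0$ without involutions or by allowing a $2$-to-$1$ lift there, which only perturbs the step measure by a bounded factor.
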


\begin{proof} According to Theorem 1.5, there is an $\mathfrak{M_1}$-group $G^\ast$ and an epimorphism $\phi:G^\ast \to G$. Also, we can select $G^\ast$ so that it has a finite symmetric generating set $S^\ast$ with $\phi(S^\ast)=S$.  Consider random walks on the Cayley graphs of both groups with respect to these generating sets, with both walks commencing at the identity element. Then $P_{(G,S)}(2m)$ is equal to the probability that the random walk on $G^\ast$ will take us to an element of ${\rm Ker}\ \phi$ after $2m$ steps. As a consequence, $P_{(G,S)}(2m)\geq P_{(G^\ast,S^\ast)}(2m)$. Moreover, by Theorem 1.7, we have $P_{(G^\ast,S^\ast)}(2m)\succsim\exp(-m^{\frac{1}{3}}).$ Hence $P_{(G,S)}(2m)\succsim\exp(-m^{\frac{1}{3}}).$
\end{proof}

It is proved in \cite{pittet3} that the probability of return for groups of exponential growth is always bounded above by the function $\exp(-m^{\frac{1}{3}})$. Hence we have the

\begin{corollary} Let $G$ be an $\mathfrak{M}$-group with a finite symmetric generating set $S$. If $G$ has exponential growth, then

\[
\pushQED{\qed}
P_{(G,S)}(2m)\sim\exp(-m^{\frac{1}{3}}).\qedhere
\popQED
\]
\end{corollary}

Recall that a finitely generated solvable group has either exponential or polynomial growth, with the latter property holding if and only if the group is virtually nilpotent (see \cite{milnor} and \cite{wolf}). Furthermore, N. Varopoulos shows in \cite{varo} that any finitely generated group of polynomial growth of degree $d$ has probability of return $m^{-d/2}$.

\subsection{Covers of non-finitely-generated $\mathfrak{M}$-groups}

Having discussed finitely generated $\mathfrak{M}$-groups, we turn next to the question
of what other $\mathfrak{M}$-groups can be realized as quotients of $\mathfrak{M_1}$-groups. As we shall see below, not all $\mathfrak{M}$-groups admit such a description; moreover, even if there is an $\mathfrak{M_1}$-cover, it may not be possible to construct one with the same spectrum as the group. To distinguish $\mathfrak{M}$-groups with different spectra, we will employ the symbols $\mathfrak{M^\pi}$ and $\mathfrak{M^\pi_1}$ to denote the subclasses of $\mathfrak{M}$ and $\mathfrak{M_1}$, respectively, 
consisting of all those members with a spectrum contained in a set of primes $\pi$. Alternatively, an $\mathfrak{M^\pi}$-group will be referred to as a {\it $\pi$-minimax group}. In Proposition 1.11 below (proved in \S 3.4), we identify three properties that must be satisfied by an $\mathfrak{M}$-group in order for it to have an $\mathfrak{M_1^\pi}$-cover.
One of these properties requires the following notion, which will play a prominent role in the paper.

\begin{definition}{\rm Let $\pi$ be a set of primes.  A {\it $\pi$-number} is a nonzero integer whose prime divisors all belong to $\pi$. If $G$ is a group and $A$ a $\mathbb ZG$-module, then we say that the action of $G$ on $A$ is {\it $\pi$-integral} if, for each $g\in G$, there are integers $\alpha_0,\alpha_1, \dots, \alpha_m$ such that $\alpha_m$ is a $\pi$-number and $(\alpha_0+\alpha_1g+\cdots+\alpha_mg^m)\in {\rm Ann}_{\mathbb ZG}(A)$. }
\end{definition}

Considering inverses of elements of $G$, we immediately see that we can replace the condition that $\alpha_m$ is a $\pi$-number in Definition 1.10 with the condition that $\alpha_0$ is a $\pi$-number. Indeed, as the reader may easily verify, this is also equivalent to requiring that both $\alpha_0$ and $\alpha_m$ are $\pi$-numbers. 

\begin{proposition} Let $\pi$ be a set of primes and $G$ an $\mathfrak{M}$-group with Fitting subgroup $N$. Set $Q=G/N$. If $G$ is a homomorphic image of an $\mathfrak{M_1^\pi}$-group, then $G$ satisfies the following three conditions. 

\begin{enumerateabc}

\item $Q$ is finitely generated.

\item ${\rm spec}(N)\subseteq \pi$.

\item $Q$ acts $\pi$-integrally on $N_{\rm ab}$. 
\end{enumerateabc}

\end{proposition}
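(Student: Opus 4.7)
Let $\phi : G^\ast \twoheadrightarrow G$ be an epimorphism with $G^\ast \in \mathfrak{M_1^\pi}$, and write $N^\ast = {\rm Fitt}(G^\ast)$ and $M = \phi(N^\ast)$. Since $M$ is nilpotent and normal in $G$, it is contained in $N$.

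Parts (a) and (b) are essentially formal. Observing $\phi^{-1}(N) \supseteq N^\ast$, the quotient $Q \cong G^\ast/\phi^{-1}(N)$ is a homomorphic image of $G^\ast/N^\ast$, which is finitely generated by Proposition 1.3(ii); this gives (a). For (b), the spectrum of a group contains the spectra of all its subgroups and homomorphic images, so ${\rm spec}(N) \subseteq {\rm spec}(G) \subseteq {\rm spec}(G^\ast) \subseteq \pi$.

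The main work is in (c). Fix $g \in Q$ and lift to $\hat g \in G$, then to $\tilde g \in G^\ast$ with $\phi(\tilde g) = \hat g$. My plan is to exploit the short exact sequence of $Q$-modules
\begin{equation*}
0 \to M'' \to N_{\rm ab} \to (N/M)_{\rm ab} \to 0,
\end{equation*}
where $M''$ denotes the image of $M$ in $N_{\rm ab}$, producing annihilators of $g$ on each end and multiplying them. For the term $M''$: since $N^\ast$ is virtually torsion-free nilpotent minimax, one checks (using that the abelianization of a torsion-free nilpotent group is torsion-free) that $N^\ast_{\rm ab}$ is virtually torsion-free, with finite torsion $T^\ast$, and that the quotient $N^\ast_{\rm ab}/T^\ast$ is a finitely generated free $\mathbb Z[1/\pi_0]$-module for some finite $\pi_0 \subseteq \pi$, since $\mathbb Z[1/\pi_0]$ is a PID. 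Applying Cayley--Hamilton to the action of $\tilde g$ on this free module, then clearing denominators in its monic characteristic polynomial (whose constant term $\pm \det \tilde g$ is a unit in $\mathbb Z[1/\pi_0]$), and finally multiplying by $t^k - 1$, where $k$ is the order of $\tilde g$ acting on $T^\ast$, yields $\tilde p(t) \in \mathbb Z[t]$ with $\pi$-number leading and constant coefficients such that $\tilde p(\tilde g)$ annihilates $N^\ast_{\rm ab}$. The $\phi$-equivariance of the surjection $N^\ast_{\rm ab} \twoheadrightarrow M_{\rm ab}$ then forces $\tilde p(\hat g)$ to annihilate $M_{\rm ab}$ and hence its quotient $M''$.

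For the term $(N/M)_{\rm ab}$: the quotient $G/M$ is a homomorphic image of the finitely generated virtually abelian group $G^\ast/N^\ast$, so $N/M$ is itself finitely generated virtually abelian and $(N/M)_{\rm ab}$ is finitely generated abelian. The same Cayley--Hamilton and torsion-clearing routine, now on the $\mathbb Z$-free quotient $\mathbb Z^r$ where $\det g = \pm 1$, produces $q(t) \in \mathbb Z[t]$ with leading and constant coefficients $\pm 1$ annihilating $g$ on $(N/M)_{\rm ab}$. Then $\tilde p(g)\,q(g)$ annihilates $N_{\rm ab}$: given $x \in N_{\rm ab}$, the element $q(g)x$ lies in $M''$, and $\tilde p(g)$ kills it there. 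The leading and constant coefficients of $\tilde p(t)\,q(t)$ are products of $\pi$-numbers, hence $\pi$-numbers, establishing $\pi$-integrality. The main obstacle I foresee is the structural lemma about $N^\ast_{\rm ab}$: verifying it is virtually torsion-free and a finitely generated $\mathbb Z[1/\pi_0]$-module modulo its torsion, which ultimately rests on classical properties of torsion-free nilpotent groups together with the module theory of torsion-free abelian minimax groups over the localization $\mathbb Z[1/\pi_0]$.
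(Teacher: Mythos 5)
Parts (a) and (b) are correct and match the paper, and your decomposition of $N_{\rm ab}$ via the short exact sequence $0 \to M'' \to N_{\rm ab} \to (N/M)_{\rm ab} \to 0$, where $M'' $ is the image of $\phi(N^\ast)$, is essentially the same decomposition the paper uses (their $A_{c^\ast}$ is your $M''$, and they likewise observe that $N_{\rm ab}/A_{c^\ast}$ is polycyclic). The argument on the $(N/M)_{\rm ab}$ side is fine.

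The gap is in your treatment of $M''$. You assert that $N^\ast_{\rm ab}$ is virtually torsion-free, ``using that the abelianization of a torsion-free nilpotent group is torsion-free.'' That parenthetical claim is false, and the conclusion you draw from it fails too. Already for finitely generated groups: the torsion-free nilpotent group $\langle a,b,c \mid [a,b]=c^2,\ c \text{ central}\rangle$ has abelianization $\mathbb Z^2 \oplus \mathbb Z/2$. Worse, in the minimax setting the torsion part can be infinite: take $K$ to be the subgroup of the Heisenberg group over $\mathbb Z[1/p]$ consisting of unipotent upper-triangular matrices with $a_{12}, a_{23} \in \mathbb Z$ and $a_{13} \in \mathbb Z[1/p]$. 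Then $K$ is torsion-free, nilpotent, $p$-minimax, and $K' = \{a_{13} \in \mathbb Z\}$, so $K_{\rm ab} \cong \mathbb Z^2 \oplus \mathbb Z(p^\infty)$, which contains a quasicyclic subgroup and is not virtually torsion-free. Consequently $N^\ast_{\rm ab}$ need not have finite torsion, it need not be (virtually) a finitely generated free $\mathbb Z[1/\pi_0]$-module, and your Cayley--Hamilton step on $N^\ast_{\rm ab}$ does not go through.

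The paper sidesteps this exact pitfall: instead of applying Lemma 3.20 directly to $N^\ast_{\rm ab}$, it filters $N^\ast$ by its upper central series. Because $N^\ast$ is virtually torsion-free, each factor $Z_i(N^\ast)/Z_{i-1}(N^\ast)$ \emph{is} virtually torsion-free (and $\pi$-minimax), so Lemma 3.20 applies to each factor individually. Pushing the filtration $Z_1(N^\ast) \leq \cdots \leq Z_{c^\ast}(N^\ast) = N^\ast$ forward through $N^\ast \to N \to N_{\rm ab}$ yields a chain of $\mathbb Z Q$-submodules $0 = A_0 \subseteq \cdots \subseteq A_{c^\ast} = M''$ of $N_{\rm ab}$ with $\pi$-integral action on each successive quotient, and $\pi$-integrality is closed under extensions; this gives $\pi$-integrality on $M''$ and then, combined with your (correct) argument for the polycyclic quotient, on all of $N_{\rm ab}$. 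To repair your proof, replace the false structural claim about $N^\ast_{\rm ab}$ with this upper-central-series filtration.
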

 
We point out that statement (b) in Proposition 1.11 is obvious and (a) is an immediate consequence of Proposition 1.3(ii).
The proof of the third statement is also quite straightforward and is based on the fact that, for any $\mathbb ZG$-module $A$ whose underlying
abelian group is in $\mathfrak{M^\pi_1}$, the action of $Q$ on $A$ is $\pi$-integral (Lemma 3.21).

In Example 1.12, we describe some situations where either statement (a) or (c) in Proposition 1.11 is false while the other two assertions
hold. These examples will serve two purposes: first, to exhibit $\mathfrak{M_{\infty}}$-groups that cannot be expressed 
as a homomorphic image of any $\mathfrak{M_1}$-group; second, to demonstrate that there are $\mathfrak{M_{\infty}}$-groups $G$ that cannot be realized as 
quotients of $\mathfrak{M_1}$-groups with the same spectrum as $G$, but nevertheless occur as quotients of $\mathfrak{M_1}$-groups with larger spectra. The descriptions 
involve the ring $\mathbb Z_p$ of $p$-adic integers and its multiplicative group of units, denoted $\mathbb Z_p^\ast$.

\begin{example} {\rm Let $p$ be a prime. The groups that we discuss are denoted $G_1$, $G_2$, and $G_3$. In each case, $G_i$ is defined to be a semidirect product $\mathbb Z(p^\infty)\rtimes \Lambda_i$, where $\Lambda_i$ is a certain minimax subgroup of $\mathbb Z_p^\ast$. In this semidirect product, we assume that the action of 
$\Lambda_i$ on $\mathbb Z(p^\infty)$ arises from the natural $\mathbb Z_p$-module structure on $\mathbb Z(p^\infty)$. Note that, with this definition, we necessarily have ${\rm Fitt}(G_i)\cong \mathbb Z(p^\infty)$.
\begin{itemize}

\item For $i=1$, we take $q$ to be a prime such that $p\ |\ q-1$. The subgroup of $\mathbb Z_p^\ast$ consisting of all the $p$-adic integers congruent to $1$ modulo $p$ is isomorphic to $\mathbb Z_p$ if $p$ is odd and  $\mathbb Z_2\oplus (\mathbb Z/2)$ if $p=2$. Thus $\mathbb Z^\ast_p$ has a subgroup isomorphic to $\mathbb Z[1/q]$ that contains $q$. Choose $\Lambda_1$ to be such a subgroup. Hence $G_1$ satisfies conditions (b) and (c) in Proposition 1.11 for $\pi=\{p,q\}$. However, $G_1$ fails to fulfill (a) and so cannot be covered by an $\mathfrak{M_1}$-group. (This example also appears in [{\bf 16}, p. 92].) 

\item For $i=2$, let $\Lambda_2$ be a cyclic subgroup generated by an element of $\mathbb Z_p^\ast$ that is 
transcendental over $\mathbb Q$. For the group $G_2$, assertion (a) from Proposition 1.11 is true, and (b) holds for $\pi=\{p\}$. However, condition (c) in Proposition 1.11 is not satisfied for any set of primes $\pi$. Hence there is no $\mathfrak{M_1}$-group that has a quotient isomorphic to $G_2$. 

\item For $i=3$, let $q$ be a prime distinct from $p$, and take $\Lambda_3$ to be the cyclic subgroup of $\mathbb Z^\ast_p$ generated by $q$. 
For the group $G_3$, statement (a) in Proposition 1.11 is true, and (b) holds for $\pi=\{p\}$. Condition (c), on the other hand, is not satisfied for $\pi=\{p\}$. Hence Proposition 1.11 implies that $G_3$ cannot be realized as a quotient of a $p$-minimax group. However, (b) and (c) are both fulfilled for $\pi=\{p,q\}$. In fact, $G_3$ is a quotient of the torsion-free $\{p,q\}$-minimax group $\mathbb Z[1/pq]\rtimes C_{\infty}$, where the generator of $C_{\infty}$ acts on $\mathbb Z[1/pq]$ by multiplication by $q$.
\end{itemize}

}

\end{example}

The main theorem of the paper, Theorem \ref{thm2}, encompasses Proposition 1.11 and its converse, thus providing a complete characterization of all the $\mathfrak{M}$-groups that occur as quotients of $\mathfrak{M_1^\pi}$-groups.  At the same time, the theorem identifies another set of conditions that is equivalent to properties (a), (b), and (c) from Proposition 1.11.

\begin{theorem}\label{thm2}
Let $\pi$ be a set of primes and $G$ an $\mathfrak{M}$-group. Write $N={\rm Fitt}(G)$ and $Q=G/N$. Then the following three statements are equivalent. 
\begin{enumerate!}

\item $G$ can be expressed as a homomorphic image of an $\mathfrak{M_1^\pi}$-group. 

\item $Q$ is finitely generated, ${\rm spec}(N)\subseteq \pi$, and $Q$ acts $\pi$-integrally on $N_{\rm ab}$. 

\item $Q$ is finitely generated, ${\rm spec}(N)\subseteq \pi$, and $Q$ acts $\pi$-integrally on the image of $R(G)$ in $N_{\rm ab}$.  

\end{enumerate!}

Moreover, if the above conditions are fulfilled, then we can select an $\mathfrak{M_1^\pi}$-minimax group $G^\ast$ and epimorphism $\phi:G^\ast\to G$ so that properties {\rm (ii)-(iv)} in Theorem 1.5 hold.

\end{theorem}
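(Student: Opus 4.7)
The plan is to close the cycle $\mathrm{(I)}\Rightarrow\mathrm{(II)}\Rightarrow\mathrm{(III)}\Rightarrow\mathrm{(IV)}\Rightarrow\mathrm{(I)}$, and then to refine the final implication to secure the moreover clause. The first implication is exactly Proposition 1.11. The second is formal: any element of $\mathbb{Z}Q$ that annihilates $N_{\mathrm{ab}}$ annihilates the submodule consisting of the image of $R(G)$.

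For (III)$\Rightarrow$(IV), write $\bar R$ for the image of $R(G)$ in $N_{\mathrm{ab}}$. By Proposition 1.3(iii), $\bar R$ is a direct product of finitely many quasicyclic groups $\mathbb{Z}(p^\infty)$ with $p\in\mathrm{spec}(N)\subseteq\pi$. Each such factor admits the torsion-free cover $\mathbb{Z}[1/p]\twoheadrightarrow\mathbb{Z}[1/p]/\mathbb{Z}=\mathbb{Z}(p^\infty)$, and the $\pi$-integrality of the $Q$-action on $\bar R$ is precisely what permits one to assemble these coordinatewise lifts into a bounded-rank $\mathbb{Z}Q$-module cover of $\bar R$ sitting in $\mathfrak{M}_1^\pi$, in what amounts to a converse of the anticipated Lemma 3.20. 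Since the quotient $N_{\mathrm{ab}}/\bar R$ already lies in $\mathfrak{M}_1^\pi$ (its finite residual is trivial), one can extend the cover of $\bar R$ by a lift of $N_{\mathrm{ab}}/\bar R$ to produce the desired $\mathbb{Z}Q$-module $M\twoheadrightarrow N_{\mathrm{ab}}$.

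The implication (IV)$\Rightarrow$(I) is the crux, and I would carry it out in two stages. In the first stage, promote the module cover $M\twoheadrightarrow N_{\mathrm{ab}}$ to a $Q$-equivariant cover $\psi\colon N^*\twoheadrightarrow N$ of nilpotent groups by climbing the lower central series of $N$: each factor $N_k/N_{k+1}$ is a quotient of the tensor power $N_{\mathrm{ab}}^{\otimes k}$, and the corresponding tensor power of $M$ stays in $\mathfrak{M}_1^\pi$ because all quasicyclic contributions vanish under tensoring (for instance $\mathbb{Z}(p^\infty)\otimes\mathbb{Z}[1/p]=0$). Running the parallel construction along the derived series of $N$ secures the derived-length control needed later. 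In the second stage, lift the extension $1\to N\to G\to Q\to 1$ to an extension $1\to N^*\to G^*\to Q\to 1$ together with a map $G^*\to G$ that restricts to $\psi$. The obstruction lies in the non-abelian $H^2(Q,-)$ of the kernel of $\psi$; since $Q$ is finitely generated polycyclic-by-finite and $N^*$ can be enlarged by adjoining central $\mathfrak{M}_1^\pi$-summands without disturbing its nilpotency class or derived length, this obstruction can be killed, delivering $G^*\in\mathfrak{M}_1^\pi$ and a covering $\phi\colon G^*\twoheadrightarrow G$.

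The moreover clause then falls out of the construction: property (ii) is secured by arranging that $\mathrm{Ker}\,\phi\subseteq N^*$, so that $\phi^{-1}(N)=N^*\cdot\mathrm{Ker}\,\phi=N^*$, and any nilpotent normal subgroup of $G^*$ must map into $N$, forcing $\mathrm{Fitt}(G^*)=N^*$; property (iii) is obtained by running the lifting procedure in parallel along the derived series of $S=\mathrm{solv}(G)$, so that $S^*=\phi^{-1}(S)$ inherits the derived length of $S$; and property (iv) is immediate from the tensor-power construction matching both the nilpotency class and the derived length of $N$. The main obstacle is the second stage of (IV)$\Rightarrow$(I): controlling the non-abelian extension obstruction while simultaneously preserving the Fitting structure, the spectrum, and the derived-length invariants. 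The remedy is a careful induction along the central series of $N^*$, at each layer reducing to an ordinary $H^2(Q,-)$ with coefficients in an abelian $\mathbb{Z}Q$-module in $\mathfrak{M}_1^\pi$, where the $\pi$-integrality of the $Q$-action keeps both the obstruction classes and the requisite enlargements within the class $\mathfrak{M}_1^\pi$.
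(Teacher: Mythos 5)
Your overall cycle $\mathrm{(I)}\Rightarrow\mathrm{(II)}\Rightarrow\mathrm{(III)}\Rightarrow\mathrm{(IV)}\Rightarrow\mathrm{(I)}$ is logically sound as a strategy, and $\mathrm{(I)}\Rightarrow\mathrm{(II)}$ and $\mathrm{(II)}\Rightarrow\mathrm{(III)}$ are handled correctly (the latter more simply than the paper, which routes $\mathrm{(II)}\Rightarrow\mathrm{(III)}$ back through $\mathrm{(I)}$). But the two hard implications are not actually proved; each is reduced to an assertion whose justification is exactly the core technical content of the theorem, and that content is absent.

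For $\mathrm{(III)}\Rightarrow\mathrm{(IV)}$ you say the $\pi$-integrality of the $Q$-action on $\bar R\cong\bigoplus\mathbb Z(p^\infty)$ ``permits one to assemble these coordinatewise lifts into a bounded-rank $\mathbb ZQ$-module cover.'' This is not a converse of Lemma~3.20 in any obvious sense: the action of $Q$ on $\bar R$ is a representation $Q\to\mathrm{GL}_n(\mathbb Z_p)$ whose matrix entries are arbitrary $p$-adic integers, and lifting it to a representation on a virtually torsion-free $\pi$-minimax abelian group is a genuine integrality/descent problem, not a coordinatewise patching. In the paper this very statement is obtained as a \emph{consequence} of $\mathrm{(II)}\Rightarrow\mathrm{(I)}$ applied to $W=N_{\rm ab}\rtimes Q$, so asserting it here without proof is circular in spirit. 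For $\mathrm{(IV)}\Rightarrow\mathrm{(I)}$, both stages of your outline hinge on ``killing'' non-abelian extension obstructions by ``adjoining central $\mathfrak M_1^\pi$-summands,'' but no reason is given why the relevant classes in $H^2$ and $H^3$ become trivial, or why the enlargement stays virtually torsion-free. When $R(G)\ne 1$, the group $G$ contains a divisible subgroup which simply cannot sit inside a virtually torsion-free cover, and this is precisely the obstruction your argument glides over. The paper's proof is built specifically to neutralise it: one densely embeds $N$ in its tensor $p$-completion $N_p$ so that the finite residual sits inside a radicable nilpotent group, whose cohomology with finite coefficients vanishes (the engine of Proposition~3.12); one uses the factorisation theorem Proposition~4.15 to split $G_{(N,p)}=R_0X$ with $X$ virtually torsion-free; and one then replaces the divisible torsion part by an inverse limit of finite-kernel covers to obtain a torsion-free $\Omega$ and hence $\Gamma=\Omega\rtimes X$. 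None of this is replicated by an ad hoc $H^2$-argument over $Q$, and without it I do not see how your two stages can close. So the proposal contains a real gap at exactly the step where the paper introduces its main new technique.
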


\begin{remark} {\rm It follows from Theorem \ref{thm2} that the solvable minimax groups $G$ that are quotients of $\mathfrak{M_1^\pi}$-groups for $\pi={\rm spec}(G)$ are precisely those solvable minimax groups that belong to the class $\mathcal{U}$ from \cite{loren}.  In that paper, the second author shows that the groups in this class enjoy two cohomological properties that are not manifested by all solvable minimax groups. }

\end{remark}

Our next lemma shows that Theorem 1.5  is a special case of the implication (III)$\implies$(I) in Theorem \ref{thm2}.

\begin{lemma} If $G$ is a finitely generated $\mathfrak{M}$-group, then $G$ satisfies statement {\rm (III)} in Theorem \ref{thm2}
for $\pi={\rm spec}(G)$. 

\end{lemma}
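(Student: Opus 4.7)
The plan is to verify condition~(IV) of Theorem~\ref{thm2} directly, by constructing a $\mathbb{Z}Q$-module $M$ whose underlying abelian group lies in $\mathfrak{M}_1^\pi$ together with a $\mathbb{Z}Q$-epimorphism $M\twoheadrightarrow N_{\rm ab}$. Since $G$ is finitely generated and $Q=G/N$ is virtually abelian by Proposition~1.3(i), $Q$ is polycyclic and hence finitely presentable; combined with the finite generation of $G$, this forces $N$ to be finitely generated as a normal subgroup, and therefore $N_{\rm ab}$ is a finitely generated $\mathbb{Z}Q$-module whose underlying abelian group automatically lies in $\mathfrak{M}^\pi$.

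Next I would exploit the $\pi$-minimaxity of cyclic $\mathbb{Z}\langle g\rangle$-submodules. For any $a\in N_{\rm ab}$ and any $g\in Q$, the submodule $\mathbb{Z}\langle g\rangle\cdot a$ is a subgroup of $N_{\rm ab}$ and so lies in $\mathfrak{M}^\pi$. Writing it as a Laurent polynomial quotient $\mathbb{Z}[t,t^{-1}]/I$, the $\pi$-minimax hypothesis forces $I$ to contain a polynomial whose leading and trailing coefficients are $\pi$-numbers. After passing, if necessary, to a finite-index normal abelian subgroup $Q_0\leq Q$ so as to have a commutative ring to work in, and after combining such polynomials across a finite $\mathbb{Z}Q$-generating set for $N_{\rm ab}$ by taking products, one obtains, for each generator $g_i$ of $Q_0$, a polynomial $p_i(t)$ with $\pi$-number leading and trailing coefficients such that $p_i(g_i)$ annihilates all of $N_{\rm ab}$.

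The module $M$ would then be built as follows. Consider the ring $R=\mathbb{Z}[1/\pi]Q_0/(p_1(g_1),\dots,p_r(g_r))$; once $\pi$ is inverted, the leading and trailing coefficients of each $p_i$ become units, so $R$ is finitely generated as a $\mathbb{Z}[1/\pi]$-module, and hence its underlying abelian group lies in $\mathfrak{M}_1^\pi$. Inducing up to $\mathbb{Z}Q$ by forming $\mathbb{Z}Q\otimes_{\mathbb{Z}Q_0}R$ preserves membership in $\mathfrak{M}_1^\pi$ since $[Q:Q_0]<\infty$, and a direct sum of sufficiently many copies provides the required $M$, mapping $\mathbb{Z}Q$-equivariantly onto $N_{\rm ab}$ by sending the canonical generators to a chosen $\mathbb{Z}Q$-generating set.

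The main obstacle I anticipate is the coherent assembly of the annihilating polynomials. Each cyclic submodule individually yields a $\pi$-integer polynomial annihilating its generator, but obtaining a single polynomial that annihilates the whole of $N_{\rm ab}$ for a fixed $g\in Q_0$ requires the commutativity of $Q_0$ so that products of polynomials remain inside $\mathbb{Z}Q$-submodules; moreover, the action of $Q$ outside $Q_0$ by conjugation on elements of $Q_0$ must be accommodated by multiplying polynomials over entire $Q$-orbits of generators, which requires some bookkeeping.
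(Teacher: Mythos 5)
Your approach attempts a direct construction of the covering module, whereas the paper's proof is far shorter and proceeds differently: since $G$ is finitely generated, $Q=G/N$ is finitely generated and virtually abelian, hence virtually polycyclic and finitely presented; therefore $N_{\rm ab}$ is a finitely generated $\mathbb ZQ$-module, and since $\mathbb ZQ$ is Noetherian, $N_{\rm ab}$ is a Noetherian module. A Noetherian minimax module cannot contain a quasicyclic subgroup (the subgroups of $p^k$-torsion elements in the finite residual would yield an infinite ascending chain of $\mathbb ZQ$-submodules), so $N_{\rm ab}$ is itself virtually torsion-free and hence already lies in $\mathfrak{M}_1^\pi$. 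That is, (IV) holds with $N_{\rm ab}$ as its own cover; no construction is required.

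Your argument has a genuine gap at its central step. You assert that the $\pi$-minimaxity of the cyclic submodule $\mathbb Z\langle g\rangle\cdot a\cong\mathbb Z[t,t^{-1}]/I$ ``forces $I$ to contain a polynomial whose leading and trailing coefficients are $\pi$-numbers.'' But $\pi$-minimaxity alone does not imply this: the module $\mathbb Z(p^\infty)$ with a generator of $\mathbb Z$ acting through a transcendental unit of $\mathbb Z_p$ (cf. $G_2$ in Example 1.12) is $\pi$-minimax for $\pi=\{p\}$ yet admits no such polynomial annihilator. What saves your argument is Lemma 3.20, which requires the module to be \emph{virtually torsion-free}, and a cyclic $\mathbb Z[t,t^{-1}]$-module is indeed virtually torsion-free because it is Noetherian. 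So the Noetherian argument you were trying to avoid is unavoidable even on your route; once you invoke it, the paper's observation that $N_{\rm ab}$ itself is Noetherian and hence already in $\mathfrak{M}_1^\pi$ makes the rest of your construction superfluous. There is also a secondary issue you do not address: after inverting $\pi$ to form $R$, the proposed $\mathbb ZQ$-map $\mathbb ZQ\otimes_{\mathbb ZQ_0}R\to N_{\rm ab}$ sending $1\otimes 1$ to a generator may fail to be well-defined when $N_{\rm ab}$ has $\pi$-torsion, since elements of $\mathbb ZQ_0$ that die in $R$ need not annihilate the generator; this can be repaired by taking the $p_i$ primitive and applying Gauss's lemma, but it is another hole in the argument as written.
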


\begin{proof} Since $Q$ is finitely presented, $N_{\rm ab}$ must be finitely generated as a $\mathbb ZQ$-module. Because $\mathbb ZQ$ is a Noetherian ring, it follows that $N_{\rm ab}$ is a Noetherian module. Therefore $N_{\rm ab}$ must be virtually torsion-free, implying that the image of $R(G)$ in $N_{\rm ab}$ is trivial. Hence (III) holds. 
\end{proof}

After laying the foundations for our argument in \S \S 3-5, we prove Theorem \ref{thm2} in \S 6.  In that section, we also discuss how the theorem can be extended to yield a $\pi$-minimax cover that is entirely torsion-free, rather than just virtually torsion-free (Corollaries 6.3 and 6.4).  With a torsion-free cover, however, we are unable to achieve quite the same degree of resemblance between the cover and the original group.

\section{Strategy and terminology}

\subsection{Strategy for proving Theorem \ref{thm2}}

In proving Theorem \ref{thm2}, we introduce a new method for studying $\mathfrak{M}$-groups, one that we hope will give rise to further advances in the theory of such groups. Our approach involves embedding an $\mathfrak{M}$-group $G$ densely in a locally compact, totally disconnected topological group and taking advantage of certain features of the structure of this new group. In this section, we describe the main aspects of the argument for (II)$\implies$(I), highlighting the roles played by four pivotal propositions, {\bf Propositions 4.15, 3.12, 4.20, and 3.28}. 
For our discussion here, we will assume that $G$ has torsion at merely a single prime $p$. This means that the finite residual $P$ of $G$ is a direct product of finitely many quasicyclic $p$-groups.

The first step is to densely embed $N$ in a nilpotent, locally compact topological group $N_p$ such that the compact subgroups of $N_p$ are all polycyclic pro-$p$ groups. 
The group $N_p$ is constructed by forming the direct limit of the pro-$p$ completions of the finitely generated subgroups of $N$. In the case where $N$ is abelian and written additively, this is equivalent to tensoring $N$ with $\mathbb Z_p$. The precise definition and properties of $N_p$, called the {\it tensor $p$-completion} of $N$, are discussed in \S 5. Moreover, in \S 4, we investigate the class $\mathfrak{N}_p$ of topological groups to which these tensor $p$-completions belong. 

Drawing on a technique originally due to Peter Hilton, we build a topological group extension $1\rightarrow N_p\rightarrow G_{(N,p)}\rightarrow Q\rightarrow 1$ that fits into a commutative diagram of the form 

\begin{displaymath} \begin{CD}
1 @>>> N @>>> G @>>> Q @>>> 1\\
&& @VVV @VVV @| &&\\  
1 @>>> N_p @>>> G_{(N,p)} @>>> Q @>>> 1,
  \end{CD} \end{displaymath}

\noindent and in which $N_p$ is an open normal subgroup. We now employ the first of our four key propositions, {\bf Proposition 4.15}, which allows us to obtain two closed subgroups $R_0$ and $X$ of $G_{(N,p)}$ satisfying the following properties.

\begin{enumerateabc}

\item $R_0$ is a radicable normal subgroup of $G_{(N,p)}$ contained in $N_p$. (A group $R$ is said to be {\it radicable} if the map $r\mapsto r^m$ is a surjection $R\to R$ for any $m\in \mathbb Z$.)                                       
\item $G_{(N,p)}=R_0X$.  
\item $X\cap N_p$ is a polycyclic pro-$p$-group. 
\end{enumerateabc}

\noindent Notice that, since $Q$ is virtually polycyclic, (c) implies that $X$ must be virtually torsion-free.

At this stage, we will invoke the most important proposition for the proof, {\bf Proposition 3.12}. This result  will enable us to construct a sequence 

\begin{equation} \cdots \stackrel{\psi_{i+1}}{\longrightarrow}R_i\stackrel{\psi_i}{\longrightarrow} R_{i-1}\stackrel{\psi_{i-1}}{\longrightarrow}\cdots \stackrel{\psi_2}{\longrightarrow} R_1\stackrel{\phi_1}{\longrightarrow} R_0\end{equation}
 
\noindent of group epimorphisms such that every $R_i$ is a radicable nilpotent group upon which $X$ acts as a group of automorphisms and each $\psi_i$ commutes with the action of $X$.  In addition, each $R_i$ will contain an isomorphic copy of $P$, and the preimage under $\psi_i$ of the copy of $P$ in $R_{i-1}$ will be the copy of $P$ in $R_i$. Finally, the map $P\to P$ induced by $\psi_i$ will be the epimorphism $u\mapsto u^p$.

In the case that $R_0$ is abelian, such a sequence (2.1) can be formed very easily merely by making each $R_i=R_0$ and $\psi_i$ the $p$-powering map.
Extending this construction to nilpotent groups, however, constitutes one of the most subtle aspects of the paper. Our method is based on the observation that $H^2(R,A)\cong H^2(P,A)^R$ whenever $R$ is a radicable nilpotent group with torsion subgroup $P$ and $A$ is a finite $\mathbb ZR$-module (Lemma 3.13). Gleaned from the Lyndon-Hochschild-Serre (LHS) spectral sequence, this property results from the fact that most of the cohomology of a torsion-free radicable nilpotent group vanishes when the coefficient module is finite. It is precisely this feature of radicable nilpotent groups that makes passing into $N_p$ desirable.

Our next step is to consider the inverse limit $\Omega$ of (2.1), letting $\psi$ be the canonical epimorphism $\Omega\to R_0$. The preimage $\Psi$ of $P$ in $\Omega$ is the inverse limit of the sequence $\cdots \rightarrow P\rightarrow P\rightarrow  P$, where each homomorphism $P\to P$ is the $p$-power map. This means that $\Psi$ is isomorphic to the direct sum of finitely many copies of $\mathbb Q_p$, so that $\Omega$ is torsion-free. We define a topology on $\Omega$ that makes it a locally compact, totally disconnected group and the action of $X$ on $\Omega$ continuous.  
Thus the semidirect product $\Gamma:=\Omega\rtimes X$ is a virtually torsion-free, locally compact group that covers $G_{(N,p)}=R_0X$ via the continuous epimorphism $(r,x)\mapsto \psi(r)x$, where $r\in \Omega$ and $x\in X$. Moreover, it will turn out that, like $G$, the group $\Gamma$ acts $\pi$-integrally on the abelianization of its Fitting subgroup. 

Inside the group $\Gamma$, we assemble a $\pi$-minimax cover for $G$, making use of {\bf Propositions 4.20 and 3.28}. The former permits us to find a $\pi$-minimax nilpotent cover for $N$ within $\Gamma$. From this cover of $N$, we stitch together a cover for $G$; to show that the latter cover is $\pi$-minimax, we exploit the $\pi$-integrality of the action of $\Gamma$ on the abelianization of its Fitting subgroup, applying {\bf Proposition 3.28.}

\subsection{Notation and terminology}

{\it Rings and modules}
\vspace{5pt}

Let $p$ be a prime. Then $\mathbb Z_p$ is the ring of $p$-adic integers, $\mathbb Z_p^\ast$ is the multiplicative group of units in $\mathbb Z_p$, $\mathbb Q_p$ is the field
 of $p$-adic rational numbers, and $\mathbb F_p$ is the field with $p$ elements. 

Let $\pi$ be a set of primes. A {\it $\pi$-number} is a nonzero integer whose prime divisors all belong to $\pi$. The ring $\mathbb Z[\pi^{-1}]$ is the subring of $\mathbb Q$ consisting of all rational numbers of the form $m/n$ where $m\in \mathbb Z$ and $n$ is a $\pi$-number. 

The term {\it module} will always refer to a left module. Moreover, if $R$ is a ring and $A$ an $R$-module, we will write the operation $R\times A\to A$ as $(r,  a)\mapsto r\cdot a$. 

A {\it section} of a module is a quotient of a submodule. 
 
If $G$ is a group and $R$ a ring, then $RG$ denotes the group ring of $G$ over $R$. 
\vspace{8pt}

{\it Abstract groups}
\vspace{5pt}

If $p$ is a prime, then $\mathbb Z(p^\infty)$ is the {\it quasicyclic $p$-group}, that is, the inductive limit of the cyclic $p$-groups $\mathbb Z/p^k$ for $k\geq 1$.

Let $G$ be a group. The center of $G$ is written $Z(G)$. The {\it Fitting subgroup} of $G$, denoted ${\rm Fitt}(G)$, is the subgroup generated by all the nilpotent normal subgroups of $G$. We use $R(G)$ to represent the {\it finite residual} of $G$, which is the intersection of all the subgroups of finite index.  

A {\it section} of a group is a quotient of a subgroup. 

A {\it series} in a group $G$ will always mean a series of the form 

$$1=G_0\unlhd G_1\unlhd \cdots \unlhd G_r=G.$$

If $\phi:G\to Q$ and $\psi:H\to Q$ are group homomorphisms, then $$G\times_QH:=\{(g,h)\ |\ g\in G, h\in H, {\rm and}\ \phi(g)=\psi(h)\}.$$

If $g$ and $h$ are elements of a group $G$, then $g^h:=h^{-1}gh$ and $[g,h]:=g^{-1}g^h$. If $H$ and $K$ are subgroups of $G$, then $[H, K]:=\langle [h,k]\ | \ h\in H, k\in K\rangle$. Moreover, if $H_1,\dots, H_r$ are subgroups of $G$, then 
$[H_1,\dots, H_r]$ is defined recursively as follows: $[H_1,\dots, H_r]=[[H_1,\dots, H_{r-1}], H_r]$. For $H, K\leq G$ and $i\geq 1$, we abbreviate $[H, \underbrace{K,\dots, K}_{i}]$ to $[H,\ _iK]$. 

Let $G$ be a group. The derived series of $G$ is written

$$\cdots \leq G^{(1)}\leq G^{(0)}=G.$$

\noindent  The subgroups $G^{(1)}$ and $G^{(2)}$ will often be represented by $G'$ and $G''$, respectively. Also, the lower and upper central series are written

$$\cdots \leq \gamma_2G\leq \gamma_1G=G\ \ \ {\rm and}\ \ \ 1=Z_0(G)\leq Z_1(G)\leq \cdots,$$

\noindent respectively.

Let $\pi$ be a set of primes. A {\it $\pi$-torsion} group, also referred to as a {\it $\pi$-group}, is one in which the order of every element is a $\pi$-number. 
A group $G$ is said to be {\it $\pi$-radicable} if, for any $\pi$-number $n$ and $g\in G$, there is an $x\in G$ such that $x^n=g$. A group is {\it radicable} if it is $\pi$-radicable for $\pi$ the set of all primes. Throughout the paper, we will often write abelian groups additively, in which case we will use the term {\it divisible} rather than {\it radicable}. 

A {\it \v{C}ernikov group} is a group that is a finite extension of a direct product of finitely many quasicyclic groups.

A virtually solvable group $G$ is said to have {\it finite torsion-free rank} if it possesses a subgroup series of finite length whose factors are either infinite cyclic or torsion. Because of the Schreier refinement theorem, the number of infinite cyclic factors in any such a series is an invariant of $G$, called its {\it Hirsch length} and denoted $h(G)$.  

A virtually solvable group is said to have {\it finite abelian section rank} if its elementary abelian sections are all finite.

If $G$ is an $\mathfrak{M}$-group and $p$ a prime, then $m_p(G)$ represents the number of factors isomorphic to $\mathbb Z(p^\infty)$ in any series in which each factor is finite, cyclic, or quasicyclic. By Schreier's refinement theorem, this number does not depend on the particular series selected.

The {\it spectrum} of an $\mathfrak{M}$-group $G$, denoted ${\rm spec}(G)$, is the set of primes $p$ such that $m_p(G)~>~0$. Note that this set is necessarily finite. 

Let $G$ and $K$ be groups. If $G$ acts upon $K$ on the left, then we call $K$ a {\it $G$-group}. In this case, we write the operation $G\times K\to K$ as $(g,  k)\mapsto g\cdot k$. A homomorphism between $G$-groups that commutes with the action of $G$ is a {\it $G$-group homomorphism}.

\vspace{8pt}

{\it Topological groups}
\vspace{5pt}

If $G$ is an abstract group, then the {\it profinite completion} of $G$, denoted $\hat{G}$, is the inverse limit of all the finite quotients of $G$.
We regard $\hat{G}$ as a topological group with respect to the topology on this inverse limit induced by the product topology. This topology makes $\hat{G}$ both compact and totally disconnected. The canonical homomorphism from $G$ to $\hat{G}$ is called the {\it profinite completion map} and denoted $c^G:G\to \hat{G}$. 

If $G$ is an abstract group and $p$ a prime, then the {\it pro-$p$ completion} of $G$, denoted $\hat{G}_p$, is the inverse limit of all the  quotients of $G$ that are finite $p$-groups. As with the profinite completion, $\hat{G}_p$ is a compact, totally disconnected topological group. The canonical homomorphism from $G$ to $\hat{G}_p$ is called the {\it pro-$p$ completion map} and denoted $c_p^G:G\to \hat{G}_p$. 

Let $G$ be a topological group. If $H\leq G$, then $\overline{H}$ denotes the closure of $H$ in $G$.  We say that $G$ is {\it topologically finitely generated} if there are elements $g_1,\dots, g_r$ of $G$ such that $G=\overline{\langle g_1,\dots, g_r\rangle}$.  If $G$ happens to be generated as an abstract group by finitely many elements, then we say that $G$ is {\it abstractly finitely generated}. 

Let $p$ be a prime. A pro-$p$ group $G$ is called {\it cyclic} if there is an element $g$ of $G$ such that $G=\overline{\langle g\rangle}$. Hence any cyclic pro-$p$ group must be either a finite cyclic $p$-group or a copy of $\mathbb Z_p$. 
 A {\it polycyclic pro-$p$ group} is a pro-$p$ group with a series of finite length in which each factor is a cyclic pro-$p$ group. 
Since we will also be applying the adjectives {\it cyclic} and {\it polycyclic} to abstract groups, we will adhere to the convention that, when used in their pro-$p$ senses, the terms
{\it cyclic} and {\it polycyclic} will always be immediately followed by the adjective {\it pro-$p$}. 

A topological space is {\it $\sigma$-compact} if it is the union of countably many compact subspaces. 

 A topological group is {\it locally elliptic} if every compact subset is contained in a compact open subgroup.

\begin{acknowledgement} {\rm We thank an anonymous referee for pointing out the relevance of local ellipticity
to our discussion in \S 4.}
\end{acknowledgement}

\section{Preliminary results about abstract groups}

In this section, we establish an array of results concerning abstract groups that we require for the proof of Theorem \ref{thm2}.
Foremost among these is Proposition 3.12, which is proved in \S 3.2. For the parts of Theorem \ref{thm2} addressing nilpotency class and derived length, we 
need to investigate ${\rm nil}\ G$ and ${\rm der}(G)$ for a group $G$ that can be written as a product of two subgroups, one of which is normal. 
This is accomplished in \S 3.1, whose principal result is Proposition 3.8.  In \S 3.3, we discuss coverings of groups, and finally, 
in \S 3.4, we study $\pi$-integral actions.

\subsection{Nilpotent and solvable actions}

In order to calculate the nilpotency class of a product, we require the well-known concept of a nilpotent action.

\begin{definition} {\rm Let $G$ be a group and $K$ a $G$-group. We
define the {\it lower $G$-central
series}
\[ \dots \leq \gamma^G_3K \leq   \gamma^G_2K \leq
\gamma^G_1K\] of $K$ as follows: $\gamma^G_1K=K$;
$\gamma^G_iK=\langle k(g\cdot l)k^{-1}l^{-1}\ | \ k\in K, l\in
\gamma^G_{i-1}K, g\in G\rangle$ for
$i>1$.

We say that the action
of
$G$ on
$K$ is {\it nilpotent} if
there is a nonnegative integer
$c$ such that $\gamma^G_{c+1}N=1$. The smallest
such integer $c$ is called the {\it nilpotency class} of the action, written ${\rm nil}_G\ N$.} 
\end{definition}

Dual to the lower $G$-central series is the upper $G$-central series.

\begin{definition} {\rm Let $G$ be a group and $K$ a $G$-group. Define $Z^G(K)$ to be the subgroup of $Z(K)$ consisting of all the elements that are centralized by $G$. The {\it upper $G$-central series} 

$$Z^G_0(K)\leq Z^G_1(K)\leq Z^G_2(K)\leq \cdots$$

\noindent is defined as follows: $Z^G_0(K)=1$; $Z^G_i(K)/Z^G_{i-1}(K)=Z^G(K/Z_{i-1}^G(K))$ for $i\geq 1$. }
\end{definition}

As stated in the following well-known lemma, the lengths of the upper and lower $G$-central series are the same.

\begin{lemma} Let $K$ be a $G$-group. If $i\geq 0$, then $\gamma^G_{i+1}K=1$ if and only if  $Z^G_i(K)=K$. \nolinebreak \hfill\(\square\)
\end{lemma}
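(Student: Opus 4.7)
The plan is to mimic the standard proof that the upper and lower central series of a nilpotent group terminate at the same stage, adapted to the $G$-equivariant setting. Throughout, I would write $[k, g, l] := k(g \cdot l)k^{-1}l^{-1}$, so that by definition $\gamma^G_{i+1}K = \langle [k, g, l] : k \in K,\ g \in G,\ l \in \gamma^G_i K\rangle$. A quick induction first confirms that each $\gamma^G_i K$ is both normal in $K$ and $G$-invariant, so these generators do yield an honest normal $G$-invariant subgroup. The subgroups $Z^G_j(K)$ are visibly normal and $G$-invariant as well.

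The linchpin, verified directly from the definition of $Z^G$, is the following: an element $l \in K$ lies in $Z^G_j(K)$ if and only if $[k, g, l] \in Z^G_{j-1}(K)$ for every $k \in K$ and $g \in G$. Indeed, $Z^G_j(K)/Z^G_{j-1}(K) = Z^G(K/Z^G_{j-1}(K))$ consists precisely of those cosets fixed by every conjugation in $K$ and every $g \in G$ simultaneously, and the vanishing of $[k, g, l]$ modulo $Z^G_{j-1}(K)$ captures both conditions at once.

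Given this equivalence, both implications fall out by routine inductions. For the forward direction, assuming $\gamma^G_{i+1}K = 1$, I would prove by downward induction on $j \in \{0, 1, \ldots, i\}$ that $\gamma^G_{i+1-j}K \leq Z^G_j(K)$: the base case $j = 0$ is the hypothesis, and the step applies the linchpin to any $l \in \gamma^G_{i-j}K$, since $[k, g, l] \in \gamma^G_{i+1-j}K \leq Z^G_{j-1}(K)$ by inductive hypothesis. Taking $j = i$ yields $K = \gamma^G_1 K \leq Z^G_i(K)$. The reverse direction is dual: assuming $Z^G_i(K) = K$, I would show by upward induction on $j$ that $\gamma^G_{j+1}K \leq Z^G_{i-j}(K)$, once again using the linchpin for the inductive step; at $j = i$ this gives $\gamma^G_{i+1}K \leq Z^G_0(K) = 1$. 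The only mild obstacle is the bookkeeping with the twisted commutator $[k, g, l]$, but it behaves essentially like an ordinary commutator modulo the $G$-action, so no genuine difficulty arises.
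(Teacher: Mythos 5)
The paper states Lemma 3.3 without proof, presenting it as a well-known fact (the $\square$ appears immediately after the statement). Your proposed argument is exactly the standard one — the $G$-equivariant version of the classical equivalence between the upper and lower central series bounding the nilpotency class — and it is essentially correct. The linchpin equivalence, namely that $l\in Z^G_j(K)$ if and only if $[k,g,l]\in Z^G_{j-1}(K)$ for all $k\in K$ and $g\in G$, is verified properly (setting $k=1$ gives $G$-fixedness modulo $Z^G_{j-1}(K)$, and setting $g=e$ gives centrality), and both inductions go through once that is in hand. One small indexing slip: in the forward direction, to prove $\gamma^G_{i+1-j}K\leq Z^G_j(K)$, you should apply the linchpin to $l\in\gamma^G_{i+1-j}K$ (not $\gamma^G_{i-j}K$), obtaining $[k,g,l]\in\gamma^G_{i+2-j}K$, which lies in $Z^G_{j-1}(K)$ by the inductive hypothesis $\gamma^G_{i+1-(j-1)}K\leq Z^G_{j-1}(K)$; as written, you invoke $\gamma^G_{i+1-j}K\leq Z^G_{j-1}(K)$, which is not the inductive hypothesis. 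The corrected step runs to $j=i$ precisely, yielding $\gamma^G_1K=K\leq Z^G_i(K)$ as required. Aside from that bookkeeping, the proof is complete and matches what the authors evidently had in mind when they cited the lemma as standard.
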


Analogous to the notion of a nilpotent action is that of a solvable action, defined below. Although we are unaware of any specific reference, it is likely that this concept has been studied before.

\begin{definition} {\rm Let $G$ be a group and $K$ a $G$-group. We
define the {\it
$G$-derived series}
\[ \dots \leq \delta^G_2K \leq   \delta^G_1K \leq
\delta^G_0K\] of $K$ as follows: $\delta^G_0K=K$; $\delta^G_iK=\gamma^{G^{(i-1)}}_2(\delta^G_{i-1}K)$ for $i\geq 1$. 

 The action
of
$G$ on
$K$ is {\it solvable} if
there is a nonnegative integer
$d$ such that $\delta^G_{d}K=1$. The smallest
such integer $d$ is called the {\it derived length} of the action, denoted ${\rm der}_G(K)$. }
\end{definition}

We wish to use the lower $G$-central series and the $G$-derived series to describe the lower central series and derived series of a group that can be written as a product of two subgroups, one of which is normal. 
First, however, we mention an elementary property of commutator subgroups, Lemma 3.5. This can be proved
with the aid of Hall and L. Kalu\v{z}nin's ``three subgroup lemma" [{\bf 16}, 1.2.3]; we leave the details to the reader. 

\begin{lemma} Let $G$ be a group and $K$ a normal subgroup of $G$. For each $i>0$, 

\[
\pushQED{\qed}
[K, \gamma_i G]\leq  [K,\ _i G].\qedhere
\popQED
\]
\end{lemma}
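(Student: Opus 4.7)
The plan is to proceed by induction on $i$, using the hinted three-subgroup lemma in the standard form: if $N\trianglelefteq G$ and two of the three iterated commutators $[A,B,C]$, $[B,C,A]$, $[C,A,B]$ lie in $N$, then so does the third.

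The base case $i=1$ is trivial, since $\gamma_1 G=G$ and $[K,\,_1G]=[K,G]$. For the inductive step, assume that $[K',\gamma_{i-1}G]\le [K',\,_{i-1}G]$ for every normal subgroup $K'$ of $G$; we want the conclusion for $i$. Apply the three-subgroup lemma with
\[
A=\gamma_{i-1}G,\qquad B=G,\qquad C=K,
\]
and with the candidate normal subgroup $N=[K,\,_iG]$, which is indeed normal in $G$ because $K$ is normal and commutators with $G$ preserve normality. Note that $[A,B,C]=[\gamma_i G,K]=[K,\gamma_iG]$ is exactly what we wish to bound.

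So it remains to verify that both other cyclic permutations lie in $[K,\,_iG]$. For $[C,A,B]=\bigl[[K,\gamma_{i-1}G],G\bigr]$, the inductive hypothesis applied to the normal subgroup $K$ gives $[K,\gamma_{i-1}G]\le [K,\,_{i-1}G]$, and commuting once more with $G$ yields $\bigl[[K,\gamma_{i-1}G],G\bigr]\le [K,\,_iG]$. For $[B,C,A]=\bigl[[K,G],\gamma_{i-1}G\bigr]$, observe that $[K,G]$ is also normal in $G$, so the inductive hypothesis applied to this normal subgroup gives $\bigl[[K,G],\gamma_{i-1}G\bigr]\le \bigl[[K,G],\,_{i-1}G\bigr]=[K,\,_iG]$. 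The three-subgroup lemma then delivers $[K,\gamma_iG]\le [K,\,_iG]$, completing the induction.

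The only mildly delicate point is the bookkeeping that makes the induction go through: one must apply the inductive hypothesis to two different normal subgroups, $K$ itself and $[K,G]$, and one must recognize that $[K,\,_iG]$ is normal in $G$ in order to legitimately invoke the three-subgroup lemma. Neither is a serious obstacle, and the proof is a clean three-line induction once these observations are in place.
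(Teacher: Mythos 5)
Your proof is correct and matches the paper's own argument essentially step for step: induct on $i$, apply the three-subgroup lemma to $A=\gamma_{i-1}G$, $B=G$, $C=K$ with target $N=[K,\,_iG]$, and bound the two permutations $[[K,G],\gamma_{i-1}G]$ and $[[K,\gamma_{i-1}G],G]$ by invoking the inductive hypothesis for the normal subgroups $[K,G]$ and $K$, respectively. Your explicit remark that the induction must be carried out uniformly over all normal subgroups is a welcome clarification the paper leaves implicit.
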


In the next lemma, we investigate the lower central series and derived series of a product.

\begin{lemma} Let $G$ be a group such that $G=KX$, where $K\unlhd G$ and $X\leq G$.  For each $i>0$, 

$$\gamma_i G=[K,\ _{i-1}G]\gamma_i X\ \ {\rm and}\ \  G^{(i)}=[K, G, G',\dots, G^{(i-1)}]X^{(i)}.$$
\end{lemma}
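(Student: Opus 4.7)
Both identities will be proved by induction on $i$, with the mechanical input being the commutator identities $[xy,z]=[x,z]^y[y,z]$ and $[x,yz]=[x,z][x,y]^z$, together with Lemma 3.5 and the observation that iterated commutator subgroups built from subgroups normal in $G$ are themselves normal in $G$.

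For the first identity, the base case $i=1$ reads $\gamma_1 G=G=KX$, under the natural convention $[K,{}_0 G]=K$. For the inductive step, assume $\gamma_i G=AB$, where $A=[K,{}_{i-1}G]$ and $B=\gamma_i X$. Since $A$ is an iterated commutator subgroup built from the normal subgroup $K$, it is normal in $G$, and hence so is $[K,{}_i G]=[A,G]$. I will compute $[AB,G]$ by expanding $[ab,g]=[a,g]^b[b,g]$: the first factor lies in $[A,G]=[K,{}_i G]$; the second, for $g=k'x'\in KX$, further expands as $[b,x'][b,k']^{x'}$, where $[b,x']\in\gamma_{i+1}X$, and $[b,k']\in[\gamma_i X,K]\leq[\gamma_i G,K]\leq[K,{}_i G]$ by Lemma 3.5. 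Normality of $[K,{}_i G]$ absorbs all the conjugations, yielding $\gamma_{i+1}G\leq[K,{}_i G]\gamma_{i+1}X$. The reverse inclusion is immediate since $[K,{}_i G]=[A,G]\leq[\gamma_i G,G]=\gamma_{i+1}G$ and $\gamma_{i+1}X\leq\gamma_{i+1}G$.

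For the second identity, the case $i=1$ is just $G'=\gamma_2 G=[K,G]X'$, which is the first identity at index $2$. For the inductive step, set $C=[K,G,G',\dots,G^{(i-1)}]$ and $D=X^{(i)}$, so that $G^{(i)}=CD$ by hypothesis. The subgroup $E:=[K,G,G',\dots,G^{(i)}]=[C,G^{(i)}]$ is normal in $G$. Since $C\leq G^{(i)}$, every commutator $[c,c']$ with $c,c'\in C$ lies in $[C,G^{(i)}]=E$; similarly $[C,D]$ and $[D,C]$ lie in $[C,G^{(i)}]=E$, while $[D,D]=X^{(i+1)}$. Expanding an arbitrary commutator $[cd,c'd']$ by the same identities yields a product whose factors are of these four types together with conjugates by elements of $D$; normality of $E$ absorbs those conjugations, giving $G^{(i+1)}\leq E\cdot X^{(i+1)}$. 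Conversely, $E\leq[G^{(i)},G^{(i)}]=G^{(i+1)}$ and $X^{(i+1)}\leq G^{(i+1)}$, so equality holds.

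The main technical obstacle is purely the bookkeeping in the commutator expansions: one must check that every commutator of a product $[ab,cd]$ decomposes into the four basic pieces and that, after all the commutator identities are applied, each conjugated piece lands in the relevant normal subgroup. Once the normality of the iterated commutator subgroups built from $K$ is in hand, and Lemma 3.5 is invoked in the one spot where $[\gamma_i X,K]$ must be pushed into $[K,{}_i G]$, both inductions run smoothly.
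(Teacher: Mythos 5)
Your proof is correct and follows essentially the same approach as the paper: induction on $i$, expansion of commutators of products via the standard identities, appeal to Lemma 3.5 to absorb $[\gamma_iX,K]$ into $[K,{}_iG]$, and normality of the iterated commutator subgroups built from $K$ to absorb conjugations. The paper only writes out the first identity (expanding $[ax,by]$ directly into four factors rather than via the two-step decomposition you use) and dismisses the second with "the second may be deduced by a similar argument"; your explicit inductive treatment of the derived series, keying on the observation that $[C,C]$, $[C,D]$, and $[D,C]$ all land in $E=[C,G^{(i)}]$, is precisely the argument the paper is alluding to.
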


\begin{proof} We content ourselves with deriving the first equation; the second may be deduced by a similar argument. We proceed by induction on $i$. For $i=1$, the equation clearly holds. Suppose $i>1$. Let $g\in \gamma_{i-1}G$ and $h\in G$. By the inductive hypothesis, $g=ax$, where $a\in [K,\ _{i-2} G]$ and $x\in \gamma_{i-1}X$. Also, write $h=by$ with $b\in K$ and $y\in X$. We have

\begin{equation*} [g,h]=[ax,by]=[a,y]^x\ [x,y]\ [a,b]^{xy}\ [x,b]^y.\end{equation*}

\noindent In this product, the first and third factors plainly belong to $[K,\ _{i-1} G]$, and the second to $\gamma_i X$. The fourth, meanwhile, is an element of $[K, \gamma_{i-1} G]$. But, by Lemma 3.5, this latter subgroup
is contained in  $[K,\ _{i-1} G]$.  As a result, $[g,h]\in [K,\ _{i-1}G]\gamma_i X.$ We have thus shown $\gamma_i G=[K,\ _{i-1}G]\gamma_i X$.
\end{proof}

The first factors in the representations of $\gamma_i G$ and $G^{(i)}$ provided in Lemma 3.6 permit an alternative description, namely, as terms in the lower $X$-central series 
and $X$-derived series of $K$. 

\begin{lemma} Let $G$ be a group such that $G=KX$, where $K\unlhd G$ and $X\leq G$. Then, for all $i\geq 1$,

$$\gamma^X_i K=[K,\ _{i-1} G]\ \ \mbox{\rm and} \ \ \delta^X_i K=[K, G, G',\dots, G^{(i-1)}].$$ 
\end{lemma}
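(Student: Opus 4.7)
The plan is to prove both equalities by induction on $i$, with the second following from the first once we identify the correct normal subgroup in which to apply it. The pivotal observation is a rewriting of the generators of $\gamma_i^X K$: for $k\in K$, $x\in X$, and $l\in\gamma_{i-1}^X K$, a direct calculation shows
$$k(x\cdot l)k^{-1}l^{-1}=kxlx^{-1}k^{-1}l^{-1}=(kx)\,l\,(kx)^{-1}l^{-1}=[(kx)^{-1},l^{-1}].$$
Since $G=KX$, the element $kx$ ranges over all of $G$ as $k$ and $x$ vary, so $\gamma_i^X K$ is exactly the subgroup generated by commutators $[g,l^{-1}]$ with $g\in G$ and $l\in\gamma_{i-1}^X K$.

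For the first identity, the base case $i=1$ is trivial. Assuming inductively that $\gamma_{i-1}^X K=[K,\,_{i-2}G]$, the above remark yields $\gamma_i^X K=[G,\,[K,\,_{i-2}G]]=[K,\,_{i-1}G]$, provided we know $[K,\,_{i-2}G]$ is normalized by $G$, which holds because the terms $[K,\,_{j}G]$ form a $G$-invariant descending chain of subgroups of $K$. This closes the induction for the first equation.

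For the second identity, I would proceed again by induction, with $i=1$ reducing to the first identity since $\delta_1^X K=\gamma_2^X K=[K,G]$. For the inductive step, set $L=\delta_{i-1}^X K$; by the inductive hypothesis $L=[K,G,G',\dots,G^{(i-2)}]$, and this is normal in $G$ (being a term in the derived-series-like chain of $G$-invariant subgroups produced in the first identity). The crucial step is to recognize that Lemma 3.6 applied at level $i-1$ gives
$$G^{(i-1)}=[K,G,G',\dots,G^{(i-2)}]\,X^{(i-1)}=L\cdot X^{(i-1)}.$$
Hence the group $H:=L\cdot X^{(i-1)}$ equals $G^{(i-1)}$ and satisfies the hypotheses of the first identity with $K,X,G$ replaced by $L,\,X^{(i-1)},\,H$. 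Applying the first identity at $i=2$ to this new triple yields $\gamma_2^{X^{(i-1)}}L=[L,H]=[L,G^{(i-1)}]$, which is precisely $[K,G,G',\dots,G^{(i-1)}]$.

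The only real subtlety is the bookkeeping in the inductive step of the second equation: one must verify that $L$ is normal in $H$ (which is immediate from its normality in $G$) and that the role of the ambient group $G$ in the first identity is played by $H=G^{(i-1)}$, not by $G$ itself. Once this substitution is made, Lemma 3.6 supplies exactly the decomposition of $G^{(i-1)}$ needed to close the induction, and no further commutator calculations are required.
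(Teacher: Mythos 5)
Your proof is correct and follows essentially the same route as the paper: the first identity is obtained by unwinding the definition of $\gamma^X_iK$ (the paper says this ``follows immediately from the definition''; you just spell out the generator rewrite $k(x\cdot l)k^{-1}l^{-1}=[(kx)^{-1},l^{-1}]$, which gives $\gamma_i^X K=[\gamma_{i-1}^X K,G]$ and then inducts), and the second identity is proved by exactly the same induction, invoking Lemma 3.6 to identify $L\cdot X^{(i-1)}$ with $G^{(i-1)}$ and then applying the first identity at level $2$ to the triple $(G^{(i-1)},L,X^{(i-1)})$. One small remark: in the first identity you flag normality of $[K,\,_{i-2}G]$ as a needed hypothesis, but it isn't — $[A,B]=[B,A]$ and $[G,\gamma_{i-1}^X K]=[\gamma_{i-1}^X K,G]=[K,\,_{i-1}G]$ by definition of the iterated commutator, with no normality required; normality only becomes relevant in the second identity where you must check that the new triple satisfies the hypotheses of the lemma, and there your justification is fine.
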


\begin{proof} The first equation follows immediately from the definition of $\gamma^X_i K$. 
For the second equation, we proceed by induction on $i$. We have $\delta^X_1 K=\gamma^X_2 K$, and, by the first equation, $\gamma^X_2 K=[K,G]$. This establishes
the case $i=1$. Assume next that $i>1$. Then 

$$\delta^X_i K=\gamma^{X^{(i-1)}}_2 (\delta^X_{i-1} K)= \gamma^{X^{(i-1)}}_2([K,G,G',\dots, G^{(i-2)}]).$$

\noindent Moreover, from the first equation, we obtain

$$\gamma^{X^{(i-1)}}_2([K,G,G',\dots, G^{(i-2)}])=[K,G,G',\dots, G^{(i-2)},H],$$

\noindent where $H=[K,G,G',\dots, G^{(i-2)}]X^{(i-1)}$. But, by Lemma 3.6, $H=G^{(i-1)}$, yielding the second equation.
\end{proof}

Combining the preceding two lemmas gives rise to

\begin{proposition} Let $G$ be a group such that $G=KX$, where $K\unlhd G$ and $X\leq G$. Then statements {\rm (i)} and {\rm (ii)} below hold.
\begin{enumerate*}

\item $G$ is nilpotent if and only if $X$ is nilpotent and acts nilpotently on K. In this case,
$${\rm nil}\ G={\rm max}  \{ {\rm nil}\ X, {\rm nil}_X\, K\}.$$

\item $G$ is solvable if and only if $X$ and $K$ are solvable. In this case,

\[
\pushQED{\qed}
{\rm der}(G)={\rm max}  \{{\rm der}(X), {\rm der}_X(K)\}.\qedhere
\popQED
\]
\end{enumerate*}

\end{proposition}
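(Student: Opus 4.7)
The plan is to derive both statements (i) and (ii) directly from Lemmas 3.6 and 3.7. Combining those two lemmas yields, for every $i \geq 1$, the identities
\[
\gamma_i G \;=\; \gamma_i^X K \cdot \gamma_i X, \qquad G^{(i)} \;=\; \delta_i^X K \cdot X^{(i)}.
\]
Because $\gamma_i^X K$ and $\delta_i^X K$ are subgroups of $K$ while $\gamma_i X$ and $X^{(i)}$ are subgroups of $X$, each factor on the right-hand side is itself a subgroup of the corresponding term on the left. In particular, the product on the right is trivial if and only if both of its factors are trivial.

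For (i), this observation immediately gives $\gamma_{c+1}G = 1$ if and only if $\gamma_{c+1}^X K = 1$ and $\gamma_{c+1}X = 1$. Hence $G$ is nilpotent if and only if $X$ is nilpotent and acts nilpotently on $K$, and selecting $c = \max\{{\rm nil}\,X,\,{\rm nil}_X K\}$ delivers the stated equality.

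For (ii), the argument runs along the same lines, but first I need the equivalence ``$G$ solvable if and only if $K$ and $X$ are solvable.'' The forward direction is standard, since subgroups and quotients of solvable groups are solvable. Conversely, because $G = KX$ with $K$ normal, $G/K$ is a quotient of $X$ and therefore solvable whenever $X$ is; combined with solvability of $K$, this forces $G$ to be solvable. Once solvability of $G$ is in hand, applying the same ``product of subgroups'' argument as in (i) to $G^{(d)} = \delta_d^X K \cdot X^{(d)}$ produces ${\rm der}(G) = \max\{{\rm der}(X),\,{\rm der}_X(K)\}$. The one formal matter worth flagging is that ${\rm der}_X(K)$ is a priori defined only when $X$ acts solvably on $K$; this is automatic once $G$ is known to be solvable, because $\delta_d^X K$ is contained in $G^{(d)}$ for every $d$. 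I do not anticipate a genuine obstacle here: the entire proposition is essentially a repackaging of Lemmas 3.6 and 3.7, and aside from the last remark about well-definedness of ${\rm der}_X(K)$, each step is a routine manipulation.
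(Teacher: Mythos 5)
Your proof is correct and takes the same route the paper intends: Proposition 3.8 is introduced with ``Combining the preceding two lemmas gives rise to'' and is left to the reader, and your argument is precisely that combination, observing from Lemmas 3.6 and 3.7 that $\gamma_i G=\gamma_i^X K\cdot\gamma_i X$ and $G^{(i)}=\delta_i^X K\cdot X^{(i)}$, then reading off the class and derived length. The closing remark about well-definedness of ${\rm der}_X(K)$ is a reasonable thing to flag, and you resolve it correctly via $\delta_d^X K\leq G^{(d)}$.
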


\subsection{Radicable nilpotent groups}

In this subsection, we examine the attributes of radicable nilpotent groups that underpin the proof of Theorem 1.13. We start with three very basic, and undoubtedly well-known, properties.

\begin{lemma} The three statements below hold for any nilpotent group $N$. 
\begin{enumerate*}

\item $N$ is radicable if and only if $N_{\rm ab}$ is divisible.

\item If $N$ is radicable, then $\gamma_i N$ is radicable for all $i\geq 1$.

\item If $K\unlhd N$ such that $K$ and $N/K$ are radicable, then $N$ is radicable.
\end{enumerate*}
\end{lemma}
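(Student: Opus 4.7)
The plan is to prove the three parts in the order (i), (iii), (ii), since (iii) falls out from (i), and (ii) follows from iterated application of (iii) together with a tensor-product observation extracted from the proof of (i).

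For (i), the forward direction is immediate: if every element of $N$ has an $m$-th root, the same holds in $N_{\rm ab}$. For the converse, I would induct on the nilpotency class $c$ of $N$. The key technical input is the standard surjection
$$\gamma_{i-1}N/\gamma_i N\ \otimes_{\mathbb Z}\ N_{\rm ab}\ \twoheadrightarrow\ \gamma_i N/\gamma_{i+1} N,\qquad \bar a\otimes \bar b\mapsto \overline{[a,b]},$$
obtained from the commutator identities $[ab,c]\equiv [a,c][b,c]$ and $[a,bc]\equiv[a,b][a,c]$ modulo $\gamma_{i+1}N$. Since the tensor product of any abelian group with a divisible abelian group is divisible, an induction on $i$ shows that every factor $\gamma_j N/\gamma_{j+1} N$ is divisible once $N_{\rm ab}$ is. In particular, $\gamma_c N$ is divisible. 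Because $\gamma_c N\subseteq N'$ for $c\geq 2$, we have $(N/\gamma_c N)_{\rm ab}=N_{\rm ab}$, so the inductive hypothesis applies to $N/\gamma_c N$. Given $g\in N$ and a positive integer $m$, I would lift an $m$-th root modulo $\gamma_c N$ to $y\in N$, obtaining $g=y^m z$ for some $z\in \gamma_c N$; since $\gamma_c N$ is central and divisible, write $z=w^m$ with $w\in \gamma_c N$ and conclude $(yw)^m = y^m w^m = g$.

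For (iii), I would first establish divisibility of $N_{\rm ab}$ and then invoke (i). Using the second isomorphism theorem together with $K'\subseteq K\cap N'$, one obtains a short exact sequence
$$0\to K/(K\cap N')\to N_{\rm ab}\to (N/K)_{\rm ab}\to 0,$$
in which the right-hand term is divisible by (i) applied to $N/K$, and the left-hand term is divisible as a quotient of $K_{\rm ab}$, which is divisible by (i) applied to $K$. Since any extension of a divisible abelian group by a divisible abelian group is divisible, $N_{\rm ab}$ is divisible, and (i) then yields that $N$ is radicable. For (ii), I would again reduce via (i) to showing $(\gamma_i N)_{\rm ab}$ is divisible. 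The tensor-product observation from (i) already gives that each factor $\gamma_j N/\gamma_{j+1}N$ is divisible, so $\gamma_i N$ carries the finite normal series $\gamma_i N\supseteq \gamma_{i+1}N\supseteq\cdots\supseteq \gamma_c N\supseteq 1$ with divisible abelian factors; iterated application of (iii) from the bottom up then shows $\gamma_i N$ is radicable.

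The only nonformal ingredient is the tensor-product surjection used in (i); the remaining steps are routine bookkeeping about divisibility of abelian groups and about extensions. I therefore expect to cite the surjection as standard commutator calculus, and the principal care required is simply the clean setup of the induction in (i) so that both (iii) and (ii) can be harvested from it with minimal additional work.
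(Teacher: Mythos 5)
Your proof is correct and follows essentially the same approach as the paper's: both arguments rest on the commutator-induced surjections showing the lower central factors are divisible when $N_{\rm ab}$ is, the exact sequence $K_{\rm ab}\to N_{\rm ab}\to (N/K)_{\rm ab}\to 0$ for part (iii), and the observation that radicability lifts through central extensions. Your reordering (proving (iii) before (ii) and then harvesting (ii) by iterating (iii) up the lower central series) is just a cleaner packaging of the paper's appeal to "radicability is preserved by central extensions."
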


\begin{proof} 
Assertions (i) and (ii) can be deduced using the epimorphisms from the tensor powers of $N_{\rm ab}$ to the factors in the lower central series of $N$ induced by the iterated commutator maps. In addition, one requires the obvious fact that the property of radicability is preserved by central extensions. 

To prove (iii), we set $Q=N/K$ and consider the exact sequence $K_{\rm ab}\rightarrow N_{\rm ab}\rightarrow Q_{\rm ab}\rightarrow 0$ of abelian groups. Since $K_{\rm ab}$ and $Q_{\rm ab}$ are divisible, so is $N_{\rm ab}$.
It follows, then, from (i) that $N$ is radicable.
\end{proof}

Next we show that the terms in the lower central $G$-series and derived $G$-series of a nilpotent radicable $G$-group are radicable.

\begin{lemma} Let $G$ be a group and $R$ a radicable nilpotent $G$-group. Then, for all $i\geq 1$, $\gamma_i^G R$ and $\delta^G_i R$ are radicable. 
\end{lemma}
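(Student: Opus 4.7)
My plan is to induct on $i$, treating $\gamma^G_i R$ first and deducing the $\delta$-case from it. The base $i=1$ gives $\gamma^G_1 R = R$, which is radicable by hypothesis. For the inductive step, set $M := \gamma^G_{i-1}R$ (radicable, by induction) and $K := \gamma^G_i R$. The strategy is to apply Lemma~3.9(iii) to the filtration $[R,M] \le K$: I will show that the normal subgroup $L := [R,M]$ of $K$ is radicable (step~(a)) and that the quotient $K/L$ is a divisible abelian group (step~(b)).

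Step~(b) is immediate once one unpacks the generators. From the identity
$$r(g\cdot s)\,r^{-1}s^{-1}\;=\;[r^{-1},(g\cdot s)^{-1}]\cdot(g\cdot s)s^{-1},$$
whose first factor lies in $[R,M]=L$, one reads off that the image of $K$ in $M/L$ coincides with the augmentation submodule $IG\cdot(M/L)$. Since $[M,M]\le[R,M]=L$, the group $M/L$ is abelian; being a quotient of $M_{\rm ab}$ (divisible by Lemma~3.9(i)), it is divisible. Then $IG\cdot(M/L)$ is a sum of subgroups $(g-1)(M/L)$, each being the image of a divisible group under a group endomorphism, hence divisible; a sum of divisible subgroups being divisible, $K/L$ is divisible.

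Step~(a), the technical core, uses the descending chain
$$L_k\;:=\;[R,M,\underbrace{R,\ldots,R}_{k-1}],\qquad k\ge 1,$$
which terminates at $1$ because $L_k\le\gamma_{k+1}R$ and $R$ is nilpotent. For each $k$, I would show via iterated applications of the Hall--Witt three-subgroup lemma (in the spirit of Lemma~3.5) that the iterated commutator bracket induces a well-defined \emph{surjection}
$$R_{\rm ab}^{\otimes k}\otimes_{\mathbb Z}M_{\rm ab}\;\twoheadrightarrow\;L_k/L_{k+1}.$$
Because $R_{\rm ab}$ and $M_{\rm ab}$ are divisible by Lemma~3.9(i) and tensor products of divisible abelian groups over $\mathbb Z$ are divisible, each $L_k/L_{k+1}$ is divisible. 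A downward induction starting from $L_c=1$ (where $c={\rm nil}\,R$), with Lemma~3.9(iii) supplying the step, then shows that every $L_k$ is radicable; in particular $L_1=[R,M]=L$ is radicable, completing step~(a).

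For $\delta$, assume by induction that $\delta^G_{i-1}R$ is radicable and $G$-invariant (the latter inherited from step to step because $G^{(i-1)}\unlhd G$). Then the equality $\delta^G_i R=\gamma^{G^{(i-1)}}_2(\delta^G_{i-1}R)$ from Definition~3.4 lets us apply the $\gamma$-result to the radicable nilpotent $G^{(i-1)}$-group $\delta^G_{i-1}R$, yielding radicability of $\delta^G_i R$. The main obstacle I anticipate is the careful bookkeeping of three-subgroup-lemma applications needed to verify that the iterated commutator map in step~(a) truly factors through the abelianized tensor powers; once this is in hand, Lemma~3.9 does all the remaining work.
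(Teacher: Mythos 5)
Your proof is correct, but it travels a noticeably longer road than the paper's. The paper only proves the case $i=2$ directly, by writing $\gamma_2^G R$ as an extension of $R'$ (radicable by Lemma~3.9(ii)) by $\gamma_2^G R/R'=(IG)(R_{\rm ab})$ (a sum of homomorphic images of the divisible group $R_{\rm ab}$), and then observes that ``the entire conclusion follows by induction''---implicitly because $\gamma_i^G R=[\gamma_{i-1}^G R,H]=\gamma_2^H(\gamma_{i-1}^G R)$ where $H=R\rtimes G$ acts by conjugation, so the general step \emph{is} an instance of the $i=2$ case applied to the radicable nilpotent $H$-group $\gamma_{i-1}^G R$; likewise $\delta_i^GR=\gamma_2^{G^{(i-1)}}(\delta_{i-1}^GR)$. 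Your decomposition of $\gamma_i^G R$ as an extension of $L:=[R,\gamma_{i-1}^G R]$ by $IG\cdot(M/L)$ is equally valid, and your Step~(b) is essentially the same computation as in the paper; but your Step~(a) does considerably more work than is needed. The descending-chain argument via $L_k=[R,M,{}_{k-1}R]$, multilinearity, and three-subgroup-lemma bookkeeping does establish that $[R,M]$ is radicable, yet this is exactly the subgroup $\gamma_2^R M$ (with $R$ acting on $M$ by conjugation), which is radicable simply by invoking the $i=2$ base case for the radicable nilpotent $R$-group $M$. So the chain argument is sound but re-derives the base case by hand inside the inductive step; adopting the observation $[R,M]=\gamma_2^R M$ would collapse your proof to essentially the paper's. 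Your handling of $\delta_i^G R$ via $\delta^G_i R=\gamma^{G^{(i-1)}}_2(\delta^G_{i-1}R)$ matches the paper's (implicit) treatment exactly.
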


\begin{proof}  Both conclusions will follow immediately from Lemma 3.7 if we prove that, whenever $R$ is a radicable normal nilpotent subgroup of a group $G$, the subgroup $[R,G]$ must be radicable. 
To show this, observe $[R,G]/R'= IA$, where $A$ is the $\mathbb ZG$-module $R_{\rm ab}$ and $I$ is the augmentation ideal in $\mathbb ZG$. Since $A$ is divisible as an abelian group, $IA$ is too. Moreover, Lemma 3.9(ii) implies that $R'$ must be radicable. Therefore, by Lemma 3.9(iii), $[R,G]$ is radicable. 
\end{proof}

Lemma 3.10 gives rise to the following two observations about extensions. 

\begin{lemma} Let $G$ be a group and $1\rightarrow F\rightarrow \bar{R}\rightarrow R\rightarrow 1$ an extension of $G$-groups such that $F$ is finite and $\bar{R}$ is nilpotent and radicable.
\begin{enumerate*}

\item If $G$ acts nilpotently on $R$, then $G$ also acts nilpotently on $\bar{R}$ and ${\rm nil}_G\ \bar{R}={\rm nil}_G\ R$. 
\item If $G$ acts solvably on $R$, then $G$ also acts solvably on $\bar{R}$ and ${\rm der}_G(\bar{R})={\rm der}_G(R)$. 
\end{enumerate*}

\end{lemma}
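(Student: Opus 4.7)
The plan is to exploit the duality between the two directions: the lower $G$-central and $G$-derived series pass easily to quotients, but to lift information back from $R$ to $\bar{R}$ one uses that the relevant terms are radicable, together with the finiteness of the kernel $F$.

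First I would fix the quotient map $\pi:\bar{R}\to R$, which is automatically a $G$-group homomorphism. Since $\pi$ is surjective and $G$-equivariant, a direct induction on $i$ shows $\pi(\gamma^G_i\bar{R})=\gamma^G_i R$ and $\pi(\delta^G_i\bar{R})=\delta^G_i R$. This yields the easy direction in both parts: if $\gamma^G_{c+1}\bar{R}=1$ then $\gamma^G_{c+1}R=1$, so ${\rm nil}_G\,R\leq{\rm nil}_G\,\bar{R}$, and similarly ${\rm der}_G(R)\leq{\rm der}_G(\bar{R})$.

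The substantive direction is the reverse inequality, and this is where Lemma 3.10 enters. For (i), set $c={\rm nil}_G\,R$. Then $\pi(\gamma^G_{c+1}\bar{R})=\gamma^G_{c+1}R=1$, so $\gamma^G_{c+1}\bar{R}\leq F$. By Lemma 3.10, $\gamma^G_{c+1}\bar{R}$ is radicable (because $\bar{R}$ is a radicable nilpotent $G$-group). But a radicable group that embeds in the finite group $F$ must be trivial, since every element would have to be an $n$th power for arbitrarily large $n$, forcing it into the intersection of the subgroups $F^n=\{x^n:x\in F\}$, which equals $\{1\}$ for $n$ a multiple of $|F|$. Hence $\gamma^G_{c+1}\bar{R}=1$ and ${\rm nil}_G\,\bar{R}\leq c$, giving equality. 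Part (ii) goes through verbatim with $\delta^G_d\bar{R}$ in place of $\gamma^G_{c+1}\bar{R}$, where $d={\rm der}_G(R)$, again invoking Lemma 3.10 to conclude that $\delta^G_d\bar{R}$ is radicable.

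There is no real obstacle; the only point requiring care is the verification that $\pi$ carries $\gamma^G_i\bar{R}$ onto $\gamma^G_i R$ (and similarly for the $G$-derived series), which is immediate from the recursive definitions since $\pi$ is $G$-equivariant and surjective. Everything else is a one-line application of Lemma 3.10 combined with the triviality of radicable subgroups of finite groups.
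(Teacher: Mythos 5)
Your proof is correct and follows essentially the same approach as the paper: in both, the key observation is that $\gamma^G_{c+1}\bar{R}$ (resp.\ $\delta^G_d\bar{R}$) is contained in $F$, hence finite, yet radicable by Lemma 3.10, and therefore trivial. You simply spell out a few details the paper leaves implicit, such as the surjection $\pi(\gamma^G_i\bar{R})=\gamma^G_i R$ and the fact that a radicable subgroup of a finite group is trivial.
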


\begin{proof} We just prove (i), the proof of (ii) being similar. Let $c={\rm nil}_G\ R$. Then $\gamma_{c+1}^G \bar{R}$ is finite. But $\gamma_{c+1}^G \bar{R}$ is radicable by Lemma 3.10. Therefore  $\gamma_{c+1}^G \bar{R}=1$. Hence ${\rm nil}_G\ \bar{R}=c.$ 
\end{proof}

The most important property of radicable nilpotent groups for our purposes is described in Proposition 3.12 below. 

\begin{proposition} Let $X$ be a group that acts upon a nilpotent radicable group $R$.  Assume further that the torsion subgroup $P$ of $R$ is $p$-minimax for some prime $p$. Then 
there exist a nilpotent radicable $X$-group $\bar{R}$ and an $X$-group epimorphism $\psi:\bar{R}\to R$ 
such that there is an $X$-group isomorphism $\nu: P\to \psi^{-1}(P)$ with $\psi\nu(u)=u^p$ for all $u\in P$.
Furthermore, it follows that
${\rm Ann}_{\mathbb ZX}(\bar{R}_{\rm ab})= {\rm Ann}_{\mathbb ZX}(R_{\rm ab}).$

\end{proposition}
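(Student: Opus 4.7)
The strategy is to realize $\bar R$ as a central extension of $R$ by the finite group $A := P[p]$, with the extension class dictated by the $p$-th power map on $P$ and lifted from $H^2(P,A)$ to $H^2(R,A)$ via the Lyndon--Hochschild--Serre spectral sequence for $1 \to P \to R \to R/P \to 1$. To begin, I would observe that $P$ is abelian and isomorphic to $\mathbb Z(p^\infty)^k$ for some $k\ge 0$: being $p$-minimax and torsion it is \v{C}ernikov, and radicability rules out any finite direct summand. Hence the $p$-power map $\pi\colon P\to P$ is an $X$- and $R$-equivariant surjection with finite kernel $A=P[p]$, and $A$ is a trivial $R$-module (the action factors through the finite group $\mathrm{Aut}(A)$, which receives only the trivial homomorphism from the radicable group $R$). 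Note also that $R/P$ is torsion-free, radicable and nilpotent. Let $\xi_P\in H^2(P,A)$ be the class of $1\to A\to P\xrightarrow{\pi}P\to 1$; if I can lift $\xi_P$ to a class $\xi\in H^2(R,A)$ and promote the resulting central extension to an $X$-group, then $\psi^{-1}(P)$ will be identified with $P$ via the lift, and both $\nu$ and the identity $\psi\nu(u)=u^p$ fall out automatically.

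The heart of the argument is that the edge map
\[
\mathrm{res}\colon H^2(R,A)\longrightarrow H^2(P,A)^{R/P}
\]
of the LHS spectral sequence is an isomorphism, which reduces to showing
\[
H^p(R/P,M)=0\text{ for all }p\ge 1\text{ and every finite }\mathbb Z(R/P)\text{-module }M,
\]
since then every column $p\ge 1$ of the $E_2$-page collapses. I would establish this vanishing by induction on the nilpotency class of $R/P$, peeling off a radicable central subgroup at each step and reducing to the abelian base case: for a $\mathbb Q$-vector space $V$ acting trivially on a finite $M$, writing $\mathbb Q=\operatorname{colim}(\mathbb Z\xrightarrow{n}\mathbb Z\xrightarrow{n}\cdots)$ and invoking the Milnor $\lim$--$\lim^1$ sequence makes the vanishing transparent: the system is Mittag--Leffler of finite groups so $\lim^1$ vanishes, while $\lim$ of the multiplication-by-$n$ maps stabilises at $0$ in positive degrees, and a K\"unneth/colimit argument handles arbitrary $\mathbb Q$-dimension. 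The class $\xi_P$ is automatically $R/P$-invariant (the $p$-power map on an abelian group commutes with any automorphism), so it lifts uniquely to a class $\xi\in H^2(R,A)$, defining the central extension $1\to A\to\bar R\xrightarrow{\psi}R\to 1$.

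To equip $\bar R$ with an $X$-action compatible with $\psi$, I would use that every object and map above is $X$-equivariant, so $\xi$ is $X$-invariant. For each $x\in X$, the automorphism of $R$ given by $x$ together with the given action on $A$ therefore lifts to an automorphism $\tilde x$ of $\bar R$; any two such lifts differ by an element of $Z^1(R,A)=\mathrm{Hom}(R_{\rm ab},A)$ with trivial $R$-action. But $R_{\rm ab}$ is divisible by Lemma 3.9(i) and $A$ is finite, so $\mathrm{Hom}(R_{\rm ab},A)=0$: the lift is unique. Uniqueness then forces $\widetilde{xy}=\tilde x\,\tilde y$, so $X$ acts on $\bar R$ via these canonical lifts, making $\psi$ an $X$-group epimorphism, and $\nu$ is then the inverse of $\psi|_{\psi^{-1}(P)}$ composed with the natural identification.

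Finally, $\bar R$ is nilpotent (central extension of nilpotent by abelian) and radicable by Lemma 3.9(iii) applied to $1\to\psi^{-1}(P)\to\bar R\to R/P\to 1$, both of whose ends are radicable. For the annihilator identity, the right-exact sequence $A\to\bar R_{\rm ab}\to R_{\rm ab}\to 0$ has kernel equal to the image of the finite group $A$; any $\lambda\in\mathbb Z X$ that annihilates $R_{\rm ab}$ therefore sends $\bar R_{\rm ab}$ into this finite subgroup, and since $\bar R_{\rm ab}$ is divisible (Lemma 3.9(i)) and no divisible abelian group admits a nonzero homomorphism to a finite group, $\lambda$ annihilates $\bar R_{\rm ab}$ as well; the reverse inclusion is automatic. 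The principal obstacle in the whole construction is the cohomological vanishing $H^p(R/P,M)=0$ for $p\ge 1$ and finite $M$, which is the technical heart of the argument and the one place where the radicability hypothesis really bites; once that is secured, the remainder is driven by the single input $H^1(R,A)=0$, itself a consequence of the divisibility of $R_{\rm ab}$ and the finiteness of $A$.
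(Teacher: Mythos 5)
Your proposal is correct and follows the same overall strategy as the paper: take $A = P[p]$, show the restriction $H^2(R,A)\to H^2(P,A)^{R/P}$ is an isomorphism by killing the cohomology of $R/P$ with finite coefficients, lift the class of $1\to A\to P\xrightarrow{p}P\to 1$, and then use uniqueness of lifts (from $\operatorname{Hom}(R_{\rm ab},A)=0$) to make everything $X$-equivariant and to transfer annihilators. Two remarks on where you diverge from the paper and where you should be more careful.

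First, the route to the cohomological vanishing is genuinely different. The paper applies the universal coefficient theorem directly to $R/P$ and quotes Hilton's result that $H_n(R/P,\mathbb Z)$ is torsion-free and divisible for $n\geq 1$, from which $H^n(R/P,A)=0$ is immediate; it then cites a precise form of the Hochschild--Serre theorem (its Lemma~3.13) whose hypotheses are exactly $H^1(P,A)=0$, $H^2(R/P,A)=0$, $H^3(R/P,A)=0$. You instead induct on the nilpotency class of $R/P$ and reduce to the abelian base case via a Milnor $\lim$--$\lim^1$ argument. That is a legitimate alternative, but your base-case argument as stated only handles $\mathbb Q$ itself and, via K\"unneth, finite-dimensional $\mathbb Q$-vector spaces; the Milnor sequence requires a \emph{sequential} colimit, and a $\mathbb Q$-vector space of uncountable dimension is not one. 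The clean way to close this gap is precisely the UCT route the paper uses: $H_n(V,\mathbb Z)\cong\Lambda^n V$ is a filtered colimit (this time in \emph{homology}, where colimits are exact) of $\mathbb Q$-vector spaces, hence torsion-free divisible, and then $\operatorname{Ext}^1(-,A)$ and $\operatorname{Hom}(-,A)$ into a finite group both vanish. As written, your ``K\"unneth/colimit handles arbitrary dimension'' is too vague to stand on its own.

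Second, the claim that ``every column $p\geq 1$ of the $E_2$-page collapses'' is stronger than what you can actually justify, because the coefficient groups $H^q(P,A)$ for $q\geq 2$ need not be finite, and your vanishing statement is only proved for finite coefficient modules. This over-claim is harmless in the end, since for the degree-$2$ edge map one only needs $E_2^{2,0}=H^2(R/P,A)$, $E_2^{3,0}=H^3(R/P,A)$, $E_2^{1,1}=H^1(R/P,H^1(P,A))$, and $E_2^{2,1}=H^2(R/P,H^1(P,A))$, and here $A$ is finite while $H^1(P,A)=\operatorname{Hom}(P,A)=0$ because $P$ is divisible. You should phrase the collapse in those terms rather than asserting vanishing across whole columns whose entries you have not controlled. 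With those two adjustments the argument is complete and is essentially the paper's proof.
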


We prove Proposition 3.12 using the cohomological classification of group extensions. Our argument hinges on the following lemma concerning the cohomology of radicable nilpotent groups.

\begin{lemma} Let $R$ be a radicable nilpotent group with torsion subgroup $P$. For any finite $\mathbb ZR$-module $A$,  $H^2(R,A)$ is mapped isomorphically onto $H^2(P,A)^R$ by the restriction homomorphism.
\end{lemma} 

Lemma 3.13 will be proved using the following fact derived from the LHS spectral sequence, which is a special case of  
[{\bf 12}, Theorem 2].

\begin{lemma} Let $1\rightarrow K\rightarrow G\rightarrow Q\rightarrow 1$ be a group extension and $A$ a $\mathbb ZG$-module. Suppose 
that the cohomology groups $H^1(K,A)$, $H^2(Q,A^K)$, and $H^3(Q,A^K)$ are all trivial. Then $H^2(G,A)$ is mapped isomorphically onto $H^2(K,A)^G$ by the restriction homomorphism. \nolinebreak \hfill\(\square\)
\end{lemma}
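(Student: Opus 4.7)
The plan is to read the conclusion off the Lyndon--Hochschild--Serre spectral sequence
$$E_2^{p,q}=H^p(Q,H^q(K,A))\;\Longrightarrow\;H^{p+q}(G,A)$$
of the given extension. Since $K$ acts trivially on its own cohomology, the induced $G$-action on each $H^q(K,A)$ factors through $Q$, so $H^2(K,A)^G$ coincides with $H^2(K,A)^Q=E_2^{0,2}$, and the restriction map $H^2(G,A)\to H^2(K,A)$ is the usual edge homomorphism. It therefore factors as the projection $H^2(G,A)\twoheadrightarrow H^2(G,A)/F^1H^2(G,A)\cong E_\infty^{0,2}$ followed by the inclusion $E_\infty^{0,2}\hookrightarrow E_2^{0,2}$. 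To prove the lemma I would establish the two separate claims $F^1H^2(G,A)=0$ and $E_\infty^{0,2}=E_2^{0,2}$.

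For the first claim, the successive quotients of the filtration of $H^2(G,A)$ are the antidiagonal terms $E_\infty^{p,2-p}$. By hypothesis $E_2^{2,0}=H^2(Q,A^K)=0$, so $E_\infty^{2,0}=0$; and $E_2^{1,1}=H^1(Q,H^1(K,A))=0$ since $H^1(K,A)=0$, so $E_\infty^{1,1}=0$. Hence $F^1H^2(G,A)=F^2H^2(G,A)=F^3H^2(G,A)=0$. For the second claim, the outgoing differentials from the $(p,q)=(0,2)$ slot are $d_r:E_r^{0,2}\to E_r^{r,3-r}$ for $r\geq 2$. At $r=2$ the target is $H^2(Q,H^1(K,A))=0$; at $r=3$ the target is a subquotient of $E_2^{3,0}=H^3(Q,A^K)=0$; and for $r\geq 4$ the target lies in the region with negative second coordinate and therefore vanishes. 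No nonzero differentials come into $E_r^{0,2}$, as their source would lie in negative $p$. Consequently $E_\infty^{0,2}=E_2^{0,2}$, and the edge map is the required isomorphism.

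Because this is a direct application of the standard five-term-style analysis of the LHS spectral sequence, there is no serious obstacle; the only care needed is the bookkeeping that identifies the edge homomorphism with restriction and confirms that the $G$-action on $H^q(K,A)$ factors through $Q$, both of which are classical. Indeed, the three vanishing hypotheses $H^1(K,A)=H^2(Q,A^K)=H^3(Q,A^K)=0$ were evidently chosen to annihilate precisely the three $E_2$-terms $E_2^{1,1}$, $E_2^{2,0}$, $E_2^{3,0}$ that would otherwise obstruct restriction from being an isomorphism at the second page.
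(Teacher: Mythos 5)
Your argument is correct: the vanishing of $E_2^{1,1}$ and $E_2^{2,0}$ kills the filtration below the top quotient of $H^2(G,A)$, the vanishing of $E_2^{2,1}$ and $E_2^{3,0}$ kills all differentials out of the $(0,2)$ slot, and the edge homomorphism onto $E_2^{0,2}=H^2(K,A)^Q=H^2(K,A)^G$ is indeed restriction. The paper gives no proof of this lemma at all — it simply cites it as a special case of Hochschild--Serre [{\bf 12}, Theorem 2] — and your Lyndon--Hochschild--Serre computation is precisely the standard derivation of that cited result, so there is nothing to fault.
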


\begin{proof}[Proof of Lemma 3.13] Notice first that $R$ must act trivially on $A$. Hence, by Lemma 3.14, the conclusion will follow if we show $H^n(R/P,A)=0$ for $n=2, 3$. To verify this, we employ the universal coefficient exact sequence
\begin{equation} 0\longrightarrow {\rm Ext}^1_{\mathbb Z}(H_{n-1}(R/P,\mathbb Z),A)\longrightarrow H^n(R/P,A)\longrightarrow {\rm Hom}_{\mathbb Z}(H_n(R/P,\mathbb Z),A)\longrightarrow 0 \end{equation}

\noindent for $n\geq 1$. Since $R/P$ is torsion-free, nilpotent, and radicable, $H_n(R/P, \mathbb Z)$ is torsion-free and divisible for $n\geq 1$ (see, for instance, [{\bf 10}, Proposition 4.8]). As a result, the second and fourth groups in sequence (3.1) are both trivial. Thus $H^n(R/P,A)=0$ for $n\geq 1$. 
\end{proof}

In proving Proposition 3.12, we  will also avail ourselves of the following simple cohomological property
of radicable abelian groups.

\begin{lemma} Let $p$ be a prime and $R$ a radicable abelian group whose $p$-torsion subgroup is minimax. In addition, 
define $\rho: R\to R$ by $\rho(r)=r^p$ for every $r\in R$. Finally, set $A={\rm Ker}\ \rho$. Then the cohomology class of the group extension $1\rightarrow A\rightarrow R\stackrel{\rho}{\rightarrow} R\rightarrow 1$ is fixed by the canonical action of ${\rm Aut}(R)$ on
$H^2(R,A)$.

\end{lemma}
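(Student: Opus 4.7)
The plan is to represent $[e]$ by an explicit $2$-cocycle and verify directly that it is cohomologous to its image under the action of an arbitrary $\alpha \in {\rm Aut}(R)$. The key observation driving everything is that $\alpha$ commutes with the $p$-power endomorphism $\rho$, because $R$ is abelian.

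Since $R$ is $p$-radicable, the map $\rho$ is surjective, so I can choose a set-theoretic section $s : R \to R$ of $\rho$. The element
$$
c(r_1,r_2) \ := \ s(r_1) + s(r_2) - s(r_1+r_2)
$$
lies in $A = \ker \rho$ because $\rho$ is a homomorphism, and it defines a normalized $2$-cocycle $c: R \times R \to A$ whose class in $H^2(R,A)$ is $[e]$. Because $A$ is characteristic in $R$, every $\alpha \in {\rm Aut}(R)$ restricts to an automorphism $\alpha|_A$ of $A$, and the canonical action of ${\rm Aut}(R)$ on $H^2(R,A)$ is the diagonal one that sends the class of $c$ to the class of
$$
c^{\alpha}(r_1,r_2) \ := \ \alpha|_A\bigl(c(\alpha^{-1}(r_1),\alpha^{-1}(r_2))\bigr).
$$

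Next I would introduce the twisted section $s^{\alpha}(r) := \alpha(s(\alpha^{-1}(r)))$. The identity $\rho \alpha = \alpha \rho$ gives $\rho(s^{\alpha}(r)) = \alpha(\alpha^{-1}(r)) = r$, so $s^{\alpha}$ is again a set-theoretic section of $\rho$. A direct expansion then identifies the cocycle produced by $s^{\alpha}$ with $c^{\alpha}$. Since any two set-theoretic sections of the same surjection yield cohomologous cocycles -- their difference being the coboundary of the $A$-valued function $r \mapsto s(r) - s^{\alpha}(r)$ -- I conclude $[c^{\alpha}] = [c]$ in $H^2(R,A)$; that is, $\alpha \cdot [e] = [e]$.

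I do not anticipate any serious obstacle. The argument is essentially a diagram-chase: the commutative diagram with rows $0 \to A \to R \to R \to 0$ and vertical maps $(\alpha|_A, \alpha, \alpha)$ already exhibits $(\alpha, \alpha|_A)$ as an ``endomorphism'' of the extension, and the cocycle computation just converts this naturality into the required equality of cohomology classes. The only real care needed is fixing the convention for the canonical ${\rm Aut}(R)$-action so that the action on cocycles agrees with the $c \mapsto c^\alpha$ formula above. I would also remark that the minimax hypothesis on the $p$-torsion subgroup, which ensures $A$ is a finite $\mathbb{F}_p$-vector space, plays no role within this lemma itself and appears to be present only to match the assumptions of Proposition 3.12, where this lemma is applied.
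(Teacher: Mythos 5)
Your proposal is correct and is essentially the same argument the paper gives: the paper exhibits the commutative diagram with all three vertical maps equal to $\alpha$ (using $\rho\alpha=\alpha\rho$, which holds since $R$ is abelian) and reads off $\alpha\cdot\xi=\xi$; you unpack exactly this naturality at the cocycle level via the twisted section $s^{\alpha}$. Your closing observation that the minimax hypothesis plays no role in the lemma itself and is carried only for use in Proposition~3.12 is also accurate.
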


\begin{proof} Let $\xi \in H^2(R,A)$ be the cohomology class of the extension $1\rightarrow A\rightarrow R\stackrel{\rho}{\rightarrow} R\rightarrow 1$. Take $\alpha\in {\rm Aut}(R)$. Then the diagram 

\begin{displaymath} \begin{CD}
1 @>>> A @>>> R @>\rho>> R @>>> 1\\
&& @VV\alpha V @VV\alpha V @VV\alpha V &&\\
1 @>>> A @>>> R @>\rho>> R @>>> 1  
\end{CD} \end{displaymath}

\noindent commutes. As a result, we have  $\alpha\cdot \xi=\xi$.
\end{proof}

Finally, we require the group-theoretic lemma below. 

\begin{lemma} Let $G$ be a group with a finite normal subgroup $F$.  Let $\phi: R\to G$ and $\psi:R\to G$ be homomorphisms where $R$ is a radicable group.  If $\phi(r)F=\psi(r)F$ for all $r\in R$, then $\phi=\psi$. 
\end{lemma}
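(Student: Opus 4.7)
My plan is to measure the difference between $\phi$ and $\psi$ with the function $f:R\to G$ defined by $f(r):=\phi(r)\psi(r)^{-1}$. The hypothesis $\phi(r)F=\psi(r)F$ ensures that $f$ actually takes values in $F$. I would equip $F$ with the $R$-action $r\cdot x:=\psi(r)x\psi(r)^{-1}$, which makes sense because $F\unlhd G$. A direct expansion of
$$f(rs)=\phi(r)\phi(s)\psi(s)^{-1}\psi(r)^{-1}=\phi(r)\psi(r)^{-1}\cdot\psi(r)\phi(s)\psi(s)^{-1}\psi(r)^{-1}$$
would confirm the crossed-homomorphism identity $f(rs)=f(r)\cdot(r\cdot f(s))$. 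At this point the problem becomes a statement about cocycles into a finite module.

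The crucial input is the principle that every finite radicable group is trivial: if $H$ is such a group and $h\in H$, then radicability produces $x\in H$ with $x^{|H|}=h$, while Lagrange gives $x^{|H|}=1$, so $h=1$. Since radicability passes to homomorphic images, any homomorphism from $R$ to a finite group must be trivial. Applying this to the action homomorphism $R\to\mathrm{Aut}(F)$, whose codomain is finite because $F$ is, shows that $R$ acts trivially on $F$.

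With the action trivialised, the cocycle identity collapses to $f(rs)=f(r)f(s)$, so $f$ becomes a genuine group homomorphism from $R$ into the finite group $F$. Invoking the same triviality principle one more time forces $f\equiv 1$, which is exactly the desired conclusion $\phi=\psi$. The only step that requires any verification is the cocycle computation, which is routine; the rest of the argument is driven entirely by the observation that a finite radicable group must be trivial, so I do not anticipate any genuine obstacle.
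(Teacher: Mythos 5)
Your proof is correct and is essentially the paper's argument, just phrased in cocycle language: the paper's observation that the radicable group $\psi(R)$ has no finite-index subgroups and therefore centralizes $F$ is exactly your observation that the action homomorphism $R\to\mathrm{Aut}(F)$ is trivial, after which both proofs note that $r\mapsto\phi(r)\psi(r)^{-1}$ is a homomorphism into $F$ and hence trivial.
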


\begin{proof} Since $\psi(R)$ is radicable, it must centralize $F$. As a result, the map $r\mapsto \phi(r)\psi(r^{-1})$ defines a homomorphism $R\to F$.  But all such homomorphisms are trivial, so that  $\phi=\psi$. 
\end{proof}

Armed with the preceding lemmas, we embark on the proof of Proposition 3.12. 

\begin{proof}[Proof of Proposition 3.12] Take $A$ to be the subgroup of $P$ consisting of all its elements of order dividing $p$. 
The group $\bar{R}$ will be obtained by employing cohomology with coefficients in the $\mathbb ZR$-module $A$. By Lemma 3.13, we know  that the restriction map induces a natural isomorphism $\theta: H^2(R,A)\to H^2(P,A)^R$.  Consider now the short exact sequence $1\rightarrow A\rightarrow P\stackrel{\rho}{\rightarrow} P\rightarrow 1,$ where $\rho(u)=u^p$ for all $u\in P$.  Let $\xi \in H^2(P,A)$ be the cohomology class of this extension. Then Lemma 3.15 implies $\xi\in H^2(P,A)^R$.  Thus there is a unique element $\zeta$ of $H^2(R,A)$ such that $\theta(\zeta)=\xi$. Regarding $H^2(P,A)$ as a $\mathbb ZX$-module and invoking Lemma 3.15 again, we have $x\cdot \xi=\xi$ for all $x\in X$. It follows, then, from the naturality of $\theta$ that $x\cdot \zeta=\zeta$ for all $x\in X$. 

Form a group extension $1\rightarrow A\stackrel{\iota}{\rightarrow}\bar{R}\stackrel{\psi}{\rightarrow} R\rightarrow 1$ corresponding to $\zeta\in H^2(R,A)$. Then $\bar{R}$ is plainly nilpotent. Moreover, because $\theta(\zeta)=\xi$, there is an isomorphism $\nu: P\to \psi^{-1}(P)$ such that the diagram

\begin{displaymath} \begin{CD}
1 @>>> A @>>> P @>\rho>> P @>>> 1\\
&& @| @VV\nu V  @| &&\\
1 @>>> A @>\iota>> \psi^{-1}(P) @>\psi>> P @>>> 1   
\end{CD} \end{displaymath}

\noindent commutes. 
Thus $\psi\nu(u)=u^p$ for all $u\in P$. Also, we see that $\bar{R}$ is an extension of $P$ by $R/P$. Hence $\bar{R}$ is radicable by Lemma 3.9(iii). 

For each $x\in X$, let $\alpha_x$ and $\beta_x$ be the automorphisms of $A$ and $R$, 
respectively, that are induced by $x$. Since $x\cdot \zeta=\zeta$ for all $x\in X$, we can find, for each $x\in X$, an automorphism $\gamma_x: \bar{R}\to \bar{R}$ that renders the diagram  

\begin{displaymath} \begin{CD}
1 @>>> A @>\iota>> \bar{R} @>\psi>> R @>>> 1\\
&& @VV\alpha_xV @VV\gamma_x V @VV\beta_x V&&\\
1 @>>> A @>\iota>> \bar{R} @>\psi>> R @>>> 1   
\end{CD} \end{displaymath}

\noindent commutative. Furthermore, in view of Lemma 3.16, $\gamma_x$ must be the only automorphism of $\bar{R}$ that makes this diagram commute. As a result, the assignment $x\mapsto \gamma_x$ defines a homomorphism
from $X$ to ${\rm Aut}(\bar{R})$, thus equipping $\bar{R}$ with an action of $X$. This action plainly makes $\psi$ into an $X$-group homomorphism. We claim that the same holds for $\nu$.  To see this, notice that $\nu$ induces an $X$-group homomorphism $P/A\to \psi^{-1}(P)/\iota(A)$. Hence it is easy to deduce from Lemma 3.16 that $\nu$ is an $X$-group homomorphism.   

To verify the assertion about the annihilators, let $\lambda\in {\rm Ann}_{\mathbb ZX}(R_{\rm ab})$.  Consider now the $\mathbb ZX$-module epimorphism  $\bar{R}_{\rm ab}\rightarrow R_{\rm ab}$ induced by $\psi$. In light of Lemma 3.16, the fact that $\lambda$ annihilates $R_{\rm ab}$ implies that $\lambda$ must have the same effect on $\bar{R}_{\rm ab}$. 
\end{proof}

\subsection{Elementary properties of coverings}

In this subsection, we collect several fundamental facts concerning coverings that are required for the proofs of our main results. We begin
by making some elementary observations about extending a covering of a quotient to the entire group; the proofs of these are left to the reader.  

\begin{lemma} Let $1\rightarrow K\stackrel{\iota}{\rightarrow}G\stackrel{\epsilon}{\rightarrow} Q\rightarrow 1$ be a group extension. Suppose that there is a group $Q^\ast$ equipped with an epimorphism $\psi:Q^\ast\to Q$. Let $G^\ast=G\times_QQ^\ast$ and define the homomorphisms $\phi:G^\ast\to G$, $\iota^\ast:K\to G^\ast$, and $\epsilon^\ast:G^\ast\to Q^\ast$ as follows:

$\phi(g,q)=g$ for all $(g,q)\in G^\ast$; 

$\iota^\ast(k)=(\iota(k),1)$ for all $k\in K$;

$\epsilon^\ast (g,q)=q$ for all $(g,q)\in G^\ast$. 

\noindent Then the diagram

\begin{displaymath} \begin{CD}
1 @>>> K @>\iota^\ast>> G^\ast @>\epsilon^\ast>> Q^\ast @>>> 1\\
&& @| @VV\phi V @VV\psi V &&\\
1 @>>> K @>\iota>> G @>\epsilon>>  Q @>>> 1  
\end{CD} \end{displaymath}

\noindent  commutes. In addition, the following three statements hold.
\begin{enumerate*}

\item ${\rm solv}(G^\ast)={\rm solv}(G)\times_Q{\rm solv}(Q^\ast)$.

\item ${\rm Fitt}(G^\ast)={\rm Fitt}(G)\times_Q{\rm Fitt}(Q^\ast)$.

\item If $N$ is a nilpotent subgroup of $G$ of class $c$ such that $\psi^{-1}(\epsilon(N))$ is nilpotent of class $\leq c$, then $\phi^{-1}(N)$ is nilpotent of class $c$. \hfill\(\square\)
\end{enumerate*} 

\end{lemma}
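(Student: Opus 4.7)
The plan is to verify all four assertions by a direct unwinding of the pullback definition of $G^\ast = G\times_Q Q^\ast$, leveraging the fact that $G^\ast$ sits as a subdirect product in $G\times Q^\ast$. The lemma is applied in the $\mathfrak{M}$-group setting, where by Proposition 1.3(i) and the remarks preceding Theorem 1.5 the solvable radical and Fitting subgroup of each group involved are themselves solvable and nilpotent, respectively; I will invoke this implicitly.

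For the diagram and the exactness of the top row, I would just chase the definitions. Injectivity of $\iota$ is immediate. Commutativity of the right square reduces to $\psi\kappa(g,q)=\psi(q)=\epsilon(g)=\epsilon\phi(g,q)$, and of the left square to $\phi\iota(k)=k$. For exactness, $\ker\kappa=\{(g,1)\in G^\ast : \epsilon(g)=1\}=\iota(K)$, and $\kappa$ is surjective because for any $q\in Q^\ast$ the surjectivity of $\epsilon$ produces $g\in G$ with $\epsilon(g)=\psi(q)$, giving $(g,q)\in G^\ast$ with $\kappa(g,q)=q$.

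The arguments for (i) and (ii) follow a single template in two directions. The inclusion ${\rm solv}(G^\ast)\subseteq{\rm solv}(G)\times_Q{\rm solv}(Q^\ast)$ (and the Fitting analogue) is easy: given a solvable (resp.\ nilpotent) normal subgroup $M\unlhd G^\ast$, its images $\phi(M)\unlhd G$ and $\kappa(M)\unlhd Q^\ast$ are solvable (resp.\ nilpotent) and normal, so $\phi(M)\leq{\rm solv}(G)$ and $\kappa(M)\leq{\rm solv}(Q^\ast)$, forcing $M\subseteq\phi^{-1}({\rm solv}(G))\cap\kappa^{-1}({\rm solv}(Q^\ast))={\rm solv}(G)\times_Q{\rm solv}(Q^\ast)$. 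For the reverse inclusion, the pullback ${\rm solv}(G)\times_Q{\rm solv}(Q^\ast)$ embeds in the direct product ${\rm solv}(G)\times{\rm solv}(Q^\ast)$, which is solvable, and is normal in $G^\ast$ as a pullback of normal subgroups; hence it is a solvable normal subgroup of $G^\ast$ and lies in ${\rm solv}(G^\ast)$. The Fitting case is identical with ``nilpotent'' in place of ``solvable.''

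For (iii), I would first identify $\phi^{-1}(N)$ with $N\times_Q \psi^{-1}(\epsilon(N))$: for $(g,q)\in G^\ast$ the condition $g\in N$ forces $\psi(q)=\epsilon(g)\in\epsilon(N)$, placing $q$ automatically in $\psi^{-1}(\epsilon(N))$. This pullback embeds in the direct product $N\times\psi^{-1}(\epsilon(N))$, which by hypothesis is nilpotent of class $\max(c,c)=c$, so $\phi^{-1}(N)$ has class at most $c$. Since $\phi$ restricts to a surjection $\phi^{-1}(N)\twoheadrightarrow N$, the class is also at least $c$, yielding equality. The only genuine obstacle in the whole argument is the hidden nilpotency assumption on ${\rm Fitt}(G)$ and ${\rm Fitt}(Q^\ast)$ needed in (ii); outside the $\mathfrak{M}$-world this can fail, but here Proposition 1.3(i) dispatches it.
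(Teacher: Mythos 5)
The paper explicitly leaves the proof of Lemma~3.16 to the reader, so there is no official argument to compare against; your direct pullback verification is exactly the expected approach, and all four parts are correctly handled. One small remark: your closing caveat that parts~(i) and~(ii) "can fail" outside the $\mathfrak{M}$-world is not actually correct. The equalities hold for arbitrary groups: by Fitting's theorem (and its solvable analogue), ${\rm Fitt}(G)$ and ${\rm solv}(G)$ are always \emph{directed} unions of nilpotent (resp.\ solvable) normal subgroups $N_\alpha$; then ${\rm Fitt}(G)\times_Q{\rm Fitt}(Q^\ast)=\bigcup_{\alpha,\beta}(N_\alpha\times_Q M_\beta)$, and each $N_\alpha\times_Q M_\beta$ is a nilpotent normal subgroup of $G^\ast$, which gives the $\supseteq$ inclusion without any hypothesis on $G$. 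Your version of the $\supseteq$ argument — that the whole pullback ${\rm Fitt}(G)\times_Q{\rm Fitt}(Q^\ast)$ is a single nilpotent normal subgroup — is the one that genuinely needs ${\rm Fitt}(G)$ and ${\rm Fitt}(Q^\ast)$ to be nilpotent, and since the lemma is only ever invoked for $\mathfrak{M}$-groups (where Proposition~1.3(i) guarantees this), your shortcut is harmless in context.
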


We now briefly touch on coverings of finite groups.

\begin{lemma} Let $G$ be a finite group with a normal nilpotent subgroup $N$ such that ${\rm nil}\ N=c$ and ${\rm der}(N)=d$. Then there is a torsion-free, virtually polycyclic group $G^\ast$ and an epimorphism $\phi:G^\ast\to G$ such that 

$${\rm nil}\ \phi^{-1}(N)={\rm max}\{c, 1\}\ {\rm and}\ {\rm der}(\phi^{-1}(N))= {\rm max}\{d, 1\}.$$
\end{lemma}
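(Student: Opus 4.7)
The plan is to assemble $G^{\ast}$ in stages: first cover the normal nilpotent subgroup $N$ by a torsion-free nilpotent group with the right invariants, then cover the finite quotient $Q=G/N$ by a torsion-free virtually polycyclic group, and finally glue the two covers together.

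For the first stage, take $\hat{N}=\mathbb{Z}$ if $N=1$. Otherwise, let $F$ be the free nilpotent group of class $c$ on a set of generators indexed by the underlying set of $N$, and put $\hat{N}=F/F^{(d)}$. Because the derived subgroups of a torsion-free nilpotent group are isolated, $\hat{N}$ is torsion-free, and it inherits nilpotency class exactly $c$ and derived length exactly $d$ from the canonical surjection $\sigma\colon\hat{N}\to N$ (the upper bounds being immediate from the construction and the lower bounds forced by the surjection onto $N$, which has those invariants). The conjugation action of $G$ on $N$ permutes the free generators of $F$, giving a $G$-action on $\hat{N}$ under which $\sigma$ is equivariant.

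For the second stage I would invoke the Auslander--Kuranishi theorem that every finite group is the holonomy of some Bieberbach group, thereby obtaining a torsion-free virtually polycyclic cover $\psi\colon Q^{\ast}\to Q$ with $\ker\psi\cong\mathbb{Z}^{n}$. Pulling back via Lemma~3.16 gives $\tilde{G}=G\times_{Q}Q^{\ast}$, which sits in a short exact sequence $1\to N\to\tilde{G}\to Q^{\ast}\to 1$ whose only torsion is concentrated in the normal copy of $N$. The desired $G^{\ast}$ is then constructed by replacing $N\trianglelefteq\tilde{G}$ with $\hat{N}$: one builds an extension $1\to\hat{N}\to G^{\ast}\to Q^{\ast}\to 1$ fitting into a commutative diagram with surjective vertical arrows down to $\tilde{G}$ (via $\sigma$ on the kernel and the identity on the quotient), and $\phi\colon G^{\ast}\to G$ is the composition $G^{\ast}\twoheadrightarrow\tilde{G}\twoheadrightarrow G$. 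Torsion-freeness of $G^\ast$ follows from torsion-freeness of $\hat N$ and $Q^\ast$; virtual polycyclicity is immediate from the extension; and $\phi^{-1}(N)=\hat{N}$ has the prescribed class and derived length by construction.

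The main obstacle is this replacement step. It amounts to lifting the extension class $[\tilde{G}]\in H^{2}(Q^{\ast},N)$ along $\sigma_{\ast}$ to a class in $H^{2}(Q^{\ast},\hat{N})$, and there is also a delicate compatibility: the outer action of $Q^{\ast}$ on $\hat{N}$ needed for the extension must agree with the pullback of the outer action of $Q$ on $N$, whereas the action of $G$ on $\hat{N}$ by permuting generators of $F$ does not in general factor through $Q$ (unless the contribution of $N$ is absorbed by inner automorphisms of $\hat{N}$). Both difficulties can be resolved by enlarging $\hat{N}$, adding further free $G$-orbits to the generating set of $F$ so that $\ker\sigma$ acquires a cohomologically trivial $\mathbb{Z}Q^{\ast}$-module as a direct summand, killing the obstruction in $H^{3}(Q^{\ast},\ker\sigma)$ and providing room to fix up the outer action. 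Since such enlargement preserves the prescribed nilpotency class and derived length of $\hat{N}$, the rest of the argument then goes through unchanged.
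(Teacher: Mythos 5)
Your approach is genuinely different from the paper's, but the central ``gluing'' step is a real gap, and the sketch you offer for filling it does not hold up.

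The paper's argument is far more economical: take a finitely generated free group $F$ with an epimorphism $\theta\colon F\to G$, put $K=\theta^{-1}(N)$ and $L=K^{(d')}\gamma_{c'+1}K$ (with $c'=\max\{c,1\}$, $d'=\max\{d,1\}$), and set $G^\ast=F/L$. Then $L\leq\ker\theta$, so $\theta$ descends to $G^\ast\to G$ with $\phi^{-1}(N)=K/L$, a relatively free group on $\geq 2$ generators in the variety cut out by $\gamma_{c'+1}$ and the $d'$th derived word; it has class exactly $c'$ and derived length exactly $d'$. Torsion-freeness of $G^\ast$ falls out from $K/L$ being torsion-free (\v{S}melkin) and $F/K'$ being torsion-free (Higman), since $L\leq K'$. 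Crucially, no outer-action compatibility ever needs to be checked, because the ``cover of $N$'' (namely $K/L$) lives inside $F/L$ from the start, and $F/L$ acts on it by honest conjugation. This is precisely what your construction of $\hat N$ as an abstract $G$-group fails to supply.

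Concretely, the gap is your replacement step. You wish to lift the extension $1\to N\to\tilde G\to Q^\ast\to 1$ to an extension $1\to\hat N\to G^\ast\to Q^\ast\to 1$ mapping onto it via $\sigma\colon\hat N\to N$ and the identity on $Q^\ast$. Since $\hat N$ is nonabelian, the relevant extension theory is the Eilenberg--Mac\,Lane nonabelian one: an extension of $Q^\ast$ by $\hat N$ is specified by an outer action $Q^\ast\to\mathrm{Out}(\hat N)$ plus a class, and the obstruction to realizing a given outer action lives in $H^3(Q^\ast,Z(\hat N))$, not in anything like $H^3(Q^\ast,\ker\sigma)$ (which is not even a well-posed object, as $\ker\sigma$ is a nonabelian normal subgroup of $\hat N$). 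Moreover, you have not produced a candidate outer action of $Q^\ast$ on $\hat N$ lifting the conjugation outer action of $Q^\ast$ on $N$; the $G$-action on $\hat N$ you define by permuting generators does not factor through $Q^\ast$ even modulo $\mathrm{Inn}(\hat N)$, and it is not clear that any such lift exists. Your remedy -- enlarging $\hat N$ by adjoining more free $G$-orbits of generators so that ``$\ker\sigma$ acquires a cohomologically trivial $\mathbb ZQ^\ast$-module as a direct summand'' -- is not a calculation: adding generators changes $Z(\hat N)$ and $\mathrm{Out}(\hat N)$ in an uncontrolled way, and you give no argument that the $H^3$-obstruction actually dies or that a compatible outer action materializes. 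As written, the proof does not go through; if you want to salvage the spirit of your construction, the cleanest repair is to abandon the abstract $\hat N$ and instead pull back along a free presentation of $G$ as the paper does, where the ``cover of $N$'' and the required action come for free.
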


\begin{proof} Put $c'={\rm max}\{c, 1\}$ and $d'={\rm max}\{d, 1\}$. Let $F$ be a finitely generated free group admitting an epimorphism $\theta: F\to G$.  Define $K=\phi^{-1}(N)$ and $L=K^{(d')}\gamma_{c'+1} K$. Notice $L\leq {\rm Ker}\ \theta$.  Moreover, according to [{\bf 26}, Theorem 3], $K/L $ is torsion-free.  Also, $F/K'$ is torsion-free by virtue of [{\bf 9}, Theorem 2]. It follows, then, that $F/L$ is torsion-free. Hence $F/L$
is a cover of $G$ that fulfills our requirements.
\end{proof}

We finish our preliminary discussion of coverings by proving a lemma describing a well-known way to cover a finite $\mathbb ZG$-module, followed by a corollary stating that every $\mathbb ZG$-module in $\mathfrak{M_1^\pi}$
has a $\mathbb ZG$-module cover that is torsion-free and $\pi$-minimax.

\begin{lemma} Let $G$ be a group and $A$ a finite $\mathbb ZG$-module. Then there is a $\mathbb ZG$-module $A^\ast$ and an $\mathbb ZG$-module epimorphism
$\phi:A^\ast\to A$ such that $A^\ast$ is torsion-free and polycyclic as an abelian group.
\end{lemma}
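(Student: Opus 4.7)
The plan is to build $A^\ast$ as a free $\mathbb{Z}Q$-module for a suitable finite quotient $Q$ of $G$, and then pull back the module structure to $G$.

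First, I would observe that since $A$ is finite, the automorphism group $\mathrm{Aut}(A)$ is finite, so the kernel $H$ of the action homomorphism $G\to\mathrm{Aut}(A)$ has finite index in $G$. Writing $Q=G/H$, the $\mathbb{Z}G$-module structure on $A$ factors through the quotient map $G\twoheadrightarrow Q$, endowing $A$ with the structure of a $\mathbb{Z}Q$-module. Since $A$ is a finite abelian group, it is in particular finitely generated as a $\mathbb{Z}Q$-module; choose a finite generating set $a_1,\dots,a_k$.

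Next, I would take $A^\ast=(\mathbb{Z}Q)^k$, the free $\mathbb{Z}Q$-module of rank $k$, viewed as a $\mathbb{Z}G$-module via the map $G\twoheadrightarrow Q$. The $\mathbb{Z}Q$-module homomorphism sending the $i$-th standard basis vector to $a_i$ yields a surjection $\phi\colon A^\ast\to A$, which is automatically a $\mathbb{Z}G$-module homomorphism since the $G$-action on both sides factors through $Q$. As an abelian group, $A^\ast$ is free of rank $k\lvert Q\rvert$, hence torsion-free and polycyclic.

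There is essentially no obstacle here; the only point requiring comment is the finiteness of $Q$, which rests entirely on the finiteness of $\mathrm{Aut}(A)$, and this is immediate from the hypothesis that $A$ itself is finite.
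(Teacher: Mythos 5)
Your proof is correct, but it takes a somewhat different route from the paper's. The paper simply takes $A^\ast$ to be the free abelian group on the underlying \emph{set} of $A$, with $G$ permuting the basis elements via its action on $A$; the evaluation map onto $A$ is then $G$-equivariant, and $A^\ast$ is finitely generated free abelian since $A$ is finite. Your version first passes to the finite quotient $Q=G/\ker(G\to\mathrm{Aut}(A))$ and uses a free $\mathbb{Z}Q$-module of finite rank. Both constructions produce a torsion-free polycyclic cover, but the paper's is slightly more economical: it needs no discussion of $\mathrm{Aut}(A)$ or of finite generation over $\mathbb{Z}Q$, and the finite rank of the cover is immediate from the finiteness of $A$. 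Your approach has the small compensating virtue of exhibiting $A^\ast$ as a \emph{free} module over a group ring, which can be convenient if one later wants projectivity rather than merely a torsion-free covering.
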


\begin{proof} Let $A^\ast$ be the free abelian group on the set of elements of $A$, and let $\phi^\ast:A^\ast\to A$ be the group epimorphism that maps
each of these generators to itself. There is, then, an obvious $\mathbb ZG$-module structure on $A^\ast$ that makes
$\phi$ into an $\mathbb ZG$-module epimorphism. 
\end{proof}

\begin{corollary} Let $G$ be a group and $A$ a $\mathbb ZG$-module whose underlying additive group is virtually torsion-free and $\pi$-minimax. Then there is a $\mathbb ZG$-module $A^\ast$ and a $\mathbb ZG$-module epimorphism
$\phi:A^\ast\to A$ such that $A^\ast$ is torsion-free and $\pi$-minimax as an abelian group.
\end{corollary}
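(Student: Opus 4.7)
My plan is to bypass the torsion of $A$ by passing to a suitable $\mathbb{Z}G$-submodule and then invoking Lemma 3.19.

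First I would write $T$ for the torsion subgroup of the underlying abelian group of $A$. Because $A$ is virtually torsion-free, $T$ is finite and thus has a well-defined finite exponent $e$. I would set $B = eA$; since multiplication by the integer $e$ commutes with the $G$-action, $B$ is automatically a $\mathbb{Z}G$-submodule of $A$. A short argument shows that $B$ is torsion-free: if $ea$ has finite order for some $a\in A$, then $a$ itself lies in $T$, whence $ea=0$. The quotient $A/B$ has exponent dividing $e$; as it is also $\pi$-minimax, and neither $\mathbb{Z}$ nor any quasicyclic group has bounded exponent, every factor in a minimax series for $A/B$ must be finite cyclic, forcing $A/B$ itself to be finite.

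Next I would apply Lemma 3.19 to the finite $\mathbb{Z}G$-module $A/B$ to obtain a torsion-free polycyclic $\mathbb{Z}G$-module $M^\ast$ together with a $\mathbb{Z}G$-module epimorphism $\psi: M^\ast \to A/B$. With these in hand, I would define
$$A^\ast := A \times_{A/B} M^\ast,$$
the pullback in the category of $\mathbb{Z}G$-modules. Projection onto the first coordinate then provides a $\mathbb{Z}G$-module epimorphism $\phi: A^\ast \to A$, since any $a \in A$ can be paired with a preimage of $\bar{a}$ under $\psi$. Projection onto the second coordinate yields the short exact sequence
$$0 \to B \to A^\ast \to M^\ast \to 0$$
of $\mathbb{Z}G$-modules.

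To finish, I would note that $B$, being a $\mathbb{Z}G$-submodule of the $\pi$-minimax group $A$, is torsion-free and $\pi$-minimax, while $M^\ast$ is torsion-free and polycyclic, hence also $\pi$-minimax; both properties therefore pass to the extension $A^\ast$. I do not foresee a serious obstacle in executing this plan: the one delicate ingredient is the observation that a $\pi$-minimax abelian group of bounded exponent is finite, which is what ensures the finiteness of $A/B$ and thereby the applicability of Lemma 3.19.
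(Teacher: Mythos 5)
Your proposal is correct and follows the same route as the paper: pass to a torsion-free $\mathbb{Z}G$-submodule of finite index, cover the finite quotient via Lemma 3.18 (you cite ``Lemma 3.19,'' but that is this corollary; you mean 3.18), and form the fibered product. The only difference is that the paper simply asserts the existence of such a submodule $A_0$, whereas you construct it explicitly as $eA$ and justify its properties---a small but genuine gap you fill.
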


\begin{proof} Let $A_0$ be a $\mathbb ZG$-submodule of $A$ such that $A/A_0$ is finite and $A_0$ is torsion-free {\it qua} abelian group. According to Lemma 3.19, $A/A_0$ can be covered by a $\mathbb ZG$-module $B$ whose additive group is torsion-free and polycyclic. Hence letting $A^\ast=A\times_{A/A_0} B$ establishes the conclusion.
\end{proof}

\subsection{$\pi$-Integral actions }

We conclude \S 3 by investigating $\pi$-integral actions (see Definition 1.10), focusing on their importance for identifying $\pi$-minimax groups (Propositions 3.25 and 3.28).  The most elementary aspect of the connection between $\pi$-integrality and the $\pi$-minimax property is captured in our first lemma. 

\begin{lemma} Let $\pi$ be a set of primes and $G$ a group. If $A$ is a $\mathbb ZG$-module that is virtually torsion-free and $\pi$-minimax as an abelian group, then $G$ acts $\pi$-integrally on $A$.  
\end{lemma}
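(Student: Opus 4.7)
The plan is to fix $g \in G$ and build a polynomial $f(t) = \sum_{i=0}^{m} \alpha_i t^i \in \mathbb{Z}[t]$ with $\alpha_m$ a $\pi$-number and $f(g) \in \mathrm{Ann}_{\mathbb{Z}G}(A)$, assembling $f$ as $(t^k-1)\,q(t)$: the factor $t^k-1$ will kill the torsion subgroup $T$ of $A$, and $q$ will kill the torsion-free quotient $\bar A := A/T$. Since $A$ is virtually torsion-free, $T$ is finite; being characteristic it is $g$-invariant, so $g$ acts on $T$ through the finite group $\mathrm{Aut}(T)$ and $g^k$ fixes $T$ pointwise for some $k \geq 1$. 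Multiplication by $t^k - 1$ preserves the leading coefficient of $q$, so the task reduces to producing $q \in \mathbb{Z}[t]$ with $\pi$-number leading coefficient such that $q(g)$ annihilates $\bar A$.

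To construct $q$, I plan to linearize by passing to $\bar A_\pi := \bar A \otimes_{\mathbb{Z}} \mathbb{Z}[\pi^{-1}]$ and applying the Cayley--Hamilton theorem there. Because $\bar A$ is torsion-free and $\pi$-minimax, it admits a series with factors among $\mathbb{Z}$, finite cyclic groups, and $\mathbb{Z}(p^\infty)$ for $p \in \pi$. Tensoring with the flat ring $\mathbb{Z}[\pi^{-1}]$ preserves this series and sends these three kinds of factors to $\mathbb{Z}[\pi^{-1}]$, to finite groups, and to $0$ respectively---the last because each $p \in \pi$ is a unit in $\mathbb{Z}[\pi^{-1}]$. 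Hence $\bar A_\pi$ is finitely generated over $\mathbb{Z}[\pi^{-1}]$. Flatness also yields that $\bar A$ embeds into $\bar A_\pi$ and that $\bar A_\pi$ is torsion-free, and since $\mathbb{Z}[\pi^{-1}]$ is a PID it follows that $\bar A_\pi$ is free of rank $n := h(A)$.

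The induced action of $g$ on $\bar A_\pi$ is $\mathbb{Z}[\pi^{-1}]$-linear and invertible (the action of $g^{-1}$ gives the inverse), represented in any chosen basis by some $M \in GL_n(\mathbb{Z}[\pi^{-1}])$. Cayley--Hamilton over the commutative ring $\mathbb{Z}[\pi^{-1}]$ then yields $\chi_M(M) = 0$, where $\chi_M(t) \in \mathbb{Z}[\pi^{-1}][t]$ is monic of degree $n$. Clearing denominators by multiplying $\chi_M$ by a $\pi$-number $N$ produces $q(t) := N\,\chi_M(t) \in \mathbb{Z}[t]$ with leading coefficient $N$; this $q$ annihilates $\bar A_\pi$, hence $\bar A$, and $f(t) := (t^k - 1)\,q(t)$ completes the construction. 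The main obstacle I anticipate is the verification that $\bar A_\pi$ is finitely generated over $\mathbb{Z}[\pi^{-1}]$; this turns crucially on the vanishing $\mathbb{Z}(p^\infty) \otimes \mathbb{Z}[\pi^{-1}] = 0$ for $p \in \pi$, after which Cayley--Hamilton and routine denominator-clearing finish the argument.
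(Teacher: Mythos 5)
Your proof is correct and takes essentially the same route as the paper's: tensor with $\mathbb Z[\pi^{-1}]$, observe that the result is a finitely generated torsion-free (hence free) module over the PID $\mathbb Z[\pi^{-1}]$, apply Cayley--Hamilton, and clear denominators to obtain a $\pi$-number leading coefficient. The only difference is in how the finite torsion subgroup is neutralized---you multiply by $t^k-1$, where $g^k$ centralizes the torsion, while the paper first replaces $A$ by a torsion-free $\mathbb ZG$-module cover via Lemma 3.19, using that the annihilator of a cover is contained in the annihilator of the quotient.
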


\begin{proof} In view of Corollary 3.20, we can suppose that $A$ is torsion-free as an abelian group. Take $\alpha:A\to A$ to be an automorphism of $A$ {\it qua} abelian group arising from the action of an element of $G$. This automorphism induces an automorphism $\alpha_{\pi}$ of $A\otimes \mathbb Z[\pi^{-1}]$. Moreover,  $\alpha_{\pi}$ is a root of a monic polynomial with coefficients in $\mathbb Z[\pi^{-1}]$. Multiplying by a large enough integer yields a polynomial $f(t)\in \mathbb Z[t]$ such that $f(\alpha_{\pi})=0$ and the leading coefficient of $f(t)$ is a $\pi$-number. It follows, then, that $f(\alpha)=0$. 
Therefore $G$ acts $\pi$-integrally on $A$. 
\end{proof}

Lemma 3.21 allows us to establish Proposition 1.11.

\begin{proof}[Proof of Proposition 1.11]

Statement (b) is plainly true since every quotient of a $\pi$-minimax group is $\pi$-minimax. To show (a) and (c), let $\phi: G^\ast\to G$ be an epimorphism where $G^\ast$ is an $\mathfrak{M_1^\pi}$-group. Put $N^\ast={\rm Fitt}(G^\ast)$ and $c={\rm nil}\ N^\ast$. By Proposition 1.3(ii), $G^\ast/N^\ast$ is finitely generated. Since $\phi(N^\ast)\leq N$, it follows that statement (a) holds. 

Now we establish assertion (c).  That $N^\ast$ is virtually torsion-free implies that $Z_i(N^\ast)/Z_{i-1}(N^\ast)$ is virtually torsion-free for $i\geq 1$. According to Lemma 3.21, this means that, for $i=1,\dots, c$, $G$ acts $\pi$-integrally on $Z_i(N^\ast)/Z_{i-1}(N^\ast)$. Taking the images of the subgroups $Z_i(N^\ast)$ under the composition $N^\ast\stackrel{\phi}{\to} N\to N_{\rm ab}$ yields a chain
$0=A_0\subseteq A_1\subseteq\cdots \subseteq A_c$ of $\mathbb ZG$-submodules of $N_{\rm ab}$. 
Because $Q$ acts $\pi$-integrally on each factor in this chain, the action of $Q$ on $A_c$ must be $\pi$-integral. Moreover, $N_{\rm ab}/A_c$ is polycyclic since $G^\ast/N^\ast$ is polycyclic. Hence $G$ acts $\emptyset$-integrally on $N_{\rm ab}/A_c$ and therefore $\pi$-integrally on $N_{\rm ab}$. 
\end{proof}

Next we prove that the $\pi$-integrality property is inherited by tensor products.

\begin{lemma} Let $\pi$ be a set of primes and $G$ a group. If $G$ acts $\pi$-integrally on the $\mathbb ZG$-modules $A$ and $B$, then $G$ acts $\pi$-integrally on $A\otimes B$, viewed as a $\mathbb ZG$-module via the diagonal action.
\end{lemma}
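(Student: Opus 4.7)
Fix $g \in G$. By hypothesis, choose polynomials $f_A(t) = \alpha_m t^m + \cdots + \alpha_0$ and $f_B(t) = \beta_n t^n + \cdots + \beta_0$ in $\mathbb Z[t]$ whose leading coefficients $\alpha_m, \beta_n$ are $\pi$-numbers, and such that $f_A(g)$ annihilates $A$ and $f_B(g)$ annihilates $B$.  On $A \otimes B$ the diagonal action of $g$ factors as $g = \gamma_A \gamma_B$, where $\gamma_A := g|_A \otimes 1_B$ and $\gamma_B := 1_A \otimes g|_B$ are commuting endomorphisms with $f_A(\gamma_A) = f_B(\gamma_B) = 0$.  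My plan is to produce a single polynomial $H(t) \in \mathbb Z[t]$ with $\pi$-number leading coefficient such that $H(\gamma_A \gamma_B) = 0$; by Definition 1.10 this will establish $\pi$-integrality of the diagonal action.

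The right $H$ is the classical ``tensor product'' polynomial whose roots are the pairwise products of roots of $f_A$ and $f_B$.  I would construct it via a resultant: introduce the homogenization
\[
G(t, u) := u^m f_A(t/u) = \sum_{k=0}^m \alpha_k t^k u^{m-k} \in \mathbb Z[t, u],
\]
and set $H(t) := \mathrm{Res}_u\bigl(f_B(u),\, G(t, u)\bigr) \in \mathbb Z[t]$.  Over $\overline{\mathbb Q}$ one has $H(t) = \alpha_m^n \beta_n^m \prod_{i,j}(t - \lambda_i \mu_j)$, where $\lambda_i, \mu_j$ run over the roots of $f_A, f_B$; in particular $H$ has degree $mn$ and leading coefficient $\alpha_m^n \beta_n^m$, a $\pi$-number.

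The vanishing $H(\gamma_A \gamma_B) = 0$ follows from the standard fact that a resultant lies in the ideal generated by its arguments: there exist $P, Q \in \mathbb Z[t, u]$ with $H(t) = P(t, u)\,f_B(u) + Q(t, u)\,G(t, u)$.  Since $\gamma_A \gamma_B$ commutes with $\gamma_B$, the substitution $t \mapsto \gamma_A \gamma_B$, $u \mapsto \gamma_B$ defines a well-defined ring homomorphism $\mathbb Z[t, u] \to \mathrm{End}(A \otimes B)$, and a short commutativity calculation gives
\[
G(\gamma_A \gamma_B, \gamma_B) = \sum_k \alpha_k (\gamma_A \gamma_B)^k \gamma_B^{m-k} = \gamma_B^m f_A(\gamma_A) = 0.
\]
Combined with $f_B(\gamma_B) = 0$, this yields $H(\gamma_A \gamma_B) = 0$ on $A \otimes B$, as required.

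The main obstacle is the choice of $H$: the naive product $f_A(t) f_B(t)$ annihilates a direct-sum action but not a tensor-product one, so some construction matching the multiplicative behavior of eigenvalues is forced.  The resultant supplies exactly the polynomial matched to the spectrum of $\gamma_A \gamma_B$; once it is in hand, the leading-coefficient bookkeeping and the Bezout representation of the resultant are both standard.
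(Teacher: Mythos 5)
Your proof is correct and takes a genuinely different route from the paper. The paper (via Lemma 3.22) argues abstractly: it passes to $R[s,t]/(f(s),g(t))$ with $R=\mathbb Z[\pi^{-1}]$, observes that $s$ and $t$ become integral over $R$ because $f$ and $g$ are monic after inverting $\pi$, invokes the closure of integral elements under products to get a monic $F'\in R[t]$ with $F'(st)\in(f(s),g(t))$, and then clears denominators. You instead construct the annihilating polynomial explicitly as the resultant $\mathrm{Res}_u\bigl(f_B(u),\,u^m f_A(t/u)\bigr)$ and verify its vanishing on $\gamma_A\gamma_B$ through the Bezout representation of the resultant, which is a legitimate and classical ``tensor polynomial'' device. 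Your route is more constructive: it pins down the degree $mn$ and the leading coefficient $\alpha_m^n\beta_n^m$, whereas the paper's argument is shorter but leaves the polynomial implicit.

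One small point to tighten: the identification of $u^m f_A(t/u)$ as a degree-$m$ polynomial in $u$, and hence the clean product formula for the resultant, requires $\alpha_0\neq 0$. This is harmless --- by the remark following Definition~1.10 one may assume both $\alpha_0$ and $\alpha_m$ are $\pi$-numbers (alternatively, since $g$ acts invertibly on $A$, one may cancel any factor of $t$ from $f_A$ without changing its leading coefficient). Worth a sentence in the write-up, but it does not threaten the argument.
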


The above lemma follows readily from

\begin{lemma}
Let $A$ and $B$ be abelian groups and $\pi$ a set of primes. Let $\phi\in {\rm End}(A)$ and $\psi\in {\rm End}(B)$. Suppose that there are polynomials $f(t), g(t)\in \mathbb Z[t]$ such that $f(\phi)=0$, $g(\psi)=0$, and the leading coefficients of $f(t)$ and $g(t)$ are $\pi$-numbers. Then there is a polynomial $F(t)\in \mathbb Z[t]$ such that the leading coefficient of $F(t)$ is a $\pi$-number and $F(\phi\otimes \psi)=0$. 
\end{lemma}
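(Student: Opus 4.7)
The plan is to construct $F$ explicitly as a resultant and then verify the required annihilation via a single clean substitution into the associated B\'ezout identity, sidestepping entirely any discussion of $\pi$-torsion in $A\otimes B$.

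Write $m=\deg f$, $n=\deg g$, $f(t)=\sum_{i=0}^m a_it^i$, and $g(t)=\sum_{j=0}^n b_j t^j$, where $a_m$ and $b_n$ are $\pi$-numbers. I would form the bivariate polynomial
$$h(x,t)\;:=\;x^n g(t/x)\;=\;\sum_{j=0}^n b_j\,t^j x^{n-j}\;\in\;\mathbb{Z}[x,t],$$
which has degree $n$ in $x$, and then define
$$F(t)\;:=\;\mathrm{Res}_x\bigl(f(x),\,h(x,t)\bigr)\;\in\;\mathbb{Z}[t].$$
The product formula for the resultant, applied to the roots $\alpha_1,\dots,\alpha_m$ of $f$ and $\beta_1,\dots,\beta_n$ of $g$ in $\overline{\mathbb{Q}}$, yields
$$F(t)\;=\;a_m^n\prod_{i=1}^m h(\alpha_i,t)\;=\;a_m^n b_n^m\prod_{i,j}(t-\alpha_i\beta_j),$$
so that $F$ has degree $mn$ with leading coefficient $a_m^n b_n^m$, plainly a $\pi$-number.

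The decisive step is the classical B\'ezout identity for the resultant: there exist $u(x,t),v(x,t)\in\mathbb{Z}[x,t]$ such that
$$F(t)\;=\;u(x,t)\,f(x)\;+\;v(x,t)\,h(x,t)\quad\text{in }\mathbb{Z}[x,t].$$
On $A\otimes B$ the operators $\phi\otimes 1$ and $1\otimes\psi$ commute and their product is $\phi\otimes\psi$, so I substitute $x=\phi\otimes 1$ and $t=\phi\otimes\psi$. The first summand vanishes because $f(\phi\otimes 1)=f(\phi)\otimes 1=0$. For the second, commutativity gives $(\phi\otimes\psi)^j(\phi\otimes 1)^{n-j}=(\phi\otimes 1)^n(1\otimes\psi)^j$, so
$$h(\phi\otimes 1,\;\phi\otimes\psi)\;=\;\sum_{j=0}^n b_j(\phi\otimes\psi)^j(\phi\otimes 1)^{n-j}\;=\;(\phi\otimes 1)^n\bigl(1\otimes g(\psi)\bigr)\;=\;0,$$
because $g(\psi)=0$ on $B$. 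Therefore $F(\phi\otimes\psi)=0$, and $F$ has $\pi$-number leading coefficient, as required.

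The main obstacle is not any single computation but rather choosing the correct construction: a naive approach via inverting the primes in $\pi$ produces a polynomial that only annihilates $\phi\otimes\psi$ modulo $\pi$-torsion, and one then has to control the exponent of $\pi$-torsion in $A\otimes B$, which need not be bounded. Routing the argument through the resultant and its B\'ezout identity bypasses this difficulty entirely, since the resulting $F$ lies in the ideal $(f(x),h(x,t))$ of $\mathbb{Z}[x,t]$ and the substitution $x\mapsto\phi\otimes 1$, $t\mapsto\phi\otimes\psi$ collapses both generators to zero simultaneously.
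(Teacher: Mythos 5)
Your proof is correct, and it takes a genuinely different route from the paper's. The paper maps $s\mapsto\phi\otimes 1$ and $t\mapsto 1\otimes\psi$, passes to the quotient $\mathbb Z[\pi^{-1}][s,t]/(f(s),g(t))$, observes that the image of $st$ is integral over $\mathbb Z[\pi^{-1}]$ because $f$ and $g$ become monic there, and then clears denominators to pull the resulting identity $F'(st)=p\,f(s)+q\,g(t)$ back into $\mathbb Z[s,t]$ before applying the evaluation map. You instead exhibit the desired polynomial explicitly as a resultant and invoke the B\'ezout identity for resultants. The two arguments are in the same spirit --- both produce a univariate $F$ whose ``multiplicative composition'' lies in the ideal generated by $f$ and $g$ inside a two-variable polynomial ring over $\mathbb Z$, so that evaluation kills it honestly and not merely modulo torsion --- but yours is constructive where the paper's is existential. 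One consequence is that you get the degree and leading coefficient of $F$ for free. Your closing remark about the ``naive approach'' is a fair description of a genuine pitfall, though I would note that the paper's actual proof does not fall into it: the paper also works with ideals of $\mathbb Z[s,t]$ rather than with $(A\otimes B)\otimes\mathbb Z[\pi^{-1}]$, so it avoids the torsion problem by essentially the same mechanism you describe. Two very minor remarks: the identity $F(t)=a_m^n b_n^m\prod_{i,j}(t-\alpha_i\beta_j)$ presupposes $g(0)\neq 0$ so that $h$ has true degree $n$ in $x$; if $g(0)=0$ one should interpret $\mathrm{Res}_x$ via the Sylvester matrix with nominal $x$-degree $n$, in which case the leading coefficient is still $\pm a_m^n b_n^m$, the B\'ezout identity still holds, and the argument is unchanged. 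And the sign of the leading coefficient is only determined up to $\pm 1$, which is of course immaterial.
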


\begin{proof} 

Let $\theta$ be the ring homomorphism from the polynomial ring $\mathbb Z[s,t]$ to the ring ${\rm End}(A\otimes B)$ such that $\theta(s)=\phi\otimes \mathbb 1_B$ and $\theta(t)=\mathbb 1_A\otimes \psi$. 
Define $I$ to be the ideal of $\mathbb Z[s,t]$ generated by $f(s)$ and $g(t)$. Then $\theta(I)=0$. Put $R=\mathbb Z[\pi^{-1}]$, and take $J$ to be the ideal in the polynomial ring $R[s,t]$ generated by $f(s)$ and $g(t)$. The images of $s$ and $t$ in $R[s,t]/J$ are both integral over $R$. Hence the same is true for the image of $st$ in $R[s,t]/J$. In other words, there is a monic polynomial $F'$ over $R$ such that $F'(st)=p(s,t)f(s)+q(s,t)g(t)$, where $p(s,t)$ and $q(s,t)$ are polynomials in $R[s,t]$. 
Multiplying by a large enough $\pi$-number, we acquire a polynomial $F$ over $\mathbb Z$ whose leading coefficient is a $\pi$-number and such that $F(st)\in I$. It follows, then, that $F(\phi\otimes \psi)=0$. 
\end{proof}

Lemma 3.22 gives rise to the properties below.

\begin{lemma} Let  $\pi$ be a set of primes and $G$ a group. Let $N$ be a $G$-group such that $G$ acts $\pi$-integrally on $N_{\rm ab}$.  Then the following two statements are true.

\begin{enumerate*}
\item The group $G$ acts $\pi$-integrally on $\gamma_iN/\gamma_{i+1}N$ for $i\geq 1$. 
\item If $N$ is nilpotent, then,  
for every $G$-subgroup $M$ of $N$, $G$ acts $\pi$-integrally on $M_{\rm ab}$. 
\end{enumerate*}
\end{lemma}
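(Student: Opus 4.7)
My approach rests on two elementary closure properties of $\pi$-integrality that I would record first: ($\ast$) if $G$ acts $\pi$-integrally on a $\mathbb{Z}G$-module $B$, then it acts $\pi$-integrally on every subquotient of $B$; and ($\ast\ast$) $\pi$-integrality is preserved under extensions, i.e.\ if $0 \to A \to B \to C \to 0$ is a short exact sequence of $\mathbb{Z}G$-modules on which $G$ acts $\pi$-integrally on $A$ and $C$, then the same holds for $B$. Property ($\ast$) is immediate, since an annihilating polynomial descends to any subquotient. For ($\ast\ast$), given $g \in G$ and polynomials $f_A, f_C \in \mathbb{Z}[t]$ with $\pi$-number leading coefficients such that $f_A(g) A = 0$ and $f_C(g) C = 0$, one has $f_C(g) B \subseteq A$, hence $(f_A f_C)(g) B = 0$, and the product polynomial still has a $\pi$-number leading coefficient.

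For (i), I would use the fact that iterated commutators induce a $G$-equivariant surjection
\[ N_{\rm ab}^{\otimes i} \twoheadrightarrow \gamma_i N / \gamma_{i+1} N, \qquad \bar{x}_1 \otimes \cdots \otimes \bar{x}_i \mapsto [x_1,\ldots,x_i]\,\gamma_{i+1}N, \]
where the tensor power carries the diagonal $G$-action. Well-definedness uses multilinearity of commutators modulo $\gamma_{i+1} N$, and equivariance reduces to the identity $g[x_1,\ldots,x_i]g^{-1} = [gx_1 g^{-1}, \ldots, gx_i g^{-1}]$. Since $G$ acts $\pi$-integrally on $N_{\rm ab}$, applying Lemma 3.22 a total of $i-1$ times gives $\pi$-integrality on $N_{\rm ab}^{\otimes i}$, and property ($\ast$) then yields the conclusion.

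For (ii), I would use the lower central series of $N$ to filter $M_{\rm ab}$. Supposing $\gamma_{c+1} N = 1$, set $M_i := (M \cap \gamma_i N)\, M'$ for $1 \leq i \leq c+1$; this gives a chain of $G$-invariant subgroups of $M$ with $M_1 = M$ and $M_{c+1} = M'$. Passing to $M_{\rm ab}$, the images $\bar{M}_i := M_i / M'$ form a finite $G$-equivariant filtration $M_{\rm ab} = \bar{M}_1 \supseteq \cdots \supseteq \bar{M}_{c+1} = 0$. By the isomorphism theorems, each factor $\bar{M}_i / \bar{M}_{i+1}$ is a quotient of $(M \cap \gamma_i N)/(M \cap \gamma_{i+1} N)$, which embeds $G$-equivariantly into $\gamma_i N / \gamma_{i+1} N$. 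By (i) together with ($\ast$), $G$ acts $\pi$-integrally on every factor, and iterating ($\ast\ast$) completes the argument.

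The only mildly delicate step I anticipate is verifying the well-definedness and $G$-equivariance of the commutator pairing on the tensor power in (i), which is standard but warrants care; the remainder reduces to bookkeeping with polynomial degrees and subquotients in the filtration.
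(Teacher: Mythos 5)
Your proof is correct and follows essentially the same route as the paper: part (i) via the $G$-equivariant iterated commutator surjection from $N_{\mathrm{ab}}^{\otimes i}$ together with the tensor-product lemma (which is Lemma 3.21 in the paper, not 3.22), and part (ii) via the lower-central filtration $(M\cap\gamma_i N)M'/M'$ of $M_{\mathrm{ab}}$ whose factors are sections of $\gamma_i N/\gamma_{i+1}N$. You are merely more explicit than the paper in recording the two closure properties $(\ast)$ and $(\ast\ast)$, which the paper invokes tacitly.
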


\begin{proof} 
Assertion (i) follows from Lemma 3.22 and the fact that the iterated commutator map induces a $\mathbb ZG$-module epimorphism from the $i$th tensor power of $N_{\rm ab}$ to $\gamma_iN/\gamma_{i+1}N$.

To prove (ii), set $c={\rm nil}\ N$. Then the $\mathbb ZG$-module $M_{\rm ab}$ has a series $0=M_{c+1}\subseteq M_c\subseteq \cdots \subseteq M_1=M_{\rm ab}$ of submodules such that, for $1\leq i\leq c$, 
$M_i/M_{i+1}$ is isomorphic to a $\mathbb ZG$-module section of $\gamma_iN/\gamma_{i+1}N$. It follows, then, from (i) that $G$ acts $\pi$-integrally on each factor $M_i/M_{i+1}$. Therefore 
$G$ acts $\pi$-integrally on $M_{\rm ab}$. 
\end{proof}

In the next proposition, we examine a group-theoretic application of $\pi$-integrality.  

\begin{proposition} Let $\pi$ be a set of primes and $G$ a group with a normal subgroup $N$ such that the following conditions are satisfied.

{\rm (i)} $N$ is nilpotent and virtually torsion-free.

{\rm (ii)} $G/N$ is virtually polycyclic. 

{\rm (iii)} $G$ acts $\pi$-integrally on $N_{\rm ab}$.

{\rm (iv)} $N$ is generated as a $G$-group by a $\pi$-minimax subgroup. 

\noindent Then $G$ must be $\pi$-minimax. 
\end{proposition}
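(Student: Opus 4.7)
The plan is to show that $N$ is $\pi$-minimax, from which $G$ being $\pi$-minimax follows since $G/N$ is virtually polycyclic (hence $\pi$-minimax for any $\pi$) and the class of $\pi$-minimax groups is closed under extensions. Because $N$ is nilpotent, I would reduce further to proving that $N_{\rm ab}$ is $\pi$-minimax: the iterated commutator maps supply $\mathbb{Z}$-module epimorphisms $N_{\rm ab}^{\otimes i}\twoheadrightarrow\gamma_iN/\gamma_{i+1}N$, and the class of $\pi$-minimax abelian groups is closed under tensor products over $\mathbb{Z}$ (verified by filtering each tensor factor with cyclic, finite, and quasicyclic pieces and checking that every pairwise tensor of the factor types is $\pi$-minimax). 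It then follows that every lower central factor of $N$, and hence $N$ itself, is $\pi$-minimax.

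Write $Q=G/N$. The abelianization $N_{\rm ab}$ is a $\mathbb{Z}Q$-module generated as such by the image $\bar M$ of $M$, which is $\pi$-minimax, and $Q$ is virtually polycyclic acting $\pi$-integrally on $N_{\rm ab}$. Crucially, $N_{\rm ab}$ is virtually torsion-free as an abelian group: the torsion subgroup $T$ of $N$ is finite by hypothesis (i), $N/T$ is torsion-free nilpotent so $(N/T)_{\rm ab}$ is torsion-free (a standard property of torsion-free nilpotent groups, since $[N/T,N/T]$ is isolated), and $N_{\rm ab}$ is an extension of $(N/T)_{\rm ab}$ by a finite group.

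The heart of the proof is the following lemma, which I would prove by induction on Hirsch length: \emph{if $Q$ is virtually polycyclic, $A$ is a virtually torsion-free $\mathbb{Z}Q$-module generated as such by a $\pi$-minimax subgroup $B$, and $Q$ acts $\pi$-integrally on $A$, then $A$ is $\pi$-minimax.} First I would replace $Q$ by a finite-index torsion-free polycyclic subgroup (absorbing coset representatives into $B$, which keeps $B$ $\pi$-minimax) and then induct on $h(Q)$. The base case $h(Q)=0$ is immediate. For the inductive step, pick $Q'\lhd Q$ with $Q/Q'\cong\mathbb{Z}$ generated by the class of $x$, and set $A'=\mathbb{Z}Q'\cdot B$, which is $\pi$-minimax by the inductive hypothesis. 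From the $\pi$-integrality polynomial $f(t)=\alpha_0+\alpha_1 t+\cdots+\alpha_m t^m$ annihilating $A$, with both $\alpha_0$ and $\alpha_m$ $\pi$-numbers, one can after inverting $\pi$-numbers express both $x^m$ and $x^{-1}$ as $\mathbb{Z}[\pi^{-1}]$-linear combinations of $1,x,\ldots,x^{m-1}$. This forces
\[
A\otimes\mathbb{Z}[\pi^{-1}] \;=\; \sum_{k=0}^{m-1} x^k\bigl(A'\otimes\mathbb{Z}[\pi^{-1}]\bigr),
\]
a finite sum of $\pi$-minimax abelian groups and hence itself $\pi$-minimax.

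The main obstacle I anticipate is the passage from the localized module $A\otimes\mathbb{Z}[\pi^{-1}]$ back to $A$ itself, and this is precisely where the virtually-torsion-free hypothesis is used. The kernel of $A\to A\otimes\mathbb{Z}[\pi^{-1}]$ is the $\pi$-torsion $T_\pi(A)$, which sits inside the finite torsion subgroup of $A$ and is therefore finite; consequently $A/T_\pi(A)$ embeds in the $\pi$-minimax group $A\otimes\mathbb{Z}[\pi^{-1}]$ (hence is $\pi$-minimax), while $A$ is an extension of the $\pi$-minimax $A/T_\pi(A)$ by the finite $T_\pi(A)$. Applying the lemma to $A=N_{\rm ab}$ and combining with the reductions above yields that $N$, and hence $G$, is $\pi$-minimax.
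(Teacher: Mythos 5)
Your reduction hinges on the claim that $N_{\rm ab}$ is virtually torsion-free, which you justify by asserting that the derived subgroup of a torsion-free nilpotent group is isolated, so that $(N/T)_{\rm ab}$ is torsion-free. That assertion is false, and in fact $N_{\rm ab}$ need not be virtually torsion-free even when $N$ is torsion-free. For a concrete counterexample, let $N$ be the group of upper unitriangular $3\times 3$ matrices with $(1,2)$- and $(2,3)$-entries in $\mathbb Z$ and $(1,3)$-entry in $\mathbb Z[1/2]$. Then $N$ is torsion-free, nilpotent of class $2$, and $\{2\}$-minimax, but $N'$ is the cyclic group generated by the elementary matrix with $(1,3)$-entry equal to $1$, so $N_{\rm ab}\cong\mathbb Z^2\oplus\mathbb Z(2^\infty)$, which is not virtually torsion-free. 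Taking $G=N$, $\pi=\{2\}$, and $M=N$ shows that hypotheses (i)--(iv) do not rescue the claim. Your key inductive lemma genuinely does need the module to be virtually torsion-free in order to pass from $A\otimes\mathbb Z[\pi^{-1}]$ back to $A$, as you yourself point out, so it cannot legitimately be applied to $N_{\rm ab}$, and the argument has a gap precisely at that point.

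The paper sidesteps this by proving only a weaker module-theoretic statement (its Lemma 3.25) with no torsion-freeness hypothesis: $N_{\rm ab}$, and hence each lower central factor $\gamma_iN/\gamma_{i+1}N$, is an extension of a $\pi_0$-torsion group by a $\pi$-minimax one for some finite $\pi_0\subseteq\pi$. The hypothesis that $N$ is virtually torsion-free is then exploited for the \emph{group} $N$ rather than for the module $N_{\rm ab}$, via an auxiliary result (Lemma 3.26) applied in an upward induction along the lower central series of $N$, where each $\gamma_{c-j}N$ really is virtually torsion-free as a subgroup of $N$. To repair your proposal you would need to weaken the conclusion of your module lemma to ``($\pi_0$-torsion)-by-($\pi$-minimax)'' for some finite $\pi_0\subseteq\pi$, propagate this through tensor powers to the lower central factors, and only then invoke virtual torsion-freeness at the level of the subgroups $\gamma_{c-j}N$, where it actually holds.
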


The first step in proving Proposition 3.25 is the following lemma.

\begin{lemma} Let $\pi$ be a set of primes and $G$ a polycyclic group. Let $A$ be a $\mathbb ZG$-module that is generated as a $\mathbb ZG$-module by a $\pi$-minimax additive subgroup. Assume further that the action of $G$ on $A$ is $\pi$-integral. Then there is a finite subset $\pi_0$ of $\pi$ such that the underlying additive group of $A$ is an extension of a $\pi_0$-torsion group by a $\pi$-minimax one.  \nolinebreak \hfill\(\square\)
\end{lemma}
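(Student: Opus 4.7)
My plan is to prove Lemma~3.25 by induction on the length $n$ of a polycyclic series $1 = G_0 \lhd G_1 \lhd \cdots \lhd G_n = G$. The base case $n=0$ is trivial: $A = A_0$ is $\pi$-minimax, so $\pi_0 = \emptyset$ works. For the inductive step, set $H := G_{n-1}$ and pick $g \in G$ whose coset generates the cyclic quotient $G/H$. Form the $\mathbb{Z}H$-submodule $A^H := \mathbb{Z}H \cdot A_0$ of $A$. The $H$-action on $A^H$ is $\pi$-integral (as the $G$-polynomials restrict), and $A^H$ is $\mathbb{Z}H$-generated by the $\pi$-minimax subgroup $A_0$, so the inductive hypothesis provides a finite $\pi_1 \subseteq \pi$ together with a short exact sequence
\[
0 \longrightarrow T_1 \longrightarrow A^H \longrightarrow M \longrightarrow 0,
\]
where $T_1$ is $\pi_1$-torsion and $M$ is $\pi$-minimax.

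If $g$ has finite order $N$ modulo $H$, then every element of $G$ has the form $g^k h$ with $0 \le k < N$ and $h \in H$, and so $A = \sum_{k=0}^{N-1} g^k A^H$. Each $g^k A^H$ is the image of $A^H$ under the abelian-group automorphism $g^k : A \to A$, so it inherits a $\pi_1$-torsion subgroup $g^k T_1$ with $\pi$-minimax quotient. Setting $T := \sum_{k=0}^{N-1} g^k T_1$ gives a $\pi_1$-torsion subgroup of $A$, and $A/T$ is a homomorphic image of $\bigoplus_{k=0}^{N-1} (g^k A^H)/(g^k T_1)$, hence $\pi$-minimax. We may therefore take $\pi_0 := \pi_1$.

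The decisive case is when $g$ has infinite order modulo $H$, and here $\pi$-integrality is essential. Choose a polynomial $f(t) = \alpha_0 + \alpha_1 t + \cdots + \alpha_m t^m \in \mathbb{Z}[t]$ with $f(g) \cdot A = 0$ and both $\alpha_0$ and $\alpha_m$ being $\pi$-numbers (allowed by the remark following Definition~1.10). Let $\pi_2$ be the finite set of primes dividing $\alpha_0 \alpha_m$, set $R := \mathbb{Z}[\pi_2^{-1}]$, and form $B := A \otimes_{\mathbb{Z}} R$ and $B^H := A^H \otimes_{\mathbb{Z}} R \subseteq B$. In $B$ the coefficients $\alpha_0, \alpha_m$ become units, so $g^m$ is an $R$-linear combination of $1, g, \ldots, g^{m-1}$, and the identity $g(\alpha_1 + \alpha_2 g + \cdots + \alpha_m g^{m-1}) = -\alpha_0$ in $\mathrm{End}_R(B)$ shows that $g^{-1}$ is also an $R$-polynomial in $g$. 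Consequently the $R$-submodule $B^H + gB^H + \cdots + g^{m-1}B^H$ is closed under the actions of $g$, $g^{-1}$, and $H$, hence is an $RG$-submodule; since it contains $A_0 \otimes R$, which $RG$-generates $B$, it equals all of $B$. Flatness of $R$ over $\mathbb{Z}$ yields the exact sequence $0 \to T_1 \otimes R \to B^H \to M \otimes R \to 0$: the subgroup $T_1 \otimes R$ is still $\pi_1$-torsion, and $M \otimes R$ is $\pi$-minimax because $\pi_2 \subseteq \pi$ is finite (tensoring each factor in a series for $M$ with $R$ yields either a $\pi$-minimax factor or zero). The sum-of-extensions argument from the previous paragraph, applied to the finite sum $B = \sum_{k=0}^{m-1} g^k B^H$, then exhibits $B$ as an extension of a $\pi_1$-torsion group by a $\pi$-minimax group.

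Finally, the kernel of the localization map $A \to A \otimes R = B$ is precisely the $\pi_2$-torsion subgroup of $A$, so $A / (\pi_2\text{-torsion})$ embeds into $B$. Since any subgroup of an extension of $\pi_1$-torsion by $\pi$-minimax is again of this form (intersect with the $\pi_1$-torsion part; the quotient embeds into the ambient $\pi$-minimax group, and subgroups of $\pi$-minimax groups are $\pi$-minimax), pulling back shows that $A$ itself is an extension of a $(\pi_1 \cup \pi_2)$-torsion group by a $\pi$-minimax group. Taking $\pi_0 := \pi_1 \cup \pi_2$, a finite subset of $\pi$, closes the induction. The principal obstacle is the infinite-order case: the unbounded family $\{g^k A^H : k \in \mathbb{Z}\}$ of $\mathbb{Z}H$-submodules generating $A$ must be trimmed to finitely many translates, which is achievable only after inverting the finitely many primes in $\pi_2$; the resulting $\pi_2$-torsion in the kernel of localization is harmlessly absorbed into $\pi_0$.
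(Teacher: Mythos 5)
Your proof is correct and takes essentially the same route as the paper's: induction on the length of a polycyclic series, with $\pi$-integrality used to show that, after inverting finitely many primes, the localized module $A\otimes\mathbb Z[\pi_0^{-1}]$ is a finite sum of translates of the localized $\mathbb ZH$-submodule. The only differences are cosmetic: you treat the finite-order and infinite-order cyclic factors separately (the paper handles both with a single uniform argument, since the $\pi$-integral relation is available in either case), and your $\pi_0$ omits the extra set ${\rm spec}(C)$ that the paper includes, which is harmless since the required conclusion is $\pi$-minimax, not $\pi_0$-minimax.
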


\begin{proof} Our strategy is to induct on the length $r$ of a series $1=G_0\lhd G_1\lhd \cdots \lhd G_r=G$ in which each factor $G_i/G_{i-1}$ is cyclic. If $r=0$, then $A$ is plainly $\pi$-minimax.   
Suppose $r>0$, and let $B$ be a $\pi$-minimax subgroup of $A$ that generates $A$ as a $\mathbb ZG$-module. Also, take $g\in G$ such that $gG_{r-1}$ is a generator of $G/G_{r-1}$. In addition, let $C$ be the $\mathbb ZG_{r-1}$-submodule of $A$ generated by $B$. Notice that, in view of the inductive hypothesis, the underlying abelian group of $C$ is ($\pi_1$-torsion)-by-($\pi$-minimax) for some finite subset $\pi_1$ of $\pi$. 

 Let $\alpha_0, \alpha_1,\dots , \alpha_m\in \mathbb Z$ such that
$\alpha_0$ and $\alpha_m$ are $\pi$-numbers and \begin{equation} (\alpha_0+\alpha_1g+\cdots +\alpha_mg^m)\cdot a=0\end{equation} for all $a\in A$. Furthermore, let $\pi_0$ be the union of the following three sets: ${\rm spec}(C)$, $\pi_1$, and the set of prime divisors of $\alpha_0\alpha_m$. Then $\pi_0$ is a finite subset of $\pi$. Next set $U=A\otimes \mathbb Z[\pi_0^{-1}]$ and $V=C\otimes \mathbb Z[\pi_0^{-1}]$. Observe that the kernel of the canonical homomorphism $A\to U$ is a $\pi_0$-torsion group, and that $V$ is $\pi$-minimax. 
Since   
$U=\displaystyle{\sum_{i=-\infty}^{\infty} g^i\cdot V}$, equation (3.2) implies $\displaystyle{U= \sum_{i=0}^{m-1} g^i\cdot V}$. Therefore $U$ is $\pi$-minimax, yielding the conclusion of the lemma. 
\end{proof}

\begin{proof}[Proof of Proposition 3.25] Let $G_0$ be a subgroup of finite index in $G$ such that $N\leq G_0$ and $G_0/N$ is polycyclic. Then $N$ is generated as a $G_0$-group by finitely many $\pi$-minimax subgroups. Therefore $N_{\rm ab}$ is generated as a $\mathbb ZG_0$-module by a single $\pi$-minimax subgroup. Hence Lemma 3.26 implies that there is a finite subset $\pi_0$ of $\pi$ such that $N_{\rm ab}$ is an extension of a $\pi_0$-torsion group by one that is $\pi$-minimax. As a result, any tensor power of $N_{\rm ab}$ is also ($\pi_0$-torsion)-by-($\pi$-minimax). The canonical epimorphism from the $i$th tensor power of $N_{\rm ab}$ to $\gamma_iN/\gamma_{i+1}N$, then, yields that $\gamma_iN/\gamma_{i+1}N$ is an extension of the same form. 
Therefore, appealing to Lemma 3.27 below, we can argue by induction on $j$ that $\gamma_{c-j}N$ is $\pi$-minimax for all $j\geq 0$, where $c={\rm nil}\ N$. In particular, $N$ is $\pi$-minimax, and so $G$ is $\pi$-minimax.  
\end{proof}

It remains to prove the elementary lemma below, which will also find later use.

\begin{lemma} Let $\pi$ be a finite set of primes. Let $N$ be a virtually torsion-free nilpotent group. Suppose further that $N$ contains a normal $\pi$-minimax subgroup $M$ such that $N/M$ is $\pi$-torsion. Then $N$ is $\pi$-minimax.
\end{lemma}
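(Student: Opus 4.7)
The plan is to induct on $c = {\rm nil}\ N$, after first reducing to the case that $N$ is torsion-free. Let $T$ denote the torsion subgroup of $N$; since $N$ is virtually torsion-free nilpotent, $T$ is finite. Set $\bar N := N/T$ and $\bar M := MT/T$. Then $\bar N$ is torsion-free nilpotent, $\bar M$ is $\pi$-minimax as a homomorphic image of $M$, and $\bar N / \bar M \cong N/MT$ is $\pi$-torsion as a quotient of $N/M$. Since $\mathfrak{M}^\pi$ is closed under extensions by finite groups, it suffices to prove the conclusion for $\bar N$, so I would henceforth assume $N$ is torsion-free.

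For the base case $c \leq 1$, where $N$ is torsion-free abelian, I would use the localization map $N \to N \otimes \mathbb Z[\pi^{-1}]$. Torsion-freeness of $N$ makes this map injective, and the hypothesis that $N/M$ is $\pi$-torsion gives $N \otimes \mathbb Z[\pi^{-1}] = M \otimes \mathbb Z[\pi^{-1}]$. Because $M$ is torsion-free $\pi$-minimax abelian, its localization $M \otimes \mathbb Z[\pi^{-1}]$ has a finite series with each factor either trivial or $\mathbb Z[\pi^{-1}]$ (the finite and quasicyclic factors of $M$ die on tensoring), and so is itself $\pi$-minimax. Thus $N$ embeds in a $\pi$-minimax group and is $\pi$-minimax.

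For the inductive step $c \geq 2$, set $Z = Z(N)$. In a torsion-free nilpotent group the center is isolated: if $z^n \in Z$ then for every $y \in N$ we have $z^n = (y^{-1}zy)^n$, and unique root extraction forces $z = y^{-1}zy$, so $z \in Z$. Hence $N/Z$ is torsion-free nilpotent of class $c-1$. I would apply the abelian base case to the pair $(Z, Z \cap M)$, noting that $Z \cap M \leq M$ is $\pi$-minimax and $Z/(Z \cap M) \hookrightarrow N/M$ is $\pi$-torsion, to conclude that $Z$ is $\pi$-minimax. The inductive hypothesis applied to the pair $(N/Z, MZ/Z)$ then gives that $N/Z$ is $\pi$-minimax. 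Since $\mathfrak{M}^\pi$ is closed under extensions, $N$ itself is $\pi$-minimax.

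The main obstacle is more bookkeeping than mathematical difficulty: one has to check that both key hypotheses ($M$ being $\pi$-minimax and $N/M$ being $\pi$-torsion) pass cleanly to the pair $(Z, Z \cap M)$ and to the pair $(N/Z, MZ/Z)$, which works precisely because the initial reduction to the torsion-free case ensures that $Z$ is isolated and hence that $N/Z$ remains torsion-free, so the induction can run.
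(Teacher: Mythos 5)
Your proof is correct and follows the same skeleton as the paper's: induct on the nilpotency class, handle the abelian case by localizing at $\pi$, and use the center for the inductive step. The one substantive addition you make is the preliminary reduction to the torsion-free case (modding out the finite torsion subgroup $T$) together with the observation that the center of a torsion-free nilpotent group is isolated. This is a genuine improvement in rigor: the paper's inductive step applies the hypothesis to $N/Z(N)$ without checking that $N/Z(N)$ is again virtually torsion-free, which is not completely obvious when $N$ itself carries torsion. Your reduction guarantees that every group arising in the induction is actually torsion-free, so the hypothesis is visibly inherited, and the abelian base case becomes cleaner as well (the localization map is injective outright rather than having a finite kernel to absorb). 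The trade-off is an extra preliminary step; what you gain is an argument with no unstated verification.
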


\begin{proof} We induct on ${\rm nil}\ N$. Assume  ${\rm nil}\ N=1$. Then $N\otimes \mathbb Z[\pi^{-1}]\cong M\otimes \mathbb Z[\pi^{-1}]$, so that $N\otimes \mathbb Z[\pi^{-1}]$ is $\pi$-minimax. Hence $N$ is $\pi$-minimax. Suppose ${\rm nil}\ N>1$, and let $Z=Z(N)$. Then $N/Z$ and $Z$ are $\pi$-minimax by virtue of the inductive hypothesis. Thus $N$ is $\pi$-minimax. 
\end{proof}

The following consequence of Proposition 3.25 will be invoked in the proof of Theorem \ref{thm2}.

\begin{proposition}Let $\pi$ be a set of primes and $G$ a group with a normal subgroup $N$ such that the following conditions are satisfied.

{\rm (i)} $N$ is nilpotent and virtually torsion-free.

{\rm (ii)} $G/N$ is virtually polycyclic. 

{\rm (iii)} $G$ acts $\pi$-integrally on $N_{\rm ab}$.

\noindent If $H$ is a $\pi$-minimax subgroup of $N$ and $V$ is a finitely generated subgroup of $G$, then 
$\langle H, V\rangle$ is $\pi$-minimax. 
\end{proposition}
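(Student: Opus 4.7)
The plan is to apply Proposition 3.24 to $L := \langle H, V\rangle$ with normal subgroup $K := L \cap N$, and thereby deduce that $L$ is $\pi$-minimax. Hypotheses (i) and (ii) for the pair $(L,K)$ will be immediate: $K$ is a subgroup of $N$, hence nilpotent and virtually torsion-free; and $L/K \cong LN/N$ is a finitely generated subgroup of the virtually polycyclic group $G/N$, so it too is virtually polycyclic. Hypothesis (iii) will follow from Lemma 3.23(ii), since $L \leq G$ inherits the $\pi$-integral action on $N_{\rm ab}$ and $K$ is an $L$-subgroup of the nilpotent group $N$.

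The real work will be in verifying hypothesis (iv): exhibiting a $\pi$-minimax subgroup $T$ of $K$ whose normal closure in $L$ is all of $K$. The key structural observation I would make concerns $V$ itself. Since $V$ is finitely generated and $V/(V \cap N)$, being a finitely generated subgroup of the virtually polycyclic group $G/N$, is virtually polycyclic and hence finitely presented, the subgroup $V \cap N$ must be finitely generated as a normal subgroup of $V$. I would then choose elements $k_1,\dots,k_s \in V \cap N$ whose $V$-normal closure equals $V \cap N$; the subgroup $B := \langle k_1,\dots,k_s\rangle$ is then a finitely generated nilpotent, hence polycyclic, subgroup of $N$.

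Set $T := \langle H, B\rangle$, a subgroup of the nilpotent group $N$ generated by the $\pi$-minimax subgroup $H$ together with the polycyclic subgroup $B$. Such a join is $\pi$-minimax, by the usual commutator calculus in a nilpotent group: each iterated commutator subgroup of bounded weight is a homomorphic image of a tensor product involving the $\pi$-minimax abelian groups $H_{\rm ab}$ and $B_{\rm ab}$, hence is itself $\pi$-minimax, and $T$ decomposes into finitely many such layers. To see that the normal closure $T^L$ equals $K$, let $M$ denote the normal closure of $H$ in $L$; then $L = \langle H,V\rangle = \langle M,V\rangle = MV$ because $M$ is normal in $L$, and an elementary check gives $K = L \cap N = M(V \cap N)$. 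On the other hand $T^L$ contains both $H^L = M$ and the $V$-closure $\{k_1,\dots,k_s\}^V = V \cap N$, so $T^L \supseteq M(V \cap N) = K$, while the reverse inclusion is automatic. With (iv) in place, Proposition 3.24 will deliver the conclusion.

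The main obstacle will be hypothesis (iv), and specifically the point that although $V \cap N$ need not be finitely generated as an abstract group, it is finitely generated as a normal subgroup of $V$. This is what allows the passage from the potentially ill-behaved $V \cap N$ to the polycyclic subgroup $B$ and, together with the commutator-calculus argument for joins in nilpotent groups, produces the required $\pi$-minimax subgroup $T$.
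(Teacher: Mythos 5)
Your proof is correct and follows essentially the same route as the paper's: both exploit the finite presentation of $V/(V\cap N)$ to see that $V\cap N$ is finitely generated as a $V$-group, form the join of $H$ with a polycyclic piece of $V\cap N$ to get a $\pi$-minimax generating subgroup, and then feed the result into Proposition 3.24 via Lemma 3.23(ii). The only cosmetic difference is that the paper defines the normal nilpotent subgroup bottom-up as the $V$-closure of that join and checks the quotient is virtually polycyclic, whereas you take $K = \langle H,V\rangle \cap N$ directly and verify it equals that closure via the Dedekind identity; the two subgroups coincide and the rest of the argument is identical.
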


For the proof of Proposition 3.28, we need the following simple observation.

\begin{lemma} Let $\pi$ be a set of primes and $N$ a nilpotent group. If $H$ and $K$ are $\pi$-minimax subgroups of $N$, then $\langle H, K\rangle$ is $\pi$-minimax. 
\end{lemma}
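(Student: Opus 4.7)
My plan is to argue by induction on the nilpotency class $c$ of $L := \langle H, K\rangle$, which is nilpotent as a subgroup of $N$.

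In the base case $c \leq 1$, the group $L$ is abelian, so $L = HK$ is a homomorphic image of the external direct product $H \oplus K$. Since $\mathfrak{M}^{\pi}$ is closed under finite direct products and under homomorphic images, $L$ lies in $\mathfrak{M}^{\pi}$.

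For the inductive step, suppose $c \geq 2$. The quotient $L/\gamma_c L$ has nilpotency class $c-1$, is generated by the (still $\pi$-minimax) images of $H$ and $K$, and sits inside the nilpotent group $N/\gamma_c L$; by the inductive hypothesis, $L/\gamma_c L$ is $\pi$-minimax. Because $\mathfrak{M}^{\pi}$ is closed under extensions, it will suffice to prove that the central subgroup $\gamma_c L$ is $\pi$-minimax.

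To that end, I would exploit the fact that $\gamma_{c+1} L = 1$, which forces the iterated commutator map $(x_1, \ldots, x_c) \mapsto [x_1, x_2, \ldots, x_c]$ from $L \times \cdots \times L$ onto $\gamma_c L$ to be $\mathbb{Z}$-multilinear modulo $\gamma_2 L$ in each slot. It therefore descends to a surjection $L_{\rm ab}^{\otimes c} \twoheadrightarrow \gamma_c L$. The abelianization $L_{\rm ab}$ is generated by the images of $H$ and $K$ and hence is $\pi$-minimax by the base case applied inside the abelian group $L_{\rm ab}$.

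The main obstacle, and the technical heart of the argument, is to verify that the class of $\pi$-minimax abelian groups is closed under tensor products over $\mathbb{Z}$. I would establish this by reducing, via the defining series of a $\pi$-minimax abelian group, to computing the tensor products of pairs of factors of the three possible types $\mathbb{Z}$, $\mathbb{Z}/n$, and $\mathbb{Z}(p^{\infty})$ with $p \in \pi$: each such pairwise tensor product is easily seen to be either trivial or a $\pi$-minimax abelian group, and an exact-sequence argument then propagates the conclusion to arbitrary $\pi$-minimax factors. Once this closure property is in hand, $L_{\rm ab}^{\otimes c}$ is $\pi$-minimax, so its quotient $\gamma_c L$ is $\pi$-minimax, and the induction closes.
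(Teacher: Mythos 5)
Your argument is correct and follows essentially the same route as the paper's proof, which simply observes that $L_{\rm ab}$ is generated by the images of $H$ and $K$ (hence $\pi$-minimax) and then invokes, without spelling it out, the standard fact that a nilpotent group with $\pi$-minimax abelianization is itself $\pi$-minimax. The induction on class together with the surjections $L_{\rm ab}^{\otimes c}\twoheadrightarrow\gamma_c L/\gamma_{c+1}L$ and the closure of $\pi$-minimax abelian groups under $\otimes_{\mathbb Z}$ is exactly the content hidden behind that last assertion.
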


\begin{proof} Set $L=\langle H, K\rangle$. Then $L_{\rm ab}$ is generated by the images of $H$ and $K$.
Thus $L_{\rm ab}$ is $\pi$-minimax, which implies that $L$ is $\pi$-minimax.
\end{proof}

\begin{proof}[Proof of Proposition 3.28]
Put $U=\langle H, V\rangle$. We will describe $U$ in a form that permits us to apply Proposition 3.25. First we observe that, as a virtually polycyclic group, the quotient $V/(N\cap V)$ is finitely presented. Hence $N\cap V$ is generated as a $V$-group by a finite set $S$. Take $L$ to be the subgroup of $N$ generated by the set $H\cup S$. Then Lemma 3.29 implies that $L$ is $\pi$-minimax. 
Next define $K$ to be the $V$-subgroup of $N$ generated, as a $V$-group, by $L$. Then $U=KV$.
Hence, because $N\cap V\leq K$, the quotient $U/K$ is virtually polycyclic.  In addition, Lemma 3.24(ii) yields that $U$ acts $\pi$-integrally on $K_{\rm ab}$. Consequently,  $U$ is seen to be $\pi$-minimax by invoking Proposition 3.25.  
\end{proof}

\section{Preliminary results about topological groups}

As explained in \S 2.1, the proof of Theorem 1.13 will utilize a dense embedding of a nilpotent $\mathfrak{M}$-group in a topological group belonging to the class $\mathfrak{N}_p$, defined below.  In the present section, we establish the properties of groups in this class required for the proof of Theorem 1.13. The culmination of this discussion are two factorization results for topological groups with a normal $\mathfrak{N}_p$-subgroup (Propositions 4.15 and 4.20). 

\subsection{Introducing the class $\mathfrak{N}_p$}

\begin{definition} {\rm For any prime $p$, we define $\mathfrak{N}_p$ to be the class of all nilpotent topological groups $N$ that have a series 

\begin{equation} 1=N_0\unlhd  N_1\unlhd\cdots \unlhd N_r=N\end{equation}

\noindent such that, for every $i\geq 1$, $N_i/N_{i-1}$ is of one of the following two types with respect to the quotient topology: 

(i) a cyclic pro-$p$ group; or 

(ii) a discrete quasicyclic $p$-group. 

}
 
\end{definition}

Notice that the compact groups in $\mathfrak{N}_p$ are exactly the polycyclic pro-$p$ groups. Also, every discrete $\mathfrak{N}_p$-group is a \v{C}ernikov $p$-group (see Proposition 1.3(iii)). 
Some more properties of $\mathfrak{N}_p$-groups are described in Lemma 4.2.

\begin{lemma} Let $p$ be a prime and $N$ a topological group in $\mathfrak{N}_p$. Then the following eight statements are true.
\begin{enumerate*}

\item $N$ is locally compact, totally disconnected, $\sigma$-compact, and locally elliptic.

\item  Every open subgroup of $N$ belongs to $\mathfrak{N}_p$.

\item If $U$ is an open normal subgroup of $N$, then $N/U$ belongs to $\mathfrak{N}_p$. 

\item Every compact subgroup of $N$ belongs to $\mathfrak{N}_p$.  

\item Every closed subgroup of $N$ belongs to $\mathfrak{N}_p$.

\item If $M$ is a closed normal subgroup of $N$, then $N/M$ belongs to $\mathfrak{N}_p$.

\item If $G$ is a totally disconnected, locally compact group and $\phi:N\to G$ is a continuous homomorphism, then ${\rm Im}\ \phi$ is closed in $G$. 

\item If $H$ and $K$ are closed subgroups of $N$ with $K\unlhd N$, then $HK$ is also closed. 
\end{enumerate*}
\end{lemma}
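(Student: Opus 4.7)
My plan is to prove all eight parts by induction on the length $r$ of a series (4.1) exhibiting $N\in\mathfrak{N}_p$, exploiting the very restricted structure of the two allowed factor types. The base case $r=0$ is trivial throughout. For (i), the properties of being locally compact, totally disconnected, and $\sigma$-compact are preserved by extensions with closed normal subgroups; local ellipticity is handled by combining a compact open subgroup of $N_{r-1}$ (from the inductive hypothesis) with either a preimage of an open subgroup in a compact pro-$p$ quotient, or, in the discrete-factor case, finitely many cosets of $N_{r-1}$ (which is then open in $N$), to enlarge any prescribed compact set into a compact open subgroup. Items (ii)--(vi) all proceed by intersecting the subgroup in question with the series and checking that each factor $H\cap N_i/H\cap N_{i-1}$ (or the analogous quotient for (iii) and (vi)) embeds as an open, closed, or quotient subgroup of one of the allowed types; this reduces to the observations that closed and open subgroups, as well as closed quotients, of a cyclic pro-$p$ group are again cyclic pro-$p$, and that the proper subgroups of $\mathbb{Z}(p^\infty)$ are finite cyclic $p$-groups while its quotients are again quasicyclic.

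The main obstacle is (vii), which I would also prove by induction on $r$. By the inductive hypothesis, $\phi(N_{r-1})$ is closed in $G$. Since $N_{r-1}\unlhd N$, the normalizer of $\phi(N_{r-1})$ in $G$ is a closed subgroup containing $\phi(N)$, hence contains $\overline{\phi(N)}$, so $\phi(N_{r-1})\unlhd\overline{\phi(N)}$ and the quotient $\overline{\phi(N)}/\phi(N_{r-1})$ is totally disconnected and locally compact. The induced continuous map $N/N_{r-1}\to\overline{\phi(N)}/\phi(N_{r-1})$ has dense image; showing this image is closed will force it to equal the whole quotient, yielding $\phi(N)=\overline{\phi(N)}$. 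The compact case (cyclic pro-$p$) is immediate since compact sets are closed in Hausdorff spaces. For the discrete quasicyclic case, I would reduce to the following auxiliary claim: any continuous homomorphism $\psi:\mathbb{Z}(p^\infty)\to G'$ into a totally disconnected locally compact group has closed image. To establish it, let $H=\psi(\mathbb{Z}(p^\infty))$ and $\bar H$ its closure (abelian, since $H$ is), apply van Dantzig's theorem to extract a compact open subgroup $K\leq\bar H$, and observe that $K$ is profinite. Since a divisible group admits no nontrivial homomorphism into any finite group, every continuous homomorphism from $\mathbb{Z}(p^\infty)$ into a profinite group is trivial; hence if $H\subseteq K$ then $H=\bar H=1$. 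Otherwise $H\cap K$ is a proper, and hence finite, subgroup of $H\cong\mathbb{Z}(p^\infty)$; density of $H$ in $\bar H$ together with openness of $K$ forces $H\cap K$ to be dense in $K$, so $K$ is finite. Since $HK$ is then simultaneously open and closed (its complement is a union of open cosets of $K$) and dense in $\bar H$, we deduce $HK=\bar H$; combined with $K\subseteq H$ (the dense finite subgroup of $K$ equals $K$), this gives $\bar H=HK=H$. The key delicacy is marrying van Dantzig with divisibility in this last step.

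Finally, (viii) follows by combining (v), (vi), and (vii): the subgroup $H$ belongs to $\mathfrak{N}_p$ by (v), the quotient $N/K$ belongs to $\mathfrak{N}_p$ by (vi) (and is totally disconnected and locally compact by (i)), and the composition $H\hookrightarrow N\twoheadrightarrow N/K$ is a continuous homomorphism from an $\mathfrak{N}_p$-group into a totally disconnected locally compact group. By (vii) its image $HK/K$ is closed in $N/K$, so $HK$, as the preimage under the continuous quotient map $N\to N/K$, is closed in $N$.
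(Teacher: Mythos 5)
Your treatment of (i), (ii), (iii), (iv) is fine and is essentially what the paper does (induction on the length of the series (4.1), using that quotient maps send open sets to open sets and compact sets to compact sets). Your (vii) is a correct argument and is essentially the paper's: the paper also reduces to the top factor $N/N_{r-1}$ and, in the quasicyclic case, shows that $\overline{\phi(N)}/\phi(N_{r-1})$ is discrete by using van Dantzig together with the fact that a divisible group maps trivially into a profinite group; your auxiliary-claim formulation is equivalent. Your (viii) is verbatim the paper's derivation from (v), (vi), (vii).

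However, your proposed proof of (v) and (vi) has a genuine gap, and it is precisely the gap that forces the paper to take a different route for these two items. You want to intersect a closed subgroup $H$ with the defining series (4.1) and claim that each factor $(H\cap N_i)/(H\cap N_{i-1})$ is topologically one of the allowed types. This is unproblematic when $N_i/N_{i-1}$ is finite (compact Hausdorff, so the continuous bijection onto the image is a homeomorphism) or when $N_{i-1}$ is open in $N_i$ (the quotient is discrete). But when $N_i/N_{i-1}\cong\mathbb Z_p$ and $N_{i-1}$ is not compact — e.g. when a quasicyclic factor sits below level $i$ in the series, as in $0\unlhd\mathbb Z_p\unlhd\mathbb Q_p\unlhd\mathbb Q_p\oplus\mathbb Z_p$ — the canonical continuous injection $(H\cap N_i)/(H\cap N_{i-1})\hookrightarrow N_i/N_{i-1}$ need not be a topological embedding onto a closed subgroup, because the quotient map $N_i\to N_i/N_{i-1}$ is no longer proper and there is a priori no reason the image of the closed set $H\cap N_i$ should be closed. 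Indeed, the assertion that $(H\cap N_i)N_{i-1}$ is closed is exactly an instance of (viii), which you only prove \emph{after} (v) and whose proof uses (v); so the argument is either incomplete or circular. The same issue infects (vi): to make sense of $N_iM/N_{i-1}M$ as a Hausdorff factor you need $N_iM$ closed, which again is (viii).

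The paper avoids this by not intersecting with (4.1) at all. Instead it fixes a compact open subgroup $C$ of $N$ (which exists by the local ellipticity established in (i)), uses nilpotency of $N$ to get a subnormal chain $C=M_0\unlhd M_1\unlhd\cdots\unlhd M_s=N$ of \emph{open} subgroups, and observes via (ii) and (iii) that every factor $M_i/M_{i-1}$ is a discrete \v Cernikov $p$-group. Now for a closed subgroup $H$, each $H\cap M_{i-1}$ is open in $H\cap M_i$, so the factors $(H\cap M_i)/(H\cap M_{i-1})$ are automatically discrete (no need to know the image is closed in $M_i/M_{i-1}$), hence \v Cernikov $p$-groups, while the bottom term $H\cap C$ is compact and therefore a polycyclic pro-$p$ group by (iv). Splicing gives $H\in\mathfrak N_p$. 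The crucial point you are missing is this replacement of the arbitrary series (4.1) by a subnormal series lying above a compact open subgroup, which makes all of the problematic ``upper'' factors discrete and lets (iv) handle the compact bottom.
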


\begin{proof}

To show (i), observe that cyclic pro-$p$ groups and discrete quasicyclic $p$-groups enjoy these four properties. 
Moreover, these properties are preserved by extensions of locally compact groups; for the first three, see  [{\bf 28}, Theorem 6.15], and for the last, see  [{\bf 22}, Theorem 2] or [{\bf 4}, Proposition 4.D.6(2)]. It follows, then, that (i) is true. 

Statements (ii), (iii), and (iv) plainly hold if $N$ is a cyclic pro-$p$ group or a discrete quasicyclic $p$-group. The general cases can then be proved by inducting on the length of the series (4.1) and employing the fact that images under quotient maps of open (compact) sets are open (compact).

To prove (v) and (vi), we fix a compact open subgroup $C$ of $N$, and, exploiting the subnormality of $C$, choose a subgroup series 

$$C=M_0\unlhd M_1\unlhd\cdots \unlhd M_s=N.$$ 

\noindent Because $C$ is open, so are the other subgroups in this series. Thus, by (ii), each $M_i$ is a member of $\mathfrak{N}_p$. Also, (iii) implies that $M_i/M_{i-1}$ is a \v{C}ernikov $p$-group for $0\leq i\leq s$. Taking $H$ to be a closed subgroup of $N$, consider the series

$$H\cap C=(H\cap M_0)\unlhd (H\cap M_1)\unlhd\cdots \unlhd (H\cap M_s)=H,$$ 

\noindent in which every subgroup is open in $H$ and each quotient $(H\cap M_i)/(H\cap M_{i-1})$ is a \v{C}ernikov $p$-group. Since $H\cap C$ is compact, it is a polycyclic pro-$p$ group. It follows, then, that $H$ belongs to $\mathfrak{N}_p$. This proves (v).   
 
For (vi), let $\epsilon:N\to N/M$ be the quotient map. The class of polycyclic pro-$p$ groups is closed under forming continuous Hausdorff homomorphic images. Thus $\epsilon(C)$ is a polycyclic pro-$p$ group. Furthermore, $\epsilon(M_i)$ is an open subgroup of $N/M$ for $0\leq i\leq s$. In addition, each quotient $\epsilon(M_i)/\epsilon(M_{i-1})$ is a \v{C}ernikov $p$-group. Therefore $N/M$ is a member of $\mathfrak{N}_p$. 

To prove (vii), we induct on the length of the series (4.1). Since $G$ is Hausdorff, the statement is true for $r=0$.  Suppose $r>0$, and let $L=N_{r-1}$. 
Then $\phi(L)$ is closed in $G$. Let $A=\overline{\phi(N)}/\phi(L)$, and let $\psi: N/L\to A$ be the homomorphism induced by $\phi$.  Our goal is to prove that ${\rm Im}\ \psi$ is closed in $A$, which will imply the desired conclusion. If $N/L$ is compact, this is immediate. Assume, then, that $N/L$ is not compact, which means that it is quasicyclic.  We will argue now that $A$ must be discrete. Suppose that this is not true. Then $A$ has an infinite compact open subgroup $B$. Since ${\rm Im}\ \psi$ is dense in $A$, the subgroup $B\cap {\rm Im}\ \psi$ must be infinite. Consequently, $\psi^{-1}(B)$ is infinite and hence equal to $N/L$. But this is impossible because $B$ is profinite and there are no nontrivial homomorphisms from a quasicyclic group to a profinite group. Therefore $A$ must be discrete, so that ${\rm Im}\ \psi$ is closed in $A$.  

Lastly, we establish assertion (viii). According to (v), $H$ belongs to $\mathfrak{N}_p$. Hence (vii) implies that $HK/K$ is closed in $N/K$.
Therefore $HK$ is closed in $N$.
\end{proof}

\begin{acknowledgement}{\rm Property (vii) in Lemma 4.2 and its proof were provided to the authors by an anonymous referee.}
\end{acknowledgement}

Lemma 4.2(viii) leads to a version of the Schreier refinement theorem for $\mathfrak{N}_p$-groups.

\begin{lemma}Let $p$ be a prime. In an $\mathfrak{N}_p$-group, any two series of closed subgroups have isomorphic refinements that also consist of closed subgroups. 
\end{lemma}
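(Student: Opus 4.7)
The plan is to adapt the classical proof of Schreier's refinement theorem, which proceeds via Zassenhaus's butterfly lemma, while verifying at each stage that all subgroups produced by the construction remain closed. Abstractly, given two series
$$1 = H_0 \unlhd H_1 \unlhd \cdots \unlhd H_r = N \quad \text{and} \quad 1 = K_0 \unlhd K_1 \unlhd \cdots \unlhd K_s = N$$
of closed subgroups of an $\mathfrak{N}_p$-group $N$, one refines the first series by inserting, between $H_{i-1}$ and $H_i$, the chain of subgroups of the form $H_{i-1}(H_i \cap K_j)$ for $j = 0, 1, \ldots, s$, and one refines the second series symmetrically. Zassenhaus's lemma then supplies the required isomorphisms between corresponding factors.

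The one thing that needs verification in our setting is that each refining subgroup $H_{i-1}(H_i \cap K_j)$ is closed in $N$; this is exactly where the hypothesis that $N$ belongs to $\mathfrak{N}_p$ enters. First, $H_i \cap K_j$ is closed in $N$, being the intersection of two closed subgroups. Second, $H_i$ itself, being closed in $N$, belongs to $\mathfrak{N}_p$ by Lemma 4.2(v). Inside $H_i$, the subgroup $H_{i-1}$ is closed and normal, and $H_i \cap K_j$ is closed; thus Lemma 4.2(viii), applied within the $\mathfrak{N}_p$-group $H_i$, yields that $H_{i-1}(H_i \cap K_j)$ is closed in $H_i$, and therefore closed in $N$. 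The refinements of the second series are handled identically.

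Once closedness of all inserted subgroups is established, the abstract isomorphism of the corresponding factors provided by Zassenhaus's lemma delivers the desired conclusion. The only real obstacle is the closedness verification just outlined; the remainder is a purely algebraic repetition of the classical argument, which requires no modification. If one wanted the isomorphism to be topological (which the statement does not demand), one could additionally invoke the open mapping theorem, available because $\mathfrak{N}_p$-groups are $\sigma$-compact and locally compact by Lemma 4.2(i), but this is not needed for the stated result.
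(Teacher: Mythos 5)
Your proof is correct and takes essentially the same approach as the paper: run the classical Zassenhaus/Schreier argument and invoke Lemma 4.2(viii) to ensure the refining subgroups stay closed. You are slightly more careful than the paper in noting that 4.2(viii) should be applied inside $H_i$ (which is an $\mathfrak{N}_p$-group by 4.2(v)) rather than inside $N$, since $H_{i-1}$ need only be normal in $H_i$; this is a worthwhile precision.
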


\begin{proof} We can prove the lemma with the same reasoning employed in \cite{robinson2} to establish Schreier's famous result. The argument there relies on the ``Zassenhaus lemma" to construct the refinements. For our purposes, all that we need to verify is that the subgroups obtained in this manner are closed. This property, however, is ensured by Lemma 4.2(viii).
\end{proof}

Lemma 4.3 has the following consequence. 

\begin{corollary} Let $p$ be a prime, and let $N$ be a topological group in the class $\mathfrak{N}_p$.  In any series in $N$ whose factors are all cyclic pro-$p$ groups or quasicyclic $p$-groups, the number of infinite pro-$p$ factors and the number of infinite factors are both invariants. 
\end{corollary}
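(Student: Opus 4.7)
The plan is to combine the topological Schreier refinement theorem (Lemma 4.3) with a direct inspection of the closed subgroup lattices of the three possible factor types. Fix two series $\mathcal{S}_1$ and $\mathcal{S}_2$ of the form specified in the statement. Lemma 4.3 furnishes isomorphic refinements $\mathcal{T}_1$ and $\mathcal{T}_2$ of $\mathcal{S}_1$ and $\mathcal{S}_2$ through closed subgroups. Since isomorphic series share the same multiset of isomorphism types of nontrivial factors, $\mathcal{T}_1$ and $\mathcal{T}_2$ produce the same number of factors isomorphic to $\mathbb Z_p$. It therefore suffices to show that the number of such infinite pro-$p$ factors is unchanged on passing from $\mathcal{S}_i$ to its refinement $\mathcal{T}_i$.

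The heart of the argument is a local, single-step calculation. Suppose $A \unlhd B$ is a step of $\mathcal{S}_i$ with $B/A$ of one of the three allowed types, and let $A = L_0 \unlhd L_1 \unlhd \cdots \unlhd L_k = B$ be a refinement by closed subgroups of $N$. Quotienting by $A$, this amounts to analysing closed subgroup chains in $B/A$. If $B/A$ is finite cyclic, then every factor in such a chain is finite cyclic. If $B/A$ is discrete quasicyclic, then its closed subgroups are either finite cyclic $p$-groups or all of $B/A$; a chain therefore has factors that are finite cyclic except for (at most) one which is quasicyclic. If $B/A \cong \mathbb Z_p$, the closed subgroups are precisely $0$ and the $p^n\mathbb Z_p$ (each isomorphic to $\mathbb Z_p$), while $p^n\mathbb Z_p/p^m\mathbb Z_p$ is finite cyclic for $m>n$; hence a proper chain $0 = C_0 \subsetneq C_1 \subsetneq \cdots \subsetneq C_k = \mathbb Z_p$ yields $C_1 \cong \mathbb Z_p$ and finite cyclic quotients thereafter. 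Consequently, refining a step contributes exactly one $\mathbb Z_p$ factor when $B/A \cong \mathbb Z_p$ and none otherwise, so the total count of infinite pro-$p$ factors is preserved when passing to a refinement.

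Chaining the invariance $\mathcal{S}_1 \leadsto \mathcal{T}_1 \cong \mathcal{T}_2 \leadsto \mathcal{S}_2$ yields the desired equality. The only point requiring attention is that Lemma 4.3 actually delivers a refinement through closed subgroups; this is built into its statement, ultimately resting on Lemma 4.2(viii), which ensures that the products $L_{j-1}(L_j \cap M)$ produced by the Zassenhaus construction are closed. The case analysis above is elementary, so I expect no further obstacle.
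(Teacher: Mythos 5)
Your proof is correct and follows the same route as the paper: invoke Lemma~4.3 (Schreier refinement through closed subgroups) and then observe that refining preserves the count of $\mathbb Z_p$ factors. The paper compresses the second step into a single sentence, whereas you spell out the local case analysis of closed subgroup chains in a finite cyclic $p$-group, a quasicyclic $p$-group, and a copy of $\mathbb Z_p$; this is exactly the verification the paper leaves to the reader, and you carry it out accurately.
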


\begin{proof} Forming a refinement of such a series with closed subgroups fails to affect the number of factors that are isomorphic to $\mathbb Z_p$, as well as the number of infinite factors.  Hence the conclusion follows immediately from the above lemma.
\end{proof}
 
Corollary 4.4 permits us to make the following definition.

\begin{definition} {\rm Let $N$ be a group in $\mathfrak{N}_p$. The {\it $p$-Hirsch length} of $N$, denoted $h_p(N)$, is the number of factors isomorphic to $\mathbb Z_p$  in any series of finite length in which each factor is either a cyclic pro-$p$ group or a quasicyclic group.}

\end{definition} 

In our next lemma, we examine subgroups of finite index in $\mathfrak{N}_p$-groups.

\begin{lemma} Let $p$ be a prime and $N$ an $\mathfrak{N}_p$-group. Then the following four statements hold. 
\begin{enumerate*}

\item Every subgroup of finite index in $N$ is open.  

\item For each natural number $m$, $N$ possesses only finitely many subgroups of index $m$.  

\item $R(N)$ is closed and $N/R(N)$ compact.

\item $R(N)$ is radicable.

\end{enumerate*}

\end{lemma}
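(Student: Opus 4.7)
The plan is to establish (i), (iii), (ii), and (iv) in that order, drawing on the properties already collected in Lemma 4.2 together with Serre's theorem that every finite-index subgroup of a topologically finitely generated pro-$p$ group is open.

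For (i), I would exploit the local ellipticity of $N$ (Lemma 4.2(i)) to choose a compact open subgroup $C$, which by definition is polycyclic pro-$p$ and hence topologically finitely generated. Given a finite-index subgroup $H$, the intersection $H\cap C$ has finite index in $C$, so Serre's theorem makes it open in $C$; hence $H$ contains an open subgroup and is itself open.

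Statement (iii) follows quickly from (i). On the one hand, $R(N)$ is an intersection of open (by (i)) subgroups, hence closed. On the other hand, a short induction on the series $(4.1)$ shows every finite quotient of $N$ is a $p$-group, so the abstract profinite completion $\hat N$ coincides with the pro-$p$ completion, and the canonical map $c\colon N\to\hat N$ is continuous with kernel $R(N)$. Lemma 4.2(vii) then makes $c(N)$ a closed, hence compact, subgroup of $\hat N$, and the open mapping theorem (applicable because $N$ is $\sigma$-compact and locally compact by Lemma 4.2(i)) identifies $N/R(N)$ homeomorphically with $c(N)$. With (iii) available, (ii) is almost immediate: every finite-index subgroup of $N$ contains $R(N)$, so subgroups of $N$ of index $m$ correspond to subgroups of $N/R(N)$ of index $m$. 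Since $N/R(N)$ is a topologically finitely generated pro-$p$ group, it admits only finitely many continuous homomorphisms to any given finite $p$-group (each being determined by its values on a fixed finite topological generating set), which bounds the number of open normal subgroups of each index; taking normal cores controls the open subgroups, and by (i) these are precisely the finite-index ones.

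The main obstacle is (iv). My approach is first to show that $R(N)$ itself has no proper subgroup of finite index, and then to pass to the abelianization via Lemma 3.9(i). Suppose for contradiction that $H$ is a proper finite-index subgroup of $R(N)$; after replacing $H$ by the intersection of its (finitely many) $N$-conjugates I may assume $H\unlhd N$. Applying (i) to the $\mathfrak{N}_p$-group $R(N)$ makes $H$ open in $R(N)$, hence closed in $R(N)$, and therefore closed in $N$. The quotient $N/H$ then belongs to $\mathfrak{N}_p$ by Lemma 4.2(vi) and is compact, being an extension of the compact group $N/R(N)$ from (iii) by the finite group $R(N)/H$; it is thus a polycyclic pro-$p$ group with $R(N/H)=1$. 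Any finite-index subgroup of $N/H$, however, pulls back to a finite-index subgroup of $N$ containing $R(N)$, so the image of $R(N)$ in $N/H$ lies in $R(N/H)=1$, forcing $R(N)\subseteq H$ and the contradiction $H=R(N)$. The no-proper-finite-index property passes to $R(N)_{\mathrm{ab}}$, and an abelian group of that kind must be divisible (for each $n$, the bounded-exponent quotient $A/nA$ is a direct sum of cyclic groups and hence has a finite cyclic quotient unless it is trivial). Lemma 3.9(i) then delivers the radicability of $R(N)$.
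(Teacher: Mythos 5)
Your argument is correct, and it reaches the same four conclusions in a logically sound order, but the two foundational steps take routes genuinely different from the paper's. For (i), you invoke Serre's theorem that finite-index subgroups of a topologically finitely generated pro-$p$ group are open, applied to a compact open polycyclic pro-$p$ subgroup $C$, and then propagate openness from $H\cap C$ to $H$. The paper instead observes that the property ``every subgroup of finite index is open'' is inherited by extensions of locally compact groups and holds in cyclic pro-$p$ and quasicyclic $p$-groups, so it climbs directly up the series $(4.1)$. The paper's route is more elementary and self-contained (no appeal to Serre's theorem is needed); yours is shorter once Serre is granted, and has the pedagogical virtue of making visible exactly why compact open subgroups matter. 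For (iii), you show the profinite completion map $c^N$ has closed image by invoking Lemma 4.2(vii), and then conclude $N/R(N)\cong c^N(N)$ is compact via the open mapping theorem; the paper instead establishes that $c^N$ is surjective (dense image combined with closed image would do the same work, and you could have noted density explicitly) using right-exactness of the profinite completion functor, and reads compactness off that. Both are legitimate; the paper's version also immediately yields that $\hat N$ itself is a polycyclic pro-$p$ group, which it then reuses for (ii), whereas you argue (ii) via $N/R(N)$ directly — a cosmetic difference. Your proofs of (ii) and (iv) coincide with the paper's in all essentials: counting homomorphisms from a topologically finitely generated pro-$p$ group to finite $p$-groups for (ii), and for (iv) showing $R(N)$ has no proper finite-index subgroup via the normal core (using (ii) to guarantee finitely many conjugates) and passing to $R(N)_{\rm ab}$ via Lemma 3.9(i). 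One small remark: your justification that an abelian group with no proper finite-index subgroup is divisible correctly appeals to Prüfer's theorem on bounded-exponent abelian groups, a step the paper leaves implicit.
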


\begin{proof} It is straightforward to see that the property that every subgroup of finite index is open is preserved by extensions of locally compact groups. Thus, since this property holds for any cyclic pro-$p$ group and any quasicyclic $p$-group, assertion (i) must be true. 

Next we jump ahead to prove assertion (iii).  First we observe that (i) implies that the profinite completion map $c^N:N\to \hat{N}$ is continuous. Hence (iii) will follow if we show that $c^N$ is surjective. 
Since the profinite completion functor is right-exact, the property of having a surjective profinite completion map is preserved by extensions. Moreover, this property holds for both cyclic pro-$p$ groups and quasicyclic $p$-groups.  Therefore  $c^N $ is surjective. 

Now we prove (ii). Since $c^N:N\to \hat{N}$ is surjective and continuous, $\hat{N}$ must be a polycyclic pro-$p$ group and thus finitely generated as a topological group. As a consequence, the set ${\rm Hom}(N,F)$ is finite for any finite group $F$. Statement (ii), then, follows. 

We establish assertion (iv) by demonstrating that $R:=R(N)$ fails to have any proper subgroups of finite index. This will then imply that $R_{\rm ab}$ must be a divisible abelian group, which will allow us to conclude from Lemma 3.9(i) that $R$ is radicable. Let $H$ be a subgroup of $R$ with finite index.  According to statement (ii), $H$ has only finitely many conjugates in $N$. If we form the intersection of these conjugates, we obtain a normal subgroup $K$ of $N$ such that $K\leq H$ and $[R:K]<\infty$. This means that $N/K$ is a polycyclic pro-$p$ group. Hence $R\leq K$, so that $R=H$. Therefore $R$ has no proper subgroups of finite index. 
\end{proof}

\subsection{Abelian $\mathfrak{N}_p$-groups}

We now focus our attention on the abelian groups in $\mathfrak{N}_p$. First we prove that they are topological $\mathbb Z_p$-modules. 

\begin{lemma} Let $p$ be a prime and $A$ an abelian group in $\mathfrak{N}_p$.

\begin{enumerate*}

\item There is a unique topological $\mathbb Z_p$-module structure on $A$ that extends its $\mathbb Z$-module structure. 

\item  A subgroup $B$ of $A$ is closed if and only if it is a $\mathbb Z_p$-submodule. 

\item If $B$ is an abelian $\mathfrak{N}_p$-group, then a map $\phi:A\to B$ is a continuous homomorphism if and only if it is a $\mathbb Z_p$-module 
homomorphism. 
\end{enumerate*}

\end{lemma}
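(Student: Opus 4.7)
The plan is as follows.

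For part (i), I would extend the $\mathbb Z$-action to $\mathbb Z_p$ by completion. The key observation is that, for any compact open subgroup $U$ of $A$, the quotient $A/U$ is a discrete abelian group in $\mathfrak{N}_p$ (by Lemma 4.2(iii)), and an induction on the length of a series of type (4.1) shows that such a discrete group is a $p$-torsion group (the image of each cyclic pro-$p$ factor in a discrete quotient is the image of a compact group in a discrete group, hence finite and of $p$-power order). Consequently, for every $a\in A$ and every compact open subgroup $U$, one has $p^k a\in U$ for all sufficiently large $k$, so the homomorphism $\mathbb Z\to A$, $n\mapsto na$, is continuous when $\mathbb Z$ carries the $p$-adic topology. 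Since $A$ is locally compact Hausdorff (Lemma 4.2(i)), it is complete, so this homomorphism extends uniquely to a continuous map $\mathbb Z_p\to A$. Defining $z\cdot a$ as the image of $z$ and verifying module axioms by continuity and density of $\mathbb Z$ in $\mathbb Z_p$ completes the construction. Uniqueness is immediate since any topological $\mathbb Z_p$-module structure extending the $\mathbb Z$-action must agree with this one by density.

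For part (ii), the direction ``closed $\Rightarrow$ $\mathbb Z_p$-submodule'' follows by passing to limits: if $B$ is closed, $b\in B$, and $n_i\to z$ with $n_i\in\mathbb Z$, then $n_ib\in B$ and $z\cdot b=\lim n_ib\in\overline B=B$. For the converse, pick a compact open subgroup $K$ of $A$ (which exists by van Dantzig's theorem; $K$ is automatically a $\mathbb Z_p$-submodule by the preceding direction). As a compact abelian $\mathfrak{N}_p$-group, $K$ is a polycyclic pro-$p$ group, and hence a finitely generated, Noetherian $\mathbb Z_p$-module, so every $\mathbb Z_p$-submodule of $K$ is finitely generated and therefore closed in $K$ (and in $A$). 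Given a $\mathbb Z_p$-submodule $B$, the subgroup $B\cap K$ is thus closed, and $B+K$ is a union of cosets of the open subgroup $K$, hence open (and closed). So $\overline B\subseteq B+K$. Given $a\in\overline B$, write $a=b+k$ with $b\in B$ and $k\in K$; for any neighborhood $V$ of $k$ inside $K$, $V+b$ meets $B$, whence $V$ meets $B\cap K$. Therefore $k\in\overline{B\cap K}=B\cap K\subseteq B$, and $a\in B$. As a byproduct, this also secures joint continuity of the $\mathbb Z_p$-action in (i): since every compact open subgroup is a $\mathbb Z_p$-submodule, the decomposition $z\cdot a-z_0\cdot a_0=(z-z_0)\cdot a_0+z\cdot(a-a_0)$ lies in any prescribed compact open subgroup $K$ provided $z-z_0$ is small and $a-a_0\in K$.

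For part (iii), the forward direction is immediate: $\phi(z\cdot a)=\phi(\lim n_ia)=\lim n_i\phi(a)=z\cdot\phi(a)$. For the converse, let $\phi:A\to B$ be $\mathbb Z_p$-linear and choose compact open subgroups $K_A\subseteq A$ and $K_B\subseteq B$. By (ii), $K_A$ is a finitely generated $\mathbb Z_p$-module, say with generators $a_1,\dots,a_n$. Part (i) applied to $B$ makes each map $\mathbb Z_p\to B$, $z\mapsto z\cdot\phi(a_i)$, continuous, so $\mathbb Z_p\cdot\phi(a_i)$ is compact; hence $\phi(K_A)=\sum_i\mathbb Z_p\cdot\phi(a_i)$ is compact. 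Its image in the discrete group $B/K_B$ is compact, hence finite, so $K_B':=\phi(K_A)+K_B$ is a compact open subgroup of $B$ with $[K_B':K_B]<\infty$. Thus $\phi^{-1}(K_B')\supseteq K_A$ is open in $A$, and $\phi^{-1}(K_B)$ is a finite-index subgroup of the open $\mathfrak{N}_p$-group $\phi^{-1}(K_B')$ (Lemma 4.2(ii)). By Lemma 4.6(i), $\phi^{-1}(K_B)$ is open in $\phi^{-1}(K_B')$ and hence in $A$, establishing continuity of $\phi$ at $0$.

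The main obstacle is the converse in part (ii): showing every $\mathbb Z_p$-submodule is closed. This requires exploiting the Noetherian structure of compact open subgroups (as polycyclic pro-$p$ groups) to handle the ``compact part'' and then gluing this to the quotient via the fact that $B+K$ is automatically open. Once this is in hand, the remaining parts fall out cleanly and also secure joint continuity of the action in (i).
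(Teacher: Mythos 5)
Your proof is correct and rests on the same structural facts as the paper's: local ellipticity gives compact open subgroups (via van Dantzig), these are polycyclic pro-$p$ groups and hence finitely generated $\mathbb Z_p$-modules, and the $\mathbb Z_p$-structure and continuity questions are reduced to those subgroups. The paper phrases this via the inductive-limit description $A=\varinjlim C$ over compact open $C$ and leaves (ii) and (iii) to the reader, whereas you unpack the argument element-by-element and give a slightly slicker closedness argument in (ii) using a single compact open subgroup $K$ (via $\overline B\subseteq B+K$ and Noetherianity of $K$) rather than the full inductive-limit topology; these are variations in execution, not in approach.
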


\begin{proof} Being $\sigma$-compact and locally elliptic, $A$ is isomorphic as a topological group to the inductive limit of its compact open subgroups. If $C$ is a compact open subgroup of $A$, then $C$ is a pro-$p$ group. Hence there is a unique continuous map $\mathbb Z_p\times C\to C$ that renders $C$ a $\mathbb Z_p$-module and restricts to the map $(m,c)\mapsto mc$ from $\mathbb Z\times C$ to $C$.  As a result, there is a unique continuous map $\mathbb Z_p\times A\to A$ that imparts a $\mathbb Z_p$-module structure to $A$ and restricts to the integer-multiplication map $\mathbb Z\times A\to A$. This proves statement (i). Viewing $A$ as an inductive limit in this fashion also permits us to deduce assertions (ii) and (iii) from the compact cases. 
\end{proof}

Lemmas 4.6 and 4.7 allow us to completely describe the structure of abelian $\mathfrak{N}_p$-groups.

\begin{proposition} For any prime $p$, a topological abelian group $A$ belongs to the class $\mathfrak{N}_p$ if and only if $A$ is a direct sum of finitely many groups of the following four types:

{\rm (i)} a finite cyclic $p$-group;

{\rm (ii)} a quasicyclic $p$-group;

{\rm (iii)} a copy of $\mathbb Z_p$;

{\rm (iv)} a copy of $\mathbb Q_p$.

\end{proposition}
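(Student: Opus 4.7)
For sufficiency, types (i)-(iii) are themselves allowed factors in the series (4.1), while $\mathbb Q_p$ admits the two-step series $0 \lhd \mathbb Z_p \lhd \mathbb Q_p$ whose factors are the cyclic pro-$p$ group $\mathbb Z_p$ and the discrete quasicyclic group $\mathbb Q_p/\mathbb Z_p \cong \mathbb Z(p^\infty)$ (discrete because $\mathbb Z_p$ is open in $\mathbb Q_p$); a finite direct sum is then handled by splicing series.

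For necessity, let $A$ be an abelian $\mathfrak N_p$-group, viewed as a topological $\mathbb Z_p$-module via Lemma 4.7. Lemma 4.6(iii)-(iv) supplies the exact sequence $0 \to R(A) \to A \to A/R(A) \to 0$ in which $R(A)$ is closed and divisible and $A/R(A)$ is compact. The compact abelian $\mathfrak N_p$-group $A/R(A)$ is an abelian polycyclic pro-$p$ group (as noted just after Definition 4.1), so the classical structure theorem gives $A/R(A) \cong \mathbb Z_p^c \oplus F$ with $F$ a finite abelian $p$-group. Inspection of the factors permitted in (4.1) shows that every torsion element of $A$ has $p$-power order, so $R(A)$ is a divisible $\mathbb Z_p$-module with only $p$-torsion, hence abstractly $\mathbb Q_p^{(I)} \oplus \mathbb Z(p^\infty)^{(J)}$; Corollary 4.4 together with Lemma 4.3 bounds $|I|$ and $|J|$ by the invariants of any series for $A$, forcing both to be finite and yielding $R(A) \cong \mathbb Q_p^a \oplus \mathbb Z(p^\infty)^b$ abstractly.

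It remains to promote these abstract decompositions to topological ones. To split the sequence above, pick lifts $a_1, \dots, a_c \in A$ of the standard basis of $\mathbb Z_p^c$; continuity of the $\mathbb Z_p$-action (Lemma 4.7) extends the assignment $e_i \mapsto a_i$ to a continuous $\mathbb Z_p$-module map $\phi:\mathbb Z_p^c \to A$, which is a closed embedding by compactness of $\mathbb Z_p^c$ and Lemma 4.2(vii). For $F = \bigoplus_i \mathbb Z/p^{k_i}$, choose lifts $\tilde b_i$ of the generators; then $p^{k_i}\tilde b_i \in R(A)$, and the $p$-divisibility of $R(A)$ lets me subtract a $p^{k_i}$-th root of this element to obtain $b_i$ with $p^{k_i}b_i = 0$, yielding a (trivially continuous) homomorphism $F \to A$ with closed finite image. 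Combining produces a continuous section $s:A/R(A) \to A$, so the addition map $R(A) \times s(A/R(A)) \to A$ is a continuous bijective homomorphism between locally compact $\sigma$-compact groups, hence a topological isomorphism by the open mapping theorem. An analogous internal splitting of $R(A)$, using that its torsion subgroup is discrete and closed (by an easy induction along (4.1)) and that $p$-divisibility extends a compact open torsion-free $\mathbb Z_p^a$-subgroup to a closed $\mathbb Q_p^a$-complement, realizes $R(A) \cong \mathbb Q_p^a \oplus \mathbb Z(p^\infty)^b$ topologically and completes $A \cong \mathbb Q_p^a \oplus \mathbb Z(p^\infty)^b \oplus \mathbb Z_p^c \oplus F$. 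The principal obstacle throughout is this topological splitting: constructing a legitimate homomorphism out of $F$ demands the $p$-divisibility adjustment, and upgrading each algebraic direct-sum decomposition to a topological one depends on the open mapping theorem for $\sigma$-compact locally compact groups.
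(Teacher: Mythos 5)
Your proposal is correct, but it follows a genuinely different route from the paper's. The paper's proof is shorter: after viewing $A$ as a topological $\mathbb{Z}_p$-module via Lemma 4.7, it notes that $D := R(A)$ is divisible, hence an \emph{injective} $\mathbb{Z}_p$-module, so $0\to D\to A\to A/D\to 0$ splits at once as an abstract $\mathbb{Z}_p$-module sequence (no piecewise section-building is needed). The classification of injective modules over a PID then gives $D$ as a sum of copies of $\mathbb{Q}_p$ and $\mathbb{Z}(p^\infty)$, Corollary 4.4 bounds the number of summands, and the compact quotient $A/D$ is a finitely generated $\mathbb{Z}_p$-module; the topological upgrade is left implicit, relying on Lemma 4.7(ii) (any $\mathbb{Z}_p$-submodule is closed) plus the standard open-mapping fact for $\sigma$-compact locally compact groups. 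You instead build a continuous section by hand: lifting a $\mathbb{Z}_p$-basis of $\mathbb{Z}_p^c$ and extending by $\mathbb{Z}_p$-linearity and continuity, correcting lifts of the generators of $F$ via divisibility of $R(A)$ so they have the right orders, and then explicitly invoking the open mapping theorem. This is more laborious — you must handle the $\mathbb{Z}_p^c$, $F$, and $R(A)$ pieces separately and verify the relations hold — but it does make the passage from an algebraic direct-sum decomposition to a topological one fully explicit, a point the paper passes over in silence. The injectivity shortcut is the main idea you missed; it makes both the splitting of $A$ over $R(A)$ and the structure of $R(A)$ itself fall out of standard module theory in one step.
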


\begin{proof} Let $D=R(A)$ and $C=A/D$. Since $D$ is an injective $\mathbb Z_p$-module, we have $A\cong D\oplus C$ as $\mathbb Z_p$-modules. Invoking the classification of injective modules over a principal ideal domain, we observe that $D$ can be expressed as a direct sum of copies of $\mathbb Q_p$ and $\mathbb Z(p^\infty)$. Moreover, since $D$ belongs to $\mathfrak{N}_p$, we can ascertain from Corollary 4.4 that this direct sum can only involve finitely many summands. Also, according to Lemma 4.6(iii), $C$ is a finitely generated $\mathbb Z_p$-module, which means that it can be expressed as a direct sum of finitely many modules of types (i) and (iii). This completes the proof of the proposition. 
\end{proof}

\begin{remark} {\rm Proposition 4.8 also follows from [{\bf 3}, Lemma 7.11].}
\end{remark}

Our understanding of abelian $\mathfrak{N}_p$-groups allows us to examine the lower central series of an  $\mathfrak{N}_p$-group.

\begin{lemma} Let $p$ be a prime. If $N$ is a member of the class $\mathfrak{N}_p$, then the two properties below hold.
\begin{enumerate*} 

\item $\gamma_iN$ is a closed subgroup of $N$ for every $i\geq 1$.

\item The iterated commutator map induces a $\mathbb Z_p$-module epimorphism

$$ \theta_i: \underbrace{N_{\rm ab}\otimes_ {\mathbb Z_p}\cdots \otimes_{\mathbb Z_p} N_{\rm ab}}_{i}\rightarrow \gamma_iN/\gamma_{i+1}N$$

\noindent for each $i\geq 1$.
\end{enumerate*}
\end{lemma}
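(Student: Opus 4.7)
The plan is to prove (i) and (ii) simultaneously by induction on $i$. The base case $i=1$ is immediate: $\gamma_1 N = N$ is closed, and $\theta_1$ is just the identity $N_{\rm ab}\to N_{\rm ab}$. So the interesting content is the inductive step, where (i) is the genuine obstacle and (ii) follows formally once (i) is known.

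Granting the inductive hypothesis that $\gamma_i N$ is closed, I would first deduce (ii) as follows. By Lemma 4.2(v), $\gamma_i N\in\mathfrak{N}_p$; once we also have $\gamma_{i+1}N$ closed (from the (i)-part at the next level), Lemma 4.2(vi) gives $\gamma_i N/\gamma_{i+1} N\in\mathfrak{N}_p$. This quotient is abelian, so Lemma 4.7 equips it with a unique continuous $\mathbb Z_p$-module structure extending its $\mathbb Z$-structure; the same holds for $N_{\rm ab}$. The iterated commutator map $N^i\to\gamma_i N$ is continuous, and the standard commutator identities (e.g.\ $[xy,z]\equiv [x,z][y,z]\pmod{\gamma_{i+1}N}$) show that, after projection to $\gamma_iN/\gamma_{i+1}N$, it is $\mathbb Z$-multilinear in the images of its arguments in $N_{\rm ab}$, and that the resulting $\mathbb Z$-linear map out of the algebraic tensor power $N_{\rm ab}^{\otimes_{\mathbb Z} i}$ is surjective (this is classical). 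To promote $\mathbb Z$-linearity to $\mathbb Z_p$-linearity in each slot, I fix the other $i-1$ entries; the remaining map $N_{\rm ab}\to\gamma_iN/\gamma_{i+1}N$ is then a continuous $\mathbb Z$-linear map between topological $\mathbb Z_p$-modules, and by density of $\mathbb Z$ in $\mathbb Z_p$ together with continuity of scalar multiplication, it is automatically $\mathbb Z_p$-linear. Hence the map factors through $N_{\rm ab}^{\otimes_{\mathbb Z_p} i}$ to give the desired epimorphism $\theta_i$.

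For (i) at level $i+1$, the goal is to show $\gamma_{i+1}N = [\gamma_i N, N]$ is closed. The clean way I would try to do this is via Lemma 4.2(vii): exhibit $\gamma_{i+1}N$ as the image of a continuous homomorphism from some $\mathfrak{N}_p$-group. Let $H=\gamma_i N$, which is in $\mathfrak{N}_p$ by induction. Via the continuous commutator $H\times N \to N$ and the argument outlined above for (ii) applied to the quotient $N/\overline{\gamma_{i+2}N}$, one sees that $\gamma_{i+1}N$ modulo $\overline{\gamma_{i+2}N}$ is (topologically) a quotient of a $\mathbb Z_p$-tensor product of abelian $\mathfrak{N}_p$-groups, which by the classification of Proposition 4.8 is again an abelian $\mathfrak{N}_p$-group; I would then pull back along the central extension and use Lemma 4.2(vii) with the natural map from this $\mathfrak{N}_p$-group into $N$ to conclude the image is closed.

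The main obstacle is precisely this closedness statement (i): in a general locally compact group the commutator of two closed subgroups need not be closed, so one must genuinely use membership in $\mathfrak{N}_p$. If the tensor-product argument above proves too slippery to make rigorous without circularity, the fall-back plan is a secondary induction on the length $r$ of a defining series (4.1). In the inductive step one replaces $N$ by $N_{r-1}$ (a closed normal subgroup of $N$, since $N/N_{r-1}$ is Hausdorff, and a proper $\mathfrak{N}_p$-subgroup of smaller series-length by Lemma 4.2(v)); using that the top factor is abelian so $\gamma_2 N\subseteq N_{r-1}$, one reduces questions about $\gamma_{i+1}N$ to analogous questions inside $N_{r-1}$, together with a direct analysis at the top of the series via the continuous action of the cyclic pro-$p$ or quasicyclic factor. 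Combining these reductions with the fact that in a compact open subgroup — a polycyclic pro-$p$ group — lower central subgroups are automatically closed handles the base of this secondary induction.
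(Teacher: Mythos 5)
Your core technical idea is the same as the paper's: use continuity to upgrade $\mathbb Z$-multilinearity of the iterated commutator map to $\mathbb Z_p$-multilinearity, so that its image is a $\mathbb Z_p$-submodule and hence (by Lemma~4.7(ii)) closed. But the way you package this into an \emph{upward} induction on $i$ is where the argument breaks, and the gap is structural rather than cosmetic.

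At level $i+1$, what the commutator/tensor argument naturally produces is that the image of $\gamma_{i+1}N$ in $\overline{\gamma_{i+1}N}/\overline{\gamma_{i+2}N}$ is closed; unwinding, this says that $\gamma_{i+1}N\cdot\overline{\gamma_{i+2}N}$ is closed in $N$. To pass from this to $\gamma_{i+1}N$ itself being closed you need $\overline{\gamma_{i+2}N}\subseteq\gamma_{i+1}N$, i.e.\ you need to already know that $\gamma_{i+2}N$ is closed --- which in your framework is the \emph{next} step of the induction, not a previous one. Your proposed fix, ``pull back along the central extension and use Lemma~4.2(vii) with the natural map from this $\mathfrak{N}_p$-group into $N$,'' does not repair this: Lemma~4.2(vii) requires a continuous \emph{group homomorphism} with the right image, and the iterated commutator map $\gamma_i N\times N\to N$ is not a homomorphism, so there is no natural map from a tensor product (or any extension of it) \emph{into} $N$ whose image is $\gamma_{i+1}N$. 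You can only factor it through the quotient $\overline{\gamma_{i+1}N}/\overline{\gamma_{i+2}N}$, which is exactly where the circularity sits.

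The paper avoids the issue entirely by not inducting upward. It runs the $\mathbb Z_p$-multilinearity argument \emph{simultaneously} for every $i$ using the single map $f_i\colon(N/\overline{N'})^{\times i}\to\overline{\gamma_iN}/\overline{\gamma_{i+1}N}$, concluding that $\gamma_iN\cdot\overline{\gamma_{i+1}N}$ is closed for every $i\ge 1$ with no inductive hypothesis. Closedness of $\gamma_iN$ then follows by a \emph{downward} induction: the chain bottoms out at $\gamma_{c+1}N=1$ (where $c$ is the nilpotency class), giving $\gamma_cN\cdot\overline{\gamma_{c+1}N}=\gamma_cN$ closed, then $\gamma_{c-1}N\cdot\overline{\gamma_cN}=\gamma_{c-1}N$ closed, and so on. The asymmetry is intrinsic: the term $\overline{\gamma_{i+1}N}$ in the denominator ties level $i$ to level $i+1$, so the dependency points downward. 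Your fallback (secondary induction on the length of the series $(4.1)$) is also not viable as sketched, since $\gamma_iN$ computed in $N$ is not the same as $\gamma_iN_{r-1}$ computed in $N_{r-1}$, and the compact base case (closedness of lower central terms in a polycyclic pro-$p$ group) is itself a nontrivial fact you would need to establish or cite. Reversing the direction of your main induction --- prove $\gamma_iN\cdot\overline{\gamma_{i+1}N}$ closed for all $i$ first, then induct downward from the top --- recovers a correct proof and is essentially what the paper does.
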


\begin{proof} We will dispose of (i) and (ii) with a single argument. If $i\geq 0$, forming iterated commutators of weight $i$ defines a continuous function

\begin{equation*}f_i: \underbrace{N/\overline{N'}\times\dots \times N/\overline{N'}}_{i}\rightarrow \overline{\gamma_iN}/\overline{\gamma_{i+1}N}\end{equation*}
that is $\mathbb Z$-linear in each component.  As $f_i$ is thus also $\mathbb Z_p$-linear in each component, it gives rise to a $\mathbb Z_p$-module homomorphism

\begin{displaymath} \theta_i:\underbrace{N/\overline{N'}\otimes_ {\mathbb Z_p}\dots \otimes_{\mathbb Z_p} N/\overline{N'}}_{i}\rightarrow \overline{\gamma_iN}/\overline{\gamma_{i+1}N}.\end{displaymath}

\noindent By Lemma 4.7(ii), ${\rm Im}\ \theta_i$ is closed in $\overline{\gamma_iN}/\overline{\gamma_{i+1} N}$. Therefore, since  ${\rm Im}\ \theta_i=\gamma_i N/\overline{\gamma_{i+1} N}$, the subgroup $\gamma_i N$ must be closed in $N$.
\end{proof}

\subsection{Extensions of finite $p$-groups by $\mathfrak{N}_p$-groups}

For the proof of Theorem 1.13, we will  require the closure property of $\mathfrak{N}_p$ described in our next lemma.

\begin{lemma} Let $p$ be a prime and $N$ a topological group in $\mathfrak{N}_p$. Also, let $F$ be a finite $p$-group. If $1\rightarrow F\rightarrow E\stackrel{\epsilon}{\rightarrow} N\rightarrow 1$ is an abstract group extension with $E$ nilpotent, then there is a unique topology on $E$ that makes it into a member of $\mathfrak{N}_p$ and $\epsilon: E\to N$ a quotient map.
\end{lemma}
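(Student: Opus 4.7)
The plan is to handle uniqueness first by a direct comparison of topologies, and then establish existence by reducing to the case where $N$ is compact and topologizing step by step using the nilpotency of $E$.

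For uniqueness, suppose $\tau_1$ and $\tau_2$ are two topologies on $E$ satisfying the conclusion. By local ellipticity of $\mathfrak{N}_p$-groups (Lemma 4.2(i)), choose a compact open subgroup $W$ of $(E, \tau_2)$. Since $\epsilon$ is a quotient map under $\tau_2$, $\epsilon(W)$ is open in $N$; by continuity of $\epsilon$ under $\tau_1$, it follows that $WA = \epsilon^{-1}(\epsilon(W))$ is open in $(E, \tau_1)$. But $[WA : W] \leq |A|$, so $W$ is a finite-index subgroup of the open $\mathfrak{N}_p$-subgroup $WA$ (Lemma 4.2(ii)); hence $W$ is open in $(E, \tau_1)$ by Lemma 4.6(i). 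By symmetry, the identity map $(E, \tau_1) \to (E, \tau_2)$ is a homeomorphism, giving $\tau_1 = \tau_2$.

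For existence, I would first reduce to the compact case. Take a compact open subgroup $K$ of $N$ (again using Lemma 4.2(i)) and set $E_0 := \epsilon^{-1}(K)$, an abstractly nilpotent group fitting into a sequence $1 \to A \to E_0 \to K \to 1$ with $K$ a polycyclic pro-$p$ group. Assuming $E_0$ can be given the structure of a compact member of $\mathfrak{N}_p$ compatible with this sequence, I would extend the topology to $E$ by declaring $E_0$ to be an open subgroup with its chosen topology. To verify this yields a topological group, one checks that for each $e \in E$ the preimage $\epsilon^{-1}(\epsilon(e)K\epsilon(e)^{-1} \cap K) \leq E_0$ is an open subgroup that is mapped by conjugation into $E_0$ in a continuous manner; this uses that $\epsilon(e)K\epsilon(e)^{-1} \cap K$ is open in $K$ and that the corresponding conjugation on $K$ is continuous. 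A series for $E_0$ with cyclic $p$-group and cyclic pro-$p$ factors, concatenated with a series for $N/K$, then witnesses $E \in \mathfrak{N}_p$.

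To topologize $E_0$ itself, I would induct on $|A|$. The base $|A| = 1$ is trivial. For the inductive step, the nilpotency of $E_0$ together with $A \unlhd E_0$ forces $A \cap Z(E_0) \neq 1$, so I can pick a central subgroup $A_1 \leq A$ of order $p$. By induction, $E_0/A_1$ is topologized as a polycyclic pro-$p$ group. The central extension
\[
1 \to A_1 \to E_0 \to E_0/A_1 \to 1
\]
is classified by a class $\xi \in H^2(E_0/A_1, A_1)$ in abstract group cohomology, and the key step is to show $\xi$ is represented by a continuous 2-cocycle. This rests on the coincidence of abstract and continuous $H^2$ with finite $p$-group coefficients for finitely generated pro-$p$ groups, which itself follows from the fact that every abstract finite-index subgroup of such a group is open (Lemma 4.6(i) applied to the already-topologized quotient $E_0/A_1$). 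Once this cohomological realization is in place, the abstract extension $E_0$ acquires the desired compact $\mathfrak{N}_p$-topology, and the remaining verifications reduce to routine applications of Lemma 4.2. The cohomological step is the main obstacle; everything else is bookkeeping built on Lemma 4.2 and Lemma 4.6.
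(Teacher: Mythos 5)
Your overall strategy is sound and genuinely different in detail from the paper's: the paper topologizes each $\epsilon^{-1}(H_i)$ ($H_i$ ranging over a directed family of compact open subgroups of $N$) with its full pro-$p$ topology and then verifies Hausdorffness by invoking the cohomological ``goodness'' of polycyclic pro-$p$ groups (its Lemma 4.13 builds this via Serre's criterion and [{\bf 6}, Theorem 2.10]), whereas you reduce to one compact piece $E_0$ and topologize it by induction on $|A|$ through a chain of central extensions of order $p$, realizing each step as a continuous $2$-cocycle. Your uniqueness argument is essentially the paper's, repackaged via a single comparison of compact open subgroups, and the reduction/extension bookkeeping around $E_0$ mirrors the paper's use of Lemma 4.12 (there is a small inversion slip — you want $\epsilon^{-1}\!\left(\epsilon(e)^{-1}K\epsilon(e)\cap K\right)$, not $\epsilon^{-1}\!\left(\epsilon(e)K\epsilon(e)^{-1}\cap K\right)$, for conjugation by $e$ to land in $E_0$ — but that is cosmetic).

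The genuine gap is in your justification of the cohomological step. You assert that surjectivity of the natural map $H^2_{\mathrm{cts}}(E_0/A_1,A_1)\to H^2_{\mathrm{abs}}(E_0/A_1,A_1)$ ``follows from the fact that every abstract finite-index subgroup of such a group is open,'' citing Lemma 4.6(i). Openness of finite-index subgroups yields only the degree-one comparison (abstract and continuous homomorphisms to finite groups coincide); it does \emph{not} imply the degree-two comparison. The equality of discrete and Galois cohomology with finite coefficients for polycyclic (or, more generally, solvable of finite rank) pro-$p$ groups is a substantive theorem — it is precisely [{\bf 6}, Theorem 2.10], which the paper invokes inside Lemma 4.13, and which is also the engine behind Serre's notion of a ``good'' group used there. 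So the fact you need is true, but your derivation of it is not a proof; you should cite [{\bf 6}, Theorem 2.10] (or reproduce the goodness argument for an extension of a finite $p$-group by a polycyclic pro-$p$ group, as the paper does) rather than deducing it from Lemma 4.6(i). With that citation in place, the rest of your argument goes through.
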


\begin{proof} In view of Lemma 4.2(i), there is a family $\{H_i:i\in \mathbb N\}$ of compact open subgroups of $N$ whose union is $N$ such that, for any $i,\ j\in \mathbb N$, there is a $k\in \mathbb N$ with $H_i\cup H_j\subseteq H_k$. For each $i$, we endow $\epsilon^{-1}(H_i)$ with its pro-$p$ topology. 
According to Lemma 4.6(i), every subgroup of finite index in $H_i$ is open. Thus the map $\epsilon^{-1}(H_i)\to H_i$ induced by $\epsilon$ is a quotient map. Now we construct a topology on $E$ by defining $U\subseteq E$ to be open if and only if $U\cap \epsilon^{-1}(H_i)$ is open in $\epsilon^{-1}(H_i)$ for all $i\in \mathbb N$. 
If $i,\ j\in \mathbb N$, then $H_i\cap H_j$ is open in $H_j$ and thus has finite index in $H_j$. Thus $\epsilon^{-1}(H_i)\cap \epsilon^{-1}(H_j)$ has finite index in $\epsilon^{-1}(H_j)$, and so  $\epsilon^{-1}(H_i)\cap \epsilon^{-1}(H_j)$ is open in $\epsilon^{-1}(H_j)$. As this holds for every $j\in \mathbb N$,  $\epsilon^{-1}(H_i)$ is open in $E$ for all $i\in \mathbb N$. Since $E$ is the union of the $\epsilon^{-1}(H_i)$,  it follows from Lemma 4.12 below that $E$ is a topological
group with respect to the topology we have defined. In addition, $\epsilon$ is a quotient map, implying that $F$ is closed.

According to Lemma 4.13 below, $\epsilon^{-1}(H_i)$ is residually finite; that is, it is Hausdorff with respect to its pro-$p$ topology.  Thus $E$ must be Hausdorff, which means that $E$ induces the discrete topology on $F$. As a result, a series like (4.1) in  $N$ extends to a series of the same type
in $E$, and so $E$ belongs to $\mathfrak{N}_p$. This, then, establishes the existence of a topology on $E$ with the desired properties.

To verify the uniqueness of the topology described above, suppose that $\mathcal{T}$ is a topology on $E$ making $E$ into a Hausdorff topological group and $\epsilon: E\to N$ a quotient map. Then, for any $i\in \mathbb N$, $\mathcal{T}$ induces a topology on $\epsilon^{-1}(H_i)$ that renders it a polycyclic pro-$p$ group. Thus the topology induced
on $\epsilon^{-1}(H_i)$ by $\mathcal{T}$ will coincide with its full pro-$p$ topology. This means that $\mathcal{T}$ is identical with the topology defined above. 
\end{proof}

To complete the above argument, it remains to prove the following two lemmas, the first of which will be invoked again in the next section.

\begin{lemma} Let $G$ be an abstract group that is also a topological space. Assume that $G$ is a union of a family of subgroups $\mathcal{H}$ that are all open in the topology on $G$ and such that, for any pair $H, K\in \mathcal{H}$, there is a subgroup $L\in \mathcal{H}$ with $H, K\leq L$. Suppose further that each subgroup in $\mathcal{H}$ is a topological group with respect to this topology.
Then $G$ is a topological group. 
\end{lemma}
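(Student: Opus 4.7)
The plan is to verify directly that the two structure maps of $G$ — multiplication $\mu\colon G\times G\to G$ and inversion $\iota\colon G\to G$ — are continuous at each point, exploiting that the members of $\mathcal{H}$ are open and that $\mathcal{H}$ is upward directed. The key observation throughout is that, because each $H\in\mathcal{H}$ is open in $G$, the topology on $H$ is automatically the subspace topology induced from $G$, so continuity of a map into $H$ is the same as continuity of the composition into $G$.

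For continuity of inversion at a point $g\in G$, I would choose $H\in\mathcal{H}$ with $g\in H$. The map $\iota|_H\colon H\to H$ is continuous by hypothesis, and the inclusion $H\hookrightarrow G$ is continuous since $H$ is open. Thus $\iota|_H$ is continuous as a map $H\to G$, which exhibits $\iota$ as continuous on the open neighborhood $H$ of $g$. Doing this at every point shows $\iota$ is continuous on $G$.

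For continuity of multiplication at a point $(g_1,g_2)\in G\times G$, I would first pick $H_1,H_2\in\mathcal{H}$ with $g_1\in H_1$ and $g_2\in H_2$, and then use the upward-directedness hypothesis to obtain $L\in\mathcal{H}$ with $H_1\cup H_2\subseteq L$. Then $L\times L$ is an open neighborhood of $(g_1,g_2)$ in $G\times G$ (since $L$ is open in $G$), and the restriction $\mu|_{L\times L}\colon L\times L\to L$ is continuous by the assumption that $L$ is a topological group in its subspace topology. Composing with the continuous open inclusion $L\hookrightarrow G$ yields that $\mu$ is continuous on the open set $L\times L$. Varying $(g_1,g_2)$ shows $\mu$ is continuous on all of $G\times G$.

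There is no real obstacle here beyond bookkeeping; the whole point is that the topological group axioms are local conditions, and the directed open cover by subgroups that are themselves topological groups reduces the verification at each point to a single member of $\mathcal{H}$. The upward-directedness is only needed to handle multiplication, where the two factors may a priori lie in different subgroups of the cover.
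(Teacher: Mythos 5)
Your proof is correct and takes essentially the same approach as the paper's: verify continuity of multiplication and inversion locally, using upward-directedness to place a given pair of points inside a single member $L\in\mathcal{H}$ and then appealing to the fact that $L$ is an open topological subgroup. The only cosmetic difference is that the paper phrases the verification at a point via explicit neighborhoods $U_g, U_h\subseteq H$ with $U_gU_h\subseteq U_x$, whereas you phrase it by restricting $\mu$ and $\iota$ to $L\times L$ and $H$ and composing with the open inclusion into $G$; these are two formulations of the same local argument.
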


\begin{proof} To show that the multiplication map $G\times G\to G$ is continuous, let $x\in G$ and $U_x$ be an open neighborhood of $x$. Also, take $g, h\in G$ such that $gh=x$. Then we can find a subgroup $H\in \mathcal{H}$ such that $g, h, x\in H$.  Since $H$ is a topological group, there are open subsets $U_g$ and $U_h$ of $H$ such that $g\in U_g$, $h\in U_h$, and $U_gU_h\subseteq U_x$. Also, because $H$ is open in $G$, so are $U_g$ and $U_h$.
This establishes the continuity of the multiplication map $G\times G\to G$. Moreover, the continuity of the inversion map $G\to G$ may be deduced by a similar argument. It follows, then, that $G$ is a topological group. 
\end{proof}

\begin{lemma} Let $1\rightarrow F\rightarrow G\rightarrow Q\rightarrow 1$ be an abstract group extension such that $F$ is finite and $Q$ is a polycyclic pro-$p$ group. Then $G$ is residually finite. 
\end{lemma}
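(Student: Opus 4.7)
Plan: I would show $G$ is residually finite by producing, for each non-identity $g\in G$, a finite-index subgroup missing $g$. I split into two cases according to whether $g$ lies in $K$. If $g\notin K$, then the image $\epsilon(g)\in Q$ is non-trivial, and since polycyclic pro-$p$ groups are residually (finite $p$-groups), there is an open normal subgroup $U\trianglelefteq Q$ of finite ($p$-power) index with $\epsilon(g)\notin U$. The preimage $\epsilon^{-1}(U)$ is then a finite-index subgroup of $G$ avoiding $g$, and we are done.

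The remaining case, $g\in K\setminus\{1\}$, is where the argument becomes delicate. The first step is a standard reduction to the situation in which $K$ is central: since $\operatorname{Aut}(K)$ is finite, the conjugation homomorphism $G\to \operatorname{Aut}(K)$ has a kernel $C=C_G(K)$ of finite index in $G$. If $g$ does not centralize $K$, then $g\notin C$ and $C$ itself is the required subgroup. Otherwise $g\in K\cap C=Z(K)$, and passing from $G$ to the finite-index subgroup $C$ and from $K$ to $Z(K)$, we may assume that $K$ is a finite abelian subgroup lying in $Z(G)$, with $G/K$ still embedded in the polycyclic pro-$p$ group $Q$.

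From here my strategy is to exploit the $p$-adic analytic nature of $Q$. By Lazard, every polycyclic pro-$p$ group embeds as an abstract group in $\operatorname{GL}_n(\mathbb{Z}_p)$ for some $n$, so $G/K$ embeds in $\operatorname{GL}_n(\mathbb{Z}_p)$. Combined with a faithful representation of the finite abelian group $K$ into the roots of unity of a finite extension $R$ of $\mathbb{Z}_p$, I would construct a faithful representation $G\to \operatorname{GL}_N(R)$ for a suitable $N$. Since $\operatorname{GL}_N(R)=\varprojlim \operatorname{GL}_N(R/\mathfrak{m}^k)$ is residually finite, so is every subgroup, and in particular $G$.

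The principal obstacle is this last step, namely the construction of such a faithful representation in the presence of a possibly non-trivial extension class $\xi\in H^2(G/K,K)$; this class obstructs extending a faithful character of $K$ to a $1$-dimensional representation of $G$. I plan to handle this by restricting to a sufficiently small finite-index subgroup $B_0$ of $G/K$, taken as the intersection of $G/K$ with an open normal subgroup $V\trianglelefteq Q$ of high enough finite $p$-power index: one expects that on a fine enough filtration the restriction of $\xi$ vanishes (so that $\epsilon^{-1}(B_0)\cong K\times B_0$), at which point the projection $K\times B_0\to B_0$ yields the finite-index subgroup of $G$ missing $g$. Verifying that $\xi$ does become trivial on such a $B_0$ — equivalently, that the extension $\epsilon^{-1}(B_0)\to B_0$ splits for small enough $V$ — is the technical heart of the argument.
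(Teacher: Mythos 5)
Your overall structure is sound — the case split on whether $g\in K$, the reduction to $K$ central via $C_G(K)$, and the observation that a splitting over a finite‑index subgroup would finish the proof are all correct — but the argument stops exactly where the real difficulty begins. The claim that the extension class $\xi\in H^2(G/K,K)$ must vanish when restricted to a sufficiently small open subgroup of $Q$ is not established, and it is not obviously true: discrete (abstract) cohomology of a polycyclic pro-$p$ group with finite coefficients is a subtle object (already $H^2(\mathbb Z_p,\mathbb Z/p)$, with $\mathbb Z_p$ viewed as an abstract group, requires a real theorem to compute), and the behaviour of restriction maps to open subgroups is not something one can wave away. Since you yourself identify this as ``the technical heart of the argument,'' what you have is a plan with a missing proof at the decisive step, not a proof. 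The auxiliary route through $p$-adic representations (Lazard plus a faithful character of $K$) also ultimately hinges on the same unproved splitting and so does not add anything.

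The paper takes a genuinely different and much more economical route. Rather than trying to split the extension over a finite-index subgroup, it invokes the fact that a polycyclic pro-$p$ group $Q$ is cohomologically \emph{good} in Serre's sense: the comparison map $H^\ast_{\rm Gal}(\hat Q, A)\to H^\ast(Q,A)$ is an isomorphism for every finite $\mathbb ZQ$-module $A$. This is a nontrivial input, taken from [{\bf 6}, Theorem 2.10], combined with the fact that $c^Q:Q\to\hat Q$ is an isomorphism (since every finite-index subgroup of $Q$ is open). Goodness, by Serre's criterion ([{\bf 25}, Exercises 1\&2, Ch. I.\S 2]), then gives that the profinite completion sequence $1\to K\to\hat G\to\hat Q\to 1$ is exact — i.e.\ the finite kernel $K$ is not collapsed by profinite completion — and residual finiteness of $G$ follows immediately from that of $Q$. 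Note that this argument does \emph{not} assert that the extension splits over any open subgroup; it sidesteps that question entirely. If you want to rescue your approach, the right move is in fact to cite this goodness result, at which point the vanishing $H^2_{\rm Gal}(V,K)=0$ for suitably small open $V\leq Q$ (by cohomological dimension considerations for pro-$p$ groups) would transfer to the discrete $H^2(V,K)$ and give you the splitting; but that is precisely the content you need to import, and at that point you may as well follow the paper's shorter path directly.
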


\begin{proof} In this argument, we employ the notation $H_{\rm Gal}^\ast(\Gamma, A)$ for the Galois cohomology of a profinite group $\Gamma$ with coefficients in a discrete $\Gamma$-module $A$. Taking $A$ to be an arbitrary finite $\mathbb ZQ$-module, consider the canonical maps
 
\begin{equation} H^\ast_{\rm Gal}(\hat{Q},A)\longrightarrow H^\ast(\hat{Q},A)\longrightarrow H^\ast(Q,A),  \end{equation}

\noindent where the second and third groups are ordinary (discrete) cohomology groups. Since every subgroup of finite index in $Q$ is open, the map $c^Q:Q\to \hat{Q}$ is an isomorphism. Hence the second map in (4.2) is an isomorphism. However, by  
[{\bf 6}, Theorem 2.10], the first map in (4.2) is also an isomorphism. As a consequence, the composition is an isomorphism; in other words, $Q$ is cohomologically ``good" in the sense of J-P. Serre [{\bf 25}, Exercises 1\&2, Chapter I.\S 2]. According to these exercises, this means that $1\rightarrow F\rightarrow \hat{G}\rightarrow \hat{Q}\rightarrow 1$ is an exact sequence of profinite groups. Therefore the residual finiteness of $Q$ implies that $G$ must be residually finite. 
\end{proof}

Our interest is primarily in the following extension of Lemma 4.11.

\begin{corollary}Let $p$ be a prime and $G$ a topological group with an open normal $\mathfrak{N}_p$-subgroup $N$. Let $F$ be a finite $p$-group. If $1\rightarrow F\rightarrow E\stackrel{\epsilon}{\rightarrow} G\rightarrow 1$ is an abstract group extension with $\epsilon^{-1}(N)$ nilpotent, then there is a unique topology on $E$ that makes it into a topological group such that   
$\epsilon: E\to G$ is a quotient map and $\epsilon^{-1}(N)$ is an open $\mathfrak{N}_p$-subgroup.
\end{corollary}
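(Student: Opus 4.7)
The plan is to reduce to Lemma 4.11 by first equipping the normal subgroup $K := \epsilon^{-1}(N)$ with the appropriate topology, then propagating it to all of $E$. First, I would apply Lemma 4.11 to the restricted extension $1 \to A \to K \to N \to 1$ (valid since $N \in \mathfrak{N}_p$ and $K$ is nilpotent by hypothesis), producing the unique topology $\mathcal{T}_0$ on $K$ making $K$ an $\mathfrak{N}_p$-group and $\epsilon\vert_K \colon K \to N$ a quotient map. I would then define a topology $\mathcal{T}$ on $E$ by declaring $U \subseteq E$ to be open exactly when, for every $e \in U$, there exists a $\mathcal{T}_0$-open neighborhood $V$ of the identity in $K$ with $eV \subseteq U$. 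By construction $\mathcal{T}$ restricts to $\mathcal{T}_0$ on $K$ and makes $K$ an open subgroup of $E$.

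The main obstacle is verifying that $(E,\mathcal{T})$ is a topological group, and the crucial step there is showing that every inner automorphism $\phi_e \colon K \to K$, $k \mapsto eke^{-1}$, is a homeomorphism of $(K,\mathcal{T}_0)$. To see this, note that $\phi_e$ is an abstract automorphism since $K \lhd E$, and the identity $\epsilon \circ \phi_e = c_{\epsilon(e)} \circ \epsilon$, combined with the fact that the conjugation $c_{\epsilon(e)}$ is a homeomorphism of $N$ (because $G$ is a topological group), implies that the pullback topology $\phi_e^{-1}(\mathcal{T}_0)$ on $K$ again makes $K$ an $\mathfrak{N}_p$-group with $\epsilon\vert_K$ a quotient map. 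The uniqueness clause of Lemma 4.11 then forces $\phi_e^{-1}(\mathcal{T}_0) = \mathcal{T}_0$, so $\phi_e$ is a homeomorphism. With the neighborhood basis of $1$ in $K$ thus conjugation-invariant, continuity of multiplication and inversion on $E$ follows by the standard propagation argument for topological groups.

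Finally, I would verify the quotient-map property and uniqueness. Continuity of $\epsilon \colon E \to G$ reduces, via the left-translation homeomorphisms of both $E$ and $G$, to continuity of $\epsilon\vert_K \colon (K,\mathcal{T}_0) \to N$ together with the openness of $N$ in $G$; openness of $\epsilon$ reduces similarly to the fact that $\epsilon\vert_K$ is a quotient map onto the open subgroup $N$. For uniqueness, any topology $\mathcal{T}'$ on $E$ satisfying the stated conditions must induce $\mathcal{T}_0$ on $K$ (by the uniqueness part of Lemma 4.11) and have $K$ open; since left translation is a homeomorphism in any topological group, $\mathcal{T}'$ is then completely determined on every coset of $K$, and hence agrees with $\mathcal{T}$.
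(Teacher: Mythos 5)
Your proof is correct, and its overall architecture matches the paper's: apply Lemma 4.11 to the extension $1\rightarrow A\rightarrow K\rightarrow N\rightarrow 1$, where $K=\epsilon^{-1}(N)$, to get the unique $\mathfrak{N}_p$-topology $\mathcal{T}_0$ on $K$; show that the automorphisms of $K$ arising from conjugation by elements of $E$ respect $\mathcal{T}_0$; and then propagate the topology to all of $E$ via the standard neighborhood-filter criterion, with uniqueness deduced from the uniqueness part of Lemma 4.11.

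The one step where you diverge from the paper is how you verify that the conjugation automorphisms $\phi_e$ of $K$ are homeomorphisms. The paper checks continuity directly: for an open subgroup $H\leq K$, it observes that $A\alpha^{-1}(H)=\epsilon^{-1}(\bar{\alpha}^{-1}(\epsilon(H)))$ is open, and since $\alpha^{-1}(H)$ has finite index in $A\alpha^{-1}(H)$, it is open by Lemma 4.6(i). You instead invoke the uniqueness clause of Lemma 4.11: the identity $\epsilon\circ\phi_e = c_{\epsilon(e)}\circ\epsilon$, together with the fact that $c_{\epsilon(e)}$ restricts to a homeomorphism of $N$, shows that the pullback topology $\phi_e^{-1}(\mathcal{T}_0)$ again makes $K$ an $\mathfrak{N}_p$-group with $\epsilon|_K$ a quotient map, so the uniqueness in Lemma 4.11 forces $\phi_e^{-1}(\mathcal{T}_0)=\mathcal{T}_0$. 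This is a slicker, more conceptual route — it exploits the universal characterization of $\mathcal{T}_0$ rather than re-deriving openness of preimages, and it dovetails naturally with the uniqueness argument you give for $E$ itself. The paper's direct verification, by contrast, makes visible exactly which structural property of $\mathfrak{N}_p$-groups (Lemma 4.6(i), that finite-index subgroups are open) is doing the work, but it is not intrinsically stronger. Both arguments are sound.
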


\begin{proof} Lemma 4.11 enables us to equip $M:=\epsilon^{-1}(N)$ with a unique topology that makes it a member of $\mathfrak{N}_p$ and the map $M\to N$ induced by $\epsilon$ a quotient map. 
Our plan is to show that the automorphisms of $M$ induced by conjugation in $E$ are continuous. We will then be able to endow $E$ with a topology enjoying the properties sought. Moreover, the uniqueness of this topology will follow from the uniqueness of the topology on $M$. 

Let $\alpha$ be an automorphism of $M$ arising from conjugation by an element of $E$. To verify that $\alpha$ is continuous, let $H$ be an open subgroup of $M$ and $\bar{\alpha}$ the automorphism of $M/F$ induced by $\alpha$.
Since $\epsilon(H)$ is open in $N$,  $F\alpha^{-1}(H)=\epsilon^{-1}(\bar{\alpha}^{-1}(\epsilon(H)))$ is open
in $M$. Moreover, $\alpha^{-1}(H)$ has finite index in $F\alpha^{-1}(H)$, which means that $\alpha^{-1}(H)$ is open in $F\alpha^{-1}(H)$ by Lemma 4.6(i). Thus $\alpha^{-1}(H)$ is open in $M$.
Therefore $\alpha$ is continuous, which completes the proof. 
\end{proof}

\subsection{Factoring certain topological groups}

The main result of this section is Proposition 4.15, which, next to Proposition 3.12, is the most important of the preliminary propositions required for the proof of Theorem 1.13. 

\begin{proposition} Let $p$ be a prime and $G$ a topological group containing an open normal $\mathfrak{N}_p$-subgroup $N$ such that $G/N$ is finitely generated and virtually abelian. Then $G$ possesses a subgroup $X$ such that $G=R(N)X$ and 
$N\cap X$ is compact.

\end{proposition}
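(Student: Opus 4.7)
The plan is to pass to $\bar G:=G/R(N)$, noting that $R(N)$ is characteristic in $N$ and hence normal in $G$. By Lemma 4.6(iii) the image $\bar N:=N/R(N)$ is a compact polycyclic pro-$p$ group and is open in $\bar G$, while $\bar G/\bar N\cong G/N=Q$ is finitely generated virtually abelian. Writing $\pi\colon G\to \bar G$ for the quotient map, the proposition is equivalent to producing a closed subgroup $X\leq G$ with $\pi(X)=\bar G$ (equivalent to $R(N)X=G$) and $X\cap R(N)$ compact (which, together with the compactness of $\bar N$, is equivalent to $X\cap N$ being compact).

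The first ingredient I would establish is that in any $\mathfrak{N}_p$-group $H$, the closure of an abstractly finitely generated subgroup is a polycyclic pro-$p$ group, in particular compact. The proof proceeds by induction on nilpotency class: in the abelian base case, Proposition 4.8 together with the observation that neither $\mathbb Q_p$ nor $\mathbb Z(p^\infty)$ is topologically finitely generated forces the closure to be a finitely generated $\mathbb Z_p$-module, hence compact; in the inductive step, Lemma 4.10 supplies $\mathbb Z_p$-linear surjections from the $i$th tensor power of $H_{\mathrm{ab}}$ onto $\gamma_iH/\gamma_{i+1}H$, so compactness propagates up the lower central series. Equipped with this, I would choose topological generators $\bar n_1,\dots,\bar n_s$ of $\bar N$ and arbitrary lifts $n_i\in N$; then $K:=\overline{\langle n_1,\dots,n_s\rangle}$ is compact, and since $\pi(K)$ is compact, hence closed in $\bar N$, and contains the dense subset $\langle\bar n_i\rangle$, we have $\pi(K)=\bar N$, i.e.\ $K\cdot R(N)=N$. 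Picking lifts $g_1,\dots,g_m\in G$ of abstract generators of $Q$ and setting $X:=\overline{\langle K,g_1,\dots,g_m\rangle}$, the image $\pi(X)$ is a closed subgroup of $\bar G$ containing the dense $\langle\bar n_i,\bar g_j\rangle$, and so $\pi(X)=\bar G$ and $R(N)X=G$.

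The main obstacle is to ensure $X\cap R(N)$ is compact: this is not automatic, since the commutators $[g_i,n_j]$ and $[g_i,g_j]$ lie in $N$ but may project to elements of $R(N)\setminus K$ that, upon iteration under $g_i$-conjugation, generate noncompact portions of $R(N)$. A small example like $(\mathbb Q_p\oplus\mathbb Z_p)\rtimes\mathbb Z$ with the $\mathbb Z$-generator acting as $\bigl(\begin{smallmatrix}p&1\\0&1\end{smallmatrix}\bigr)$ already exhibits this failure for naive lifts (producing $X\cap R(N)=\mathbb Q_p$), while a single correction of the $\mathbb Z$-generator by an appropriate element of $R(N)$ kills the offending commutator and yields $X\cap R(N)=0$. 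To handle this in general I would exploit Lemma 4.6(ii): $\bar N$ has only finitely many open subgroups of each finite index, so conjugation by $G$ permutes them in finite orbits, and the intersections of these orbits form a cofinal descending chain of $G$-invariant open normal subgroups of $\bar N$. Working through the resulting tower of discrete quotients of $\bar G$ and using the radicability of $R(N)$ to divide errors by arbitrarily large $p$-powers, one iteratively adjusts the lifts $g_j$ and $n_i$ so that every $[g_i,g_j]$ and $[g_i,n_j]$ lies in $K$ times a fixed compact subgroup of $R(N)$. Applying the preliminary closure-of-finitely-generated lemma to the resulting $X\cap N$ then produces the required compactness.

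This final equivariant correction step is the heart of the proof; both the radicability of $R(N)$ (to enable the successive divisions by $p$-powers) and the finite generation of $Q$ (to reduce the construction to finitely many commutator conditions that must be satisfied simultaneously) are essential ingredients.
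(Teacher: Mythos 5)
The first two paragraphs of your proposal set up reasonable machinery: your claim that the closure of a finitely generated subgroup of an $\mathfrak{N}_p$-group is compact is correct (though it follows more directly from local ellipticity, Lemma 4.2(i), than from the inductive argument you sketch), and the reduction to controlling $X\cap R(N)$ is sound. But the paper's proof does not follow this route at all; it is a cohomological argument, and the place where you admit the ``heart of the proof'' lies is precisely where your proposal stops being a proof and becomes a hope.

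The genuine gap is your third paragraph. You acknowledge that naive lifts fail and propose an ``iterative equivariant correction,'' but you do not prove that such a correction scheme converges, nor that the interlocking commutator conditions $[g_i,g_j]$, $[g_i,n_k]\in K\cdot(\text{fixed compact})$ can be satisfied simultaneously. Adjusting $g_1$ to repair $[g_1,g_2]$ typically disturbs $[g_1,g_3]$; the obstruction to a consistent choice of lifts lives in $H^2$, and hand-tuning finitely many commutators one at a time does not address it. Your own toy example $(\mathbb Q_p\oplus\mathbb Z_p)\rtimes\mathbb Z$ is a case where a single correction suffices because $Q$ is cyclic and there are no competing relations; once $Q$ has rank $\geq 2$ one needs actual vanishing of a cohomology class, not just pointwise division by $p$-powers. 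The paper handles this by a structural induction: it first splits off the torsion of $R(N)$ (where $H^2(Q,-)$ is a \v{C}ernikov $p$-group, so a finite power of $p$ kills the obstruction), then inducts on $h_p(R(N))$ to reduce to $R$ torsion-free abelian and $\mathbb Q_pG$-simple, and in that case invokes the Ext-annihilation result of Kropholler, Lorensen, and Robinson (Proposition 4.19) to show that ${\rm Ext}^1_{\mathbb Z_pQ_0}(N/R,R)$ and $H^n(Q_0,R)$ are torsion, hence zero since $R$ is a $\mathbb Q$-vector space, which gives an honest module splitting $N=R\oplus V$ and an honest group splitting over $R$. None of these ingredients appears in your proposal, and without them the correction step is an unsupported assertion. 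To turn your outline into a proof you would essentially have to reconstruct the cohomological vanishing that the paper takes as its central tool.
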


The proof of Proposition 4.15 relies on the cohomological classification of group extensions, as well as the classification of extensions of modules via the functor ${\rm Ext}_R^1(\ ,\ )$. 
We will begin by establishing the proposition in the case that $R(N)$ is torsion; for this, it is only necessary to assume that $G/N$ is virtually polycyclic. Our reasoning is based on the following two
elementary lemmas, whose proofs are left to the reader. (See [{\bf 16}, 10.1.15] for a similar result.) 

\begin{lemma} Let $R$ be a ring, and let $A$ and $B$ be $R$-modules such that the underlying abelian group of $B$ is divisible. Let $0\rightarrow B\rightarrow E\rightarrow A\rightarrow 0$ be an $R$-module extension and $\xi$ the element of ${\rm Ext}_R^1(A,B)$ corresponding to this extension. If $m\cdot \xi=0$ for some $m\in \mathbb Z$, then $E$ has a submodule $X$ such that $E=B+X$ and $B\cap X=\{b\in B\ |\ m\cdot b=0\}$. \nolinebreak \hfill\(\square\)
\end{lemma}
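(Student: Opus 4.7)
The plan is to exploit divisibility of $B$ via the long exact sequence of ${\rm Ext}$. Since $B$ is divisible as an abelian group, the integer-multiplication map $\mu_m\colon B\to B$ is a surjective $R$-module homomorphism (integer multiplication being automatically $R$-linear), with kernel the $R$-submodule $B[m]:=\{b\in B\ |\ m\cdot b=0\}$. Applying ${\rm Ext}^*_R(A,-)$ to the resulting short exact sequence $0\to B[m]\stackrel{\iota}{\to} B\stackrel{\mu_m}{\to} B\to 0$ produces the fragment
$${\rm Ext}^1_R(A,B[m]) \stackrel{\iota_*}{\longrightarrow} {\rm Ext}^1_R(A,B) \stackrel{\cdot m}{\longrightarrow} {\rm Ext}^1_R(A,B),$$
and the hypothesis $m\cdot \xi=0$ places $\xi$ in the image of $\iota_*$. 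I would pick $\xi'\in {\rm Ext}^1_R(A,B[m])$ with $\iota_*\xi'=\xi$, represent it by an extension $0\to B[m]\stackrel{j}{\to} E'\stackrel{\pi'}{\to} A\to 0$, and then use naturality to fit these into a commutative diagram of $R$-modules with exact rows
\begin{displaymath}
\begin{CD}
0 @>>> B[m] @>j>> E' @>\pi'>> A @>>> 0\\
&& @VV\iota V @VVf V @| &&\\
0 @>>> B @>>> E @>\pi>> A @>>> 0,
\end{CD}
\end{displaymath}
exhibiting $E$ as the pushout of $B\leftarrow B[m]\to E'$. The pushout description makes $f$ injective: since $\iota$ is injective, $f(e')=0$ forces $e'=j(c)$ with $\iota(c)=0$, hence $c=0$ and $e'=0$.

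I would then set $X:=f(E')$, which is plainly an $R$-submodule of $E$. That $E=B+X$ follows from $\pi(X)=\pi'(E')=A$ together with ${\rm Ker}\ \pi=B$. For $B\cap X$, if $b=f(e')\in B$ then $\pi'(e')=\pi(b)=0$ forces $e'=j(c)$ for some $c\in B[m]$, whereupon commutativity of the left square yields $b=\iota(c)\in B[m]$; conversely $\iota(c)=f(j(c))\in X$ for every $c\in B[m]$. The only step requiring any real care is the identification of $E$ with the pushout of $E'$ along $\iota$ (equivalently, the realization of the bottom row as $\iota_*$ applied to the top row), which is exactly the naturality of the connecting homomorphism in the Ext long exact sequence; once this is in place the construction of $X$ and the verification of its two defining properties are formal.
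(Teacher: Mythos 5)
The paper leaves the proof of this lemma to the reader (with only a pointer to [{\bf 16}, 10.1.15] for a similar result), so there is no in-paper proof to compare against; your argument is correct and is essentially the standard one. Applying ${\rm Ext}^*_R(A,-)$ to the short exact sequence $0\to B[m]\to B\stackrel{\cdot m}{\to}B\to 0$ (valid because $B$ is divisible), using that pushforward along $m\cdot{\rm id}_B$ is multiplication by $m$ on ${\rm Ext}^1$, lifting $\xi$ to some $\xi'\in{\rm Ext}^1_R(A,B[m])$, and then taking $X$ to be the image of the corresponding pushout map $E'\to E$ is exactly what is needed; your diagram chases for injectivity of $f$, for $E=B+X$, and for $B\cap X=B[m]$ are all sound. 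The single microscopic point you leave implicit is that $\mu_m$ is surjective only for $m\neq 0$, but the case $m=0$ is trivial (take $X=E$), so this does not constitute a gap.
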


\begin{lemma} Let $G$ be a group and $A$ a $\mathbb ZG$-module whose underlying abelian group is divisible. Let
$0\rightarrow A\rightarrow E\rightarrow G\rightarrow 1$ be a group extension giving rise to the given $\mathbb ZG$-module structure on $A$, and let $\xi$ be the corresponding element of $H^2(G,A)$. If $m\cdot \xi=0$ for some $m\in \mathbb Z$, then $E$ has a subgroup $X$ such that $E=AX$ and $A\cap X=\{a\in A\ |\ m\cdot a=0\}$. \nolinebreak \hfill\(\square\)
\end{lemma}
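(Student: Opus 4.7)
The plan is to mimic the proof of the module version (Lemma 4.16), but inside the category of group extensions with abelian kernel. The central device is the short exact sequence of $\mathbb ZG$-modules
\[0\longrightarrow A[m]\longrightarrow A\stackrel{\mu_m}{\longrightarrow} A\longrightarrow 0,\]
where $A[m]=\{a\in A\mid m\cdot a=0\}$ and $\mu_m$ denotes multiplication by $m$. Because $A$ is divisible, $\mu_m$ is surjective, so this sequence really is exact. Applying $H^\ast(G,-)$ produces a portion
\[H^2(G,A[m])\stackrel{\iota_\ast}{\longrightarrow} H^2(G,A)\stackrel{(\mu_m)_\ast}{\longrightarrow} H^2(G,A)\]
of the associated long exact sequence. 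Since the coefficient map $\mu_m$ is multiplication by $m$, the induced map $(\mu_m)_\ast$ is multiplication by $m$ on $H^2(G,A)$.

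The hypothesis $m\cdot\xi=0$ therefore gives an $\eta\in H^2(G,A[m])$ with $\iota_\ast(\eta)=\xi$. Let
\[1\longrightarrow A[m]\longrightarrow E_0\stackrel{\pi_0}{\longrightarrow} G\longrightarrow 1\]
be a group extension representing $\eta$. The standard description of the connecting maps in group cohomology says that the extension representing $\iota_\ast(\eta)$ is obtained from $E_0$ by pushout along $\iota\colon A[m]\hookrightarrow A$; since $\iota_\ast(\eta)=\xi$, this pushout is equivalent to the given extension $1\to A\to E\to G\to 1$. Hence there is a commutative diagram
\[\begin{CD}
1 @>>> A[m] @>>> E_0 @>\pi_0>> G @>>> 1\\
&& @VV\iota V @VV\lambda V @| &&\\
1 @>>> A @>>> E @>>> G @>>> 1
\end{CD}\]
whose rows are exact.

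It now remains to read off the desired subgroup $X$. Because $\iota$ is injective, a diagram chase shows that $\lambda$ is injective; set $X=\lambda(E_0)$. The equality of the right-hand vertical map with the identity on $G$ gives $AX=E$. Finally, if $x\in X\cap A$, then $x=\lambda(e)$ for some $e\in E_0$ with $\pi_0(e)=1$, so $e\in A[m]$ and $x=\iota(e)\in A[m]$; conversely $A[m]\subseteq X\cap A$ by construction. Thus $X\cap A=\{a\in A\mid m\cdot a=0\}$, as required.

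The only genuinely delicate point is the identification of $(\mu_m)_\ast$ with multiplication by $m$ (so that $\xi$ being killed by $m$ really does force $\xi$ into the image of $\iota_\ast$) together with the identification of $\iota_\ast$ with pushout on the level of extensions; both are standard facts about the long exact cohomology sequence and the classification of extensions by $H^2$, but they are what converts the cohomological hypothesis into the group-theoretic conclusion.
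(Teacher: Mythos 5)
Your proof is correct. The paper leaves Lemmas 4.16 and 4.17 to the reader (with a pointer to a similar result in Lennox--Robinson), so there is no in-paper proof to compare against, but your argument is exactly the natural one: use divisibility to get the short exact coefficient sequence $0\to A[m]\to A\xrightarrow{\mu_m} A\to 0$, deduce from $m\cdot\xi=0$ that $\xi$ lifts to some $\eta\in H^2(G,A[m])$, realize $\eta$ by an extension $E_0$ of $G$ by $A[m]$, and note that the injective pushout map $\lambda\colon E_0\to E$ provides the desired subgroup $X=\lambda(E_0)$. All the identifications you flag as the delicate points (that $(\mu_m)_\ast$ is multiplication by $m$ on cohomology, and that $\iota_\ast$ corresponds to pushout of extensions) are indeed the standard facts one needs, and the final diagram chase establishing $AX=E$ and $A\cap X=A[m]$ is straightforward and correct.
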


Equipped with these two lemmas, we dispose of the special case of Proposition 4.15. 

\begin{lemma} Let $p$ be a prime, and let $G$ be a topological group with an open normal $\mathfrak{N}_p$-subgroup $N$ such that $G/N$ is virtually polycyclic. Suppose further that $R(N)$ is torsion. Then $G$ contains a subgroup $X$ such that $G=R(N)X$ and $N\cap X$ is compact. 
\end{lemma}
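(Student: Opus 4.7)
The strategy is to apply Lemma 4.17 to the extension $1 \to A \to G \to G/A \to 1$, where $A = R(N)$, by showing that its cohomology class $\xi \in H^2(G/A, A)$ is annihilated by some positive integer $m$. First I would establish the structure of $A$. By Lemma 4.6, $A$ is closed in $N$ and radicable, hence a characteristic normal subgroup of $G$. Being torsion, radicable, and nilpotent, $A$ is locally finite, and a short computation using the vanishing of $\mathbb{Z}(p^\infty)\otimes_{\mathbb{Z}}\mathbb{Z}(p^\infty)$ shows that the alternating commutator form on the abelianization of any torsion radicable nilpotent group is trivial; hence $A$ is abelian. Because $A$ is a torsion $\mathfrak{N}_p$-group, all factors in its $\mathfrak{N}_p$-series are discrete quasicyclic $p$-groups, so $A$ is discrete in $G$; combined with Proposition 4.8, we deduce $A \cong \mathbb{Z}(p^\infty)^k$ for some $k$. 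In particular, $G/A$ is a locally compact, totally disconnected topological group with $N/A$ compact open.

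Next I would bound $\xi$ in two stages. The restriction $\xi|_{N/A}$ classifies $1 \to A \to N \to N/A \to 1$; since $A$ is discrete and $N/A$ is compact totally disconnected, $N \to N/A$ is a covering map, and $N/A$ can be partitioned into finitely many clopen pieces on each of which this covering has a continuous section. Gluing these produces a set-theoretic cross-section whose associated $2$-cocycle takes only finitely many values in $A$, and any finite subset of $A$ has bounded $p$-exponent, yielding $p^e\xi|_{N/A} = 0$ for some $e$. To propagate this bound to $\xi$ itself, I would invoke the Hochschild-Serre spectral sequence for $1 \to N/A \to G/A \to G/N \to 1$: the kernel of the restriction $H^2(G/A, A) \to H^2(N/A, A)^{G/N}$ is controlled by $H^2(G/N, A^{N/A})$ and $H^1(G/N, H^1(N/A, A))$, and using that $G/N$ is finitely generated virtually polycyclic (together with analogous clopen-cover arguments for the $H^1$ terms) one shows these groups also have bounded torsion. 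Putting everything together, some $m$ annihilates $\xi$.

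Granted $m\xi = 0$, Lemma 4.17 furnishes $X \leq G$ with $G = AX$ and $A \cap X = A[m] = \{a \in A : ma = 0\}$, which is finite since $A \cong \mathbb{Z}(p^\infty)^k$. From $G = AX$ and the normality of $A$ it follows that $N = A(N \cap X)$, so the natural inclusion yields a continuous surjection $N \cap X \to N/A$ with finite kernel $A \cap X$; hence $N \cap X$ is compact. The main obstacle is the spectral-sequence step needed to pass from the bound on $\xi|_{N/A}$ to a bound on $\xi$, as it requires controlling the $G/N$-action on the intermediate cohomology modules. A useful simplification is to first pass to a finite index subgroup of $G$ on which the $G/N$-action on $A$ factors through a finite quotient of $\operatorname{Aut}(A) \cong \operatorname{GL}_k(\mathbb{Z}_p)$, reducing the cohomological estimate to a more tractable trivial-action case to which the clopen-cover technique again applies.
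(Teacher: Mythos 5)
There are two genuine gaps, and both trace back to the same strategic choice: you attack the cohomology $H^2(G/A,A)$ directly over the topological group $G/A$, whereas the paper's proof first makes an intermediate $\mathbb{Z}_pQ$-module splitting (with $Q=G/N$) that avoids this entirely.

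\textbf{Gap 1: the $E_2^{1,1}$ term is not torsion.} The module $H^1(N/A,A)$ occurring in your LHS spectral sequence consists of abstract (not continuous) derivations, and no clopen-cover argument controls it. Concretely, take $N/A\cong\mathbb{Z}_p$ acting trivially on $A\cong\mathbb{Z}(p^\infty)$: then $H^1(N/A,A)=\mathrm{Hom}_{\mathbb{Z}}(\mathbb{Z}_p,\mathbb{Z}(p^\infty))$, and since $\mathbb{Z}_p/\mathbb{Z}$ is divisible with a $\mathbb{Q}$-vector space summand of continuum dimension, $\mathrm{Hom}_{\mathbb{Z}}(\mathbb{Z}_p,\mathbb{Z}(p^\infty))$ contains a nontrivial divisible torsion-free subgroup. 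It is therefore not even $p$-torsion, let alone of bounded exponent, so there is no reason for $H^1(G/N,H^1(N/A,A))$, or the piece of the filtration it feeds, to be torsion. Your clopen-cover trick works for the \emph{restriction} of $\xi$ because a continuous cocycle on a compact space with discrete values has finite image; it gives no traction on abstract derivations.

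\textbf{Gap 2: compactness of $N\cap X$.} Even granting $m\xi=0$, Lemma 4.17 produces only an abstract subgroup $X$. A continuous surjection $N\cap X\to N/A$ with finite kernel does not make $N\cap X$ compact in the subspace topology: inside $N\cong\mathbb{Z}(p^\infty)\times\mathbb{Z}_p$, the graph of a discontinuous abstract homomorphism $\mathbb{Z}_p\to\mathbb{Z}(p^\infty)$ maps bijectively and continuously onto $\mathbb{Z}_p$, yet it is not closed and not compact. You need the complement to be produced in a way that forces closedness.

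The paper's proof handles both issues at once. After two reductions you skip --- an induction on $m_p(R(N))$ to the case where every proper $G$-invariant subgroup of $R:=R(N)$ is finite, and a reduction to $N'=1$ by showing $N'$ is compact --- $N$ becomes a $\mathbb{Z}_pQ$-module. Since $\mathbb{Z}_pQ$ is Noetherian and $N/R$ is a finitely generated, hence $\mathrm{FP}_\infty$, $\mathbb{Z}_pQ$-module, $\mathrm{Ext}^1_{\mathbb{Z}_pQ}(N/R,R)$ is $p$-torsion, and Lemma 4.16 yields a $\mathbb{Z}_pQ$-submodule $V$ with $N=R+V$ and $R\cap V$ finite. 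Being a $\mathbb{Z}_p$-submodule, $V$ is closed (Lemma 4.7(ii)), and having trivial finite residual it is compact. The group extension $0\to N/V\to G/V\to Q\to 1$ is then over the ordinary virtually polycyclic group $Q$, whose $\mathrm{FP}_\infty$ property makes $H^2(Q,N/V)$ $p$-torsion directly, with no spectral sequence over $N/A$. Finally $X$ is taken to be the preimage in $G$ of a complement in $G/V$; since $V\leq N\cap X$ with finite index, $N\cap X$ is a finite union of cosets of a closed compact set, hence compact. This two-stage construction is what your direct appeal to $H^2(G/A,A)$ and Lemma 4.17 fails to reproduce.
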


\begin{proof} Set $R=R(N)$ and $Q=G/N$. First we treat the case where $R$ is infinite and every proper $G$-subgroup of $R$ is finite. 
Since $N$ is not compact,
Lemma 4.10 implies that the same holds for $N_{\rm ab}$. As a result, $R\cap N'$ is a proper subgroup of $R$, and so $R\cap N'$ is finite. From Lemmas 4.2(vii) and 4.6(iii), we deduce that $N'$ is compact. Hence it suffices to consider the case $N'=1$. 
Under this assumption, $N$ can be viewed as a $\mathbb Z_pQ$-module. Because $Q$ is virtually polycyclic,  $\mathbb Z_pQ$ is a Noetherian ring. Thus, as a finitely generated $\mathbb Z_pQ$-module, $N/R$ must have type ${\rm FP}_\infty$. This means that ${\rm Ext}^n_{\mathbb Z_pQ}(N/R,R)$ is $p$-torsion for $n\geq 0$. 
Invoking Lemma 4.16, we acquire a compact $\mathbb Z_pQ$-submodule $V$ of $N$ such that $N=R+V$ and $R\cap V$ is finite. 

We now examine the group extension $0\rightarrow A\rightarrow G/V\rightarrow Q\rightarrow 1$, where $A=N/V\cong R/(R\cap V)$.  Being a virtually polycyclic group, $Q$ is of type ${\rm FP}_\infty$. As a result, $H^n(Q,A)$ is $p$-torsion for $n\geq 0$. Consequently, by Lemma 4.17, $G/V$ has a subgroup $X^\dagger$ such that $G/V=AX^\dagger$ and $A\cap X^\dagger$ is finite. Thus, if we let $X$ be the preimage of $X^\dagger$ in $G$, then $X$ has the desired properties. This concludes the argument for the case where $R$ is infinite and every proper $G$-subgroup of $R$ is finite.

Finally, we handle the general case by inducting on $m_p(R)$. The case $m_p(R)=0$ being trivial, suppose $m_p(R)\geq 1$. Choose $K$ to be a radicable $G$-subgroup of $R$ such that $m_p(K)$ is as large as possible while still remaining less than $m_p(R)$. Then every proper $G$-subgroup of $R/K$ is finite. Hence, by the case established above, we can find a subgroup $Y$ such that $K\leq Y$, $G=RY$, and $(N\cap Y)/K$ is compact. Notice further that $K$ is the finite residual of $N\cap Y$. The inductive hypothesis furnishes, then, a subgroup $X\leq Y$ such that $N\cap X$ is compact and $Y=KX$. Since $G=RX$, this completes the proof. 
\end{proof}

The proof of the general case of Proposition 4.15 will depend heavily upon the following result.

\begin{proposition}{\rm (Kropholler, Lorensen, and Robinson [{\bf 18}, Proposition 2.1])} Let $G$ be an abelian group and $R$ a principal ideal domain such that $R/Ra$ is finite for every nonzero element $a$ of $R$. Let $A$ and $B$ be $RG$-modules that are $R$-torsion-free and have finite $R$-rank. Suppose further that $A$ fails to contain a nonzero $RG$-submodule that is isomorphic to a submodule of $B$. Then there is a positive integer $m$ such that $m\cdot {\rm Ext}_{RG}^n(A,B)=0$  
for all $n\geq 0$.  \nolinebreak \hfill\(\square\)
\end{proposition}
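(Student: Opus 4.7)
The plan is to exploit the commutativity of $RG$ (which follows from $G$ being abelian) and work with the annihilator ideals $\mathfrak{a}:=\mathrm{Ann}_{RG}(A)$ and $\mathfrak{b}:=\mathrm{Ann}_{RG}(B)$. Since $RG$ is commutative, each of $\mathfrak{a}$ and $\mathfrak{b}$ acts as zero on every $\mathrm{Ext}^n_{RG}(A,B)$, so $\mathfrak{a}+\mathfrak{b}$ annihilates all these Ext groups simultaneously. The entire task therefore reduces to producing a nonzero element $r_0\in(\mathfrak{a}+\mathfrak{b})\cap R$: the hypothesis that $R/Rr_0$ is finite then supplies a positive integer $m$ lying in $Rr_0\subseteq\mathfrak{a}+\mathfrak{b}$, and this $m$ is the annihilator demanded by the conclusion.

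To locate $r_0$ I would pass to the field of fractions $K=\mathrm{Frac}(R)$ and consider the finite-dimensional $KG$-modules $A_K=A\otimes_R K$ and $B_K=B\otimes_R K$. A routine localization argument identifies $\mathrm{Ann}_{KG}(A_K)$ with the extended ideal $\mathfrak{a}_K:=\mathfrak{a}\cdot KG$, and similarly for $B$. Clearing denominators shows that finding a nonzero $r_0\in(\mathfrak{a}+\mathfrak{b})\cap R$ is equivalent to proving $\mathfrak{a}_K+\mathfrak{b}_K=KG$. The quotient rings $KG/\mathfrak{a}_K\hookrightarrow\mathrm{End}_K(A_K)$ and $KG/\mathfrak{b}_K\hookrightarrow\mathrm{End}_K(B_K)$ are then finite-dimensional commutative, hence Artinian, $K$-algebras.

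I would argue by contradiction. If $\mathfrak{a}_K+\mathfrak{b}_K$ is proper, Zorn's lemma places it inside some maximal ideal $\mathfrak{m}$ of $KG$, and $V:=KG/\mathfrak{m}$ is a simple $KG$-module annihilated by both $\mathfrak{a}_K$ and $\mathfrak{b}_K$. Because $A_K$ is a faithful module over the Artinian commutative ring $KG/\mathfrak{a}_K$---which decomposes as a finite product of local Artinian rings---the residue field at each maximal ideal lies in the socle of the corresponding direct summand; in particular $V$ embeds as a $KG$-submodule of $A_K$, and, by the same reasoning, of $B_K$. Fix such embeddings $\iota\colon V\hookrightarrow A_K$ and $\iota'\colon V\hookrightarrow B_K$, and set $A':=\iota^{-1}(A)$, $B':=(\iota')^{-1}(B)$. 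These are nonzero $RG$-submodules of $V$, each finitely generated over the Noetherian PID $R$ (as submodules of the $R$-finite modules $A$ and $B$) and each spanning $V$ over $K$. Since $A'$ is $R$-finitely generated and $K\cdot B'=V$, some nonzero $s\in R$ satisfies $sA'\subseteq B'$. Multiplication by $s$ is an $RG$-isomorphism from $A'$ onto $sA'$ (using that $s$ is central in $RG$ and $A'$ is $R$-torsion-free), so $A$ contains the nonzero $RG$-submodule $A'$, which is isomorphic to the $RG$-submodule $sA'\subseteq B$. This contradicts the hypothesis, forcing $\mathfrak{a}_K+\mathfrak{b}_K=KG$ as required.

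The principal obstacle is the passage from the integral hypothesis---that $A$ contains no nonzero $RG$-submodule isomorphic to an $RG$-submodule of $B$---to the $KG$-level assertion that $A_K$ and $B_K$ share no common simple submodule. The two $R$-lattices $A'$ and $B'$ in the common simple module $V$ need not be $RG$-isomorphic, but they are commensurable, and the scaling trick $sA'\subseteq B'$ is precisely what manufactures an isomorphic copy of $A'$ inside $B$. This commensurability-plus-scaling step is the linchpin of the argument; everything else is a reasonably direct application of standard commutative algebra to the Artinian quotients of $KG$, together with the finiteness hypothesis on the residue rings of $R$ used only at the very last step to convert an $R$-annihilator into an integral one.
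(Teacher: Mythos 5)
This proposition is not proved in the paper at all; it is quoted from the literature ([{\bf 18}, Proposition 2.1]), so there is no internal argument to compare yours against, and your proposal must be judged on its own merits. Its architecture is sound: reducing the problem to finding a nonzero element of $\bigl(\mathrm{Ann}_{RG}(A)+\mathrm{Ann}_{RG}(B)\bigr)\cap R$ is exactly the right move, and converting such an element $r_0$ into a positive integer via the finiteness of $R/Rr_0$ is correct. The identification $\mathrm{Ann}_{KG}(A_K)=\mathfrak a\cdot KG$ does hold (note it uses the $R$-torsion-freeness of $A$, via $A\hookrightarrow A_K$, to get $\mathrm{Ann}_{KG}(A_K)\cap RG\subseteq\mathfrak a$), and the socle argument in the Artinian quotient correctly produces a common simple $KG$-constituent $V$ of $A_K$ and $B_K$ whenever $\mathfrak a_K+\mathfrak b_K$ is proper.

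The one genuine flaw is the parenthetical claim that $A'=\iota^{-1}(A)$ and $B'=(\iota')^{-1}(B)$ are finitely generated over $R$ ``as submodules of the $R$-finite modules $A$ and $B$.'' The hypotheses give only that $A$ and $B$ have finite $R$-rank; they need not be finitely generated over $R$ (think of $\mathbb Z[1/p]$ over $\mathbb Z$), and neither need $A'$ or $B'$ be. Without finite generation, the assertion that some nonzero $s\in R$ satisfies $sA'\subseteq B'$ can fail outright: take $A'=\mathbb Z[1/2]$ and $B'=\mathbb Z[1/3]$ inside $V=\mathbb Q$. The step is easily repaired, however, because you do not need to move all of $A'$ into $B'$: pick a single nonzero $v\in A'$ and set $A''=RG\cdot v$. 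Since $K\cdot B'=V$ (as $B'$ is a nonzero $RG$-submodule of the simple $KG$-module $V$), some nonzero $s\in R$ has $sv\in B'$, whence $sA''=RG\cdot(sv)\subseteq B'$ because $B'$ is an $RG$-submodule; multiplication by $s$ is then an $RG$-isomorphism of $A''$ onto a submodule of $B'$, and transporting through $\iota$ and $\iota'$ yields a nonzero $RG$-submodule of $A$ isomorphic to a submodule of $B$, the desired contradiction. With that one-line substitution your proof is complete.
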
 

\begin{proof}[Proof of Proposition 4.15]

Let $R=R(N)$ and $Q=G/N$. To begin with, we treat the case where $R$ is torsion-free and abelian, as well as simple when viewed as a $\mathbb Q_pG$-module. Suppose first that $R$ has a nontrivial compact $G$-subgroup $C$.  Then $R/C$ is torsion, so that Lemma 4.18 provides a subgroup $X$ of $G$ containing $C$ such that $G=RX$ and $(N\cap X)/C$ is compact. It follows that $N\cap X$ is compact, yielding the conclusion sought.  
Assume next that $R$ has no nontrivial compact $G$-subgroups. Since $N$ is not compact, Lemma 4.10 implies that the same holds for $N_{\rm ab}$. As a result, $R\cap N'$ is a proper subgroup of $R$. Hence the finite residual $R_0$ of $R\cap N'$ is a proper radicable $G$-subgroup of $R$. Thus $R_0=1$; that is, $R\cap N'$ is compact, and so $R\cap N'=1$. Furthermore, $N'$ is compact, which means that there is no real loss of generality in supposing $N'=1$.

Consider now the $\mathbb Z_pQ$-module extension 
$0\rightarrow R\rightarrow N\rightarrow N/R\rightarrow 0$.  
Take $Q_0$ to be a normal abelian subgroup of $Q$ with finite index. Observe that any compact $\mathbb Z_pQ_0$-submodule of $R$ must be contained in a compact $\mathbb Z_pQ$-submodule. Hence $R$ cannot possess any nonzero compact $\mathbb Z_pQ_0$-submodules. It follows, then, from
Proposition 4.19 that ${\rm Ext}^1_{\mathbb Z_pQ_0}(N/R,R)$ is torsion.  Being a vector space
 over $\mathbb Q$, this Ext-group must therefore be trivial. Thus $N$ splits as a $\mathbb Z_pQ_0$-module over $R$.  Let $V_0$ be a $\mathbb Z_pQ_0$-module complement to $R$ in $N$. Then $V_0$ is contained in a compact $\mathbb Z_pQ$-submodule $V$ of $N$.  Hence $R\cap V=0$ and $N=R+V$. 

Having obtained $V$, we shift our attention to the group extension $0\rightarrow R\rightarrow G/V\rightarrow Q\rightarrow 1$. We maintain that this extension, too, splits. To see this, notice 
$H^n(Q_0,R)\cong {\rm Ext}^n_{\mathbb Z_pQ_0}(\mathbb Z_p,R)$ for $n\geq 0$. Appealing again to Proposition 4.19, we conclude that $H^n(Q_0,R)$ is torsion for all $n\geq 0$. Hence $H^n(Q_0,R)=0$ for every $n\geq 0$, which implies $H^n(Q,R)=0$ for $n\geq 0$. This means that the desired splitting occurs; in other words, $G$ has a subgroup $X$ such that $G=NX$ and $N\cap X=V$. This subgroup, then, fulfills our requirements, completing the argument for the case where $R$ is torsion-free abelian and simple as a $\mathbb Q_pG$-module.

Now we tackle the case where $R$ is torsion-free, but not necessarily abelian. We proceed by induction on $h_p(R)$, the case $h_p(R)=0$ being trivial. Suppose $h_p(R)\geq 1$. Select $K$ to be a closed radicable $G$-subgroup of $R$ with $R/K$ abelian such that $h_p(K)$ is as large as possible while still remaining less than $h_p(R)$. Then $R/K$ is simple when regarded as a 
$\mathbb Q_pG$-module. Consequently, by the case established above, there is a subgroup $Y$ such that $K\leq Y$, $(N\cap Y)/K$ is compact, and $G=RY$. Because $K$ is the finite residual of $N\cap Y$, the inductive hypothesis provides a subgroup $X\leq Y$ such that $N\cap X$ is compact and $Y=KX$. It follows, then, that $G=RX$, thus completing the argument for the case where $R$ is torsion-free.   

Finally, we deal with the general case. Letting $T$ be the torsion subgroup of $R$, we apply the torsion-free case to $R/T$, thereby obtaining a subgroup $Y$ containing $T$ such that $G=RY$ and $(N\cap Y)/T$ is compact. Since $T$ is the finite residual of $N\cap Y$, we can apply Lemma 4.18 to $Y$, acquiring a subgroup $X\leq Y$ such that $Y=TX$ and $N\cap X$ is compact.  Then $G=RX$, so that $X$ enjoys the desired properties. 
\end{proof}

The proof of Theorem 1.13 will make use of a further factorization result for topological groups. 

\begin{proposition}  Let $p$ be a prime and $\pi$ a finite set of primes. Let $N$ be a nilpotent Hausdorff topological group with a closed normal $\mathfrak{N}_p$-subgroup $K$ such that $N/K$ is ($\pi$-torsion)-by-($\pi$-minimax). 
Then $N$ contains a ($\pi$-torsion)-by-($\pi$-minimax) subgroup $X$ such that $N=KX$.
\end{proposition}

Before proving Proposition 4.20, we establish two lemmas.

\begin{lemma} Let $\pi$ be a set of primes and $N$ be  a nilpotent group that is ($\pi$-torsion)-by-($\pi$-minimax). Then $N$ contains a finitely generated subgroup $H$ such that, for every $n\in N$, there is a $\pi$-number $m$ such that $n^m\in H$.
\end{lemma}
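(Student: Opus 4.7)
The plan is to induct on the nilpotency class of $N$, reducing the heart of the matter to the abelian case, which in turn amounts to producing a finitely generated subgroup whose quotient is $\pi$-torsion.

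For the abelian base case, write $N$ as an extension $1\to T\to N\to Q\to 1$ with $T$ a $\pi$-torsion group and $Q$ abelian $\pi$-minimax. It suffices to produce a finitely generated $\bar{H}_0\leq Q$ with $Q/\bar{H}_0$ a $\pi$-torsion group, for then lifting $\bar{H}_0$ to a finitely generated $H_0\leq N$ gives an extension $T/(T\cap H_0)\rightarrowtail N/H_0\twoheadrightarrow Q/\bar{H}_0$ of $\pi$-torsion groups, so $N/H_0$ is $\pi$-torsion as required. The existence of such an $\bar{H}_0$ follows by induction on the length of a $\pi$-minimax series in $Q$: finite and infinite cyclic factors get absorbed into the generators of $\bar{H}_0$, while any quasicyclic $p$-group factor with $p\in\pi$ can simply be discarded, being already $\pi$-torsion.

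For the inductive step, suppose $c={\rm nil}\ N\geq 2$ and let $Z=Z(N)$. Both $Z$ and $N/Z$ remain ($\pi$-torsion)-by-($\pi$-minimax): $Z\cap T$ is $\pi$-torsion with $Z/(Z\cap T)$ embedding into the $\pi$-minimax group $N/T$, and the image of $T$ in $N/Z$ is $\pi$-torsion with $\pi$-minimax quotient $N/TZ$. The abelian case applied to $Z$ yields a finitely generated $H_Z\leq Z$ with $Z/H_Z$ a $\pi$-torsion group, and the inductive hypothesis applied to $N/Z$, of class $c-1$, yields a finitely generated $\bar{H}'\leq N/Z$ satisfying the lemma's conclusion; I lift $\bar{H}'$ to a finitely generated $H'\leq N$. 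Define $H=H'H_Z$, which is finitely generated because $H_Z$ is central. Given $n\in N$, pick a $\pi$-number $m$ with $n^m\in H'Z$, write $n^m=h'z$ with $h'\in H'$ and $z\in Z$, then pick a $\pi$-number $m'$ with $z^{m'}\in H_Z$; since $z$ is central, $n^{mm'}=(h'z)^{m'}=h'^{m'}z^{m'}\in H$, and $mm'$ is a $\pi$-number.

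The only delicate point is the final combination step, where one needs to distribute the exponent $m'$ cleanly across the factorisation $h'z$ to land in $H'H_Z$. It is precisely the centrality of $z$ that trivialises the binomial expansion $(h'z)^{m'}=h'^{m'}z^{m'}$; passing to the center rather than, say, the abelianisation is what avoids the commutator corrections that would otherwise obstruct a direct inductive argument.
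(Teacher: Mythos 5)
Your argument is correct and follows essentially the same route as the paper's: induction on the nilpotency class, with the abelian base case handled by lifting a finitely generated subgroup from the $\pi$-minimax quotient, and the inductive step carried out by passing to the center $Z(N)$ and taking the product of the finitely generated subgroups produced for $Z$ and for $N/Z$. The only difference is that you spell out the sub-induction along a $\pi$-minimax series in the abelian case, which the paper treats as known.
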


\begin{proof} We argue by induction on ${\rm nil}\ N$. Suppose that $N$ is abelian. Let $T$ be a $\pi$-torsion subgroup of $N$ such that $N/T$ is $\pi$-minimax, and take $\epsilon:N\to N/T$ to be the quotient map. The group $N/T$ contains a finitely generated subgroup $\bar{H}$ such that, for every $n\in N$, there is a $\pi$-number $m$ such that $\epsilon(n^m)\in \bar{H}$. Thus, if we let $H$ be a finitely generated subgroup of $N$ such that $\epsilon(H)=\bar{H}$, then $H$ can serve as the subgroup sought. 

Now we treat the case where ${\rm nil}\ N>1$. Let $Z=Z(N)$. By the abelian case of the lemma, $Z$ contains a finitely generated subgroup $A$ such that, for every $z\in Z$, there is a $\pi$-number $m$ such that $z^m\in A$.
Moreover, we can deduce from the inductive hypothesis that $N$ possesses a finitely generated subgroup $H_0$ such that, for every $n\in N$, there is a $\pi$-number $m$ such that $n^m\in ZH_0$.
Therefore, if we set $H=AH_0$, then $H$ fulfills our requirements.
\end{proof}

\begin{lemma} Let $\pi$ be a finite set of primes and $N$ a nilpotent group with a normal subgroup $K$ such that $K$ is $\pi$-radicable and $N/K$ is ($\pi$-torsion)-by-($\pi$-minimax).
Then $N$ contains a subgroup $X$ such that $N=KX$ and $X$ is ($\pi$-torsion)-by-($\pi$-minimax).
\end{lemma}

\begin{proof} We induct on ${\rm nil}_N\ K$. First suppose that $N$ centralizes $K$. Writing $Q=N/K$ and invoking Lemma 4.21, we choose a finitely generated subgroup $\bar{H}\leq Q$ such that, for every $q\in Q$, there is a $\pi$-number $m$ such that $q^m\in \bar{H}$. 
Now take $H$ to be a finitely generated subgroup of $N$ whose image in $Q$ is $\bar{H}$. Let $X$ be the {\it $\pi$-isolator} of $H$ in $N$; that is, $X$ is the subgroup of $N$ consisting of all the elements
$x$ such that $x^m\in H$ for some $\pi$-number $m$. Then Lemma 3.27 implies that $X$ is an extension of a $\pi$-torsion group by one that is $\pi$-minimax. 

We claim further that the $\pi$-radicability of $K$ ensures $N=KX$. To verify this, take $n$ to be an arbitrary element of $N$.  Then the image of $n^m$ is in $\bar{H}$ for some $\pi$-number $m$.
It follows that there exists $k\in K$ such that $kn^m\in H$.  Selecting $l\in K$ so that $l^m=k$, we have
$(ln)^m\in H$. Thus $ln\in X$, and so $n\in KX$. 

Finally, we treat the case where ${\rm nil}_N\ K>1$.   
Set $Z=Z^N(K)$.  We can suppose that $K$ is $\pi$-torsion-free. Under this assumption, $Z$ must be $\pi$-radicable.  
By the inductive hypothesis, $N$ contains a subgroup $Y$ such that $Z\leq Y$, $Y/Z$ is ($\pi$-torsion)-by-($\pi$-minimax), and $N=KY$. The base case yields, then, a ($\pi$-torsion)-by-($\pi$-minimax) subgroup $X$ of $Y$ such that $Y=ZX$. Hence $N=KX$, completing the proof.
\end{proof}

Lemma 4.22 is the basis for the proof of Proposition 4.20. 

\begin{proof}[Proof of Proposition 4.20] We argue by induction on ${\rm nil}_N\ K$. First we dispose of the case where $N$ acts trivially on $K$. Since $\mathbb Z_p/\mathbb Z$ is divisible, it follows from Proposition 4.8 that every abelian $\mathfrak{N}_p$-group
contains an abstractly finitely generated subgroup giving rise to a divisible quotient. Choose $L$ to be such a subgroup in $K$. Applying Lemma 4.22 to $N/L$, we acquire a subgroup $X$ of $N$ such that $L\leq X$, $X/L$ is ($\pi$-torsion)-by-($\pi$-minimax), and $N=KX$. Lemma 3.27, then, implies that $X$ must be ($\pi$-torsion)-by-($\pi$-minimax).

Next we handle the case where ${\rm nil}_N\ K>1$. Write $Z=Z^N(K)$. That $N$ is Hausdorff implies that $Z$ is closed; thus both $Z$ and $K/Z$ are members of $\mathfrak{N}_p$. Invoking the inductive hypothesis, we can find a subgroup $Y$ of $N$ such that $Z\leq Y$,  $Y/Z$ is ($\pi$-torsion)-by-($\pi$-minimax), and $N=KY$.  We can now invoke the case of a trivial action to obtain a ($\pi$-torsion)-by-($\pi$-minimax) subgroup $X$ of $Y$ such that $Y=ZX$. Therefore $N=KX$, which finishes the argument.
\end{proof}

\section{Tensor $p$-completion}

\subsection{$p$-Completing nilpotent groups}

Let $p$ be a prime. Throughout this section, $\mathfrak{U}_p$ will represent the class of 
topological groups whose topologically finitely generated closed subgroups are all pro-$p$ groups. Note that $\mathfrak{U}_p$ contains the class $\mathfrak{N}_p$ defined in Section 4 as a proper subclass. The embedding of a nilpotent $\mathfrak{M}$-group in an $\mathfrak{N}_p$-group employed in the proof of Theorem 1.13 will arise from a functor
$N\mapsto N_p$ from the class of nilpotent groups of finite torsion-free rank to $\mathfrak{U}_p$ 
such that $N_p$ belongs to $\mathfrak{N}_p$ if $N$ is minimax. The present section is devoted to the construction and investigation of this functor, which will coincide with classical notions in three cases:

\begin{itemize}

\item If $N$ is finitely generated, then $N_p$ is naturally isomorphic to the pro-$p$ completion of $N$.
\item If $N$ is abelian, then $N_p$ is naturally isomorphic to $N\otimes \mathbb Z_p$. 
\item If $N$ is torsion-free, then $N_p$ is naturally isomorphic to the closure of $N$ in its $p$-adic Mal'cev completion. 

\end{itemize}

Our discussion in this section will rely on the following two well-known properties of pro-$p$ completions of finitely generated nilpotent groups. 

\begin{lemma} Let $N$ be a finitely generated nilpotent group and $p$ a prime.

\begin{enumerate*}

\item If $1\to M \stackrel{\iota}{\rightarrow} N \stackrel{\epsilon}{\rightarrow} Q\to 1$ is an extension of abstract groups, then 
$1\to \hat{M_p} \stackrel{\hat{\iota_p}}{\rightarrow} \hat{N_p} \stackrel{\hat{\epsilon_p}}{\rightarrow} \hat{Q_p}\to 1$ is an extension of pro-$p$ groups. 

\item If $H\leq N$ and $\phi:H\to N$ is the inclusion monomorphism,
then $\hat{\phi_p}:\hat{H_p}\to \hat{N_p}$ is injective for every prime~$p$.  \nolinebreak \hfill\(\square\)
\end{enumerate*}

\end{lemma}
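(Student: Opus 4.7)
The plan is to reduce both parts to a single topological fact and then prove that fact by iterating a one-step construction along a subnormal series.

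Let $(\star)$ denote the statement that for every finitely generated nilpotent group $N$ and subgroup $K\leq N$, the pro-$p$ topology on $N$ induces the pro-$p$ topology on $K$; equivalently, every normal subgroup $V$ of $p$-power index in $K$ contains $K\cap U$ for some normal subgroup $U$ of $p$-power index in $N$. Granting $(\star)$, part (ii) is $(\star)$ with $K=H$, since the injectivity of $\hat{\phi_p}$ is equivalent to the inclusion $H\hookrightarrow \hat{N_p}$ realizing $\hat{H_p}$ as the closure of $H$. For part (i), surjectivity of $\hat{\epsilon_p}$ is automatic from the right-exactness of pro-$p$ completion; injectivity of $\hat{\iota_p}$ is $(\star)$ with $K=M$; and middle exactness follows because $\ker \hat{\epsilon_p}$ is the closure of $\iota(M)$ in $\hat{N_p}$, which $(\star)$ identifies with $\hat{M_p}$. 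One also needs that the quotient topology on $Q=N/M$ inherited from $N$'s pro-$p$ topology coincides with $Q$'s own pro-$p$ topology, which is immediate since normal subgroups of $p$-power index in $Q$ correspond bijectively with those of $N$ containing $M$.

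To prove $(\star)$, use that $K$ is subnormal in $N$ (as $N$ is nilpotent) and fix a subnormal series $K=H_0\lhd H_1\lhd \cdots \lhd H_r=N$. Iterating reduces $(\star)$ to the one-step version: given $V\lhd K$ of $p$-power index and $K\lhd L\leq N$, produce $V'\lhd L$ of $p$-power index in $L$ with $V'\cap K\leq V$. First replace $V$ by its $L$-core $\tilde V:=\bigcap_{x\in L}V^x$, which remains of $p$-power index in $K$ because a finite-index subgroup of a finitely generated nilpotent group has only finitely many conjugates and the product of finitely many $p$-power indices is still a $p$-power. Passing to $\bar L:=L/\tilde V$ with $\bar K:=K/\tilde V$ (a finite normal $p$-subgroup of $\bar L$), the task reduces to producing a normal subgroup of $p$-power index in $\bar L$ that meets $\bar K$ trivially.

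This last task is the main obstacle. Let $T$ be the finite torsion subgroup of $\bar L$ and write $T=T_p\times T_{p'}$. Since $\bar K\leq T$ and $\bar K$ is a $p$-group, $\bar K\cap T_{p'}=1$; hence $\bar K$ embeds into $\bar L/T_{p'}$, a finitely generated nilpotent group whose torsion is the $p$-group $T_p$. By Gruenberg's classical theorem, such a group is residually $p$-finite, so one may separate the finitely many nontrivial elements of $\bar K$ from the identity by a single normal subgroup of $p$-power index; pulling this back to $\bar L$ gives the required subgroup. The essential external input is Gruenberg's residual $p$-finiteness result, which is precisely what ensures that $p$-completion behaves well on the class of finitely generated nilpotent groups and distinguishes them from general solvable groups.
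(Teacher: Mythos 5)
Your proof is correct but takes a genuinely different route from the paper. The paper accepts right-exactness of pro-$p$ completion as standard for arbitrary groups, obtains the injectivity of $\hat{\iota_p}$ by citing a cohomological result for finitely generated nilpotent groups ([{\bf 17}, Corollary 1.1], on groups having the same cohomology as their pro-$p$ completions), and then deduces (ii) from (i) via subnormality. You instead isolate the single topological statement $(\star)$ that the pro-$p$ topology of $N$ induces the pro-$p$ topology on every subgroup, derive both (i) and (ii) from it, and prove $(\star)$ directly by a subnormal induction: after passing to a normal core and quotienting, the one-step case becomes the problem of separating a finite normal $p$-subgroup from the identity by a normal subgroup of $p$-power index, which is precisely Gruenberg's classical theorem that a finitely generated nilpotent group whose torsion subgroup is a $p$-group is residually (finite $p$). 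Both arguments use subnormality of subgroups of nilpotent groups, but in different places. Your route is more elementary and self-contained, trading a specialized cohomological lemma for a classical residual-finiteness fact; the paper's route is shorter because it leverages existing machinery. One small presentational point: middle exactness, i.e., ${\rm im}\,\hat{\iota_p}={\rm Ker}\,\hat{\epsilon_p}$, holds for arbitrary groups and is part of the right-exactness you already invoke, so it does not need $(\star)$; what $(\star)$ actually supplies is that the surjection $\hat{M_p}\to {\rm Ker}\,\hat{\epsilon_p}$ is an isomorphism, which is the same thing as injectivity of $\hat{\iota_p}$.
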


\begin{proof} That the sequence $\hat{M_p} \stackrel{\hat{\iota_p}}{\rightarrow} \hat{N_p} \stackrel{\hat{\epsilon_p}}{\rightarrow} \hat{Q_p}\to 1$ is exact holds even if $N$ is taken to be an arbitrary group. The injectivity of $\iota_p$ when $N$ is finitely generated and nilpotent follows from the cohomological property of finitely generated nilpotent groups contained in [{\bf 17}, Corollary 1.1]. Statement (ii) can be readily deduced from (i) by using the fact that every subgroup of a nilpotent group is subnormal. 
\end{proof}

Let $p$ be a prime and $N$ a nilpotent group of finite torsion-free rank. Let $\mathcal{H}$ be the set of all finitely generated subgroups of $G$. For any pair $H, K\in \mathcal{H}$ with $H\leq K$, define $\iota^{HK}:H\to K$ to be the inclusion map. Notice that, according to Lemma 5.1(ii), the map $\hat{\iota}_p^{HK}:\hat{H}_p\to \hat{K}_p$ must be injective. The {\it tensor $p$-completion} of $N$, denoted $N_p$, is defined to be the inductive limit of the groups $\hat{H}_p$ for $H\in \mathcal{H}$. Then $N_p$ is a nilpotent group with ${\rm nil}\ N_p\leq {\rm nil}\ N$.

For every $H\in \mathcal{H}$, we identify $\hat{H}_p$ with its image in $N_p$. Moreover, 
we endow $N_p$ with a topology by making $U\subseteq N_p$ open if and only if $U\cap \hat{H}_p$ is open in $\hat{H}_p$ for every $H\in \mathcal{H}$. 
In the proposition below, we show that $N_p$ is a topological group with respect to this topology.  An alternative approach to constructing $N_p$ is mentioned in Remark 5.10. 

\begin{lemma} Let $p$ be a prime and $N$ a nilpotent group of finite torsion-free rank. 

\begin{enumerate*}
\item If $H$ is a finitely generated subgroup of $N$ such that $h(H)=h(N)$, then $\hat{H}_p$ is open in $N_p$.

\item $N_p$ is a topological group that is the union of a family $\mathcal{V}$ of polycyclic pro-$p$ open subgroups of $p$-Hirsch length $h(N)$ such that, for every pair $H, K\in \mathcal{V}$, there is a subgroup $L\in \mathcal{V}$ with $H\cup K\subseteq L$.
\end{enumerate*}

\end{lemma}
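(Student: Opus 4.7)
The plan is to prove (i) directly from the definition of the topology on $N_p$, and then to use (i) together with Lemma 4.12 to establish (ii).

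For (i), fix a finitely generated $H\leq N$ with $h(H)=h(N)$, and take any $K\in\mathcal{H}$. I would pass to $L=\langle H,K\rangle$, which still lies in $\mathcal{H}$ because a subgroup of a nilpotent group generated by finitely many elements is itself a finitely generated nilpotent group. Since $h(L)=h(H)=h(N)$, the finitely generated nilpotent quotient $L/H$ is torsion, and therefore finite. Applying Lemma 5.1(i) to the extension $H\hookrightarrow L\twoheadrightarrow L/H$ then shows that $\hat H_p$ has finite index in $\hat L_p$, so $\hat H_p$ is open in $\hat L_p$. Because $\hat K_p$ is compact and $\hat L_p$ is Hausdorff, the inclusion $\hat K_p\hookrightarrow\hat L_p$ is a topological embedding, so $\hat H_p\cap\hat K_p$ is open in $\hat K_p$. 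This holds for every $K\in\mathcal{H}$, giving (i).

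For (ii), let $\mathcal{V}=\{\hat H_p:H\in\mathcal{H},\ h(H)=h(N)\}$. Each $\hat H_p\in\mathcal{V}$ is a polycyclic pro-$p$ group of $p$-Hirsch length $h(N)$ (obtained by completing a cyclic series of the finitely generated nilpotent group $H$), and is open in $N_p$ by (i). The family is upward directed: for $H_1,H_2\in\mathcal{V}$, the subgroup $L=\langle H_1,H_2\rangle$ belongs to $\mathcal{H}$ and still satisfies $h(L)=h(N)$. Moreover $\mathcal{V}$ covers $N_p$ once one produces a single $H_0\in\mathcal{H}$ with $h(H_0)=h(N)$, which can be done by lifting a $\mathbb{Z}$-basis of the torsion-free quotient of each factor of the lower central series of $N$; for then any $K\in\mathcal{H}$ is contained in $\langle K,H_0\rangle\in\mathcal{H}$, which lies in $\mathcal{V}$.

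The remaining task is to check that, for $H\in\mathcal{V}$, the subspace topology on $\hat H_p$ inherited from $N_p$ coincides with its intrinsic pro-$p$ topology. One inclusion is immediate from the definition of the topology on $N_p$. For the other, any intrinsically open $V\subseteq\hat H_p$ and any $K\in\mathcal{H}$ lead (with $L=\langle H,K\rangle$ as in (i)) to $V$ being open in $\hat L_p$, so that $V\cap\hat K_p$ is open in $\hat K_p$; hence $V$ is open in $N_p$. With all the hypotheses verified, Lemma 4.12 then applies and shows $N_p$ is a topological group. The main obstacle is really (i): everything hinges on aligning the Hirsch lengths of $H$, $K$, and $L$, which is what forces $[\hat L_p:\hat H_p]$ to be finite; once this openness is secured, the rest is a formal verification from the definition of the inductive-limit topology.
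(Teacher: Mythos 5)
Your proof is correct and follows essentially the same route as the paper's: both establish (i) by reducing to finite index via Lemma 5.1 and then obtain (ii) from (i) together with Lemma 4.12, the only cosmetic differences being that you route (i) through $L=\langle H,K\rangle$ where the paper works directly with $[K:H\cap K]<\infty$, and that you spell out the subspace-versus-intrinsic topology check that the paper leaves implicit. One small informality: you invoke Lemma 5.1(i) for the ``extension'' $H\hookrightarrow L\twoheadrightarrow L/H$, but $H$ need not be normal in $L$, so one should pass down a subnormal chain from $L$ to $H$ (or use the normal core of $H$ in $L$), exactly as the paper itself does in deducing Lemma 5.1(ii) from 5.1(i).
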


\begin{proof} To prove (i), we let $K$ be an arbitrary finitely generated subgroup of $N$. Then $[K:H\cap K]$ is finite. It can thus be deduced from Lemma 5.1 that $[\hat{K}_p:\hat{H}_p\cap \hat{K}_p]<\infty$. Hence $\hat{H}_p\cap \hat{K}_p$ is open in $\hat{K}_p$. Therefore $\hat{H}_p$ is open in $N_p$.

Next we dispose of (ii). Notice that $N_p$ is the union of all the $\hat{H}_p$ such that $H$ is a finitely generated subgroup of $N$ with $h(H)=h(N)$. Hence (ii) follows from (i) and Lemma 4.12. 
\end{proof}

For any prime $p$ and nilpotent group $N$ of finite torsion-free rank, we let  $t^N_p:N\to N_p$ be the homomorphism induced by the pro-$p$ completion maps $c^H_p:H\to \hat{H}_p$ for all the finitely generated subgroups $H$ of $N$.  The map $t^N_p:N\to N_p$ 
enjoys the universal property described in Proposition 5.3 below.

\begin{proposition} Let $N$ be a nilpotent group with finite torsion-free rank and $p$ a prime. Suppose that $\phi: N\to G$ is a group homomorphism such that $G$ is a $\mathfrak{U}_p$-group.  
Then there exists a unique continuous homomorphism $\psi:N_p\to G$ such that the diagram
\begin{equation} \xymatrix{
&N \ar[d]_{t^N_p} \ar[r]^{\phi} &G\\
& N_p \ar[ur]_{\psi} &}\end{equation} commutes. 
\end{proposition}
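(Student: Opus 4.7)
The plan is to construct $\psi$ locally on each pro-$p$ completion $\hat{H}_p$ attached to a finitely generated subgroup $H\leq N$, and then glue these pieces using the compatibility afforded by Lemma 5.1(ii) together with the topology placed on $N_p$ in Lemma 5.2.

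First I would handle a single finitely generated $H\leq N$. Since $\phi(H)$ is abstractly finitely generated, its closure $\overline{\phi(H)}$ in $G$ is a topologically finitely generated closed subgroup of $G$, so the defining property of the class $\mathfrak{U}_p$ makes $\overline{\phi(H)}$ a pro-$p$ group. Because $H$ is a finitely generated nilpotent group, the universal property of its pro-$p$ completion furnishes a \emph{unique} continuous homomorphism $\psi_H\colon\hat{H}_p\to\overline{\phi(H)}\subseteq G$ with $\psi_H\circ c^H_p=\phi|_H$. Next I would verify compatibility: if $H\leq K$ are both finitely generated, Lemma 5.1(ii) lets us regard $\hat{H}_p$ as a closed subgroup of $\hat{K}_p$, and both $\psi_K|_{\hat{H}_p}$ and $\psi_H$ are continuous homomorphisms extending $\phi|_H$ into the Hausdorff pro-$p$ group $\overline{\phi(K)}$; by the uniqueness clause just used, they agree. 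Therefore the rule $\psi(x):=\psi_H(x)$ for $x\in\hat{H}_p$ yields a well-defined map $\psi\colon N_p\to G$.

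To finish, I would check that $\psi$ is a homomorphism, is continuous, makes (5.2) commute, and is unique. The homomorphism property holds because any $x,y\in N_p$ lie jointly in some $\hat{L}_p$ by the directedness of the family $\mathcal{V}$ in Lemma 5.2(ii), on which $\psi$ restricts to $\psi_L$. Continuity is automatic from the topology placed on $N_p$: for open $U\subseteq G$, the intersection $\psi^{-1}(U)\cap\hat{H}_p=\psi_H^{-1}(U)$ is open in $\hat{H}_p$ for every finitely generated $H$. Commutativity reduces to taking $H=\langle n\rangle$ and reading off $\psi(t^N_p(n))=\psi_H(c^H_p(n))=\phi(n)$. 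For uniqueness, a rival $\psi'$ restricts on each $\hat{H}_p$ to a continuous homomorphism that agrees with $\psi_H$ on the dense image $c^H_p(H)$ and takes values in the Hausdorff group $\overline{\phi(H)}$, forcing $\psi'|_{\hat{H}_p}=\psi_H=\psi|_{\hat{H}_p}$ for each $H$, and hence $\psi'=\psi$ on the union $N_p$.

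The only genuine obstacle is the opening move, namely recognising $\overline{\phi(H)}$ as a pro-$p$ group. This is exactly where the hypothesis $G\in\mathfrak{U}_p$ is used, and it is the step that makes the classical universal property of pro-$p$ completions of finitely generated nilpotent groups applicable piece by piece; everything else is a routine compatibility-and-gluing argument driven by Lemmas 5.1 and 5.2.
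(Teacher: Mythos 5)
Your proposal is correct and follows essentially the same route as the paper's proof: construct $\psi_H$ on each $\hat{H}_p$ via the universal property of pro-$p$ completions (using $G\in\mathfrak{U}_p$ to ensure $\overline{\phi(H)}$ is pro-$p$), then pass to the direct limit. You simply make explicit the compatibility, continuity, and uniqueness checks that the paper dispatches in one sentence as ``the universal properties of direct limits for abstract groups and topological spaces.''
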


\begin{proof} For each finitely generated subgroup $H$ of $N$, $\overline{\phi(H)}$ is a pro-$p$ group. Hence there is a unique continuous homomorphism $\psi_H:\hat{H}_p\to \overline{\phi(H)}$
such that the diagram

$$\xymatrix{
&H \ar[d]_{c^H_p} \ar[r]^{\phi} &{\overline{\phi(H)}}\\
& \hat{H}_p \ar[ur]_{\psi_H} &}$$ commutes.  The universal properties of direct limits for abstract groups and topological spaces yield, then, a unique continuous homomorphism $\psi:N_p\to G$ that makes diagram (5.1) commute.
\end{proof}

As a consequence of the preceding result, any homomorphism $\phi:N\to M$ between nilpotent groups of finite torsion-free rank induces a unique continuous homomorphism $\phi_p:N_p\to M_p$ that renders the diagram
$$\xymatrix{
&N \ar[d]_{t^N_p} \ar[r]^{\phi} &M\ar[d]_{t^M_p}\\
& N_p \ar[r]_{\phi_p} &M_p}$$
commutative. In this way, tensor $p$-completion defines a functor from the category of nilpotent groups with finite torsion-free rank to the category of $\mathfrak{U}_p$-groups. 
This functor enjoys several convenient properties, listed below.

\begin{lemma} Let $N$ be a nilpotent group with finite torsion-free rank and $p$ a prime. Then the following hold.

\begin{enumerate*}

\item If $N$ is abelian, then there is a unique topological $\mathbb Z_p$-module structure on $N_p$ that extends the topological $\mathbb Z_p$-module structures possessed by its pro-$p$ open subgroups. Moreover, the canonical $\mathbb Z_p$-module homomorphism $N\otimes \mathbb Z_p\to N_p$ is an isomorphism. 

\item The kernel of $t^N_p:N\to N_p$ is the $p'$-torsion subgroup of $N$.

\item If $1\to M \stackrel{\iota}{\rightarrow} N \stackrel{\epsilon}{\rightarrow} Q\to 1$ is an extension of abstract groups, then 
$1\to M_p \stackrel{\iota_p}{\rightarrow} N_p \stackrel{\epsilon_p}{\rightarrow} Q_p\to 1$ is an extension of topological groups.

\item If $N$ is minimax, then $N_p$ lies in the class $\mathfrak{N}_p$.

\end{enumerate*}

\end{lemma}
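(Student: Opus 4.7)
My plan is to handle the four statements in the order given, leaning heavily on the finitely generated case (Lemma 5.1) and the direct-limit description of $N_p$ afforded by Lemma 5.2.

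For (i), the point is that when $N$ is abelian, each $\hat H_p$ for $H\leq N$ finitely generated is canonically isomorphic to $H\otimes\mathbb Z_p$, and it inherits a unique topological $\mathbb Z_p$-module structure (it is a finitely generated $\mathbb Z_p$-module, hence a polycyclic pro-$p$ group, so Lemma 4.7(i) applies). Since tensor product with $\mathbb Z_p$ commutes with direct limits and the $\hat H_p$ form a directed system of open subgroups of $N_p$, the assignment $n\otimes a\mapsto a\cdot t^N_p(n)$ gives the required $\mathbb Z_p$-module isomorphism $N\otimes\mathbb Z_p\to N_p$. Uniqueness of the topological $\mathbb Z_p$-module structure follows from the uniqueness on each open $\hat H_p$ together with the fact that $N_p$ is the direct limit of these subgroups.

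For (ii), suppose $x\in N$. Then $x\in H$ for some finitely generated subgroup $H$, and $t^N_p(x)=0$ iff $c^H_p(x)$ dies in $\hat H_p$ (using the injectivity of $\hat H_p\to N_p$ built into the construction, cf.\ Lemma 5.1(ii)). So I reduce to the finitely generated case, and invoke the classical fact that a finitely generated nilpotent group is residually a finite $p$-group precisely when its torsion subgroup is a $p$-group; equivalently, the kernel of $c^H_p$ is exactly the $p'$-torsion subgroup of $H$ (this can be seen inductively along a central series with cyclic factors). Taking the union over $H$ shows $\ker t^N_p$ is the $p'$-torsion subgroup of $N$.

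For (iii), I would show exactness and the topological quotient property by reducing to the finitely generated level. Any finitely generated $L\leq Q$ lifts to a finitely generated $K\leq N$ with $\epsilon(K)=L$, and $K\cap M$ is then finitely generated nilpotent as well. Applying Lemma 5.1(i) to $1\to K\cap M\to K\to L\to 1$ yields a short exact sequence of pro-$p$ completions; passing to the direct limit over these compatible triples (which is exact) gives the algebraic short exact sequence $1\to M_p\to N_p\to Q_p\to 1$. The map $\epsilon_p$ is a quotient map because on each pro-$p$ open piece $\hat K_p\to \hat L_p$ it is a continuous surjection of compact Hausdorff groups, hence a quotient map, and a subset of $Q_p$ is open iff its intersection with every such $\hat L_p$ is open. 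Then $M_p=\ker\epsilon_p$ is automatically closed, and $\iota_p$ is a topological embedding.

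For (iv), I would choose a finitely generated subgroup $M\leq N$ with $h(M)=h(N)$, so that $N/M$ is a torsion minimax (hence \v Cernikov) nilpotent group. By (iii) the sequence $1\to M_p\to N_p\to (N/M)_p\to 1$ is a topological extension. Since $M$ is finitely generated nilpotent, $M_p=\hat M_p$ is a polycyclic pro-$p$ group, so it lies in $\mathfrak N_p$. Meanwhile every finitely generated subgroup of $N/M$ is finite, and by (i) plus the structure theorem for abelian \v Cernikov $p$-groups (and an easy induction on nilpotency class) $(N/M)_p$ is a discrete \v Cernikov $p$-group, in particular a member of $\mathfrak N_p$. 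Splicing together the defining series for $M_p$ and $(N/M)_p$ yields a series for $N_p$ of the type required in Definition 4.1, so $N_p\in\mathfrak N_p$.

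The main obstacle I anticipate is (iii): the exactness on the left side (injectivity of $\iota_p$ and matching the kernel of $\epsilon_p$) requires that pro-$p$ completion of finitely generated nilpotent groups be exact on all short exact sequences, which is the content of Lemma 5.1(i); after that, verifying that $\epsilon_p$ is actually a topological quotient and not just an abstract surjection, given the unusual direct-limit topology on the tensor completion, is the delicate part.
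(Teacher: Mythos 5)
Parts (i), (ii), and (iii) of your argument are correct and follow essentially the route taken in the paper: you reduce to the finitely generated case using Lemma 5.1, exploit that $\hat H_p\cong H\otimes\mathbb Z_p$ for finitely generated abelian $H$, use the fact that pro-$p$ completion of finitely generated nilpotent groups is exact (Lemma 5.1(i)) together with exactness of direct limits, and verify the quotient topology by checking on the compact open pieces; the paper compresses this last topological step into a one-line appeal to the groups being locally compact, $\sigma$-compact and Hausdorff, but your more explicit version is equally valid.

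Part (iv), however, has a genuine gap. You choose a finitely generated subgroup $M\leq N$ with $h(M)=h(N)$ and then apply (iii) to the extension $1\to M\to N\to N/M\to 1$, which tacitly requires $M$ to be \emph{normal} in $N$. Such a finitely generated normal subgroup of full Hirsch length need not exist. For instance, let $N$ be the Heisenberg group of upper unitriangular $3\times 3$ matrices over $\mathbb Z[1/p]$, with center $Z\cong\mathbb Z[1/p]$. Any finitely generated $M\leq N$ of full Hirsch length has $MZ/Z$ of rank $2$ in $N/Z\cong\mathbb Z[1/p]^2$, and the commutator pairing $N/Z\times MZ/Z\to Z$ then has image all of $Z$, so $[N,M]=Z\not\leq M$ because $M$ is polycyclic but $Z$ is not; hence no such $M$ is normal. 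The paper instead proves (iv) by taking a finite \emph{normal} series in $N$ with cyclic or quasicyclic factors (which exists because $N$ is nilpotent and minimax, e.g. by refining the upper central series) and applying (i) and (iii) along that series; the factors of the resulting series in $N_p$ are cyclic pro-$p$ or quasicyclic $p$-groups, which is precisely what Definition 4.1 requires. Your reasoning about the pieces $M_p$ and the Černikov quotient would be fine if $M$ were normal, so replacing the single subgroup $M$ by a full normal series repairs the argument.
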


\begin{proof}

The first sentence of (i) follows immediately from the definition of $N_p$. The second is a consequence of the well-known fact that the canonical homomorphism $H\otimes \mathbb Z_p\to \hat{H}_p$ is an isomorphism for any finitely generated subgroup $H$ of $N$.

The case of (ii) where $N$ is finitely generated follows from the properties of the pro-$p$ completion of a nilpotent group. The general case is then an immediate consequence of this.

Next we prove (iii). By Lemma 5.1(i), if $N$ is finitely generated, then $1\to M_p \stackrel{\iota_p}{\rightarrow} N_p \stackrel{\epsilon_p}{\rightarrow} Q_p\to 1$ is an extension of pro-$p$ groups. But inductive limits commute with exact sequences of abstract groups. As a result, $1\to M_p \stackrel{\iota_p}{\rightarrow} N_p \stackrel{\epsilon_p}{\rightarrow} Q_p\to 1$ is also exact if $N$ is infinitely generated. 
Moreover, since the groups involved are locally compact, $\sigma$-compact Hausdorff groups, it follows that $1\to M_p \stackrel{\iota_p}{\rightarrow} N_p \stackrel{\epsilon_p}{\rightarrow} Q_p\to 1$  is an extension of topological groups. 

 Statement (iv) is proved by taking a series of finite length whose factors are cyclic or quasicyclic and then applying (i) and (iii).
\end{proof}

In deriving further properties of tensor $p$-completions, it will be helpful to have a notion embracing all the homomorphisms that obey the same universal property as $t^N_p$.

\begin{definition} {

\rm

 Let $G$ be an abstract group and $\bar{G}$ a topological group. A homomorphism $\tau:G\to \bar{G}$ {\it $p$-completes} $G$ if $\bar{G}$ belongs to $\mathfrak{U}_p$ and, for any homomorphism $\phi$ 
from $G$ to a $\mathfrak{U}_p$-group $H$, there is a unique continuous homomorphism $\psi:\bar{G}\to H$ such that the diagram
$$ \xymatrix{
&G \ar[d]_{\tau} \ar[r]^{\phi} &H\\
& \bar{G}\ar[ur]_{\psi} &}$$ commutes.}

\end{definition}

The next lemma is an immediate consequence of the above definition; the proof is left to the reader.

\begin{lemma} Let $N$ be a nilpotent group of finite torsion-free rank and $\bar{N}$ a topological group in $\mathfrak{U}_p$. Let $\tau:N\to \bar{N}$ be a homomorphism. If $\phi:N_p\to \bar{N}$ is the unique continuous homomorphism such that $\phi t^N_p=\tau$, then $\tau$ $p$-completes $N$ if and only if $\phi$ is an isomorphism of topological groups. \nolinebreak \hfill\(\square\)
\end{lemma}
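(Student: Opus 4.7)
The plan is to exploit the fact that both $t^N_p : N \to N_p$ and (hypothetically) $\tau : N \to \bar{N}$ satisfy the same universal property, so a standard Yoneda-style argument forces any map between the targets to be an isomorphism. Concretely, I would split the proof into the two implications and rely throughout on Proposition 5.3, which asserts that $t^N_p$ itself $p$-completes $N$.

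For the forward direction, assume $\tau$ $p$-completes $N$. Applying the universal property of $\tau$ to the homomorphism $t^N_p : N \to N_p$ (and noting $N_p \in \mathfrak{U}_p$), I would obtain a unique continuous homomorphism $\psi : \bar{N} \to N_p$ with $\psi\tau = t^N_p$. Then $\phi\psi : \bar{N} \to \bar{N}$ is continuous and satisfies $(\phi\psi)\tau = \phi t^N_p = \tau$, while $\mathbb{1}_{\bar{N}}$ also has this property; uniqueness in the universal property of $\tau$ forces $\phi\psi = \mathbb{1}_{\bar{N}}$. Symmetrically, $\psi\phi : N_p \to N_p$ is continuous and satisfies $(\psi\phi) t^N_p = \psi\tau = t^N_p$, and the uniqueness clause in Proposition 5.3 gives $\psi\phi = \mathbb{1}_{N_p}$. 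Hence $\phi$ is an isomorphism of topological groups with continuous inverse $\psi$.

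For the converse, suppose $\phi : N_p \to \bar{N}$ is a topological isomorphism. I would verify directly that $\tau = \phi t^N_p$ satisfies the universal property of Definition 5.5. Given any homomorphism $\chi : N \to H$ into a $\mathfrak{U}_p$-group, Proposition 5.3 supplies a unique continuous homomorphism $\eta : N_p \to H$ with $\eta t^N_p = \chi$. Then $\eta\phi^{-1} : \bar{N} \to H$ is a continuous homomorphism with $(\eta\phi^{-1})\tau = \eta t^N_p = \chi$. For uniqueness, if $\psi : \bar{N} \to H$ is any continuous homomorphism satisfying $\psi\tau = \chi$, then $\psi\phi : N_p \to H$ is continuous with $(\psi\phi) t^N_p = \psi\tau = \chi$, so by uniqueness in Proposition 5.3, $\psi\phi = \eta$, giving $\psi = \eta\phi^{-1}$.

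There is no real obstacle here: the lemma is a formal consequence of the defining universal property of $p$-completion combined with the fact, already established in Proposition 5.3, that $t^N_p$ is a $p$-completion. The only point requiring any care is to check that the comparison maps produced are \emph{continuous}, but this is automatic since they are obtained from the universal property in Proposition 5.3 and Definition 5.5, both of which guarantee continuity of the induced maps.
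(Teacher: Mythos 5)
Your argument is correct and is exactly the routine universal-property (Yoneda-style) verification that the paper has in mind when it states the lemma is "an immediate consequence of the above definition" and leaves the proof to the reader. Both directions are handled properly, and you are right that continuity of the comparison maps is automatic from the universal properties invoked.
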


Lemma 5.4(iii) gives rise to the following two lemmas about maps that $p$-complete.

\begin{lemma}  Let $p$ be a prime and $1\rightarrow M\rightarrow N\rightarrow Q\rightarrow 1$ be a group extension such that $N$ is nilpotent and minimax. Let $1\rightarrow \bar{M}\rightarrow \bar{N}\rightarrow \bar{Q}\rightarrow 1$ be a topological group extension such that $\bar{N}$ is nilpotent. Suppose further that these two extensions fit into a commutative diagram 

\begin{displaymath} \begin{CD}
1 @>>> M @>>> N @>>> Q@>>> 1\\
&& @VV\phi V @VV\theta V @VV\psi V&&\\
1 @>>> \bar{M}@>>> \bar{N}@>>> \bar{Q} @>>> 1.
\end{CD} \end{displaymath}

\noindent If any two of the homomorphisms $\phi$, $\theta$, and $\psi$ $p$-complete, then so does the third. 

\end{lemma}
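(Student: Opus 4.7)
The plan is to reduce the lemma to a combination of the short five lemma and the open mapping theorem for topological groups. By Lemma 5.6, each of the conditions ``$\phi$ $p$-completes'', ``$\theta$ $p$-completes'', and ``$\psi$ $p$-completes'' is equivalent to the assertion that the corresponding induced map supplied by the universal property of tensor $p$-completion is an isomorphism of topological groups. Denote these induced maps by $\phi_p:M_p\to\bar M$, $\theta_p:N_p\to\bar N$, and $\psi_p:Q_p\to\bar Q$. By Lemma 5.4(iii) the top row of the resulting commutative diagram
\begin{displaymath}\begin{CD}
1 @>>> M_p @>>> N_p @>>> Q_p @>>> 1\\
&& @VV\phi_pV @VV\theta_pV @VV\psi_pV &&\\
1 @>>> \bar M @>>> \bar N @>>> \bar Q @>>> 1
\end{CD}\end{displaymath}
is a topological group extension, while the bottom row is one by hypothesis.

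The short five lemma applied to this diagram shows that as soon as two of $\phi_p$, $\theta_p$, $\psi_p$ are abstract group isomorphisms, so is the third. What remains is to upgrade the third abstract isomorphism to a homeomorphism. Two of the three cases are immediate. If $\phi_p$ and $\theta_p$ are topological isomorphisms, then $\psi_p$ is the map on quotients induced by the homeomorphism $\theta_p$ between two topological extensions, hence a homeomorphism. If $\theta_p$ and $\psi_p$ are topological isomorphisms, then $\phi_p$ is the restriction of the homeomorphism $\theta_p$ to the closed subgroup $M_p$ with image the closed subgroup $\bar M$, so $\phi_p$ is itself a homeomorphism.

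The main obstacle is the remaining case, in which $\phi_p$ and $\psi_p$ are topological isomorphisms but $\theta_p$ is only known to be a continuous bijection. Here I would invoke the open mapping theorem for topological groups. By Lemma 5.4(iv) and Lemma 4.2(i), $N_p$ is locally compact, $\sigma$-compact, and Hausdorff; and $\bar N$ inherits these properties because it is an extension of $\bar M\cong M_p$ by $\bar Q\cong Q_p$, both of which are $\mathfrak{N}_p$-groups. Consequently, any continuous surjective homomorphism $N_p\to\bar N$ is open, which forces $\theta_p$ to be a homeomorphism and completes the proof.
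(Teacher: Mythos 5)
Your proposal is correct and follows essentially the same strategy as the paper: apply Lemma 5.4(iii) to form the comparison diagram between $1\to M_p\to N_p\to Q_p\to 1$ and $1\to\bar M\to\bar N\to\bar Q\to 1$, then deduce that if two of the vertical maps are isomorphisms of topological groups then so is the third. The paper states this last step in a single sentence, while you carefully fill in the five-lemma argument at the abstract level and then address continuity of the inverse case by case, invoking the open mapping theorem in the one nontrivial direction; this is a reasonable and correct elaboration of what the paper leaves implicit.
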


\begin{proof} First we point out that the hypothesis implies that  $\bar{M}$, $\bar{N}$, and $\bar{Q}$ are all members of $\mathfrak{N}_p$. Appealing to Lemma 5.4(iii), we form the commutative diagram

\begin{displaymath} \begin{CD}
1 @>>> M_p @>>> N_p @>>> Q_p@>>> 1\\
&& @VV\phi' V @VV\theta' V @VV\psi' V&&\\
1 @>>> \bar{M}@>>> \bar{N}@>>> \bar{Q} @>>> 1,
\end{CD} \end{displaymath}

\noindent where $\phi'$, $\theta'$, and $\psi'$ are induced by $\phi$, $\theta$, and $\psi$, respectively. Since two of the maps $\phi'$, $\theta'$, and $\psi'$ are isomorphisms, the same holds for the third. This, then, proves the lemma.
\end{proof}

\begin{lemma} Let $N$ be a nilpotent minimax group and $p$ a prime. For any $i\geq 1$, the homomorphism $\gamma_iN\to \gamma_iN_p$ induced by $t^N_p:N\to N_p$ $p$-completes.

\end{lemma}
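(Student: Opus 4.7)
The plan is to invoke Lemma 5.6, which reduces the claim to showing that the unique continuous extension $\tilde\tau_i:(\gamma_iN)_p\to \gamma_iN_p$ of the map $\tau_i:\gamma_iN\to \gamma_iN_p$ in question is a topological isomorphism. My strategy is to identify both $(\gamma_iN)_p$ and $\gamma_iN_p$ with the same closed subgroup of $N_p$, so that $\tilde\tau_i$ becomes the identity.

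First I would apply Lemma 5.4(iii) to the abstract extension $1\to \gamma_iN\to N\to N/\gamma_iN\to 1$, obtaining a topological group extension
\[
1\to (\gamma_iN)_p\to N_p\to (N/\gamma_iN)_p\to 1.
\]
This realizes $(\gamma_iN)_p$ as a closed subgroup of $N_p$ carrying the subspace topology, and by Proposition 5.3 the map $t^{\gamma_iN}_p$ coincides under this identification with the restriction of $t^N_p$ to $\gamma_iN$. Because each element of $N_p$ lies in some $\hat H_p$ in which $H$ sits densely, the image $t^N_p(N)$ is dense in $N_p$; the same argument gives that $t^N_p(\gamma_iN)$ is dense in $(\gamma_iN)_p$. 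Hence $(\gamma_iN)_p=\overline{t^N_p(\gamma_iN)}$ inside $N_p$.

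Next I would show this closure equals $\gamma_iN_p$. The inclusion $\overline{t^N_p(\gamma_iN)}\subseteq \gamma_iN_p$ is immediate: $\gamma_iN_p$ is closed in $N_p$ by Lemma 4.10(i) and visibly contains $t^N_p(\gamma_iN)$. For the reverse, since $N$ is dense in $N_p$ and the iterated commutator map $N_p^{\,i}\to N_p$ is continuous, every $i$-fold commutator of elements of $N_p$ is a limit of $i$-fold commutators of elements of $N$ and so lies in $\overline{\gamma_iN}$. As $\gamma_iN_p$ is generated by such commutators and $\overline{\gamma_iN}$ is a subgroup, $\gamma_iN_p\subseteq \overline{\gamma_iN}$.

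Consequently $(\gamma_iN)_p=\gamma_iN_p$ as subsets of $N_p$, and both inherit the same subspace topology, so they agree as topological groups. Under this identification $\tilde\tau_i$ satisfies the defining universal property of the identity map, forcing $\tilde\tau_i=\mathrm{id}$, whence $\tau_i$ $p$-completes by Lemma 5.6. The main delicate point is the careful bookkeeping in the first step, namely verifying that the inclusion of $(\gamma_iN)_p$ into $N_p$ coming from Lemma 5.4(iii) identifies $t^{\gamma_iN}_p$ with $t^N_p|_{\gamma_iN}$; this is what licences the interpretation of $(\gamma_iN)_p$ as the topological closure of $\gamma_iN$ inside $N_p$ and makes the direct comparison with $\gamma_iN_p$ possible.
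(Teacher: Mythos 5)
Your proposal is correct and follows essentially the same route as the paper: apply Lemma 5.4(iii) to the extension $1\to\gamma_iN\to N\to N/\gamma_iN\to 1$, identify the image $M$ of $(\gamma_iN)_p$ in $N_p$ with $\overline{t^N_p(\gamma_iN)}$, and then use Lemma 4.10(i) together with density and continuity of the iterated commutator map to conclude $M=\gamma_iN_p$. Your version simply spells out the two inclusions and the final appeal to Lemma 5.6 a bit more explicitly than the paper does.
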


\begin{proof} Invoking Lemma 5.4(iii), we consider the topological group extension $1\rightarrow (\gamma_i(N))_p\rightarrow N_p\rightarrow (N/\gamma_i(N))_p\rightarrow 1$. Let $M$ be the image of  $(\gamma_i(N))_p$ in $N_p$.  Then $M$ is the closure of the image of $\gamma_i(N)$ in $N_p$. But $\gamma_i(N_p)\leq M$, and, by Lemma 4.10(i), $\gamma_i(N_p)$ is closed. Therefore $\gamma_iN_p=M$, which yields the conclusion. 
\end{proof}

\subsection{Partial tensor $p$-completion}

We conclude this section by discussing how the tensor $p$-completion functor can be applied to a normal nilpotent subgroup of finite torsion-free rank, thereby embedding the ambient group densely in a topological group with an open normal, locally pro-$p$ subgroup. This process is inspired by Hilton's notion of a {\it relative localization} from \cite{hilton}. Essential to the construction is the following result of his concerning group extensions.  

\begin{proposition}{\rm (Hilton \cite{hilton})} Let $1\rightarrow K\rightarrow G\rightarrow Q\rightarrow 1$ be a group extension and $\phi:K\to L$ a group homomorphism. Suppose further that there is an action of $G$ on $L$ that satisfies the following two properties:

{\rm (i)} $k\cdot l=\phi(k)l\phi(k^{-1})$ for all $l\in L$ and $k\in K$;

{\rm (ii)} $g\cdot \phi(k)=\phi(gkg^{-1})$ for all $g\in G$ and $k\in K$.

\noindent In addition, let $K^\dagger=\{ (\phi(k), k^{-1}) \ | \ k\in K\}\subset (L\rtimes G)$. Then $K^\dagger\lhd (L\rtimes G)$. Furthermore, if  $G^\ast=(L\rtimes G)/K^\dagger$, the diagram  

\begin{displaymath} \begin{CD}
1 @>>> K @>>> G @>>> Q @>>> 1\\
&& @VV\phi V @VV\psi V @| &&\\
1 @>>> L @>>> G^\ast @>>> Q @>>> 1
\end{CD} \end{displaymath}

\noindent commutes, where $\psi: G\to G^\ast$ is the map $g\mapsto (1, g)K^\dagger$.
\nolinebreak \hfill\(\square\)
\end{proposition}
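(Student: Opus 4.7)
The plan is to verify directly that $K^\dagger$ is a normal subgroup of the semidirect product $L \rtimes G$, and then to unwind the definition of the quotient $G^\ast = (L \rtimes G)/K^\dagger$ to read off the commutative diagram together with the short exactness of its bottom row. Throughout, multiplication in $L \rtimes G$ is given by $(l_1, g_1)(l_2, g_2) = (l_1(g_1 \cdot l_2),\, g_1 g_2)$, and inversion by $(l,g)^{-1} = (g^{-1} \cdot l^{-1},\, g^{-1})$.

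First I would show that $K^\dagger$ is a subgroup. Using hypothesis (ii), for any $k_1, k_2 \in K$ I compute
\[
(\phi(k_1), k_1^{-1})(\phi(k_2), k_2^{-1}) = \bigl(\phi(k_1)\,\phi(k_1^{-1} k_2 k_1),\; (k_2 k_1)^{-1}\bigr) = (\phi(k_2 k_1),\; (k_2 k_1)^{-1}),
\]
so $K^\dagger$ is closed under multiplication (the map $k \mapsto (\phi(k), k^{-1})$ being an antihomomorphism); taking $k_1 = k_2 = 1$ and then $k_2 = k_1^{-1}$ gives the identity and inverses. Next I would check normality: for arbitrary $(l,g) \in L \rtimes G$ and $k \in K$, a direct computation using (ii) in the first step and (i) in the second step gives
\[
(l,g)(\phi(k), k^{-1})(l,g)^{-1} = \bigl(l\,\phi(gkg^{-1})\,((gk^{-1}g^{-1}) \cdot l^{-1}),\; gk^{-1}g^{-1}\bigr) = \bigl(\phi(gkg^{-1}),\; (gkg^{-1})^{-1}\bigr),
\]
which lies in $K^\dagger$. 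The cancellation in the $L$-coordinate is the heart of the argument and is where conditions (i) and (ii) interact with one another.

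Having established that $K^\dagger \lhd L \rtimes G$, I would set $G^\ast := (L \rtimes G)/K^\dagger$ and define the maps $\iota : L \to G^\ast$ by $\iota(l) = (l,1)K^\dagger$ and $\psi : G \to G^\ast$ by $\psi(g) = (1,g)K^\dagger$, both visibly homomorphisms. To see that the left square commutes I compute, for $k \in K$, that $(\phi(k)^{-1}, k) = (\phi(k), k^{-1})^{-1} \in K^\dagger$ (using (i) with $l = \phi(k)^{-1}$ to simplify $k \cdot \phi(k)^{-1} = \phi(k)^{-1}$), whence $(1,k)K^\dagger = (\phi(k),1)K^\dagger$, i.e.\ $\psi(k) = \iota(\phi(k))$. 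For the right square I use the canonical projection $L \rtimes G \to G \to Q$, which kills $K^\dagger$ and therefore factors through $G^\ast \to Q$.

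Finally I would verify exactness of the bottom row. Injectivity of $\iota$ amounts to $L \cap K^\dagger = \{1\}$, which is immediate since $(\phi(k), k^{-1}) \in L \times \{1\}$ forces $k = 1$. The preimage of $\iota(L)$ in $L \rtimes G$ is $L \cdot K^\dagger = \{(l\phi(k), k^{-1}) : l \in L, k \in K\} = L \rtimes K$, whose quotient in $L \rtimes G$ is naturally $G/K = Q$; thus $G^\ast/\iota(L) \cong Q$ compatibly with $\psi$. The main obstacle is the normality computation, which requires carefully combining (i) and (ii); everything else is bookkeeping in the semidirect product.
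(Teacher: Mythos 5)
Your proof is correct and complete. The paper itself does not give a proof of this proposition — it is attributed to Hilton and stated with a concluding box — so there is no argument in the paper to compare against. Your direct verification is exactly the natural route: the multiplication computation showing $k \mapsto (\phi(k), k^{-1})$ is an antihomomorphism, the conjugation computation where (i) and (ii) combine to make the $L$-coordinate collapse to $\phi(gkg^{-1})$, the identification $(1,k)K^\dagger = (\phi(k),1)K^\dagger$ for the left square, and the observation $L \cdot K^\dagger = L \rtimes K$ for exactness are all accurate and constitute a full proof of the statement as cited.
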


Now assume that $G$ is a discrete group with a nilpotent normal subgroup $N$ with finite torsion-free rank, and suppose that $p$ is a prime. The functorial property of $N_p$ supplies a unique continuous action of $G$ on $N_p$ that makes $t^N_p:N\to N_p$ a $G$-group homomorphism. Furthermore, this action fulfills the condition  $n\cdot x=t^N_p(n)xt^N_p(n^{-1})$ for all $x\in N_p$ and $n\in N$. Hence, by Hilton's proposition, $N^\dagger:=\{ (t_p(n), n^{-1}) \ | \ n\in N\}$ is a normal subgroup of $N_p\rtimes G$. 
We define the {\it partial tensor $p$-completion of $G$ with respect to $N$}, denoted $G_{(N,p)}$, by 
$G_{(N, p)}=(N_p\rtimes G)/N^\dagger$. Then $G_{(N,p)}$ fits into a commutative diagram

\begin{displaymath} \begin{CD}
1 @>>> N @>>> G @>>> Q @>>> 1\\
&& @VVt^N_pV @VVV @| &&\\
1 @>>> N_p @>>> G_{(N,p)} @>>> Q @>>> 1,    
\end{CD} \end{displaymath}

\noindent and $G_{(N,p)}$ is a topological group containing an isomorphic copy of $N_p$ as an open normal subgroup.

\begin{remark} {\rm It was pointed out to the authors by an anonymous referee that both the tensor $p$-completion and the partial tensor $p$-completion can be subsumed under a single construction, using the notion of a {\it relative profinite completion} from [{\bf 27}, \S 3]. (See also the additional references provided there.) To see this, take $N$ and $G$ to be as described above and 
form the relative profinite completion $\hat{G}_H$ of $G$ with respect to a finitely generated subgroup $H$ of $N$ with maximal Hirsch length. Next, divide out by the closed subgroup of $\hat{G}_H$ generated by all the Sylow $q$-subgroups, for $q\neq p$, of the closure of the image of $N$ in $\hat{G}_H$. Then the resulting quotient can be shown to be naturally isomorphic to $G_{(N,p)}$.
} 
\end{remark}

\section{Proofs of Theorem 1.13 and its corollaries}

\subsection{Theorem 1.13}

As related in \S 2.1, the key step in our argument for Theorem 1.13 involves constructing a virtually torsion-free, locally compact cover
for $G_{(N,p)}$. For the sake of readability, we extract this step from the proof of the theorem, making it a separate proposition.

\begin{proposition} Let $p$ be a prime, and let $G$ be a topological group with an open normal $\mathfrak{N}_p$-subgroup $N$ such that
$G/N$ is finitely generated and virtually abelian. Set $S={\rm solv}(G)$. Then there is a virtually torsion-free topological group $\Gamma$ and a continuous epimorphism $\theta:\Gamma\to G$ such that the following statements hold, where $\Lambda:=\theta^{-1}(N)$.

\begin{enumerate*}

\item $\Lambda$ belongs to $\mathfrak{N}_p$.
\item ${\rm der}(\theta^{-1}(S))={\rm der}(S)$, ${\rm nil}\ \Lambda={\rm nil}\ N$, and ${\rm der}(\Lambda)={\rm der}(N)$.
\item If $G$ acts $\pi$-integrally on $N_{\rm ab}$ for some set of primes $\pi$, then $\Gamma$ acts $\pi$-integrally on $\Lambda_{\rm ab}$.
\end{enumerate*}

\end{proposition}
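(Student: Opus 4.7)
My plan is to realize explicitly the construction outlined in Section 2.1.

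\textbf{Step 1 (Factoring $G$).} I apply Proposition 4.15 to obtain a closed subgroup $X \leq G$ with $G = R(N)X$ and $N \cap X$ a polycyclic pro-$p$-group. Write $R_0 = R(N)$; by Lemma 4.6 this is a closed, characteristic, radicable subgroup of $N$, hence a normal $\mathfrak{N}_p$-subgroup of $G$ upon which $X$ acts continuously by conjugation. Its torsion subgroup $P$ is a radicable \v{C}ernikov $p$-group, so $P \cong \mathbb{Z}(p^\infty)^k$ for some $k \geq 0$, and in particular $P$ is $p$-minimax, so Proposition 3.12 applies.

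\textbf{Step 2 (Building the torsion-free cover).} I iterate Proposition 3.12 with this $X$-action to construct a sequence of radicable nilpotent $X$-group epimorphisms $\cdots \to R_2 \xrightarrow{\psi_2} R_1 \xrightarrow{\psi_1} R_0$, each $R_i$ having torsion subgroup isomorphic to $P$ on which $\psi_i$ restricts to the $p$-power map. Set $\Omega := \varprojlim R_i$ with canonical $X$-equivariant epimorphism $\psi \colon \Omega \to R_0$. The torsion subgroup $\Psi := \psi^{-1}(P) = \varprojlim(P, \cdot p)$ is abstractly $\mathbb{Q}_p^k$ and hence torsion-free, while $\Omega/\Psi \cong R_0/P$ is torsion-free radicable nilpotent; so $\Omega$ is torsion-free nilpotent of class at most ${\rm nil}\, R_0$. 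I topologize $\Omega$ by endowing $\Psi$ with its natural $\mathbb{Z}_p$-module topology and the quotient $\Omega/\Psi$ with the quotient topology from $R_0/P \in \mathfrak{N}_p$; arguments parallel to those of Lemmas 4.11 and 4.14, combined with Lemma 4.12, show that $\Omega \in \mathfrak{N}_p$ and that the $X$-action on $\Omega$ is continuous.

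\textbf{Step 3 (Assembling $\Gamma$ and verifying (i), (ii)).} Set $\Gamma := \Omega \rtimes X$ with $\theta(r,x) := \psi(r)x$. A direct calculation shows $\theta$ is a continuous epimorphism onto $G = R_0 X$, and $\Gamma$ is virtually torsion-free, since $\Omega$ is torsion-free and $X$ is an extension of a finitely generated virtually abelian discrete group by the polycyclic pro-$p$ subgroup $N \cap X$, hence virtually torsion-free. For (i), the identity $\Lambda = \theta^{-1}(N) = \Omega \rtimes (N \cap X)$ holds because $\psi(r)x \in N$ forces $x \in N \cap X$; as both factors lie in $\mathfrak{N}_p$ with continuous action, $\Lambda \in \mathfrak{N}_p$. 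For (ii), the annihilator identity ${\rm Ann}_{\mathbb{Z}X}(\overline{R}_{\rm ab}) = {\rm Ann}_{\mathbb{Z}X}(R_{\rm ab})$ built into Proposition 3.12, iterated and transferred to $\Omega$, guarantees that the $(N \cap X)$-lower central and $(N \cap X)$-derived series of $\Omega$ have the same lengths as those of $R_0$. Proposition 3.8 then yields ${\rm nil}\, \Lambda = {\rm nil}\, N$ and ${\rm der}(\Lambda) = {\rm der}(N)$; the analogous argument applied to $\theta^{-1}(S)$ and $S$ gives ${\rm der}(\theta^{-1}(S)) = {\rm der}(S)$.

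\textbf{Step 4 ((iii) and main obstacles).} Using Lemma 3.23(ii), the hypothesis that $G$ acts $\pi$-integrally on $N_{\rm ab}$ implies $X$ acts $\pi$-integrally on $R_{0,\rm ab}$; the annihilator identity then transports this $\pi$-integrality to each $R_{i,\rm ab}$ and, through the inverse limit, to $\Omega_{\rm ab}$, on which $\Gamma$ acts through $X$. For the quotient $\Lambda_{\rm ab}/\mathrm{image}(\Omega_{\rm ab}) \cong (N\cap X)_{\rm ab}$, which is a finitely generated $\mathbb{Z}_p$-module, each element of $X$ satisfies its characteristic polynomial (cleared of denominators), whose leading coefficient is a power of $p$; this is a $\pi$-number in the setting of Theorem 1.13 where $p \in \pi$. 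Combining these gives $\pi$-integrality of the $\Gamma$-action on all of $\Lambda_{\rm ab}$. The main obstacles are Step 2 — topologizing the inverse limit $\Omega$ so that it belongs to $\mathfrak{N}_p$ with a continuous $X$-action, where the passage to the limit manufactures new torsion-free $\mathbb{Q}_p^k$ out of the quasicyclic torsion — and the transport of the annihilator identity through the inverse limit in Step 4, which must exploit the bounded nilpotency class of the $R_i$ to pass from simultaneous annihilation on each $R_{i,\rm ab}$ to annihilation on $\Omega_{\rm ab}$.
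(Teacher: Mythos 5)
Your outline correctly reproduces the paper's construction in Steps 1--3: factoring $G=R_0X$ via Proposition 4.15, iterating Proposition 3.12 to build the inverse system, forming $\Omega=\varprojlim R_i$, setting $\Gamma=\Omega\rtimes X$, and using Proposition 3.8 for the nilpotency class and derived length claims. This is essentially the paper's argument. But your Step 4 has a genuine gap. To obtain $\pi$-integrality of the $X$-action on $(N\cap X)_{\rm ab}$, you appeal to the characteristic polynomial of an $X$-element acting on a finitely generated $\mathbb{Z}_p$-module, whose leading coefficient is a power of $p$, and you acknowledge that this is only a $\pi$-number ``in the setting of Theorem 1.13 where $p\in\pi$.'' The proposition as stated makes no such assumption: $\pi$ is any set of primes for which $G$ acts $\pi$-integrally on $N_{\rm ab}$. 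Your argument therefore proves only $\{p\}$-integrality and the step fails in general. The paper avoids this entirely by applying Lemma 3.23(ii) to the $X$-subgroup $Y:=N\cap X$ of the nilpotent group $N$: since the hypothesized $\pi$-integral action of $G$ on $N_{\rm ab}$ restricts to a $\pi$-integral action of $X$, and $Y$ is normalized by $X$, that lemma directly gives $\pi$-integrality of $X$ on $Y_{\rm ab}$ with no condition on $p$. The paper then combines this with the $\pi$-integrality on $\Omega_{\rm ab}$ via the natural $\mathbb{Z}X$-module epimorphism $\Omega_{\rm ab}\oplus Y_{\rm ab}\to\Lambda_{\rm ab}$.

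Two smaller remarks. Your topologization of $\Omega$ in Step 2 — equipping $\Psi$ and $\Omega/\Psi$ separately — does not by itself produce a topology on $\Omega$ without explaining how to glue them; the paper instead invokes Corollary 4.14 iteratively to topologize each $R_i\rtimes X$ and then gives $\Omega$ the topology induced by the product topology on $\varprojlim R_i$, which makes both the continuity of the $X$-action and the membership $\Omega\in\mathfrak{N}_p$ immediate. And for passing $\pi$-integrality to $\Omega_{\rm ab}$, the key fact is not ``bounded nilpotency class'' but the injectivity of the canonical map $\Omega_{\rm ab}\to\varprojlim(R_i)_{\rm ab}$, which the paper gets from the observation that $\Omega'$ is closed in $\Omega$ (Lemma 4.10(i)).
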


\begin{proof} Put $R_0=R(N)$. Also, let $P$ be the torsion subgroup of $R_0$, making $P$ a direct product of finitely many quasicyclic $p$-groups. Appealing to Proposition 4.15, we obtain a subgroup $X$ of $G$ such that $N\cap X$ is compact and $G=R_0X$.  Write $Y= X\cap N$, so that we also have the factorization $N=R_0Y$. In addition, let $X_0={\rm solv}(X)$. 

For the convenience of the reader, we divide the proof into three steps.
\vspace{5pt}

{\it Step 1: Covering $R_0$ with a radicable nilpotent torsion-free $X$-group $\Omega$.}
\vspace{3pt}

Applying Proposition 3.12 {\it ad infinitum} allows us to construct a 
sequence 

\begin{equation} \cdots \stackrel{\psi_{i+1}}{\longrightarrow}R_i\stackrel{\psi_i}{\longrightarrow} R_{i-1}\stackrel{\psi_{i-1}}{\longrightarrow}\cdots \stackrel{\psi_2}{\longrightarrow} R_1\stackrel{\psi_1}{\longrightarrow} R_0\end{equation}
 
\noindent of $X$-group epimorphisms such that the following two statements are true.

\begin{enumeratenum}

\item $R_i$ is nilpotent and radicable for each $i\geq 0$. 

\item For each $i\geq 1$, there is an $X$-group isomorphism $\nu_i: P\to \psi^{-1}_i\psi^{-1}_{i-1}\dots \psi^{-1}_1(P)$ such that $\psi_i\nu_i(u)=\nu_{i-1}(u^{p})$ for all $u\in P$ and $i\geq 2$.

\end{enumeratenum}

The three assertions below about (6.1) also hold. Statement (3) is implied by the last sentence in Proposition 3.12, and the other two follow from Lemma 3.11. 

\begin{enumeratenum}
\setcounter{enumi}{2}

\item ${\rm Ann}_{\mathbb ZX}((R_i)_{\rm ab})= {\rm Ann}_{\mathbb ZX}((R_0)_{\rm ab})$ if $i\geq 0$.

\item For every $i\geq 0$, the subgroup $Y$ of $X$ acts nilpotently on $R_i$ with ${\rm nil}_{Y}\ R_i={\rm nil}_{Y}\ R_0$ and  ${\rm der}_Y(R_i)={\rm der}_Y(R_0)$. 

\item For every $i\geq 0$, ${\rm der}_{X_0}(R_i)={\rm der}_{X_0}(R_0)$.
\end{enumeratenum}

Next we define the $X$-group $\Omega$ to be the inverse limit of the system (6.1) and $\psi$ to be the canonical map $\Omega\to R_0$. Furthermore, for each $i\geq 0$, set $\Gamma_i=R_i\rtimes X$ and $N_i=R_i\rtimes Y$.   
Corollary 4.14 implies that there is a sequence of topologies on the groups $\Gamma_i$ making each $N_i$ an open normal $\mathfrak{N}_p$-subgroup of $\Gamma_i$ and each homomorphism $\Gamma_i\to \Gamma_{i-1}$ induced by $\psi_i$ a quotient map. 
Using the topologies imparted thereby to the $R_i$, we can make $\Omega$ into a topological group by endowing it with the topology induced by the product topology. Since $X$ acts continuously on each $R_i$, the action of $X$ on $\Omega$ must also be continuous. Moreover, the subgroup $\Psi:=\psi^{-1}(P)$ is closed and isomorphic, as a topological group, to the inverse limit of the system $\cdots \rightarrow P\rightarrow P\rightarrow P$, in which every homomorphism $P\to P$ is the map $u\mapsto u^p$. As a result, $\Psi$ is isomorphic to the direct sum of finitely many copies of $\mathbb Q_p$. Since $\Omega/\Psi\cong R_0/P$ as topological groups, we conclude that $\Omega$ is a member of the class $\mathfrak{N_p}$, and that $\Omega$ is torsion-free and radicable. 
\vspace{5pt}

{\it Step 2: Defining the the cover $\Gamma$ and establishing (i) and (ii).}
\vspace{3pt}

We now set $\Gamma=\Omega\rtimes X$ and take $\theta$ to be the epimorphism from $\Gamma$ to  $G$ such that $\theta(r,x)=\psi(r)x$ for all $r\in \Omega$ and $x\in X$. 
Since $X$ acts continously on $\Omega$, $\Gamma$ is a topological group and $\theta$ is continuous. 

Writing $\Lambda=\theta^{-1}(N)$, we have $\Lambda=\Omega\rtimes Y$.   
According to Proposition 3.8(i), ${\rm nil}\ N={\rm max}\{{\rm nil}_Y\ R_0, {\rm nil}\ Y\}$.
But property (4) in Step 1 implies ${\rm nil}_Y\ \Omega={\rm nil}_Y\ R_0$. Thus, invoking Proposition 3.8(i) again,  we conclude ${\rm nil}\ \Lambda={\rm nil}\ N$ . 
Moreover, a similar argument, this time using Proposition 3.8(ii), can be adduced to show ${\rm der}(\Lambda)={\rm der}(N)$. 
 Observe further that, since $\Omega$ and $Y$ both belong to $\mathfrak{N}_p$, $\Lambda$ must be a member of $\mathfrak{N}_p$. 

We can also employ property (5) in Step 1 and Proposition 3.8(ii) to show ${\rm der}(\Sigma)= {\rm der}(S)$, where $\Sigma={\rm solv}(\Gamma)=\theta^{-1}(S)$. To accomplish this, notice $S=R_0X_0$ and $\Sigma=\Omega\rtimes X_0$. Therefore 

$${\rm der}(S)= {\rm max}\{{\rm der}_{X_0}(R_0), {\rm der}(X_0)\}= {\rm max}\{{\rm der}_{X_0}(\Omega), {\rm der}(X_0)\}={\rm der}(\Sigma).$$
\vspace{5pt}

{\it Step 3: Showing statement (iii).} 
\vspace{3pt}
 
By Lemma 3.24(ii), $X$ acts $\pi$-integrally on both $(R_0)_{\rm ab}$ and $Y_{\rm ab}$. 
It can therefore be deduced from property (3) in Step 1 that $X$ acts $\pi$-integrally on $\varprojlim (R_i)_{\rm ab}$. Now we examine the action of $X$ on $\Omega_{\rm ab}$. Observe first that the canonical maps $\Omega_{\rm ab}\to (R_i)_{\rm ab}$ give rise to a continuous $\mathbb ZX$-module homomorphism $\eta: \Omega_{\rm ab}\to \varprojlim (R_i)_{\rm ab}$.  Also, if $r\in \Omega$ such that the image of $r$ in $R_i$ lies in $R_i'$ for each $i$, then the fact that $\Omega'$ is closed in $\Omega$ ensures $r\in \Omega'$.  In other words, $\eta$ is injective. As a result, $X$ must act $\pi$-integrally on $\Omega_{\rm ab}$. Since there is a $\mathbb ZX$-module epimorphism $\Omega_{\rm ab}\oplus Y_{\rm ab}\to \Lambda_{\rm ab}$, we conclude that the action of $X$ on $\Lambda_{\rm ab}$ is $\pi$-integral. Thus $\Gamma$ acts $\pi$-integrally on $\Lambda_{\rm ab}$. 
\end{proof}

We proceed now with the proof of our main result.

\begin{proof}[Proof of Theorem 1.13] The implication ${\rm (I)}\implies {\rm (II)}$ was already established in Proposition 1.11.  
Our plan is to first show ${\rm (II)}\implies {\rm (I)}$, in the process demonstrating that the $\mathfrak{M_1^\pi}$-cover we obtain can be made to satisfy conditions (ii)-(iv) in Theorem 1.5. 
Secondly, we prove the equivalence of (II) and (III).  

For the first part of the proof, we establish Assertion A below. This statement will imply (II)$\implies$(I), together with conditions (ii)-(iv)
in Theorem 1.5. 
\vspace{5pt}

{\bf Assertion A.} {\it Let $G$ be an $\mathfrak{M^\pi}$-group with $S={\rm solv}(G)$. For any nilpotent normal subgroup $N$ of $G$ with $G/N$ finitely generated and virtually abelian and the action of $G$ on $N_{\rm ab}$ $\pi$-integral, there is an $\mathfrak{M}^\pi_1$-group $G^\ast$ and an epimorphism $\phi:G^\ast\to G$ such that the following two statements hold.
\begin{itemize}

\item $\phi^{-1}(N)$ is nilpotent of the same class and derived length as $N$.

\item ${\rm der}(S^\ast)={\rm der}(S)$, where $S^\ast={\rm solv}(G^\ast)$.

\end{itemize}}
\vspace{5pt}

 We will prove Assertion A by induction on the number of primes for which $G$ contains a quasicyclic subgroup.  If this number is zero, then we simply make $G^\ast=G$ and $\phi:G^\ast\to G$ the identity map. Suppose that this number is positive. Let $p$ be a prime for which $G$ has a quasicyclic $p$-subgroup, and let $P$ be the $p$-torsion part of $R(G)$. By the inductive hypothesis, $G/P$ can be covered by an $\mathfrak{M_1^\pi}$-group with a solvable radical of derived length ${\rm der}(S/P)$ such that $N/P$ lifts to a nilpotent subgroup of the same class and derived length. Thus, in view of Lemma 3.17, there is no real loss of generality in assuming that $G/P$ is virtually torsion-free. Lemma 3.17 also allows us to reduce to the case where the $p'$-torsion subgroup of $N$ is trivial. With this assumption, the tensor $p$-completion map $t^N_p:N\to N_p$ becomes an injection.

Form the partial tensor $p$-completion $G_{(N,p)}$, and, for convenience, identify $G$ with its image in $G_{(N,p)}$.  Put $S^\dagger={\rm solv}(G_{(N,p)})$. Since $N$ and $S$ are dense in $N_p$ and $S^\dagger$, respectively, we have ${\rm nil}\ 
 N_p={\rm nil}\ N$ and ${\rm der}(S^\dagger)={\rm der}(S)$.  Write $Q=G/N$. According to Lemmas 5.8 and 5.7, the $\mathbb ZQ$-module homomorphism $N_{\rm ab}\to (N_p)_{\rm ab}$ induced by $t^N_p:N\to N_p$ must $p$-complete. Thus ${\rm Ann}_{\mathbb ZQ}(N)\subseteq {\rm Ann}_{\mathbb ZQ}((N_p)_{\rm ab})$.  As a result, $Q$ must act $\pi$-integrally on $(N_p)_{\rm ab}$, which means that $G_{(N,p)}$ acts $\pi$-integrally on $(N_p)_{\rm ab}$.

Invoking Proposition 6.1, we obtain a virtually torsion-free topological group $\Gamma$ and a continuous epimorphism $\theta:\Gamma\to G_{(N,p)}$ such that the following statements hold.

\begin{enumeratenum}
\item $\Lambda:=\theta^{-1}(N_p)$ belongs to $\mathfrak{N}_p$.
\item ${\rm der}(\theta^{-1}(S^\dagger))={\rm der}(S)$, ${\rm nil}\ \Lambda={\rm nil}\ N$, and ${\rm der}(\Lambda)={\rm der}(N)$.
\item $\Gamma$ acts $\pi$-integrally on $\Lambda_{\rm ab}$.\
\end{enumeratenum}

We now show that $\Gamma$ contains an $\mathfrak{M_1^\pi}$-subgroup $G^\ast$ that covers $G$. 
We begin by picking elements $g_1, \dots, g_k$ of $G$ whose images generate $Q$. For each $j=1,\dots, k$, choose $g^\ast_j\in \Gamma$ such that $\theta(g^\ast_j)=g_j$.  Let $V$ be the subgroup of $\Gamma$ generated as an abstract group by $g^\ast_1,\dots, g^\ast_k$. 
Applying Proposition 4.20 to the extension 

$$1\rightarrow {\rm Ker}\ \theta \rightarrow \theta^{-1}(N)\stackrel{\theta}{\rightarrow}  N\rightarrow 1,$$

\noindent we acquire a $\pi$-minimax subgroup $H$ of $\theta^{-1}(N)$ such that $\theta(H)=N$. 
We then define $G^\ast$ to be the subgroup of $\Gamma$ generated by $H$ and $V$ as an abstract group. Notice that Proposition 3.28 implies that $G^\ast$ is $\pi$-minimax. Moreover, our selection of $H$ and $g^\ast_1,\dots, g^\ast_k$ guarantees 
$\theta(G^\ast)=G$. In addition, assertion (2) above implies that the preimage of $N$ in $G^\ast$ has the same nilpotency class and derived length as $N$, and that ${\rm der}(S^\ast)={\rm der}(S)$, where $S^\ast={\rm solv}(G^\ast)$.  
\vspace{8pt}

This now completes the proof of Assertion A. Hence (II)$\implies$(I), and the $\mathfrak{M_1^\pi}$-covering satisfies statements (ii)-(iv) from Theorem 1.5. 
\vspace{8pt}

Our final task is to prove the equivalence of (II) and (III). That (II) implies (III) is plain. Suppose that (III) holds, and write $R=R(G)$. For each $i\geq 0$, let $Z_i/R$ be the $i$th term in the upper central series of $N/R$. Since $Z_i/Z_{i-1}$ is a $\mathbb ZQ$-module that is virtually torsion-free and $\pi$-minimax {\it qua} abelian group, $Q$ acts $\pi$-integrally on $Z_i/Z_{i-1}$ for all $i\geq 1$. Now take $\bar{Z}_i$ to be the image of $Z_i$ in $N_{\rm ab}$ for each $i\geq 0$. Then, for $i\geq 1$, the action of $Q$ on $\bar{Z}_i/\bar{Z}_{i-1}$ is $\pi$-integral. In addition, the hypothesis says that $Q$ acts $\pi$-integrally on $\bar{Z}_0$. Therefore (II) is true. We have thus shown (III)$\implies$(II).
\end{proof}

\begin{remark}{\rm We point out that it seems unlikely that the kernel of the covering constructed in the proof of Theorem 1.13 can be made to be abelian in general. This is because it is not always possible to choose the subgroup $X$ in the proof of Proposition 6.1 so that the kernel of the epimorphism
$R_0\rtimes X\to G$ is abelian: that would require that $R_0\cap X$ be abelian. Of course, this still leaves open the possibility that the cover can be modified to obtain an abelian kernel. We conjecture, however, that this cannot always be accomplished, which would mean that Open Question 1.6 has a negative answer. }
\end{remark}

\subsection{Torsion-free covers}

Example 1.4 suggests that it might be feasible to strengthen Theorem \ref{thm2} to furnish a cover that is entirely torsion-free, rather than merely virtually torsion-free. This suspicion is buttressed by Lemma 3.18, which states that any finite group is a homomorphic image of a torsion-free polycyclic group. In Corollary 6.3 below, we succeed in establishing such an extension of Theorem 1.13. 
For our discussion of torsion-free covers, we use $\mathfrak{M^\pi_0}$ to represent the subclass of $\mathfrak{M^\pi_1}$ consisting of all the $\mathfrak{M^\pi_1}$-groups that are torsion-free.

\begin{corollary} Let $G$ be an $\mathfrak{M}$-group satisfying the three equivalent statements in Theorem 1.13 for a set of primes $\pi$.  Write $N={\rm Fitt}(G)$ and $S={\rm solv}(G)$.  
 Then there is an $\mathfrak{M^\pi_0}$-group $G^\ast$ and an epimorphism $\phi: G^\ast\to G$ possessing the following three properties, where $S^\ast={\rm solv}(G^\ast)$ and $N^\ast={\rm Fitt}(G^\ast)$. 

\begin{enumerate*}

\item $S^\ast=\phi^{-1}(S)$.

\item ${\rm der}(S^\ast)={\rm max}\{{\rm der}(S), 1\}$. 

\item If $S\neq 1$, then $N^\ast$ contains a normal subgroup $N^\ast_0$ of finite index such that ${\rm nil}\ N^\ast_0\leq  {\rm nil}\ N$ and ${\rm der}(N^\ast_0)\leq {\rm der}(N)$.

\end{enumerate*}

\end{corollary}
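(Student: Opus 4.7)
The plan is to start from the virtually torsion-free cover supplied by Theorem~1.13 and then refine it to an honest torsion-free cover by pulling back along a Lemma~3.17-type cover of a carefully chosen finite quotient. First I invoke Theorem~1.13 to obtain an epimorphism $\phi^\dagger\colon G^\dagger\to G$ with $G^\dagger\in\mathfrak{M_1^\pi}$ satisfying properties (ii)--(iv) of Theorem~1.5, so that $N^\dagger:={\rm Fitt}(G^\dagger)=(\phi^\dagger)^{-1}(N)$, $S^\dagger:={\rm solv}(G^\dagger)=(\phi^\dagger)^{-1}(S)$, and the nilpotency class and derived lengths of $N$ and $S$ are preserved. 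Since $G^\dagger$ is virtually torsion-free, I pick a torsion-free normal subgroup $T$ of finite index in $G^\dagger$ and, after replacing it by $T\cap S^\dagger$, arrange $T\leq S^\dagger$. Setting $Q:=G^\dagger/T$, $\bar S:=S^\dagger/T$ and $\bar N:=N^\dagger T/T$, any solvable normal subgroup of $Q$ pulls back to an extension of $T$ by a solvable group, which is itself solvable and hence sits in $S^\dagger$; thus $\bar S={\rm solv}(Q)$, and $\bar N\leq\bar S$ is nilpotent. Applying Lemma~3.17 to $(Q,\bar N)$ produces a torsion-free virtually polycyclic $\psi\colon Q^\ast\to Q$, and the pullback $G^\ast:=G^\dagger\times_Q Q^\ast$ is torsion-free, being an extension of the torsion-free group $T$ by the torsion-free group $Q^\ast$. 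Lemma~3.16 together with Proposition~3.27 then places $G^\ast$ in $\mathfrak{M_0^\pi}$, and $\phi:=\phi^\dagger\circ\phi_{\rm out}\colon G^\ast\to G$ is the required epimorphism.

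The observation that ties everything together is that the kernel $R/L$ of the cover furnished by Lemma~3.17 already lies inside $\psi^{-1}(\bar N)$: since $1\in\bar N$ forces $R\leq K$, one has $R^{(d')}\leq K^{(d')}\leq L$ and $\gamma_{c'+1}R\leq\gamma_{c'+1}K\leq L$, so ${\rm Ker}\,\psi$ is nilpotent (of class at most $\max\{{\rm nil}\,\bar N,1\}$) and in particular solvable. This forces $\psi^{-1}(\bar S)={\rm solv}(Q^\ast)$, so Lemma~3.16(i) gives $S^\ast=S^\dagger\times_Q\psi^{-1}(\bar S)=\phi^{-1}(S)$, which is (i). For (iii), assuming $S\neq 1$, take $N^\ast_0:=\phi^{-1}(N^\dagger)=N^\dagger\times_Q\psi^{-1}(\bar N)$; Lemma~3.16(iii), combined with the analogous derived-length version (applied inside the direct product $N^\dagger\times\psi^{-1}(\bar N)$), yields ${\rm nil}\,N^\ast_0\leq{\rm nil}\,N$ and ${\rm der}(N^\ast_0)\leq{\rm der}(N)$. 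The index $[N^\ast:N^\ast_0]$ is finite because $[{\rm Fitt}(Q^\ast):\psi^{-1}(\bar N)]$ is finite (both containing $\psi^{-1}(\bar N)$ and living inside the finite-index subgroup $\psi^{-1}(\bar S)$ of $Q^\ast$).

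The main obstacle is property (ii). The naive bound from the extension $1\to{\rm Ker}\,\psi\to S^\ast\to S^\dagger\to 1$ only gives ${\rm der}(S^\ast)\leq{\rm der}(S)+{\rm der}({\rm Ker}\,\psi)$, which is strictly too loose. To pin ${\rm der}(S^\ast)$ down to $\max\{{\rm der}(S),1\}$, I expect one must enlarge the relation subgroup $L$ used in Lemma~3.17 to include $J^{(e')}$, where $J:=\theta^{-1}(\bar S)$ and $e':=\max\{{\rm der}\,\bar S,1\}$, so that $\psi^{-1}(\bar S)$ has derived length at most $e'$. The delicate step is verifying that $F/L$ remains torsion-free after this enlargement: I would first apply the iterated Magnus theorem (cited as [{\bf 9}] in the paper) to obtain torsion-freeness of the polycyclic group $F/J^{(e')}$, and then run an isolator argument inside this torsion-free polycyclic group to absorb the commutator generators $K^{(d')}\gamma_{c'+1}K$ without introducing new torsion. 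Granting this refinement, $S^\ast$ embeds in $S^\dagger\times\psi^{-1}(\bar S)$, so ${\rm der}(S^\ast)\leq\max\{{\rm der}(S^\dagger),e'\}=\max\{{\rm der}(S),1\}$, while the reverse inequality is automatic from $\phi(S^\ast)=S$ together with the fact that $S^\ast$ is nontrivial (the nontrivial kernel lies inside ${\rm solv}(Q^\ast)$).
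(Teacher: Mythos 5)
Your overall strategy is exactly the paper's: start from the $\mathfrak{M}_1^\pi$-cover $G^\dagger$ supplied by Theorem~1.13, choose a torsion-free solvable normal subgroup of finite index (your $T$, the paper's $L$), and pull back along a Lemma~3.17-type cover of the finite quotient. Your treatment of (i) is correct, and your choice $N_0^\ast=\phi^{-1}(N^\dagger)$ for (iii) is different from the paper's $(N^\dagger\cap L)\times\operatorname{Ker}\epsilon$ but is also fine (the finite-index claim and the class/derived-length bounds go through once $\psi^{-1}(\bar N)$ is nilpotent of class at most $\operatorname{nil}N^\dagger$).

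The genuine gap is in (ii), and you have correctly identified where it lies. Your plan is to enlarge the Lemma~3.17 relation subgroup $K^{(d')}\gamma_{c'+1}K$ (with $K=\theta^{-1}(\bar N)$) by adjoining $J^{(e')}$, $J=\theta^{-1}(\bar S)$, and then argue torsion-freeness of $F/\bigl(K^{(d')}\gamma_{c'+1}K\cdot J^{(e')}\bigr)$ via iterated Magnus plus an isolator argument. Two problems. First, $F/J^{(e')}$ is \emph{not} polycyclic when $e'\geq 2$: since $J\unlhd F$ has finite index and $F$ has rank $\geq 2$, the group $J/J^{(e')}$ is a free solvable group of rank $\geq 2$, which has infinite rank. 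Second, and decisively, there is no reason for the image of $K^{(d')}\gamma_{c'+1}K$ to be isolated in the torsion-free group $F/J^{(e')}$, so the isolator step does not establish that $F/L'$ is torsion-free; you acknowledge you cannot complete this.

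The paper avoids the difficulty by a simpler choice. Since $d=\operatorname{der}(S^\dagger)$, the subgroup $(S^\dagger)^{(d-1)}$ is abelian, hence lies in $N^\dagger=\operatorname{Fitt}(G^\dagger)$, and its image $\bar S^{(d-1)}$ in $Q=G^\dagger/L$ is an abelian normal subgroup. Applying Lemma~3.17's free-group construction to $\bar S^{(d-1)}$ (rather than to $\bar N$), the relation subgroup collapses to $\bigl(\theta^{-1}(\bar S^{(d-1)})\bigr)'$; torsion-freeness of the resulting cover is then a single application of the Magnus/Higman theorem ([{\bf 9}] in the paper), and the cover is polycyclic because $\theta^{-1}(\bar S^{(d-1)})$ becomes finitely generated free abelian. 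The kernel of $\epsilon$ is therefore abelian, which already gives $\operatorname{der}(\operatorname{solv}(H))\leq d$ (since $\operatorname{solv}(H)^{(d-1)}\leq\epsilon^{-1}(\bar S^{(d-1)})$) and is all one needs for (iii) via $N_0^\ast=(N^\dagger\cap L)\times\operatorname{Ker}\epsilon$. The lesson is that you do not need to retain the relations coming from $\bar N$ simultaneously with the new ones: making the kernel abelian is both sufficient and free of the torsion-freeness obstruction you ran into.
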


It will not have escaped the reader that, in advancing to a torsion-free cover, we have weakened the properties regarding the Fitting subgroups from Theorem 1.13. Our next corollary allows us to recover these properties, but at the expense of diluting the assertion about the derived lengths of the solvable radicals.  

\begin{corollary} Let $G$ be an $\mathfrak{M}$-group satisfying the three equivalent statements in Theorem 1.13 for a set of primes $\pi$. Set $N={\rm Fitt}(G)$ and $S={\rm solv}(G)$.   
 Then there is an $\mathfrak{M^\pi_0}$-group $G^\ast$ and an epimorphism $\phi: G^\ast\to G$ with the following three properties, where $N^\ast={\rm Fitt}(G^\ast)$ and $S^\ast={\rm solv}(G^\ast)$. 
\begin{enumerate*}

\item $N^\ast=\phi^{-1}(N)$; hence $S^\ast=\phi^{-1}(S)$.

\item If $S\neq 1$, then $S^\ast$ contains a normal subgroup $S^\ast_0$ of finite index such that $N^\ast\leq S^\ast_0$ and ${\rm der}(S^\ast_0)={\rm der}(S)$. 

\item ${\rm nil}\ N^\ast={\rm max}\{{\rm nil}\ N, 1\}$  and ${\rm der}(N^\ast)={\max}\{{\rm der}(N), 1\}$.  
\end{enumerate*}
\end{corollary}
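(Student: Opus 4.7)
The plan is to upgrade the cover furnished by Theorem 1.13 to a fully torsion-free one, by covering its finite quotient via Lemma 3.17 and assembling the pieces with the pullback construction of Lemma 3.16. First, I apply Theorem 1.13 to obtain a virtually torsion-free $\mathfrak{M^\pi_1}$-cover $\phi_1 \colon G_1^\ast \to G$ satisfying conditions (ii)--(iv) from Theorem 1.5; writing $N_1^\ast := {\rm Fitt}(G_1^\ast)$ and $S_1^\ast := {\rm solv}(G_1^\ast)$, this gives $N_1^\ast = \phi_1^{-1}(N)$, together with ${\rm nil}\ N_1^\ast = {\rm nil}\ N$, ${\rm der}(N_1^\ast) = {\rm der}(N)$, and ${\rm der}(S_1^\ast) = {\rm der}(S)$.

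Since $G_1^\ast$ is virtually torsion-free, I next choose a torsion-free normal subgroup $K$ of finite index in $G_1^\ast$ and set $F := G_1^\ast/K$ and $\bar{N} := N_1^\ast K/K$. Here $\bar{N}$ is a nilpotent normal subgroup of the finite group $F$ of class at most ${\rm nil}\ N$ and derived length at most ${\rm der}(N)$. Applying Lemma 3.17 to the pair $(F, \bar{N})$ produces a torsion-free, virtually polycyclic cover $\pi \colon F^\ast \to F$ with $\pi^{-1}(\bar{N})$ nilpotent of class ${\rm max}\{{\rm nil}\ N, 1\}$ and derived length ${\rm max}\{{\rm der}(N), 1\}$. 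I then form the pullback $G^\ast := G_1^\ast \times_F F^\ast$ via Lemma 3.16, with canonical projection $\phi_0 \colon G^\ast \to G_1^\ast$ and composition $\phi := \phi_1 \phi_0$. By Lemma 3.16, $G^\ast$ sits in an extension $1 \to K \to G^\ast \to F^\ast \to 1$ whose ends are both torsion-free, so $G^\ast$ is torsion-free; since $K$ is $\pi$-minimax and $F^\ast$ is virtually polycyclic, $G^\ast$ lies in $\mathfrak{M^\pi_0}$.

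Property (i) follows from Lemma 3.16(ii): the identity ${\rm Fitt}(G^\ast) = N_1^\ast \times_F {\rm Fitt}(F^\ast)$, combined with the pullback constraint $\epsilon(n) = \pi(q)$ and the equality $\epsilon(N_1^\ast) = \bar{N}$, forces the $F^\ast$-coordinate into $\pi^{-1}(\bar{N}) \subseteq {\rm Fitt}(F^\ast)$, so that ${\rm Fitt}(G^\ast) = N_1^\ast \times_F \pi^{-1}(\bar{N}) = \phi^{-1}(N)$. Property (iii) then follows from Lemma 3.16(iii) applied to the nilpotent subgroup $N_1^\ast$ of $G_1^\ast$: when ${\rm nil}\ N \geq 1$, the class of $\pi^{-1}(\bar{N})$ is at most ${\rm nil}\ N$, giving ${\rm nil}\ N^\ast = {\rm nil}\ N$; the parallel argument, viewing $\phi^{-1}(N_1^\ast)$ as a subgroup of the direct product $N_1^\ast \times \pi^{-1}(\bar{N})$, handles the derived length. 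The edge case $N = 1$, which forces $G$ to be finite with trivial Fitting subgroup, is handled by applying Lemma 3.17 directly to $G$.

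For property (ii), assume $S \neq 1$ and define $S_0^\ast$ to be the preimage under $G^\ast \to F^\ast$ of the subgroup $\pi^{-1}(S_1^\ast K/K) \subseteq F^\ast$; this is a normal subgroup of finite index in $S^\ast := {\rm solv}(G^\ast)$ that contains $N^\ast$. Writing $S_0^\ast = K \cdot Y$ for a suitably chosen lift $Y$ of $\pi^{-1}(S_1^\ast K/K)$ and invoking Proposition 3.8(ii), one reduces the derived-length computation to bounds already in hand, and exploiting ${\rm der}(S_1^\ast) = {\rm der}(S)$ from Theorem 1.13 yields ${\rm der}(S_0^\ast) = {\rm der}(S)$. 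The main obstacle will be this final derived-length bookkeeping: one must choose the lift $Y$ and the polycyclic layer produced by Lemma 3.17 so that the additional structure does not inflate the derived length of $S_0^\ast$ beyond ${\rm der}(S)$.
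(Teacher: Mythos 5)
Your overall architecture matches the paper's: cover $G$ by an $\mathfrak{M_1^\pi}$-group $G_1^\ast$ via Theorem 1.13, pass to a finite quotient by a torsion-free normal subgroup of finite index, cover that finite quotient torsion-freely via Lemma 3.17, and glue with the pullback of Lemma 3.16. Your handling of properties (i) and (iii) is essentially correct.

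The genuine gap is exactly the one you flag yourself at the end: the derived-length estimate in property (ii) is never actually established. You define $S_0^\ast$ as a preimage of $\pi^{-1}(S_1^\ast K/K)$, write it as $K\cdot Y$ for an unspecified lift $Y$, gesture toward Proposition 3.8(ii), and then concede that ``one must choose the lift $Y$ and the polycyclic layer produced by Lemma 3.17 so that the additional structure does not inflate the derived length.'' That is the entire content of property (ii), and it is left unresolved. A secondary issue feeds into this: you take $K$ to be merely torsion-free of finite index, without requiring it to be \emph{solvable}. Since $G_1^\ast$ is only virtually solvable, a torsion-free finite-index normal subgroup need not lie in $\mathrm{solv}(G_1^\ast)$, so you have no control on $\mathrm{der}(K)$; you need to replace $K$ by $K\cap\mathrm{solv}(G_1^\ast)$.

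The paper closes the gap differently and more economically. Choosing $L$ torsion-free, \emph{solvable}, and normal of finite index gives $\mathrm{der}(L)\leq\mathrm{der}(S^\dagger)=\mathrm{der}(S)$ outright. For the other factor, it observes that $\mathrm{Ker}\,\epsilon$ is contained in $\epsilon^{-1}(N^\dagger/L)$, which Lemma 3.17 makes nilpotent of derived length $\max\{\mathrm{der}(N^\dagger/L),1\}\leq\mathrm{der}(S)$. Setting $U=L\times\mathrm{Ker}\,\epsilon$ (the pullback over the trivial element), one gets a normal subgroup of $G^\ast$ of finite index with $\mathrm{der}(U)\leq\mathrm{der}(S)$, and then the desired $S_0^\ast$ is found between $U$ and $S^\ast$ using that $\mathrm{der}(S^\ast)\geq\mathrm{der}(S)$ and $S^\ast/U$ is finite. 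The key structural input you are missing is the identification of $\mathrm{Ker}\,\epsilon$ as a subgroup of the controlled nilpotent layer $\epsilon^{-1}(N^\dagger/L)$ inherited from Lemma 3.17, which is what bounds its derived length by $\mathrm{der}(S)$ rather than by something involving $F^\ast$ at large.
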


The second open question of the article addresses whether it might be possible to completely preserve both the the nilpotency class of the Fitting subgroup and the derived length of the group when constructing a cover entirely bereft of torsion.  

\begin{openquestion2} Let $G$ be an $\mathfrak{M}$-group satisfying the three equivalent statements in Theorem 1.13 for a set of primes $\pi$.
 Is there an $\mathfrak{M^\pi_0}$-group $G^\ast$ and an epimorphism $\phi: G^\ast\to G$ satisfying conditions {\rm (ii)-(iv)} in Theorem 1.5? 
\end{openquestion2}

\noindent The proofs of Corollaries 6.2 and 6.3, provided below, reveal that the above question may be reduced to the case where $G$ is a finite group. 

We find it convenient to first prove the second corollary.

\begin{proof}[Proof of Corollary 6.4]

First we observe that the case $S=1$ is a consequence of Lemma 3.18. Assume $S\neq 1$. Let $G^\dagger$ be an $\mathfrak{M_1^\pi}$-cover of $G$ satisfying properties (ii)-(iv) in Theorem 1.5. Pick a torsion-free solvable normal subgroup $L$ in $G^\dagger$ such that $G^\dagger/L$ is finite. Writing $N^\dagger={\rm Fitt}(G^\dagger)$, we apply Lemma 3.18 to cover $G^\dagger/L$ with an $\mathfrak{M_0^{\emptyset}}$-group $H$ such that $N^\dagger/L$ lifts to a nilpotent subgroup of the same nilpotency class and derived length. Now put $G^\ast=G^\dagger\times_{G^\dagger/L} H$. Then $G^\ast$ is an $\mathfrak{M_0^\pi}$-group that covers $G$. Moreover, according to parts (ii) and (iii) of Lemma 3.17, $G^\ast$ satisfies properties (i) and (iii) of Corollary 6.4. 

It remains to establish statement (ii) of the corollary. For this, we let $\epsilon:H\to G^\dagger/L$ be the covering defined above and put $S^\dagger={\rm solv}(G^\dagger)$. Also, set $U=L\times ({\rm Ker}\ \epsilon)$. We have ${\rm der}(L)\leq {\rm der}(S)$ and

$${\rm der}({\rm Ker}\ \epsilon)\leq {\rm der}(N^\dagger/L)\leq {\rm der}(S^\dagger)={\rm der}(S).$$

\noindent As a result, ${\rm der}(U)\leq {\rm der}(S)$. Write $S^\ast={\rm solv}(G^\ast)$. Since ${\rm der}(S^\ast)\geq {\rm der}(S)$, it is straightforward to see that $S^\ast$ must therefore contain a subgroup $S_0^\ast$ such that $U\leq S^\ast_0$ and ${\rm der}(S_0)={\rm der}(S)$. 
Because $G^\ast/U$ is finite, this, then, proves assertion (iii). 
\end{proof}

\begin{proof}[Proof of Corollary 6.3]
The case $S=1$ is again a result of Lemma 3.18. Suppose $S\neq 1$. Let $G^\dagger$ and $L$ be as described in the proof of Corollary 6.4.  As before, we invoke Lemma 3.18 to obtain an epimorphism $\epsilon: H\to G^\dagger/L$ such that $H$ belongs to $\mathfrak{M_0^{\emptyset}}$; this time, however, we ensure that $\epsilon^{-1}((S^\dagger/L)^{(d-1)})$ is abelian, where $S^\dagger={\rm solv}(G^\dagger)$ and $d={\rm der}(S)$. Since
${\rm solv}(H)=\epsilon^{-1}(S^\dagger/L),$
${\rm der}({\rm solv}(H))\leq d$.
As in the previous proof, set $G^\ast=G^\dagger \times_{G^\dagger/L} H$, making $G^\ast$ an $\mathfrak{M_0^\pi}$-cover of $G$. 
Writing $S^\ast={\rm solv}(G^\ast)$, we have $S^\ast=S^\dagger\times_{G^\dagger/L} {\rm solv}(H)$. Hence assertions (i) and (ii) of the corollary are true.  

Finally, we verify that $G^\ast$ satisfies property (iii). To show this, we set $N^\ast_0=(N^\dagger\cap L)\times ({\rm Ker}\ \epsilon)$. Then $N^\ast_0$
is a normal subgroup of finite index in $N^\ast:={\rm Fitt}(G^\ast)$, and ${\rm nil}\ N^\ast_0={\rm nil}(N^\dagger\cap L)\leq {\rm nil}\ N$.  Similarly, we have ${\rm der}(N_0^\ast)\leq {\rm der}(N)$.
\end{proof}

We conclude this section by stating an important special case of Corollary 6.4.

\begin{corollary}
Let $\pi$ be a set of primes and $N$ a nilpotent $\pi$-minimax group. Then $N$ can be covered by a nilpotent torsion-free $\pi$-minimax group $N^\ast$ such that ${\rm nil}\ N^\ast= {\rm nil}\ N$ and ${\rm der}(N^\ast)={\rm der}(N)$. \hfill\(\square\)
\end{corollary}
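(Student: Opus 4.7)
The plan is to deduce Corollary 6.6 as an immediate specialization of Corollary 6.4 applied to the $\mathfrak{M}$-group $G := N$. First I would verify that $N$ satisfies the conditions of Theorem 1.13 for the set of primes $\pi$. Since $N$ is nilpotent, ${\rm Fitt}(N) = N$, so the quotient $Q = N/{\rm Fitt}(N)$ is trivial. Hence condition (II) is trivially satisfied: the trivial group is finitely generated; ${\rm spec}(N) \subseteq \pi$ holds because $N$ is $\pi$-minimax; and the trivial group acts $\pi$-integrally on $N_{\rm ab}$ vacuously.

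Next I would invoke Corollary 6.4 to produce an $\mathfrak{M}_0^\pi$-group $N^\ast$ together with an epimorphism $\phi: N^\ast \to N$. Property (i) of Corollary 6.4 gives ${\rm Fitt}(N^\ast) = \phi^{-1}({\rm Fitt}(N)) = \phi^{-1}(N) = N^\ast$, which shows that $N^\ast$ coincides with its Fitting subgroup, i.e.\ $N^\ast$ is itself nilpotent. Since $N^\ast$ lies in $\mathfrak{M}_0^\pi$, it is automatically torsion-free and $\pi$-minimax, as required.

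For the nilpotency class and derived length, property (iii) of Corollary 6.4 supplies the equalities ${\rm nil}\ N^\ast = \max\{{\rm nil}\ N,\, 1\}$ and ${\rm der}(N^\ast) = \max\{{\rm der}(N),\, 1\}$. In the generic case where $N$ is nontrivial we have ${\rm nil}\ N \geq 1$ and ${\rm der}(N) \geq 1$, so these maxima simplify to the desired equalities ${\rm nil}\ N^\ast = {\rm nil}\ N$ and ${\rm der}(N^\ast) = {\rm der}(N)$. The degenerate case $N = 1$ is handled by taking $N^\ast = 1$.

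There is no real obstacle here: the substantive content has already been established in Corollary 6.4, and Corollary 6.6 is essentially its reformulation in the nilpotent case.
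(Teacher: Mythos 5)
Your proposal is correct and matches the paper's intent exactly: the paper itself introduces Corollary 6.6 with the words ``an important special case of Corollary 6.4'' and supplies no further argument, so specializing Corollary 6.4 to $G = N$ (where ${\rm Fitt}(N)=N$ trivializes the quotient $Q$, making condition (II) of Theorem 1.13 automatic) is precisely the intended deduction. You also correctly noticed the small wrinkle that Corollary 6.4(iii) produces $\max\{{\rm nil}\ N,1\}$ rather than ${\rm nil}\ N$, and that the exceptional case $N=1$ must be handled by hand; the paper glosses over this, so your explicit treatment is, if anything, slightly more careful.
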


\section{Examples of finitely generated $\mathfrak{M_{\infty}}$-groups}

In this section, we collect some examples of finitely generated 
$\mathfrak{M_\infty}$-groups.   
The principal purpose of these examples is to show that there are $\mathfrak{M_\infty}$-groups that fail to admit an $\mathfrak{M}_1$-covering whose kernel is polycyclic. This is demonstrated by Examples 7.1 and 7.6, the first of which has two primes in its spectrum and the second just a single prime. 

\begin{example}{\rm 
For any pair of distinct primes $p$ and $q$, we define a finitely generated solvable minimax group $G_1(p,q)$ with the following properties. 

\begin{itemize}
\item The spectrum of $G_1(p,q)$ is $\{p, q\}$. 
\item The finite residual $R(G_1(p,q))$ is isomorphic to $\mathbb Z(p^\infty)$, and its centralizer has infinite index. 
\item The kernel of every $\mathfrak{M_1}$-covering of $G_1(p,q)$ has $q$ in its spectrum.
\end{itemize}

Let $V$ be the group of upper triangular $3\times 3$ matrices $(a_{ij})$ such that $a_{12}\in \mathbb Z[1/pq]$, $a_{13}\in\mathbb Z[1/pq]$, $a_{23}\in\mathbb Z[1/p]$, $a_{33} = 1$, $a_{11} = q^s$, and $a_{22} = p^r$, where $r,s\in\mathbb Z$. Now let $A$ be the normal abelian subgroup of $V$ consisting of those matrices in $V$ that differ from the identity matrix at most in the $a_{13}$ entry and where $a_{13}\in\mathbb Z[1/q]$. 
Set $G_1(p,q)=V/A$. Then $G_1(p,q)$ is a finitely generated solvable minimax group, and the first two properties listed above hold. The third assertion is proved in Lemma 7.2 below.   
} 
\end{example}

\begin{acknowledgement}{\rm  Example 7.1 was suggested to the authors by an anonymous referee.}
\end{acknowledgement}

\begin{lemma}\label{eg1}
Let $p$ and $q$ be distinct primes. If $K$ is the kernel of an $\mathfrak{M_1}$-covering of $G_1(p,q)$, then $q\in {\rm spec}(K)$.
\end{lemma}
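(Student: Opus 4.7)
The plan is to single out an explicit solvable subgroup $\bar{X} \leq G_1(p,q)$ to which Proposition 1.11 applies so as to force $q$ into the spectrum of its preimage in $G^\ast$; the conclusion $q \in {\rm spec}(K)$ will then follow from additivity of spectra under minimax group extensions.

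Let $t \in G_1(p,q)$ denote the image of $\operatorname{diag}(q,1,1) \in V$, and let $P$ be the finite residual of $G_1(p,q)$, isomorphic to $\mathbb{Z}(p^\infty)$ and arising from the $a_{13}$-entries modulo $A$. Since $t$ acts on $P$ by multiplication by $q$, the subgroup $\bar{X} := \langle t, P\rangle \leq G_1(p,q)$ is a semidirect product $\bar{X} \cong \mathbb{Z}(p^\infty) \rtimes_q \mathbb{Z}$. I would first verify that ${\rm Fitt}(\bar{X}) = P$: for any normal subgroup $M \unlhd \bar{X}$ containing an element $t^\ell m$ with $\ell \neq 0$ and $m \in P$, conjugating by $x \in P$ and multiplying by $(t^\ell m)^{-1}$ yields $(1-q^\ell) x \in M \cap P$ for every $x \in P$; since $P$ is $p$-divisible and $1-q^\ell$ is a nonzero integer, $(1-q^\ell)P = P$, so $P \leq M$ and hence $P \rtimes \langle t^\ell\rangle \leq M$, which fails to be nilpotent because the iterated $t^\ell$-commutators act on $P$ by the nonzero integers $(q^\ell-1)^n$. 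Thus $\bar{X}/{\rm Fitt}(\bar{X}) \cong \mathbb{Z}$, generated by the image of $t$, and ${\rm spec}(\bar{X}) = \{p\}$.

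Setting $\bar{X}^\ast := \phi^{-1}(\bar{X}) \leq G^\ast$ and $\pi := {\rm spec}(\bar{X}^\ast)$, the group $\bar{X}^\ast$ lies in $\mathfrak{M}_1^\pi$, and Proposition 1.11 applied to the surjection $\bar{X}^\ast \twoheadrightarrow \bar{X}$ forces $\langle t\rangle$ to act $\pi$-integrally on $P = {\rm Fitt}(\bar{X})_{\rm ab}$. The key computation is that the $\mathbb{Z}[t]$-annihilator of this action is $(t-q)\mathbb{Z}[t]$: a polynomial $f(t) \in \mathbb{Z}[t]$ annihilates $\mathbb{Z}(p^\infty)$ iff $f(q)$ kills every element, which, since $\mathbb{Z}(p^\infty)$ has elements of unbounded $p$-power order, is equivalent to $f(q)=0$ in $\mathbb{Z}$. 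Every nonzero annihilator therefore has the form $f(t) = (t-q) c(t)$ with $c(t) \in \mathbb{Z}[t]$, and its constant term equals $-q\, c(0)$; the $\pi$-integrality hypothesis demands that this be a nonzero $\pi$-number (see the remark following Definition 1.10), which forces $q \in \pi$.

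Finally, since ${\rm spec}$ is additive on minimax group extensions (by the Schreier refinement theorem applied to a series whose factors are cyclic or quasicyclic), the extension $1 \to K \to \bar{X}^\ast \to \bar{X} \to 1$ gives ${\rm spec}(\bar{X}^\ast) = {\rm spec}(K) \cup {\rm spec}(\bar{X}) = {\rm spec}(K) \cup \{p\}$. Since $q \in \pi = {\rm spec}(\bar{X}^\ast)$ and $q \neq p$, we conclude $q \in {\rm spec}(K)$. The delicate step is the annihilator computation combined with the precise form of $\pi$-integrality requiring both leading and constant coefficients to be $\pi$-numbers; this is where the coprimality of $p$ and $q$ prevents $q$ from being absorbed into the $p$-part of the cover.
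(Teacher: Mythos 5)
Your proof is correct, and it reaches the conclusion by a slightly different route than the paper's own argument. Both proofs hinge on the same key computation: the $\mathbb{Z}[t]$-annihilator of $\mathbb{Z}(p^\infty)$, with $t$ acting as multiplication by $q$, is the principal ideal $(t-q)$, so any nonzero annihilator in $\mathbb{Z}[t]$ has constant term divisible by $q$, forcing $q$ into the relevant set of primes. The difference is one of packaging. You restrict to the metabelian subgroup $\bar{X}=P\rtimes\langle t\rangle$ and its full preimage $\bar{X}^\ast=\phi^{-1}(\bar{X})$, then apply Proposition 1.11 as a black box with $\pi={\rm spec}(\bar{X}^\ast)$, and close the loop with additivity of spectra for the extension $1\to K\to\bar{X}^\ast\to\bar{X}\to 1$. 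The paper instead works inside $G^\ast$ directly: it fixes $\pi={\rm spec}(K)\cup\{p\}$ in advance, locates the subgroup $P^\ast=N^\ast\cap\phi^{-1}(P)$ and picks a particular factor $Z_{i+1}(P^\ast)/Z_i(P^\ast)$ of its upper central series (with $i$ maximal subject to $\phi(Z_i(P^\ast))\neq P$) to which Lemma 3.20 applies, and then argues that the resulting polynomial annihilates a nontrivial quotient of $P$, so it lies in $(t-q)$ and has constant term divisible by $q$. In effect, your argument delegates the ``pick a good section of the upper central series'' step to the already-proved Proposition 1.11, which makes the proof more modular and somewhat cleaner, whereas the paper re-implements the underlying mechanism inline. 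Both are valid, and both exploit the same $\pi$-integrality calculation with the identity $\operatorname{Ann}_{\mathbb{Z}[t]}(P)=(t-q)$; your verification that $\operatorname{Fitt}(\bar{X})=P$ (needed to apply Proposition 1.11 correctly) is carried out carefully and is correct.
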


\begin{proof} Let $\phi:G^\ast\to G_1(p,q)$ be an $\mathfrak{M_1}$-covering of $G_1(p,q)$ such that ${\rm Ker}\ \phi=K$. Write $P=R(G_1(p,q)))$, $N^\ast={\rm Fitt}(G^\ast)$, 
and $P^\ast=N^\ast\cap \phi^{-1}(P)$. Since $G^\ast/N^\ast$ is virtually polycyclic, we have $\phi(P^\ast)=P$. Choose $i$ to be the largest integer such that
$\phi(Z_i(P^\ast))\neq P$, and take $g\in G^\ast$ to be a preimage under $\phi$ of the image in $G_1(p,q)$ of the $3\times 3$ matrix that differs from the identity in only the $(1,1)$ position, which is occupied by $q$. Viewing $P$ as a $\mathbb ZG^\ast$-module, we have $g\cdot a=qa$ for all $a\in P$; in other words, $g-q\in {\rm Ann}_{\mathbb ZG^\ast}(P)$.  
                                   
Let $\pi={\rm spec}(K)\cup \{p\}$. Invoking Lemma 3.21, we obtain a polynomial $f(t)\in \mathbb Z[t]$ such that the constant term of $f(t)$ is a $\pi$-number and  $f(g)\cdot a=0$ for all $a\in Z_{i+1}(P^\ast)/Z_{i}(P^\ast)$. It follows that  $f(g)$ annihilates a nontrivial $\mathbb ZG^\ast$-module quotient $\bar{P}$ of $P$.
Moreover, we can find $r\in \mathbb Z$ and a polynomial $g(t)\in \mathbb Z[t]$ such that $f(t)=(t-q)g(t)+r$. This means $ra=0$ for all $a\in \bar{P}$, and so $r=0$. As a result, $q$ divides the constant term in $f(t)$, which implies $q\in {\rm spec}(K)$.
\end{proof}

For our next family of examples, we employ the following general construction.

\begin{construct}{\rm 
Let $Q$ be a group and $A$ a $\mathbb ZQ$-module.  Denote the exterior square $A\wedge A=A\wedge_{\mathbb Z} A$ by $B$, and let $M$ be the nilpotent group with underlying set $B\times A$ and multiplication

$$(b,a)(b',a'):=(b+b'+a\wedge a', a+a').$$ 

\noindent We extend the action of $Q$ on $A$ diagonally to an action of $Q$ on $M$ and set $\Gamma=M\rtimes Q$. Then $\Gamma$ is finitely generated if and only if $Q$ is finitely generated and $A$ is finitely generated as a $\mathbb ZQ$-module.}
\end{construct}

To study our examples, we require Lemmas 7.4 and 7.5 below. In 7.4 and the rest of the paper, when $A$ is an abelian group and $R$ a commutative ring, we write $A_R$ for the $R$-module $R\otimes_{\mathbb Z} A$.  Furthermore, in statement (ii) of Lemma 7.4, we adhere to the convention ${{n}\choose{k}}:=0$ if $n<k$. 

\begin{lemma}\label{extsq}
Let $A$ be a torsion-free abelian minimax group and let $p$ be a prime.
Then the following formulae hold. 
\begin{enumerate*}
\item
$h(A)=m_p(A)+\dim_{\mathbb F_p}(A_{\mathbb F_p})$.
\item
$m_p(A\wedge A)={{h(A)}\choose{2}}-{{h(A)-m_p(A)}\choose{2}}$.
\end{enumerate*}
\end{lemma}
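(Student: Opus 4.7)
For (i), I would choose a finitely generated subgroup $A_0 \cong \mathbb Z^{h(A)}$ of $A$ such that $A/A_0 \cong \bigoplus_q \mathbb Z(q^\infty)^{m_q(A)}$ (this is a standard structural fact for torsion-free abelian minimax groups). Tensoring the short exact sequence $0 \to A_0 \to A \to A/A_0 \to 0$ over $\mathbb Z$ with $\mathbb F_p$ and invoking $\mathrm{Tor}^{\mathbb Z}_1(A_0,\mathbb F_p) = \mathrm{Tor}^{\mathbb Z}_1(A,\mathbb F_p) = 0$ (since $A_0$ is free and $A$ is torsion-free), I obtain
\[
0 \to \mathrm{Tor}^{\mathbb Z}_1(A/A_0,\mathbb F_p) \to (A_0)_{\mathbb F_p} \to A_{\mathbb F_p} \to (A/A_0)_{\mathbb F_p} \to 0.
\]
Now $(A_0)_{\mathbb F_p} \cong \mathbb F_p^{h(A)}$, $(A/A_0)_{\mathbb F_p} = 0$ because $A/A_0$ is divisible, and $\mathrm{Tor}^{\mathbb Z}_1(A/A_0,\mathbb F_p) \cong \mathbb F_p^{m_p(A)}$ since only the $\mathbb Z(p^\infty)$-summands of $A/A_0$ carry $p$-torsion. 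Equating $\mathbb F_p$-dimensions yields (i).

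The plan for (ii) is to bootstrap from (i) applied to $A \wedge A$, so I would first verify that $A \wedge A$ is itself a torsion-free abelian minimax group of Hirsch length $\binom{h(A)}{2}$. It is minimax because it is a quotient of $A \otimes_{\mathbb Z} A$, which embeds in $\mathbb Z[\pi^{-1}]^{h(A)^2}$ (where $\pi = \mathrm{spec}(A)$) by flatness of $\mathbb Z[\pi^{-1}]$. For torsion-freeness and the Hirsch length computation, I would write $A$ as the directed union of its finitely generated subgroups $B \le A$ of maximal Hirsch length $h(A)$: each such $B$ is free of rank $h(A)$, so $B \wedge B$ is free of rank $\binom{h(A)}{2}$, and for $B \subseteq B'$ both of rank $h(A)$ the induced map $B \wedge B \to B' \wedge B'$ is a homomorphism of free abelian groups of equal rank that becomes an isomorphism after tensoring with $\mathbb Q$, hence is injective. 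Passing to the direct limit exhibits $A \wedge A$ as torsion-free of rank $\binom{h(A)}{2}$.

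With these preliminaries in hand, (i) applies to $A \wedge A$ to give $m_p(A \wedge A) = \binom{h(A)}{2} - \dim_{\mathbb F_p}((A \wedge A)_{\mathbb F_p})$. The remaining ingredient is the natural base-change identity $(A \wedge_{\mathbb Z} A)_{\mathbb F_p} \cong A_{\mathbb F_p} \wedge_{\mathbb F_p} A_{\mathbb F_p}$: describing $A \wedge A$ as $(A \otimes A)/K$ with $K$ the subgroup generated by $\{a \otimes a : a \in A\}$, right-exactness of $(-) \otimes \mathbb F_p$ reduces the claim to checking that the image of $K$ spans the full exterior kernel of $A_{\mathbb F_p} \otimes_{\mathbb F_p} A_{\mathbb F_p}$, which follows by expanding $(u+v)\otimes(u+v)$ for basic tensors $u, v$ of the form $a \otimes 1$. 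By (i), $\dim_{\mathbb F_p} A_{\mathbb F_p} = h(A) - m_p(A)$, so $\dim_{\mathbb F_p}(A_{\mathbb F_p} \wedge_{\mathbb F_p} A_{\mathbb F_p}) = \binom{h(A) - m_p(A)}{2}$, from which (ii) drops out.

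The main point requiring care is the torsion-freeness of $A \wedge A$, since exterior squares of torsion-free abelian groups need not be torsion-free in general; the directed-union argument over finitely generated subgroups of maximal rank is what makes this step go through in the minimax setting. The base-change identity is a general fact, and everything else is a routine homological-algebra computation.
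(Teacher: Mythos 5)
Your proof is correct. For part (i), you give a direct argument: choose a free subgroup $A_0$ of maximal rank with $A/A_0$ divisible, apply $-\otimes\mathbb F_p$, and read the dimensions off the resulting four-term exact sequence. The paper instead proves (i) by induction on $h(A)$, verifying the rank-one case directly; the two arguments have essentially the same content, but your exact-sequence version makes the additivity of all three quantities across short exact sequences visible rather than burying it in the induction. For part (ii), you and the paper take the same route, namely applying (i) to $A\wedge A$ and computing $h(A\wedge A)$ and $\dim_{\mathbb F_p}(A\wedge A)_{\mathbb F_p}$ via rational and mod-$p$ base change. Where you go beyond the paper is in explicitly verifying the hypotheses needed to apply (i) to $A\wedge A$: that the exterior square is again a torsion-free abelian minimax group of Hirsch length $\binom{h(A)}{2}$. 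As you note, torsion-freeness of exterior squares is not automatic, and the paper's two-line derivation leaves this implicit; your directed-union argument over the finitely generated subgroups of full rank is a clean and genuine gap-fill. One small simplification you could make: every element of $A_{\mathbb F_p}$ is already of the form $a\otimes 1$ for some $a\in A$ (lift the $\mathbb F_p$-coefficients to integers and absorb them into $a$), so the image of $K$ in $A_{\mathbb F_p}\otimes_{\mathbb F_p}A_{\mathbb F_p}$ contains $v\otimes v$ for \emph{every} $v\in A_{\mathbb F_p}$ on the nose, and the $(u+v)\otimes(u+v)$ expansion in your base-change step is unnecessary.
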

\begin{proof} Assertion (i) can be easily checked for the case $h(A)=1$. The general case may then be deduced by inducting on $h(A)$. 

Statement (ii) follows readily from (i):
\begin{align*}
m_p(A\wedge A)
&=h(A\wedge A)-\dim_{\mathbb F_p}(A\wedge A)_{\mathbb F_p}\\
&=\dim_{\mathbb Q}(A_{\mathbb Q}\wedge_{\mathbb Q} A_{\mathbb Q})
-\dim_{\mathbb F_p}(A_{\mathbb F_p}\wedge_{\mathbb F_p} A_{\mathbb F_p})\\
&={{h(A)}\choose{2}}-{{h(A)-m_p(A)}\choose{2}}.\qedhere\\
\end{align*}
\end{proof}

\begin{lemma}\label{onevar}
Let $a_0,\dots, a_m\in \mathbb Z$ such that $a_0a_m\ne0$ and $\gcd(a_0,\dots,a_m)=1$. Regarding $f(t):=a_mt^m+a_{m-1}t^{m-1}+\cdots +a_1t+a_0$ as an element of the Laurent polynomial ring $\mathbb Z[t,t^{-1}]$, let  $I$ be the principal ideal of $\mathbb Z[t,t^{-1}]$ generated by $f(t)$.  Furthermore,  let $\pi$ denote the set of primes dividing $a_0a_m$. Then the underlying additive group of the ring $\mathbb Z[t,t^{-1}]/I$ is torsion-free and $\pi$-minimax and has Hirsch length $m$.

\end{lemma}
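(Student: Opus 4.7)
The plan is to establish the three assertions—that $A$ is torsion-free, that $A$ is $\pi$-minimax, and that $h(A)=m$—by embedding $A$ into a free module over the PID $R:=\mathbb Z[\pi^{-1}]$.

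First I would pass from $\mathbb Z[t,t^{-1}]$ to its localization $R[t,t^{-1}]$ and analyze $A_R := A \otimes_{\mathbb Z} R \cong R[t,t^{-1}]/\langle f(t)\rangle$. The point is that both $a_m$ and $a_0$ become units in $R$, so $f(t)$ is associate in $R[t,t^{-1}]$ to a monic polynomial $\tilde{f}(t)$ of degree $m$ whose constant term is also a unit. Consequently $R[t]/\langle \tilde f(t)\rangle$ is a free $R$-module of rank $m$ in which the image of $t$ is already invertible, so the inclusion $R[t]/\langle \tilde f(t)\rangle \hookrightarrow R[t,t^{-1}]/\langle \tilde f(t)\rangle$ is in fact an equality. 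Thus $A_R$ is a free $R$-module of rank $m$, which is $\pi$-minimax and has Hirsch length $m$.

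The next step is to show that the canonical map $A \to A_R$ is injective, which will finish the proof: $A$ will embed into a torsion-free $\pi$-minimax group of Hirsch length $m$, and it will match that group's Hirsch length since $A\otimes \mathbb Q = A_R\otimes \mathbb Q$ already has $\mathbb Q$-dimension $m$. Injectivity of $A \to A_R$ amounts to showing that $A$ has no $n$-torsion for any $\pi$-number $n$, and in fact it suffices to prove that $A$ is $\mathbb Z$-torsion-free outright. The crucial step here is the following: suppose $n g(t) \in J$ for some positive integer $n$ and some $g(t)\in \mathbb Z[t,t^{-1}]$, so that $n g(t)=h(t)f(t)$ for some $h(t)\in \mathbb Z[t,t^{-1}]$; multiplying by a sufficiently high power of $t$ we may regard the equation as taking place in $\mathbb Z[t]$, and now the hypothesis $\gcd(a_0,\dots,a_m)=1$ means that $f(t)$ is primitive. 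By Gauss's lemma, $\mathrm{content}(h\,f) = \mathrm{content}(h)$, so $n \mid \mathrm{content}(h)$, whence $h(t)/n \in \mathbb Z[t]$ and therefore $g(t)\in J$. This is the heart of the argument and is the only step where the primitivity assumption is used.

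The main obstacle—really the only delicate point—is the torsion-freeness argument via Gauss's lemma; everything else is essentially a formal computation with the localization $R$. Once torsion-freeness is in hand, $A \hookrightarrow A_R$ is an embedding into a torsion-free $\pi$-minimax group of Hirsch length $m$, so $A$ itself lies in $\mathfrak{M_0^\pi}$ and has Hirsch length $m$, as claimed.
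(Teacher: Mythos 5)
Your proof is correct and follows essentially the same route as the paper's: both pass to $R=\mathbb Z[\pi^{-1}]$ (the paper writes $\mathbb Z[\tfrac{1}{a_0a_m}]$, which is the same ring) and show that $A\otimes R$ is a free $R$-module of rank $m$ generated by the images of $1,t,\dots,t^{m-1}$. The one place you go beyond the paper is in supplying an explicit Gauss's-lemma argument for the torsion-freeness of $A$; the paper merely asserts that the hypothesis $\gcd(a_0,\dots,a_m)=1$ forces $A$ to be torsion-free, so your primitivity/content computation is a welcome bit of detail that the paper leaves to the reader.
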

\begin{proof}
Set $A=\mathbb Z[t,t^{-1}]/I$. The hypothesis $\gcd(a_0,\dots,a_m)=1$ renders $A$ torsion-free as an abelian group. As a result, we can embed $A$ in the ring $A':=\mathbb Z[\frac{1}{a_0a_m}, t, t^{-1}]/(f(t))$. It is easy to see that $A'$ is generated by the images of  $1,t,\dots,t^{m-1}$ as a $\mathbb Z[\pi^{-1}]$-module, and that these images are linearly independent over $\mathbb Z[\pi^{-1}]$. Therefore the additive group of $A$ is $\pi$-minimax with Hirsch length $m$. 
\end{proof}

The examples below appear in the first author's doctoral thesis \cite{kropholler1}, although our observation
about their coverings in Lemma 7.7 is new.

\begin{example} {\rm For each prime $p$, we define a finitely generated solvable minimax group $G_2(p)$ which will turn out to have the following properties. 

\begin{itemize}
\item The spectrum of $G_2(p)$ is $\{p\}$. 
\item The finite residual $R(G_2(p))$ is isomorphic to $\mathbb Z(p^\infty)$, and its centralizer has infinite index. 
\item The kernel of every $\mathfrak{M_1}$-covering of $G_2(p)$ has $p$ in its spectrum.
\end{itemize}

 Let $f(t)$ denote the polynomial $pt^3+t^2-t+p$ in the Laurent polynomial ring $\mathbb Z[t,t^{-1}]$. According to Lemma \ref{onevar}, $A:=\mathbb Z[t,t^{-1}]/(f(t))$ is a torsion-free $p$-minimax group of Hirsch length $3$. Moreover, ${\rm dim}_{{\mathbb F}_p}(A_{{\mathbb F}_p})=1$, so that Lemma 7.4(i) implies $m_p(A)=2$. Setting $B=A\wedge A$, we conclude from Lemma 7.4(ii) that  $m_p(B)=3$. Because $h(B)=3$, this means $B\cong \mathbb Z[1/p]^3$. 

Since $f(t)$ has no rational root, it is irreducible over $\mathbb Q$. Computing its discriminant, we ascertain that $f(t)$ must have three distinct roots in $\mathbb C$.
Taking $\alpha$, $\beta$, and $\gamma$ to be these roots, we have
$$\alpha+\beta+\gamma=-1/p;$$
$$\alpha\beta+\beta\gamma+\gamma\alpha=-1/p;$$
$$\alpha\beta\gamma=-1.$$

Now let $\phi:A_{\mathbb C}\to A_{\mathbb C}$ and $\psi:B_{\mathbb C}\to B_{\mathbb C}$ be the linear transformations induced by the left-action of $t$. Note that for $A_{\mathbb C}$ this action arises from left-multiplication, and for $B_{\mathbb C}$ it is the resulting diagonal action. Since $\alpha$, $\beta$, and $\gamma$ are the eigenvalues of $\phi$, the complex numbers $\widehat\alpha:=\beta\gamma$, $\widehat\beta:=\gamma\alpha$, and $\widehat\gamma:=\alpha\beta$ are the eigenvalues of $\psi$. 
 Moreover, we find 
$$\widehat\alpha+\widehat\beta+\widehat\gamma=-1/p;$$
$$\widehat\alpha\widehat\beta+\widehat\beta\widehat\gamma+\widehat\gamma\widehat\alpha=1/p;$$
$$\widehat\alpha\widehat\beta\widehat\gamma=1.$$
Thus the characteristic polynomial of $\psi$ is $g(t):=pt^3+t^2+t-p$. Hence $g(t)$ annihilates $B$. Notice further that $g(t)$, too, is irreducible over $\mathbb Q$. As a consequence, $\mathbb Q[t,t^{-1}]/(g(t))$ is a simple $\mathbb Q[t,t^{-1}]$-module, and every nonzero cyclic $\mathbb Z[t,t^{-1}]$-submodule of $B$ is isomorphic to $\mathbb Z[t,t^{-1}]/(g(t))$. Take $B_0$ to be one such submodule. By the same analysis as for $A$, we determine that $B_0$ is torsion-free $p$-minimax of Hirsch length $3$ with $m_p(B_0)=2$. Therefore $B/B_0\cong \mathbb Z(p^\infty)$, and, since $B_0$ is a rationally irreducible $\mathbb Z[t,t^{-1}]$-module, so is $B$. Moreover, $B_0$ is normal in the group
$\Gamma:=M\rtimes Q$ described in Construction 7.3, where $Q:=\langle t\rangle$ and the underlying set of $M$ is $B\times A$. We define $G_2(p)$ to be the quotient $\Gamma/B_0$. Then $G_2(p)$ is a finitely generated solvable
$p$-minimax group and $R(G_2(p))=B/B_0$.  Furthermore, the image of $t$ in $G_2(p)$ acts on $R(G_2(p))\cong \mathbb Z(p^\infty)$ in the same fashion as some element $\lambda$ of $\mathbb Z_p^\ast$. Since $g(\lambda)=0$, we have $\lambda\equiv -1\ \mbox{mod}\ p.$ As $\lambda\neq -1$, this means that $\lambda$ has infinite order. Consequently, 
the centralizer of $R(G_2(p))$ has infinite index in $G_2(p)$.
}
\end{example}

\begin{lemma}\label{eg2}
Let $p$ be a prime. If $K$ is the kernel of an $\mathfrak{M_1}$-covering of $G_2(p)$, then $p\in\mathrm{spec}(K)$.
\end{lemma}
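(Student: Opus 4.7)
My proof proposal closely mirrors the argument of Lemma~\ref{eg1}, with the cubic $g(t) = pt^3 + t^2 + t - p$ from Example~7.6 playing the role that the linear divisor $(t-q)$ played there. The central structural input is that the generator $t$ of $Q = G_2(p)/\tilde{N}$ acts on $P = R(G_2(p)) \cong \mathbb{Z}(p^\infty)$ as multiplication by a specific $p$-adic unit $u \in \mathbb{Z}_p^{\ast}$: since $g(t)$ was built to annihilate $B$ as a $\mathbb{Z}Q$-module, it annihilates the quotient $P = B/B_0$, forcing $g(u) = 0$ in $\mathbb{Z}_p$; moreover, as $g(t) \bmod p = t(t+1)$ has the simple root $-1$ with $g'(-1) \equiv -1 \pmod{p}$, Hensel's lemma singles out a unique root $u \equiv -1 \pmod{p}$, which is automatically a unit. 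Observe also that $g(t)$ is primitive in $\mathbb{Z}[t]$ and irreducible over $\mathbb{Q}$ (the rational root theorem rules out rational roots), so the minimal polynomial of $u$ over $\mathbb{Q}$ is $g(t)/p$.

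Set $N^\ast = \mathrm{Fitt}(G^\ast)$ and $P^\ast = N^\ast \cap \phi^{-1}(P)$, and argue as in Lemma~\ref{eg1} that $\phi(P^\ast) = P$. Choose $i$ maximal with $\phi(Z_i(P^\ast)) \neq P$, so that $\bar{P} := P/\phi(Z_i(P^\ast)) \cong \mathbb{Z}(p^\infty)$ is a nontrivial $\mathbb{Z}G^\ast$-module quotient of $P$ to which $Z_{i+1}(P^\ast)/Z_i(P^\ast)$ surjects. Pick $g \in G^\ast$ with $\phi(g) = t$; then $g$ acts on $\bar{P}$ as multiplication by $u$. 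Let $\pi = \mathrm{spec}(K) \cup \{p\}$; by Lemma~3.20 applied to the virtually torsion-free, $\pi$-minimax module $Z_{i+1}(P^\ast)/Z_i(P^\ast)$, there is $f(t) \in \mathbb{Z}[t]$ whose constant term $\alpha_0$ is a $\pi$-number and for which $f(g)$ annihilates this module. Passing to the quotient, $f(g) \cdot \bar{P} = 0$, so $f(u) = 0$ in $\mathbb{Z}_p$.

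The analogue of Lemma~\ref{eg1}'s polynomial-division step is now divisibility by $g(t)$ in $\mathbb{Z}[t]$. Since $u$ has minimal polynomial $g(t)/p$ over $\mathbb{Q}$, we have $g(t) \mid pf(t)$ in $\mathbb{Q}[t]$. Gauss's lemma, applied to the primitive polynomial $g$, then promotes this to $g(t) \mid pf(t)$ in $\mathbb{Z}[t]$, say $pf(t) = g(t)h(t)$ with $h \in \mathbb{Z}[t]$; comparing contents, $p \cdot \mathrm{cont}(f) = \mathrm{cont}(g) \cdot \mathrm{cont}(h) = \mathrm{cont}(h)$, so $p \mid \mathrm{cont}(h)$, and writing $h = p h_2$ we obtain $f = g \cdot h_2$ in $\mathbb{Z}[t]$. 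Evaluating at $0$ gives $\alpha_0 = -p \cdot h_2(0)$, so $p \mid \alpha_0$.

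The main obstacle is the final step: turning $p \mid \alpha_0$ into the sharper conclusion $p \in \mathrm{spec}(K)$, since the tautology $p \in \pi$ is insufficient. My plan is to combine the above with Lemma~3.20 applied to the submodule $A_0 := \ker(Z_{i+1}(P^\ast)/Z_i(P^\ast) \twoheadrightarrow \bar{P})$. One checks that $A_0$ coincides with the image of $K \cap Z_{i+1}(P^\ast)$ in $Z_{i+1}(P^\ast)/Z_i(P^\ast)$, hence is a section of $K$ and is therefore $\mathrm{spec}(K)$-minimax; Lemma~3.20 then yields $f_0 \in \mathbb{Z}[t]$ with $f_0(g) \cdot A_0 = 0$ and $f_0(0)$ a $\mathrm{spec}(K)$-number. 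Assuming for contradiction that $p \notin \mathrm{spec}(K)$, so $f_0(0)$ is coprime to $p$, the product $F = f_0 \cdot g$ then annihilates all of $Z_{i+1}(P^\ast)/Z_i(P^\ast)$ with $F(0) = -p f_0(0)$ of $p$-adic valuation exactly~$1$. I expect that combining this with the virtual torsion-freeness of $Z_{i+1}(P^\ast)/Z_i(P^\ast)$---which forces the extension $0 \to A_0 \to Z_{i+1}(P^\ast)/Z_i(P^\ast) \to \bar{P} \to 0$ to be non-split as an abelian-group extension and thereby to endow $A_0$ with enough $p$-divisibility to absorb the $\mathbb{Z}(p^\infty)$ cokernel---will produce a $p$-quasicyclic section inside $A_0$, contradicting $p \notin \mathrm{spec}(K)$ and completing the proof.
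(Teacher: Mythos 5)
Your proof has a genuine gap at the final step, which you yourself flag. The mechanism you propose for closing it---that the non-splitting of $0 \to A_0 \to M \to \mathbb{Z}(p^\infty) \to 0$, where $M := Z_{i+1}(P^\ast)/Z_i(P^\ast)$ is virtually torsion-free, endows $A_0$ with ``enough $p$-divisibility'' to produce a $p$-quasicyclic section of $A_0$---is false as a statement about abelian groups: the extension $0 \to \mathbb{Z} \to \mathbb{Z}[1/p] \to \mathbb{Z}(p^\infty) \to 0$ is non-split with torsion-free middle term, and yet ${\rm spec}(\mathbb{Z}) = \emptyset$. Nor does the observation that $F(0) = -p f_0(0)$ has $p$-adic valuation exactly $1$ feed into any argument. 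Without a correct closing step, everything preceding the final paragraph establishes only the tautology $p \mid \alpha_0$, which, as you say, carries no information since $p \in \pi$ by construction.

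Your route is in any case quite different from the paper's, and---somewhat remarkably---it can be made to work, but by a different mechanism than the one you suggest, and with the contradiction landing in $M$ rather than in $A_0$. The paper reduces to $G^\ast$ torsion-free and then invokes Wehrfritz's theorem on holomorphs (Theorem 7.8) together with Baer's theorem (Theorem 7.9) to show that $G^\ast/C_{G^\ast}(K)$ is ${\rm spec}(K)$-minimax, after which it exploits the exterior-square structure of $G_2(p)$ (the rational irreducibility of $B$ and $B^\dagger$, and the computation $m_p(B^\dagger) = 3$) to force $m_p(K^\dagger) \geq 2$. Your ingredients yield a more elementary argument. Suppose $p \notin {\rm spec}(K)$, so $f_0(0)$ is prime to $p$. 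Then $f_0(u) \neq 0$ in $\mathbb{Z}_p$: otherwise the irreducibility of $g(t)$ over $\mathbb{Q}$, combined with the Gauss's-lemma computation you already supply, would give $g(t) \mid f_0(t)$ in $\mathbb{Z}[t]$ and hence $p = |g(0)|$ dividing $f_0(0)$. Since $f_0(g)$ kills $A_0$ and acts on $M/A_0 \cong \mathbb{Z}(p^\infty)$ by multiplication by $f_0(u) \neq 0$, the endomorphism $f_0(g)$ of $M$ factors through $M/A_0$, and its image $f_0(g)M$ is $M/A_0$ modulo a finite subgroup, hence isomorphic to $\mathbb{Z}(p^\infty)$. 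But $M$ is virtually torsion-free and has no quasicyclic subgroup. This contradiction---obtained without the auxiliary polynomial $F = f_0 g$, the divisibility $p \mid \alpha_0$, or any appeal to non-splitting---gives $p \in {\rm spec}(K)$. As written, however, the proposal does not contain this argument, and its stated closing step is incorrect.
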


To prove Lemma \ref{eg2} we employ a theorem of B. Wehrfritz. 

\begin{theorem}{\rm [{\bf 30}, Theorem 1]}
Let $\pi$ be a set of primes and $G$ an $\mathfrak{M_1^\pi}$-group. Then the holomorph $G\rtimes\mathrm{Aut}(G)$ is isomorphic to a subgroup of $\mathrm{GL}_m(\mathbb Z[\pi^{-1}])$ for some natural number $m$. \hfill\(\square\)
\end{theorem}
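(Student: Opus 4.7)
The plan is to construct the required embedding in two stages: first faithfully embed $G$ itself in some $\mathrm{GL}_m(\mathbb{Z}[\pi^{-1}])$, and then show that the action of $\mathrm{Aut}(G)$ on $G$ can be realized by conjugation by matrices over the same ring. To initialize, I would reduce to the case that $G$ is torsion-free. An $\mathfrak{M}_1^\pi$-group has a torsion-free characteristic subgroup $G_0$ of finite index, and the restriction homomorphism $\mathrm{Aut}(G)\to\mathrm{Aut}(G_0)$ has finite kernel since an automorphism fixing $G_0$ pointwise is determined by its action on the finite quotient $G/G_0$. Once we have a faithful $\mathbb{Z}[\pi^{-1}]$-linear embedding of $G_0\rtimes\mathrm{Aut}(G_0)$, we may form the induced representation from the finite-index subgroup to recover an embedding of $G\rtimes\mathrm{Aut}(G)$, possibly at the cost of enlarging $m$.

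With $G$ now torsion-free, set $N=\mathrm{Fitt}(G)$ and $Q=G/N$. By Proposition 1.3, $N$ is torsion-free nilpotent and $\pi$-minimax, and $Q$ is virtually polycyclic and finitely generated. The torsion-free nilpotent $\pi$-minimax group $N$ sits as a discrete subgroup of its rational Mal'cev completion $N\otimes\mathbb{Q}$, which in turn is realized as unipotent upper-triangular matrices $\mathrm{UT}_n(\mathbb{Q})$. Choosing a Mal'cev basis from $N$, we may arrange that $N$ lies inside $\mathrm{UT}_n(\mathbb{Z}[\pi^{-1}])$ via the Baker-Campbell-Hausdorff coordinates. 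The group $Q$ acts on $N$ by conjugation; by Lemma 3.20, this action is $\pi$-integral on $N_{\mathrm{ab}}$, and hence by Lemma 3.23(i) on every graded piece $\gamma_iN/\gamma_{i+1}N$. Writing each conjugation as a polynomial in the generators on the $\mathbb{Q}$-Lie algebra of $N$, this implies that $Q$ acts on $\mathrm{UT}_n(\mathbb{Z}[\pi^{-1}])\supseteq N$ through $\mathbb{Z}[\pi^{-1}]$-rational automorphisms. Combining with any faithful $\mathbb{Z}$-linear embedding of $Q$ (available because $Q$ is virtually polycyclic, by Auslander-Swan), one obtains a faithful representation $G\hookrightarrow\mathrm{GL}_{m_1}(\mathbb{Z}[\pi^{-1}])$.

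To extend to $\mathrm{Aut}(G)$, note that $N$ is characteristic in $G$, so every $\alpha\in\mathrm{Aut}(G)$ restricts to an automorphism of $N$ and induces an automorphism of $Q$. By functoriality of the Mal'cev completion, $\alpha|_N$ extends uniquely to a $\mathbb{Q}$-rational automorphism $\tilde\alpha$ of $N\otimes\mathbb{Q}$. The essential step is to verify that $\tilde\alpha$ is $\mathbb{Z}[\pi^{-1}]$-integral in the fixed Mal'cev basis; this follows because $\tilde\alpha$ preserves $N$ as a set, the $\mathbb{Z}[\pi^{-1}]$-span of $N$ inside $N\otimes\mathbb{Q}$ coincides with the chosen integral form, and the Baker-Campbell-Hausdorff expressions for the group operations involve only $\pi$-denominators. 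The action of $\mathrm{Aut}(G)$ on $Q$ is handled analogously, noting that the automorphism group of a polycyclic group is itself $\mathbb{Z}$-linear. Assembling these actions, $G\rtimes\mathrm{Aut}(G)$ embeds in the semidirect product of $\mathrm{GL}_{m_1}(\mathbb{Z}[\pi^{-1}])$ with a suitable $\mathbb{Z}[\pi^{-1}]$-linear overgroup, which can itself be realized inside $\mathrm{GL}_m(\mathbb{Z}[\pi^{-1}])$ for some $m$ via the standard trick of letting matrices act on matrices.

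The main obstacle is the $\mathbb{Z}[\pi^{-1}]$-integrality of the induced action of $\mathrm{Aut}(G)$ on the Mal'cev completion of $N$. A priori one only obtains $\mathbb{Q}$-rationality, and passing to integrality over $\mathbb{Z}[\pi^{-1}]$ requires knowing that $\mathrm{Aut}(N)$ stabilizes a $\mathbb{Z}[\pi^{-1}]$-form of $N\otimes\mathbb{Q}$; this is the technical heart of the argument and is where the hypothesis that $N$ itself is $\pi$-minimax (rather than merely finitely generated) is used in an essential way. A subsidiary difficulty is keeping track of compatibility between the embedding of $G$ and the normalizing action of $\mathrm{Aut}(G)$, which is why the reduction to a characteristic torsion-free subgroup of finite index is a convenient first step.
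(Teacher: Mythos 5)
This statement is quoted in the paper as Wehrfritz's theorem [{\bf 29}, Theorem 1] and is not proved there, so there is no internal argument to compare yours against; judging the proposal on its own terms, the very first reduction fails. The kernel of the restriction map ${\rm Aut}(G)\to {\rm Aut}(G_0)$ need not be finite, and an automorphism fixing $G_0$ pointwise is \emph{not} determined by its effect on $G/G_0$. Take $G$ to be the infinite dihedral group $\langle a,b\mid b^2=1,\ bab=a^{-1}\rangle$ with $G_0=\langle a\rangle$ its characteristic torsion-free subgroup of index $2$: conjugation by $a^n$ fixes $G_0$ pointwise and induces the identity on $G/G_0$, yet these automorphisms are pairwise distinct, so the kernel contains an infinite cyclic group. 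Moreover, even where the kernel is finite, $G_0\rtimes{\rm Aut}(G_0)$ is not exhibited as a finite-index subgroup of $G\rtimes{\rm Aut}(G)$ (the restriction map need be neither injective nor surjective, nor have finite cokernel), so induction of representations from a finite-index subgroup does not apply to the holomorph.

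There are two further substantive gaps. First, "combining" a faithful $\mathbb Z[\pi^{-1}]$-representation of $N={\rm Fitt}(G)$ with a faithful $\mathbb Z$-representation of $Q=G/N$ does not yield a faithful representation of $G$: the extension $1\to N\to G\to Q\to 1$ is generally non-split, and producing a faithful module for the extension from modules for the kernel and quotient is exactly the nontrivial content of the Auslander--Swan circle of results, not a formal consequence of them. Second, the closing step --- realizing the semidirect product of $\mathrm{GL}_{m_1}(\mathbb Z[\pi^{-1}])$ with a "$\mathbb Z[\pi^{-1}]$-linear overgroup" inside a single $\mathrm{GL}_m(\mathbb Z[\pi^{-1}])$ --- is precisely what the theorem asserts and cannot be dispatched by letting matrices act on matrices; a semidirect product of two $\mathbb Z[\pi^{-1}]$-linear groups need not be linear over that ring, and the conjugation trick only applies once the automorphisms are already known to be induced by conjugation in an ambient linear group. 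The one sound and useful observation in your sketch is the middle one: an automorphism of a torsion-free nilpotent $\pi$-minimax $N$ stabilizes the $\mathbb Z[\pi^{-1}]$-form of the Mal'cev Lie algebra spanned by $\log N$ and hence acts through $\mathrm{GL}_n(\mathbb Z[\pi^{-1}])$ on the graded pieces. But that only controls ${\rm Aut}(N)$ acting on ${\rm Fitt}(G)$; the difficulty Wehrfritz's proof actually resolves is assembling $G$ and all of ${\rm Aut}(G)$ simultaneously into one linear group over $\mathbb Z[\pi^{-1}]$, and that step is missing here.
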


We also require the following special case of [{\bf 1}, Theorem 6.3]. 

\begin{theorem}{\rm (R. Baer)}
If $\pi$ is a set of primes and $N$ is a nilpotent $\pi$-minimax group, then every solvable subgroup of ${\rm Aut}(N)$ is $\pi$-minimax. \hfill\(\square\)
\end{theorem}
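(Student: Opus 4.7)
The plan is to induct on $c = {\rm nil}\ N$ and reduce the abelian base case to Mal'cev's classical theorem on soluble subgroups of $\mathrm{GL}_n(\mathbb Q)$.

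For the abelian base case, write $N = T \oplus F$, where $T$ is the divisible torsion subgroup (a finite direct sum of Pr\"ufer $p$-groups with $p \in \pi$ by Proposition 1.3(iii)) and $F$ is a torsion-free $\pi$-minimax group of finite rank $n$. Since $T$ is characteristic, $\mathrm{Aut}(N)$ is block upper-triangular; it suffices to show the $\pi$-minimax conclusion for soluble subgroups of each diagonal block and for the unipotent radical $\mathrm{Hom}(F,T)$ (which is itself an abelian $\pi$-minimax group). For the block $\mathrm{Aut}(F)$, tensoring with $\mathbb Q$ embeds $\mathrm{Aut}(F)$ into $\mathrm{GL}_n(\mathbb Q)$; Mal'cev's theorem gives that every soluble subgroup of $\mathrm{GL}_n(\mathbb Q)$ is minimax, and the constraint of preserving the $\mathbb Z[\pi^{-1}]$-lattice $F$ forces the spectrum into $\pi$. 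For the torsion block $\mathrm{Aut}(T)$, a prime-by-prime analogue inside $\mathrm{GL}_{n_p}(\mathbb Q_p)$ plays the same role.

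For the inductive step ($c > 1$), let $H \leq \mathrm{Aut}(N)$ be soluble, and consider the induced diagonal map $\rho \colon H \to \prod_{i=1}^c \mathrm{Aut}(A_i)$, where $A_i = Z_i(N)/Z_{i-1}(N)$ is the $i$-th factor of the (characteristic) upper central series. Each $A_i$ is abelian $\pi$-minimax, so by the base case $\mathrm{im}\ \rho$ is $\pi$-minimax. The kernel $K$ of $\rho$ is the stability group of a central series, hence nilpotent by the Hall--Kalu\v{z}nin theorem. Filtering $K$ by order of vanishing of the assignment $\alpha \mapsto (xZ_{i-1} \mapsto \alpha(x)x^{-1})$ realises its successive quotients inside direct sums of groups $\mathrm{Hom}(A_i, A_j)$ with $j < i$, and one uses these inclusions together with the compatibility of the action by $\mathrm{im}\ \rho$ to deduce that $K$ is $\pi$-minimax.

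The principal obstacle is the analysis of the torsion block $\mathrm{Aut}(T)$ and of the $\mathrm{Hom}$ groups appearing in the stability kernel: a priori $\mathrm{Aut}(\mathbb Z(p^\infty)) = \mathbb Z_p^\ast$ is uncountable, as is $\mathrm{Hom}(\mathbb Z(p^\infty), \mathbb Z(p^\infty)) = \mathbb Z_p$. What rescues the argument is that the soluble subgroup $H$ acts on the entire nilpotent group $N$, so its images must be compatible across all factors; this rigidifies them inside the $\mathbb Q_p$-algebraic subgroups where Mal'cev again applies. Making these compatibilities precise is the substance of Baer's Theorem 6.3, which the paper is content to invoke as a black box rather than reprove.
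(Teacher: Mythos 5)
The paper does not actually prove this statement: it is imported verbatim from Baer [{\bf 1}, Theorem 6.3] and stamped with a box, and it is only ever used through Corollary 7.10, i.e.\ for the torsion-free module $N=\mathbb Z[\pi^{-1}]^n$. So there is no argument of the authors to compare yours against; the only question is whether your sketch constitutes a proof, and it does not.

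Your skeleton --- induct on the class, split off the divisible torsion $T$, handle $\mathrm{Aut}(F)$ by Mal'cev--Kolchin triangularisation plus an integrality argument on the eigenvalues, and treat the kernel of $H\to\prod_i\mathrm{Aut}\bigl(Z_i(N)/Z_{i-1}(N)\bigr)$ as a nilpotent stability group --- is the standard one and is sound for torsion-free $N$. But the two steps you defer, namely the torsion block $\mathrm{Aut}(T)$ and the stability kernel (whose layers sit inside groups such as $\mathrm{Hom}(\mathbb Z(p^\infty),\mathbb Z(p^\infty))\cong\mathbb Z_p$), are exactly where the content of the theorem lives, and you offer nothing there beyond the assertion that compatibility ``rigidifies'' the situation. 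This is not a routine verification that can be waved through: $\mathbb Z_p^\ast$ and $\mathbb Z_p$ are abelian and uncountable, hence not minimax for any $\pi$, so no block-by-block analogue of Mal'cev inside $\mathrm{GL}_{n_p}(\mathbb Q_p)$ can deliver the conclusion, and any correct argument must exploit the interplay with the reduced part of $N$. Indeed, taken literally the case $N=\mathbb Z(p^\infty)$, where $\mathrm{Aut}(N)=\mathbb Z_p^\ast$ is itself a soluble non-minimax group, shows that the statement as transcribed requires Baer's original hypotheses; this is further evidence that the step you omit is the whole theorem. As it stands your proposal is an honest reduction to the hardest case, not a proof.
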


Baer's result has the corollary below.

\begin{corollary}
Let $\pi$ be a finite set of primes. Then every virtually solvable subgroup of $\mathrm{GL}_n(\mathbb Z[\pi^{-1}])$ is $\pi$-minimax. \hfill\(\square\)
\end{corollary}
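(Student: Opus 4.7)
The plan is to deduce this corollary from Baer's theorem by taking $N := \mathbb Z[\pi^{-1}]^n$ and exhibiting $\mathrm{GL}_n(\mathbb Z[\pi^{-1}])$ as a subgroup of $\mathrm{Aut}(N)$. The first step is a standard reduction from the virtually solvable case to the solvable one: given a virtually solvable subgroup $G \leq \mathrm{GL}_n(\mathbb Z[\pi^{-1}])$, I would select a solvable subgroup of finite index in $G$ and replace it with its normal core $T$, which is still of finite index in $G$ and solvable (being contained in a solvable subgroup). Once $T$ is shown to be $\pi$-minimax, $G$ inherits $\pi$-minimaxity because the class $\mathfrak{M}^\pi$ is closed under finite extensions (splice a series for $T$ onto the trivial series for the finite quotient $G/T$).

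The next task is to verify that $N$ is a nilpotent $\pi$-minimax group. Here the hypothesis that $\pi$ is finite is essential: it guarantees that $\mathbb Z[\pi^{-1}]$ admits the short series $0 \subset \mathbb Z \subset \mathbb Z[\pi^{-1}]$ whose top factor is $\bigoplus_{p \in \pi} \mathbb Z(p^\infty)$, a \emph{finite} direct sum of quasicyclic groups for primes in $\pi$. Summing $n$ copies of this series yields a finite series in $N$ whose factors are either infinite cyclic or quasicyclic $p$-groups for $p \in \pi$, so $N$ is an abelian (hence nilpotent) $\pi$-minimax group.

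The obvious action of $\mathrm{GL}_n(\mathbb Z[\pi^{-1}])$ on $N$ by matrix multiplication supplies an injective homomorphism $\mathrm{GL}_n(\mathbb Z[\pi^{-1}]) \hookrightarrow \mathrm{Aut}(N)$, where $\mathrm{Aut}(N)$ denotes the automorphism group of $N$ as an abstract group. With this identification in hand, any solvable subgroup of $\mathrm{GL}_n(\mathbb Z[\pi^{-1}])$ becomes a solvable subgroup of $\mathrm{Aut}(N)$, to which Baer's theorem (Theorem 7.9) applies directly, declaring it $\pi$-minimax. Combined with the reduction in the first paragraph, this yields the corollary. The argument is essentially a packaging of Baer's theorem, so there is no real obstacle; the only points to verify are the $\pi$-minimaxity of $N$ and the standard passage to a normal solvable subgroup of finite index, both of which are routine.
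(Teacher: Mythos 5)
Your proof is correct and follows the route the paper evidently has in mind (the paper marks Corollary 7.10 with $\square$, presenting it as an immediate consequence of Baer's theorem): identify $\mathrm{GL}_n(\mathbb Z[\pi^{-1}])$ with $\mathrm{Aut}(\mathbb Z[\pi^{-1}]^n)$, note that $\mathbb Z[\pi^{-1}]^n$ is an abelian $\pi$-minimax group precisely because $\pi$ is finite, apply Theorem 7.9, and pass from solvable to virtually solvable via a normal solvable subgroup of finite index. The only cosmetic point is that the factor $\mathbb Z[\pi^{-1}]/\mathbb Z\cong\bigoplus_{p\in\pi}\mathbb Z(p^\infty)$ should be refined to a chain with individually quasicyclic factors before you sum $n$ copies, but that is routine and does not affect the argument.
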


\begin{proof}[Proof of Lemma \ref{eg2}] 
Write $G=G_2(p)$. Pick an arbitrary $\mathfrak{M_1}$-cover $G^\ast$ of $G$, and let $K$ be the kernel of the corresponding covering. In view of Lemmas 3.17 and 3.18, we will not lose any real generality if we assume that $G^\ast$ is torsion-free. Set $N={\rm Fitt}(G)$, $N^\ast={\rm Fitt}(G^\ast)$, $C=C_{G^\ast}(K)$, $N^\dagger=N^\ast\cap C$, and $K^\dagger=K\cap C$.  By Theorem 7.8 and Corollary 7.10, $G^\ast/C$ is $\pi$-minimax, where $\pi={\rm spec}(K)$. Hence $G^*/N^\dagger$ is also $\pi$-minimax.   
Let $P^\dagger$ be the intersection of  $N^\dagger$ with the preimage of the quasicyclic subgroup $P$ of $G$. Since $K^\dagger\leq Z(N^\dagger)$ and $P^\dagger/K^\dagger$ is torsion, we have $P^\dagger\leq Z(N^\dagger)$. 

Suppose $p\notin \pi$. In this case, since $N/P$ is a rationally irreducible $\mathbb ZG$-module, the covering induces a $\mathbb ZG^\ast$-module isomorphism from $N^\dagger/P^\dagger$ to a submodule of finite index in $N/P$. As in 7.6, we write $B=N/P\wedge N/P$; also, we set $B^\dagger=(N^\dagger/P^\dagger)\wedge (N^\dagger/P^\dagger)$. For the same reasons that $B\cong \mathbb Z[1/p]^3$, we have $B^\dagger\cong \mathbb Z[1/p]^3$.   

The commutator maps $N\times N\to P$ and $N^\dagger\times N^\dagger\to P^\dagger $ induce $\mathbb ZG^\ast$-module homomorphisms $\eta:B\to P$ and $\theta:B^\dagger\to P^\dagger$, respectively, with $\eta$ being surjective. Consider the commutative square

\begin{displaymath} \begin{CD} B^\dagger@>\theta>>P^\dagger\\
@VVV @VVV\\
B@>\eta>>P\end{CD} \end{displaymath}

\noindent of $\mathbb ZG^\ast$-module homomorphisms, in which both vertical maps are induced by the covering. Since the map $B^\dagger\to B$ is monic, its image must have finite index in $B$. Hence the composition $B^\dagger\to B\stackrel{\eta}{\to} P$ must be epic, which means $\theta\neq 0$. But $B^\dagger$ is a rationally irreducible $\mathbb ZG^\ast$-module because $B$ enjoys this property. Hence ${\rm Ker}\ \theta=0$.
Thus $m_p(P^\dagger)\geq 3$, and so $m_p(K^\dagger)\geq 2$, contradicting the assumption $p\notin \pi$. Therefore $p\in \pi$. 
\end{proof}

In the finitely generated $\mathfrak{M_\infty}$-groups described above, the finite residual lies in the center of the Fitting subgroup. However, it is important to recognize that this is not always the case. We illustrate this with Example 7.11, which also appears in \cite{kropholler3}, though the discussion there focuses on another aspect.  
Our description of the example is based on Construction 7.3.

\begin{example} {\rm For each prime $p$ and positive integer $m$, we define a finitely generated solvable minimax group $G_3(p,m)$ such that $R(G_3(p,m))$ is not contained in the center of the Fitting subgroup when $m\geq 2$.  
We begin by applying Construction 7.3 with $A=\mathbb Z[1/p]^{m+1}$ and $Q$ free abelian of rank two generated by the automorphisms $\phi$ and $\psi$ of $A$ with the following definitions:

$$\phi(a_0,a_1,\dots,a_m)=(a_0,a_1,a_2+a_1,a_3+a_2,\dots,a_m+a_{m-1}); $$
$$\psi(a_0,a_1,\dots,a_m)=(p^{-1}a_0,pa_1,\dots,pa_m).$$ 
If $B$, $M$, and $\Gamma$ are defined as in 7.3, then $\Gamma$ is a finitely generated solvable $p$-minimax group. Moreover, since the cyclic group $\langle \phi \rangle$ acts nilpotently on $A$ with class $m$, we have ${\rm Fitt}(\Gamma)=\langle M,\phi \rangle$. 

Letting $e_0,\dots,e_m$ be the standard basis for $A$,
take $B_0$ to be the subgroup of $B$ generated by $e_0\wedge e_1,\dots,e_0\wedge e_m$. Then $B_0$ is centralized by $\psi$ and so is normal in $\Gamma$.
We define $G_3(p,m)=\Gamma/B_0$. Observe that the action of $\langle\phi\rangle$ on $R(G_3(p,m))$ is nilpotent of class $m$, and that the subgroup of $G_3(p,m)$ generated by $R(G_3(p,m))$ and the image of $\phi$ is contained in ${\rm Fitt}(G_3(p,m))$. As a result, if $m\geq 2$, $R(G_3(p,m))$ is not contained in the center of ${\rm Fitt}(G_3(p,m))$.}
\end{example}

As observed in the introduction to the paper, part of the significance of Theorem 1.5 derives from the fact that there are uncountably many isomorphism classes of finitely generated $\mathfrak{M_\infty}$-groups. We conclude the article by using Example 1.4 to provide a simple proof of this well-known proposition. Another approach to showing this is mentioned in [{\bf 16}, p. 104]. 

\begin{proposition} There are uncountably many isomorphism classes of finitely generated $\mathfrak{M_\infty}$-groups.
\end{proposition}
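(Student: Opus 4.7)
The plan is to construct $2^{\aleph_0}$ pairwise non-isomorphic finitely generated $\mathfrak{M}_\infty$-groups, using Example 1.4 as the building block. Example 1.4 already exhibits one specific finitely generated $\mathfrak{M}_\infty$-group $G = G^\ast/A$, where $G^\ast$ is Hall's matrix group over $\mathbb{Z}[1/p]$ with $Z(G^\ast) \cong \mathbb{Z}[1/p]$ and $A \cong \mathbb{Z}$ is a cyclic central subgroup. The task is to parameterize an uncountable family of such quotients of $G^\ast$ (or closely related $\mathfrak{M}_1$-groups) whose isomorphism types are distinct.

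The first observation is that merely varying $A$ over the countably many subgroups of $Z(G^\ast) \cong \mathbb{Z}[1/p]$ yields only countably many non-isomorphic quotients; similarly, quotients of finite direct products $(G^\ast)^n$ by subgroups of their centers give only countably many examples. Hence we need a construction that injects a continuous parameter into the abstract group structure. My approach would be to parameterize over the uncountable set $\mathbb{Z}_p$: for each $\lambda \in \mathbb{Z}_p$, construct a finitely generated $\mathfrak{M}_\infty$-group $G_\lambda$ via a $\lambda$-twisted variant of Hall's construction. Natural candidates include either (i) central extensions of $G$ by a quasicyclic $p$-group whose cocycles form an uncountable subset of $H^2(G, \mathbb{Z}(p^\infty))$ chosen to depend continuously on $\lambda$, or (ii) subgroups of $\mathrm{GL}_3(\mathbb{Q}_p)$ generated by the Hall generators together with a further matrix whose entries involve $\lambda$. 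In either case one verifies that $G_\lambda$ lies in $\mathfrak{M}_\infty$ by locating $\mathbb{Z}(p^\infty)$ in its finite residual, and that it is genuinely finitely generated, typically because the quasicyclic piece arises from a commutator of a fixed pair of generators (as in Example 1.4).

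The main obstacle is the non-isomorphism step: showing that the uncountable parameter space $\mathbb{Z}_p$ injects into the set of isomorphism classes modulo a countable equivalence relation. Here I would argue that any isomorphism $G_\lambda \cong G_{\lambda'}$ induces an isomorphism at the level of $\mathfrak{M}_1$-covers, constrained by the countable group $\mathrm{Aut}(G^\ast)$, and hence must identify the twisting data for $\lambda$ and $\lambda'$ up to the action of a countable automorphism group (together with $\mathrm{Aut}(\mathbb{Z}(p^\infty)) \cong \mathbb{Z}_p^\ast$ acting on the coefficient module if one uses approach (i)). The key step is to extract from $G_\lambda$ an intrinsic invariant—for instance, a $p$-adic number attached to the way the finite residual sits inside $G_\lambda$—that recovers $\lambda$ modulo this countable ambiguity. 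Once such an invariant is in hand, a straightforward cardinality comparison between $\mathbb{Z}_p$ and its countable orbits yields $2^{\aleph_0}$ isomorphism classes, completing the argument.
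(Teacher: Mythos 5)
Your plan identifies the right starting point (Example 1.4) but then runs into the obstacle you yourself name — and that obstacle is not surmounted, so there is a genuine gap. Specifically, the entire weight of the argument rests on producing ``an intrinsic invariant'' of $G_\lambda$ recovering $\lambda$ modulo a countable ambiguity, and you neither construct such an invariant nor indicate why one should exist. Without it, the conclusion ``a straightforward cardinality comparison yields $2^{\aleph_0}$ isomorphism classes'' does not follow: you would only know there are uncountably many \emph{normal subgroups} or \emph{cocycles}, not uncountably many isomorphism types.

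Moreover, the early observation you use to dismiss the simplest approach is mistaken, and it steers you away from the route the paper actually takes. You claim that quotients of $(G^\ast)^n$ (equivalently, of $G^n$) by central subgroups yield only countably many examples. That is false: already $Z(G\times G)\cong \mathbb Z(p^\infty)\oplus\mathbb Z(p^\infty)$ has uncountably many subgroups (for instance the graphs $\{(x,\lambda\cdot x):x\in\mathbb Z(p^\infty)\}$ for $\lambda\in\mathbb Z_p$, using the $\mathbb Z_p$-module structure, are pairwise distinct). The paper's proof simply sets $\Gamma=G\times G$, notes this uncountability of normal subgroups, and then invokes de la Harpe's lemma (Lemma 7.13): \emph{a finitely generated group has uncountably many nonisomorphic quotients if and only if it has uncountably many normal subgroups}. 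That lemma is exactly the device that disposes of the ``main obstacle'' without any explicit isomorphism invariant. The resulting quotients are all finitely generated $p$-minimax, and since only countably many finitely generated $\mathfrak{M}_1$-groups exist up to isomorphism, uncountably many of them lie in $\mathfrak{M}_\infty$. So the fix for your argument is not a clever $p$-adic invariant but rather Lemma 7.13; once you know it, the ``dismissed'' direct-product approach becomes a two-line proof.
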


The proof of the proposition is based on the following lemma. 

\begin{lemma}{\rm [{\bf 8}, Lemma III.C.42]}
Let $G$ be a finitely generated group. Then $G$ has uncountably many nonisomorphic quotients if and only if it has uncountably many normal subgroups. \nolinebreak \hfill\(\square\) 
\end{lemma}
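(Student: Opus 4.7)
The plan is to apply Lemma 7.13 to a single finitely generated $\mathfrak{M}$-group that possesses uncountably many normal subgroups. Since every finitely generated $\mathfrak{M_1}$-group is $\mathbb Q$-linear, there are only countably many isomorphism classes of finitely generated $\mathfrak{M_1}$-groups, as noted in \S 1.1. Hence, once we produce uncountably many pairwise non-isomorphic finitely generated $\mathfrak{M}$-groups, the complement $\mathfrak{M_\infty}=\mathfrak{M}\setminus\mathfrak{M_1}$ must contain uncountably many of them.

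To manufacture such a group, I would take the Hall group $H$ from Example 1.4 and set $G_0 := H\times H$. This is clearly finitely generated and minimax. Since $Z(H)\cong \mathbb Z(p^\infty)$, the center of $G_0$ contains a subgroup $A\cong \mathbb Z(p^\infty)\oplus \mathbb Z(p^\infty)$, and every subgroup of $A$ is automatically normal in $G_0$. To count subgroups of $A$, I would exploit the natural $\mathbb Z_p$-module structure on $\mathbb Z(p^\infty)$ and, for each $\alpha\in \mathbb Z_p$, consider the ``diagonal'' subgroup $A_\alpha := \{\,(x,\alpha\cdot x) \mid x\in \mathbb Z(p^\infty)\,\}\leq A$. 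The faithfulness of $\mathbb Z(p^\infty)$ as a $\mathbb Z_p$-module (an element of $\mathbb Z_p$ annihilating every $p^{-n}+\mathbb Z$ must be zero) immediately gives $A_\alpha\neq A_\beta$ whenever $\alpha\neq \beta$, producing $|\mathbb Z_p|=2^{\aleph_0}$ distinct subgroups of $A$.

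Having equipped $G_0$ with uncountably many normal subgroups, Lemma 7.13 supplies uncountably many pairwise non-isomorphic quotients of $G_0$. Each of these is a finitely generated $\mathfrak{M}$-group, and the first paragraph then forces uncountably many of them to lie in $\mathfrak{M_\infty}$, proving Proposition 7.12.

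There is essentially no obstacle in this argument; the only substantive input is the easy verification that $\alpha\mapsto A_\alpha$ is injective, which reduces to the faithful action of $\mathbb Z_p$ on $\mathbb Z(p^\infty)$. One could equally well invoke Pontryagin duality to observe that the closed rank-one $\mathbb Z_p$-submodules of the dual $\mathbb Z_p\oplus \mathbb Z_p$ are parameterized by $\mathbb P^1(\mathbb Q_p)$, but the explicit diagonal construction is both shorter and more transparent.
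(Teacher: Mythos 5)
Your proposal does not address the statement under review. The statement is Lemma 7.13 itself -- the general assertion that a finitely generated group has uncountably many nonisomorphic quotients if and only if it has uncountably many normal subgroups. What you have written is a proof of Proposition 7.12 (the existence of uncountably many nonisomorphic finitely generated $\mathfrak{M_\infty}$-groups), and your argument \emph{invokes} Lemma 7.13 as a black box. So, relative to the actual task, the proposal is circular: the one thing that needed proving is exactly the thing you assumed. (In the paper this lemma is simply quoted from de la Harpe's book with no proof, so there is nothing to compare your argument against; but a correct submission must still prove the equivalence rather than use it.)

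For the record, the content of the lemma is as follows. One direction is immediate: distinct isomorphism classes of quotients require distinct normal subgroups. For the converse one uses finite generation: if $Q$ is any quotient of $G$, then $Q$ is countable, and since $G$ is generated by finitely many elements, a homomorphism $G\to Q$ is determined by the images of those generators, so there are at most countably many epimorphisms $G\to Q$ and hence at most countably many normal subgroups $N$ with $G/N\cong Q$. If $G$ had only countably many nonisomorphic quotients, its normal subgroups would therefore form a countable union of countable sets, contradicting uncountability. Your construction of uncountably many normal subgroups in $H\times H$ via the diagonal subgroups $A_\alpha$ is fine as far as it goes, but it belongs to the proof of Proposition 7.12, not to the proof of this lemma.
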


\begin{proof}[Proof of Proposition 7.12]
In fact, we show that, for any prime $p$, there are uncountably many nonisomorphic, finitely generated $p$-minimax groups. Fix $p$ and let $\Gamma$ be the direct product of two copies of the group $G$ from Example 1.4. The center $Z(\Gamma)$ is isomorphic to $\mathbb Z(p^\infty)\oplus\mathbb Z(p^\infty)$, which has uncountably many subgroups. Therefore, by Lemma 7.13, $\Gamma$ has uncountably many nonisomorphic quotients. 
\end{proof}

\begin{acknowledgements} {\rm The authors wish to express their gratitude to four mathematicians who contributed to the development of the ideas presented here. First, we are greatly indebted to Lison Jacoboni for making us aware of the relevance of Theorem 1.5 to the investigation of random walks. Immediately upon learning of our result, she pointed out precisely how it could be employed to extend the main theorem in \cite{pittet}. We thank her for generously sharing this insight with us and giving permission for its inclusion in our paper as Corollary 1.8. 

We were also exceedingly fortunate to have had a meticulous and perspicacious  anonymous referee who read two preliminary versions of the paper carefully and provided copious comments on both the exposition and the mathematical content. His/her many valuable suggestions touched upon almost every aspect of the article, impacting its final form substantially. Of particular importance were the insights furnished by the referee concerning topological groups, as well as his/her encouraging us to seek a ``minimal" cover, which 
inspired us to prove parts (iii) and (iv) of Theorem 1.5. 

In addition, the authors are grateful to Michael Weiner for inquiring in the Penn State Topology Seminar whether our techniques could yield a complete characterization of the solvable minimax groups that occur as quotients of torsion-free solvable minimax groups. It was his question that led us to Theorem \ref{thm2}.

Lastly, we wish to thank Hector Durham for reading portions of the manuscript and correcting some errors.}

\end{acknowledgements}

\noindent{\sc 
Mathematical Sciences\\
University of Southampton\\
Highfield\\
Southampton SO17 1BJ\\
UK\\
{\it E-mail: {\tt P.H.Kropholler@southampton.ac.uk}}
\vspace{5pt}

\noindent {\sc 
Department of Mathematics and Statistics\\
Pennsylvania State University, Altoona College\\
Altoona, PA 16601\\
USA\\
{\it E-mail: {\tt kql3@psu.edu}}
}

\end{document}